\title[Non-pluripolar products on vector bundles]{Non-pluripolar products on vector bundles and Chern--Weil formulae}
\author{Mingchen Xia}
\begin{document}

\begin{abstract}
    In this paper, we develop several pluripotential-theoretic techniques for singular metrics on vector bundles.
    We first introduce the theory of non-pluripolar products on holomorphic vector bundles on complex manifolds. Then we define and study a special class of singularities of Hermitian metrics on vector bundles, called $\mathcal{I}$-good singularities, partially extending Mumford's notion of good singularities.
    Next,  we derive a Chern--Weil type formula expressing the Chern numbers of Hermitian vector bundles with $\mathcal{I}$-good singularities in terms of the associated b-divisors. We also define an intersection theory on the Riemann--Zariski space and apply it to reformulate our Chern--Weil formula.
\end{abstract}
\blfootnote{
\textbf{\textit{Keywords---}}: Mixed Shimura variety, Chern--Weil formula, I-good singularity, non-pluripolar product.
}

\blfootnote{\textbf{\textit{MSC}}:	32J25, 14C17, 14G40.
}
 \normalsize

\maketitle

\tableofcontents

\section*{Statements and Declarations}
The author declares no conflict of interest.

\section*{Introduction}
This paper is devoted to lie down the foundation for studying Griffiths quasi-positive singular Hermitian metrics on holomorphic vector bundles on quasi-projective varieties. We expect our work to be the first step in establishing the arithmetic intersection theory on mixed Shimura varieties and in extending Kudla's program to  mixed Shimura varieties.

We explore two different techniques. First of all, we introduce the general theory non-pluripolar products on vector bundles. Secondly, we continue the study of b-divisors associated with Hermitian line bundles initiated in \cite{XiaPPT} and \cite{BBGHdJ21}. 

\subsection{Background}
One of the most striking features of the intersection theory on Shimura varieties is the so-called \emph{Hirzebruch--Mumford proportionality theorem} \cite{Hir58, Mum77}. Consider a locally symmetric space $\Gamma\backslash D$, where $D=G/K$ is a bounded symmetric domain, $G$ is a semi-simple adjoint real Lie group such that there is a $\mathbb{Q}$-algebraic group $\mathcal{G}$ with $G=\mathcal{G}(\mathbb{R})^+$, $K$ is a maximal compact subgroup of $G$ and $\Gamma\subseteq \mathcal{G}(\mathbb{Q})$ is a neat arithmetic subgroup. By \cite{BB66}, $\Gamma\backslash D$ is in fact quasi-projective.
Given any finite-dimensional unitary representation $\rho:K\rightarrow U(n)$ of $K$, one can naturally construct an equivariant Hermitian vector bundle $\hat{E}=(E,h_E)$ on $\Gamma\backslash D$ and an equivariant vector bundle $E'$ on the compact dual $\check{D}$ of $D$.  

Let $\overline{\Gamma\backslash D}$ be a smooth projective toroidal compactification of $\Gamma\backslash D$ in the sense of \cite{AMRT}. Mumford proved that $E$ has a unique extension $\bar{E}$ to $\overline{\Gamma\backslash D}$ such that the metric has \emph{good singularities} at the boundary. Then the proportionality theorem states that the Chern numbers of $\bar{E}$ are proportional to the Chern numbers of $E'$. The same holds if we consider the mixed Chern numbers of various vector bundles obtained as above, although this is not explicitly written down in \cite{Mum77}.

Modulo some easy curvature computations, the essence of Hirzebruch--Mumford's proportionality is the following \emph{Chern--Weil} type result: a Hermitian vector bundle $\hat{E}$ on the quasi-projective variety $\Gamma\backslash D$ admits an extension as a vector bundle $\bar{E}$ with singular metric on the compactification $\overline{\Gamma\backslash D}$, such that the Chern forms of $\hat{E}$ on $\Gamma\backslash D$, regarded as currents on $\overline{\Gamma\backslash D}$, represents the Chern classes of $\bar{E}$. The whole idea is embodied in the notion of \emph{good singularities}. In this paper, we would like to understand this phenomenon in greater generality.

The examples of locally symmetric spaces include all Shimura varieties. It is therefore natural to ask if the same holds on mixed Shimura varieties. It turns out that this phenomenon does not happen even in the simplest examples like the universal elliptic curve with level structures, see \cite{BGKK16}. In fact, by definition of good metrics, the singularities at infinity of good singularities are very mild, which is far from being true in the mixed Shimura case.

In this paper, we want to answer the following general question:
\begin{question}\label{que:mainq}
Consider a quasi-projective variety $X$ and Hermitian vector bundles $\hat{E}_i$ on $X$, how can we interpret the integral of Chern polynomials of $\hat{E}_i$ in terms of certain algebraic data on the compactifications of $X$? 
\end{question}
We will first content ourselves to the special case where $\hat{E}$ is Griffiths positive. 
Already in this case, we have non-trivial examples like the automorphic line bundles on the universal abelian varieties. 
In fact, the positivity assumption is not too restrictive, as we can always twist $E$ by some ample line bundle and our theory will turn out to be insensitive to such perturbations, so we could handle a much more general case than just positive vector bundles.

\cref{que:mainq} cannot have a satisfactory answer for all Griffiths positive singularities. Already on the line bundle $\mathcal{O}(1)$ on $\mathbb{P}^1$, it is easy to construct families of plurisubharmonic metrics whose Chern currents outside a Zariski closed subset are meaningless from the cohomological point of view, see \cite[Example~6.10]{BBJ21}. We will remedy this by introducing two nice classes of singularities: full-mass singularities and $\mathcal{I}$-good singularities.

We will develop two different techniques to handle \cref{que:mainq}, divided into the two parts of the paper.

\subsection{Main results in \cref{part:1}}
The first part concerns the non-pluripolar intersection theory on vector bundles. Recall that the non-pluripolar products of metrics on line bundles were introduced in \cite{BEGZ10}. Consider a compact K\"ahler manifold $X$ of pure dimension $n$ and line bundles $L_1,\ldots,L_m$ on $X$. We equip each $L_i$ with a singular plurisubharmonic (psh) metric $h_i$. We write $\hat{L}_i=(L_i,h_i)$.
The non-pluripolar product
\[
c_1(\hat{L}_1)\wedge \cdots \wedge c_1(\hat{L}_m)
\]
is a closed positive $(m,m)$-current on $X$ that puts no mass on any pluripolar set. When the $h_i$'s are bounded, this product is nothing but the classical Bedford--Taylor product. The non-pluripolar theory is not the only possible extension of Bedford--Taylor theory to unbounded psh metrics. However, there are two key features that single out the non-pluripolar theory among others: first of all, the non-pluripolar products are defined for \emph{all} psh metrics; secondly, the non-pluripolar masses satisfy the monotonicity theorem \cite{WN19}. Both properties are crucial to our theory.

There is a slight extension of the non-pluripolar theory constructed recently by Vu \cite{Vu20}. He defined the so-called the \emph{relative non-pluripolar products}. Here \emph{relative} refers to the extra flexibility of choosing a closed positive current $T$ on $X$ and one can make sense of expressions like
\[
c_1(\hat{L}_1)\wedge \cdots \wedge c_1(\hat{L}_m)\cap T.
\]
The usual non-pluripolar products correspond to the case $T=[X]$, the current of integration along $X$.
For the purpose of defining the non-pluripolar products on vector bundles, we slightly extend Vu's theory by allowing $T$ to be closed dsh currents in \cref{sec:relnpp}, see \cref{def:dsh} for the definition of closed dsh currents.

Now suppose that we are given a Hermitian vector bundle $\hat{E}=(E,h_E)$ on $X$, $\rank E=r+1$ and $h_E$ is probably singular. We assume that $h_E$ is Griffiths positive in the sense of \cref{def:grif}. As in the usual intersection theory, one first investigates the Segre classes $s_i(\hat{E})$. We will realize $s_i(\hat{E})$ as an operator $\widehat{Z}_a(X)\rightarrow \widehat{Z}_{a-i}(X)$, where $\widehat{Z}_a(X)$ is the vector space of closed dsh currents of bidimension $(a,a)$ on $X$. Let $p\colon \mathbb{P}E^{\vee}\rightarrow X$ be the natural projection (here $\mathbb{P}E^{\vee}$ is the projective space of lines in $E^{\vee}$). From a simple computation, one finds that the natural map $p^*E\rightarrow \mathcal{O}(1)$ induces a psh metric $h_{\mathcal{O}(1)}$ on $\mathcal{O}(1)$. We write $\hO(1)$ for $\mathcal{O}(1)$ equipped with this metric. Then the natural definition of $s_i(\hat{E})\cap\colon \widehat{Z}_a(X)\rightarrow \widehat{Z}_{a-i}(X)$ is
\[
s_i(\hat{E})\cap T\coloneqq (-1)^i p_* \left(c_1(\hO(1))^{r+i}\cap p^*T\right).
\]
 Here in the bracket, the product is the relative non-pluripolar product.
Of course, one needs to make sense of $p^*T$. This is possible as $p$ is a flat morphism. The detailed construction is provided by Dinh--Sibony in \cite{DS07}. In our situation, the pull-back can also be constructed elementarily as the dual of the fibral integration.
We will prove several important functoriality results of the pull-back in \cref{sec:pull}.

We will prove that the Segre classes behave like the usual Segre classes in \cref{sec:SegChern}. By iteration, we can make sense of expressions like
\[
P(s_i(\hat{E}_j))\cap T,
\]
where $P$ is a polynomial and the $\hat{E}_j$'s are Griffiths positive Hermitian vector bundles on $X$. Due to the non-linearity of the relative non-pluripolar product, we need a technical assumption that $T$ does not have mass on the polar loci of any $\hat{E}_j$. We express this condition as $T$ is \emph{transversal} to the $\hat{E}_j$'s.

In particular, this construction allows us to make sense of the Chern classes $c_i(\hat{E})\cap T$ as long as $T$ is transversal to $\hat{E}$. Observe that we do not have splitting principles in the current setting, so the vanishing of higher Chern classes is not clear. In fact, we only managed to prove it in the following special but important case of small unbounded locus:
\begin{theorem}[{\cref{cor:vanishingChern}}]Let $T\in \widehat{Z}_a(X)$, $\hat{E}$ be a Griffiths positive Hermitian vector bundles on $X$ having small unbounded locus. 
Assume that $T$ is transversal to $\hat{E}$.
Then for any $i>\rank E$, $c_i(\hat{E})\cap T=0$.
\end{theorem}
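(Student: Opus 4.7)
The plan is to reduce, via the small-unbounded-locus hypothesis, to the classical Chern--Weil vanishing for smooth Griffiths-positive metrics, which is a purely linear-algebraic identity.

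Write $r+1=\rank E$. By hypothesis there is a closed complete pluripolar set $Z\subset X$ outside of which $h_E$ is locally bounded; set $V:=X\setminus Z$. First I would observe that each Segre operator $s_j(\hat{E})\cap T$, and therefore any polynomial combination of them---in particular $c_i(\hat{E})\cap T$---puts no mass on $Z$: transversality of $T$ to $\hat{E}$ says $T$ does not charge $Z$, the relative non-pluripolar product never charges pluripolar subsets of $\mathbb{P}E^{\vee}$, and pushforward by the smooth projection $p$ preserves this property. Consequently it is enough to show that $c_i(\hat{E})\cap T$ vanishes on $V$.

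On $V$ the metric $h_E$ is locally bounded and Griffiths-positive, so a Demailly-type regularization furnishes a decreasing net of smooth Griffiths-positive Hermitian metrics $h_{E,\epsilon}$ converging to $h_E|_V$ (any $\epsilon$-loss of strict positivity can be absorbed by tensoring with a small positive line bundle and letting the twist go to zero at the end). For each $\epsilon$, the Chern--Weil form $c_i(\hat{E}_\epsilon)$ is a fixed $\mathrm{GL}_{r+1}$-invariant polynomial evaluated on the $\mathrm{End}(E)$-valued curvature of $\hat{E}_\epsilon$; when $i>r+1$, this polynomial vanishes identically on all $(r+1)\times(r+1)$-matrices, so $c_i(\hat{E}_\epsilon)\equiv 0$ as a smooth $(i,i)$-form on $V$, and a fortiori $c_i(\hat{E}_\epsilon)\cap T|_V=0$.

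The concluding step is the limit $\epsilon\to 0$. Since $h_{E,\epsilon}$ decreases to a locally bounded metric, the induced psh weights on $\mathcal{O}(1)$ over $p^{-1}(V)$ form a locally bounded decreasing family, so the Monge--Amp\`ere products in the definition of $s_j$ reduce there to Bedford--Taylor products, which are continuous under such monotone convergence. Combining this with the pullback and pushforward functorialities of the relative non-pluripolar product already developed in the paper, one obtains weak convergence $s_j(\hat{E}_\epsilon)\cap T|_V \to s_j(\hat{E})\cap T|_V$, so the vanishing persists in the limit and yields $c_i(\hat{E})\cap T|_V=0$. The principal obstacle is precisely this continuity step: non-pluripolar products are generically discontinuous in the metric, and the argument genuinely needs both the small-unbounded-locus hypothesis (which retreats the problem into the Bedford--Taylor regime on $V$) and the Griffiths positivity (which keeps the regularization well-posed and preserves the sign of curvature necessary for monotonicity).
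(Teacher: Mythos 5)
Your reduction to the locally bounded locus, your use of the vanishing of Chern--Weil polynomials of degree $>\rank E$ for smooth Hermitian metrics, and your final passage to the limit are exactly the paper's strategy (it invokes \cref{prop:smTzeroext} and then \cref{prop:appgri}). There is, however, a slip in the regularization step that deserves flagging. You claim a \emph{decreasing} net of smooth Griffiths-positive Hermitian metrics converging to $h_E$; such a regularization does not exist in general. Griffiths positivity means $\log h_E^{\vee}$ is psh on the total space of $E^{\vee}$, so a Demailly/Berndtsson-type regularization of this psh weight produces a decreasing sequence of smooth psh functions, i.e.\ a decreasing sequence of smooth dual metrics $h^{\vee}_{\epsilon}$, which after dualizing gives an \emph{increasing} sequence of smooth Griffiths-positive Hermitian metrics on $E$. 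This is precisely what \cite[Proposition~3.1]{BD08} (\cref{prop:appgri}) provides, and what the paper uses. Moreover, one must be careful not to regularize the associated $\mathcal{O}(1)$-weight directly, since that only yields Finsler metrics, for which the vanishing of $c_i$ in degrees $>\rank E$ is \emph{not} a formal consequence of linear algebra; the paper explicitly warns that the corollary requires a genuine Hermitian metric. With ``decreasing'' replaced by ``increasing'' and the regularization sourced from \cref{prop:appgri} rather than an $\mathcal{O}(1)$-side Demailly argument, your proof is correct and coincides with the paper's; the Bedford--Taylor continuity you invoke works for monotone increasing families of locally bounded psh weights just as well as for decreasing ones.
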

We point out that the corresponding result is not known in the theory of Chern currents of \cite{LRSW18}. 

See \cref{def:small} for the definition of small unbounded locus. This assumption is not too restrictive for most applications. A common place where singular metrics arise is on the compactifications of moduli spaces. In many cases, the singularities of the metric only occur at the boundary of the moduli space, hence naturally having small unbounded locus.

Let us mention that there are (at least) two other methods to make sense of the Chern currents of singular Hermitian vector bundles. Namely, \cite{LRSW18} and \cite{LRRS18}. The former only works for analytic singularities and suffers from the drawback that Segre classes do not commute with each other; the latter puts a strong restriction on the dimension of polar loci. In our non-pluripolar theory, the characteristic classes are defined for all Griffiths positive singularities and behave in the way that experts in the classical intersection theory would expect.

Our theory works pretty well for positive (and by extension quasi-positive) singularities, by duality, it can be easily extended to the negative (and quasi-negative) case. However, some natural singularities belong to neither class. In these cases, we do not possess a satisfactory Hermitian intersection theory.

Next, in \cref{sec:specmet}, we introduce two special classes of singularities on a Griffiths positive Hermitian vector bundle $\hat{E}$. The first class is the \emph{full mass} metrics. We say $\hat{E}$ has full mass if $\hO(1)$ on $\mathbb{P}E^{\vee}$ has full (non-pluripolar) mass in the sense of \cite{DDNL18fullmass}. We will show in \cref{prop:fullmasscrit} that this is equivalent to the condition that $\int_X s_n(\hat{E})$ is equal to the $n$-th Segre number of $E$ if $E$ is nef. Here $s_n(\hat{E})=s_n(\hat{E})\cap [X]$.

The most important feature of a full mass metric is:
\begin{theorem}[{\cref{thm:ChernrepChern}}]\label{thm:Chernmain}
Let $\hat{E}_1,\ldots,\hat{E}_m$ be Griffiths positive Hermitian vector bundles on $X$. Assume that the $\hat{E}_j$'s have full masses and the $E_j$'s are nef. Let $P(c_i(\hat{E}_j))$ be a homogeneous Chern polynomial of degree $n$. Then $P(c_i(\hat{E}_j))$ represents $P(c_i(E_j))$. 
\end{theorem}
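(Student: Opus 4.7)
The plan is to reduce the statement to a total-mass computation on an auxiliary compact Kähler manifold and then invoke a full-mass representation theorem for nef line bundles. Since $X$ is compact Kähler of pure dimension $n$, $H^{n,n}(X,\mathbb{R})$ is spanned by any volume form, so two closed $(n,n)$-currents (resp.\ classes) agree as classes iff they integrate to the same number. Using the universal identity $s(E)c(E)=1$, which should hold for the operator Segre/Chern classes by \cref{sec:SegChern}, one rewrites the Chern polynomial as a polynomial in Segre classes, and by multilinearity the problem reduces to showing
\[
\int_X s_{i_1}(\hat E_{j_1}) \cdots s_{i_k}(\hat E_{j_k}) \;=\; \int_X s_{i_1}(E_{j_1}) \cdots s_{i_k}(E_{j_k})
\]
for every monomial with $\sum_l i_l = n$.

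Next, I iterate the definition $s_i(\hat E)\cap T = (-1)^i\,p_*\bigl(c_1(\hO(1))^{r+i}\cap p^*T\bigr)$ on the fiber product $Y := \mathbb{P}E_{j_1}^\vee \times_X \cdots \times_X \mathbb{P}E_{j_k}^\vee$, with canonical projections $q_l: Y \to \mathbb{P}E_{j_l}^\vee$ and $\pi: Y \to X$. Repeatedly applying the base-change and projection formulas for non-pluripolar pullbacks of closed dsh currents established in \cref{sec:pull}, together with the Dinh--Sibony flat-pullback construction, one should express
\[
\int_X s_{i_1}(\hat E_{j_1}) \cdots s_{i_k}(\hat E_{j_k}) \;=\; (-1)^{\sum_l i_l}\!\int_Y \Bigl\langle \bigwedge_{l=1}^k c_1\bigl(q_l^*\hO_{j_l}(1)\bigr)^{r_{j_l}+i_l}\Bigr\rangle,
\]
where the wedge on $Y$ denotes the top-degree non-pluripolar product. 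The analogous classical computation on $Y$ yields the same formula for the right-hand side with the cohomological top intersection in place of the non-pluripolar wedge.

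The final step is to identify the non-pluripolar top product on $Y$ with the cohomological intersection. Each $q_l^*\mathcal{O}_{j_l}(1)$ is nef (nefness is preserved under pullback), and the metric $q_l^*h_{j_l}$ is psh with smooth restriction to every fiber of $q_l$, because along such a fiber the bundle is trivial and the metric reduces to a constant varying with the base point. The full-mass hypothesis on $\hO_{j_l}(1)$ on $\mathbb{P}E_{j_l}^\vee$ should therefore propagate, via a Fubini-type integration along fibers of $q_l$, to a joint full-mass property for the tuple $\{q_l^*\hO_{j_l}(1)\}_{l=1}^k$ on $Y$. Invoking the multilinear full-mass representation theorem for nef line bundles (the nef extension of \cite{BEGZ10} consolidated by \cite{DDNL18fullmass}, and compatible with \cref{prop:fullmasscrit}) one concludes that the top non-pluripolar product on $Y$ has the same total mass as $\prod_l c_1(q_l^*\mathcal{O}_{j_l}(1))^{r_{j_l}+i_l}$, closing the loop.

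The main obstacle I foresee is the last step: upgrading the single-bundle full-mass assumption on each $\hO_{j_l}(1)$ on $\mathbb{P}E_{j_l}^\vee$ to a joint full-mass statement on $Y$ strong enough to equate mixed non-pluripolar and cohomological top intersections. The iterated projective-bundle geometry and the smoothness of the metric along fibers of each $q_l$ should make this tractable, but one must handle carefully the interaction between Vu's relative non-pluripolar product, the Dinh--Sibony pullback, and the Witt Nyström monotonicity in the multi-bundle setting.
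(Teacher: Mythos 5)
Your proposal departs from the paper's route in an essential way: the paper does not prove this theorem by a direct analytic computation of non-pluripolar masses on the fiber product $Y = \mathbb{P}E_1^{\vee}\times_X\cdots\times_X\mathbb{P}E_m^{\vee}$. Instead, the paper defers the proof until \cref{sec:bdiv1}, where it is deduced from the b-divisor intersection formula \cref{cor:cptCherncurri} (which in turn rests on \cref{cor:igdff}, \cref{thm:nefbvolume2}, and ultimately Bonavero's volume formula plus quasi-equisingular approximation and \cref{thm:convdsmeasures}), combined with \cref{ex:fullmassnef}, which says that full mass plus nefness implies $\mathbb{D}(\hat{L})=\mathbb{D}(L)$.

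Your steps 1--2 are fine: reducing $P$ to Segre monomials and re-expressing the integral on $Y$ via \cref{cor:segreintformula} is exactly what the paper does. The gap is in steps 3--4. Once $m\geq 2$, each $q_l^*\mathcal{O}_{j_l}(1)$ is nef but \emph{not big} on $Y$, so the single-bundle full-mass condition on $q_l^*\hO_{j_l}(1)$ degenerates to $0=0$ and carries no content; there is no well-defined ``joint full-mass property for a tuple'' to propagate via Fubini. Likewise, the ``multilinear full-mass representation theorem for nef line bundles'' that you invoke does not exist in the non-big case; the consolidated BEGZ/DDNL results you cite are formulated for big classes. Indeed the paper explicitly warns (\cref{sec:quapos}) that ``the full mass property of a Hermitian pseudo-effective line bundle is not preserved by tensor products,'' which is a red flag for any scheme that tries to propagate full mass through the iterated projective-bundle geometry. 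The concept that \emph{does} propagate under flat pull-back and tensoring is $\mathcal{I}$-goodness in the quasi-positive sense (\cref{prop:pullIgood}, \cref{thm:vecttens}, \cref{prop:vectcanc}); full mass (plus big and nef) implies $\mathcal{I}$-goodness, and $\mathcal{I}$-goodness of each $q_l^*\hO_{j_l}(1)$ is precisely what feeds into \cref{cor:igdff}. Even a careful projection-formula Fubini (using \cref{prop:projTiterms}) does not escape this: after pushing one factor forward you would need a \emph{relative} full-mass representation theorem for Vu's relative non-pluripolar product against a general closed positive current $T$, which is not established in the paper or the literature. You flag the final step as the obstacle, which is the right instinct, but the fix is not a smoother Fubini argument; it is the quasi-positive $\mathcal{I}$-goodness machinery together with the Dang--Favre b-divisor intersection, which is the content of an entire part of the paper.
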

This gives the Chern--Weil formula in the full mass case. It is not hard to generalize to non-nef $E_j$. However, one could also derive the general case directly from the more general theorem \cref{thm:Dmixedvec} below.

Nevertheless, the natural metrics in a number of concrete examples (for examples, natural metrics on mixed Shimura varieties) are not always of full mass. We need to relax the notion of full mass metrics. This gives the \emph{$\mathcal{I}$-good metrics}. We say $\hat{E}$ is $\mathcal{I}$-good if $\hO(1)$ is $\mathcal{I}$-good in the sense that its non-pluripolar mass is positive and is equal to the the volume defined using multiplier ideal sheaves. See \cref{def:Igoodvect} for the precise definition. As a consequence of \cite{DX21, DX22}, we have the following characterization of $\mathcal{I}$-good potentials.
\begin{theorem}[{\cref{thm:Igoodvect}}]
Let $\hat{E}=(E,h_E)$ be a Griffiths positive Hermitian vector bundle on $X$ of rank $r+1$. Assume that $\hO(1)$ has positive mass, then $\hat{E}$ is $\mathcal{I}$-good if and only if
\[
    \lim_{k\to\infty} \frac{1}{k^{n+r}} h^0(X, \mathcal{I}_k(h_E))=\frac{(-1)^n}{(n+r)!}\int_X s_n(\hat{E}).
\]
\end{theorem}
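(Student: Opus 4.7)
The strategy is to reduce to the corresponding characterization of $\mathcal{I}$-good line bundle metrics on the projectivization $p\colon\mathbb{P}E^\vee\to X$, applied to $\hO(1)$, and then transfer everything back to $X$ via pushforward. By the definition of $\mathcal{I}$-goodness for a Hermitian vector bundle (\cref{def:Igoodvect}), $\hat{E}$ is $\mathcal{I}$-good exactly when $\hO(1)$ on $\mathbb{P}E^\vee$ is $\mathcal{I}$-good as a Hermitian line bundle. Since $\dim\mathbb{P}E^\vee=n+r$ and $\hO(1)$ is assumed to have positive non-pluripolar mass, the main theorem of \cite{DX21,DX22} applies and characterizes $\mathcal{I}$-goodness of $\hO(1)$ by the asymptotic
\[
    \lim_{k\to\infty}\frac{1}{k^{n+r}}h^0\!\bigl(\mathbb{P}E^\vee,\,\mathcal{O}(k)\otimes\mathcal{I}_k(h_{\mathcal{O}(1)})\bigr)=\frac{1}{(n+r)!}\int_{\mathbb{P}E^\vee}c_1(\hO(1))^{n+r}.
\]

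To transfer this to $X$, I would analyze both sides under $p_*$. For the intersection number, the defining formula for Segre classes (the case $T=[X]$ of the construction in \cref{sec:SegChern}) gives
\[
    s_n(\hat{E})=(-1)^n p_*\bigl(c_1(\hO(1))^{n+r}\bigr),
\]
so $\int_{\mathbb{P}E^\vee}c_1(\hO(1))^{n+r}=(-1)^n\int_X s_n(\hat{E})$, which matches the right-hand side of the claim. For the cohomological side, I would combine the classical identity $p_*\mathcal{O}(k)=\Sym^k E$ with the sheaf-theoretic identification
\[
    p_*\bigl(\mathcal{O}(k)\otimes\mathcal{I}_k(h_{\mathcal{O}(1)})\bigr)\cong\Sym^k E\otimes\mathcal{I}_k(h_E),
\]
which should be built into the definition of $\mathcal{I}_k(h_E)$ for a Hermitian vector bundle. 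Together with the vanishing $R^i p_*\bigl(\mathcal{O}(k)\otimes\mathcal{I}_k(h_{\mathcal{O}(1)})\bigr)=0$ for $i>0$ and $k\gg 0$ (a Nadel-type vanishing applied fibrewise, since $\hO(1)$ is fibrewise ample on $\mathbb{P}^r$), the Leray spectral sequence yields the equality $h^0(\mathbb{P}E^\vee,\mathcal{O}(k)\otimes\mathcal{I}_k(h_{\mathcal{O}(1)}))=h^0(X,\Sym^k E\otimes\mathcal{I}_k(h_E))$, completing the translation of the asymptotic formula.

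The principal technical obstacle is the sheaf-level identification $p_*\bigl(\mathcal{O}(k)\otimes\mathcal{I}_k(h_{\mathcal{O}(1)})\bigr)\cong\Sym^k E\otimes\mathcal{I}_k(h_E)$. Concretely, a local section of $\mathcal{O}(k)$ corresponds under $p_*$ to a local section of $\Sym^k E$, and one must show that the $L^2$ condition defining $\mathcal{I}_k(h_{\mathcal{O}(1)})$ on the total space is fibrewise equivalent (up to a smooth Fubini--Study factor that is irrelevant to local integrability) to the $L^2$ condition on $X$ defining $\mathcal{I}_k(h_E)$. Granted that $\mathcal{I}_k(h_E)$ is set up precisely to enforce this equivalence in \cref{def:Igoodvect}, the remainder of the argument reduces to invoking \cite{DX21,DX22} and the projection formula, which is essentially formal.
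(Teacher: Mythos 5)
Your overall strategy is exactly the one the paper uses: apply the main theorem of \cite{DX21,DX22} (recalled as \cref{thm:DXmain}) to $\hO(1)$ on $\mathbb{P}E^\vee$, then translate both sides through the pushforward along $p$; in fact the paper's proof is the one-liner ``This follows from \cref{thm:DXmain}.'' Your handling of the intersection-number side via the defining formula $s_n(\hat{E})=(-1)^n p_*\bigl(c_1(\hO(1))^{n+r}\bigr)$ is also correct.

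Two points are off, though, and they concern what you present as the hard part. First, the equality
\[
h^0\bigl(\mathbb{P}E^\vee,\mathcal{O}(k)\otimes\mathcal{I}(kh_{\mathcal{O}(1)})\bigr)=h^0\bigl(X,\Sym^kE\otimes\mathcal{I}_k(h_E)\bigr)
\]
does not require vanishing of $R^ip_*$ or a Leray argument: for any sheaf $\mathcal{F}$ one always has $H^0(\mathbb{P}E^\vee,\mathcal{F})=H^0(X,p_*\mathcal{F})$, so the equality follows once you know that $p_*\bigl(\mathcal{O}(k)\otimes\mathcal{I}(kh_{\mathcal{O}(1)})\bigr)\cong\Sym^kE\otimes\mathcal{I}_k(h_E)$. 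Second, that sheaf-level identification is not a ``principal technical obstacle'' involving a fibrewise $L^2$ comparison: it is literally how $\mathcal{I}_k(h_E)$ is defined, namely in \cref{def:mis} (not \cref{def:Igoodvect}, which is the definition of $\mathcal{I}$-goodness). There is no independent $L^2$ condition on $X$ defining $\mathcal{I}_k(h_E)$ that you would need to reconcile with the one on the total space; $\mathcal{I}_k(h_E)$ is the unique ideal such that the pushforward identity holds. With these corrections the argument is not merely ``essentially formal'' --- it is entirely formal, which is precisely why the paper can dispose of it with a reference to \cref{thm:DXmain}.
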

Here $\mathcal{I}_k(h_E)\subseteq \Sym^k E$ are multiplier sheaves defined in \cref{def:mis}. These multiplier ideal sheaves are different from the usual one defined by $L^2$-sections.

An example of $\mathcal{I}$-good singularities is given by the so-called \emph{toroidal singularities} in \cref{def:tor}. This definition seems to be the natural generalization of the toroidal singularities on line bundles introduced by Botero--Burgos Gil--Holmes--de Jong in \cite{BBGHdJ21}. 
Another important example is the so-called analytic singularities as studied in \cite{LRSW18}.

In \cref{sec:quapos}, we extend the notion of $\mathcal{I}$-good singularities to not necessarily positively curved case. 
We will establish the following result:
\begin{theorem}[={\cref{prop:Igoodtensor}+\cref{thm:Igoodcancel}}]
Assume that $X$ is projective.

Let $\hat{L},\hat{L}'$ be $\mathcal{I}$-good Hermitian pseudo-effective line bundles on $X$. Then $\hat{L}\otimes \hat{L}'$ is also $\mathcal{I}$-good.

Conversely, if $\hat{L}$, $\hat{L}'$ are Hermitian pseudo-effective line bundles such that $\hat{L}$ has positive mass. Suppose that $\hat{L}\otimes \hat{L}'$ is $\mathcal{I}$-good then so is $\hat{L}$.
\end{theorem}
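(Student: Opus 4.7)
The plan is to work through both parts via the characterization theorem stated just before, which for a line bundle $\hat{L}=(L,e^{-\varphi})$ reads
\[
\mathrm{vol}^{\mathcal{I}}(\hat{L}) := \lim_{k\to\infty} \frac{n!}{k^n}\, h^0(X, L^{\otimes k}\otimes\mathcal{I}_k(\varphi)) \;=\; \int_X c_1(\hat{L})^n,
\]
with the left side always $\leq$ the right side (a general upper bound that should already be in place). The $\mathcal{I}$-good condition is exactly equality together with positivity of both sides, so in each part it suffices to control $\mathrm{vol}^{\mathcal{I}}$ from one side.

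For \textbf{Part 1} (tensor product preserves $\mathcal{I}$-goodness), I would first verify the pointwise ideal inclusion $\mathcal{I}_k(\varphi)\cdot \mathcal{I}_k(\psi)\subseteq \mathcal{I}_k(\varphi+\psi)$ — this is the counterpart to the standard subadditivity for $L^2$-multiplier ideals and should follow from the definition of $\mathcal{I}_k$ used in the paper, possibly via the approximation by quasi-equisingular currents. Using this, multiplication of sections gives a morphism $H^0(L^k\otimes\mathcal{I}_k(\varphi))\otimes H^0((L')^k\otimes\mathcal{I}_k(\psi)) \to H^0((L\otimes L')^k\otimes\mathcal{I}_k(\varphi+\psi))$. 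I would then invoke the Minkowski-type inequality for graded linear series of the form $(\mathrm{vol}^{\mathcal{I}})^{1/n}$ (an Okounkov-body / Boucksom-type consequence of the multiplication structure, which should be available after the paper's earlier Okounkov-body sections) to obtain
\[
\mathrm{vol}^{\mathcal{I}}(\hat L\otimes \hat L')^{1/n} \;\geq\; \mathrm{vol}^{\mathcal{I}}(\hat L)^{1/n} + \mathrm{vol}^{\mathcal{I}}(\hat L')^{1/n}.
\]
By hypothesis the right side equals $(\int c_1(\hat L)^n)^{1/n}+(\int c_1(\hat L')^n)^{1/n}$, which by Brunn--Minkowski for non-pluripolar masses (applicable since $\hat L,\hat L'$ are pseudo-effective with full mass on their respective envelopes, because they are $\mathcal{I}$-good) is bounded above by $(\int c_1(\hat L\otimes\hat L')^n)^{1/n}$. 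Chained with the general upper bound this forces equality throughout, giving $\mathcal{I}$-goodness of $\hat L\otimes\hat L'$.

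For \textbf{Part 2} (cancellation), the delicate point is that the section-multiplication map goes the wrong way for extracting information about $\hat L$ out of $\hat L\otimes \hat L'$. I would first fix, for every $\varepsilon>0$, an ample $\mathbb{Q}$-line bundle $A$ small enough that $\hat L'\otimes A$ has a genuine smooth Hermitian metric with positive lower bound in curvature; the role of $A$ is to absorb harmless perturbations. Then I would build, for large $m$, an injection
\[
H^0(X, L^k\otimes \mathcal{I}_k(\varphi)) \;\hookrightarrow\; H^0(X, (L\otimes L')^{k+m}\otimes A^{-m}\otimes \mathcal{I}_{k+m}(\varphi+\psi))
\]
by tensoring with a carefully chosen section of $(L')^m\otimes A^m\otimes\mathcal{I}_m(\psi+\text{smooth})$, whose existence is guaranteed by $\hat L$ having positive mass (so $L\otimes L'$ is big, and $\hat L\otimes\hat L'$ has many $\mathcal{I}$-adapted sections coming from its $\mathcal{I}$-goodness). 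Dividing by $k^n$ and using the $\mathcal{I}$-goodness of $\hat L\otimes\hat L'$, and then letting $\varepsilon\to 0$ with $A\to 0$ in the numerical sense, I would conclude $\mathrm{vol}^{\mathcal{I}}(\hat L)\geq \int_X c_1(\hat L)^n$. Combined with the opposite general inequality this yields equality, hence $\mathcal{I}$-goodness of $\hat L$.

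The hardest step I expect to be in Part 2: controlling the $A$-twist and the loss of $m$ coming from the auxiliary section, so that after the double limit $k\to\infty$ then $(\varepsilon,A,m)\to 0$ the inequality survives. This is the usual difficulty in dividing out in multiplier-ideal-based volume comparisons, and it is likely where the projectivity hypothesis on $X$ gets used (to have enough global sections of $A$). In Part 1 the main technical obstacle is the Minkowski inequality for $\mathrm{vol}^{\mathcal{I}}$ in the singular setting — one has to check that the concavity argument applies despite the multiplier ideal $\mathcal{I}_k$ depending on $k$, which typically requires a subadditivity statement like $\mathcal{I}_{k+\ell}((k\varphi+\ell\varphi)/(k+\ell))\supseteq \ldots$, or an Okounkov-body reformulation that linearises the problem.
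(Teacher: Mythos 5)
Your Part~1 hinges on the ideal inclusion $\mathcal{I}_k(\varphi)\cdot\mathcal{I}_k(\psi)\subseteq\mathcal{I}_k(\varphi+\psi)$, which you describe as a counterpart of subadditivity. It is in fact the reverse of Demailly--Ein--Lazarsfeld subadditivity ($\mathcal{I}(\varphi+\psi)\subseteq\mathcal{I}(\varphi)\cdot\mathcal{I}(\psi)$), and it is generically false: on $\mathbb{C}$ with $\varphi=\psi=(2-\epsilon)\log|z|$ for small $\epsilon>0$, one has $\mathcal{I}(\varphi)\cdot\mathcal{I}(\psi)=(z^2)$ while $\mathcal{I}(\varphi+\psi)=(z^3)$. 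Multiplication of sections therefore does not land in $H^0\bigl((L\otimes L')^k\otimes\mathcal{I}_k(\varphi+\psi)\bigr)$, and the claimed superadditivity of $(\vol^{\mathcal{I}})^{1/n}$ has no basis. Even if it were available, your chain of inequalities would not collapse: you would get two lower bounds for $\bigl(\int_X c_1(\hat{L}\otimes\hat{L}')^n\bigr)^{1/n}$ (one from the Minkowski step, one from Khovanskii--Teissier) and one upper bound, consistent with all three being strict. Part~2 inherits the same obstruction, and the proposed injection has, in addition, a degree mismatch: tensoring a section of $L^k\otimes\mathcal{I}_k(\varphi)$ by a section of $(L')^m\otimes A^m$ gives a section of $L^k\otimes(L')^m\otimes A^m$, which is not $(L\otimes L')^{k+m}\otimes A^{-m}$ as required.

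The paper stays entirely on the pluripotential side and never touches section rings. \cref{prop:Igoodtensor} is a restatement of \cref{prop:Igoodsum}, the additivity of $\mathcal{I}$-goodness of quasi-psh potentials, which is deduced from \cite[Corollary~4.8]{Xia21} and the $d_S$-approximation characterization of $\mathcal{I}$-goodness in \cref{thm:DXmain}(3). For the cancellation in \cref{thm:Igoodcancel}, the paper first reduces via \cref{lma:Igoodinsenspert} to the case where both curvature currents are K\"ahler currents and takes quasi-equisingular approximations $\varphi_j\downarrow\varphi$ and $\psi_j\downarrow\psi$; the $\mathcal{I}$-goodness of $\hat{L}\otimes\hat{L}'$ together with \cite[Corollary~4.8]{Xia21} gives $\varphi_j+\psi_j\xrightarrow{d_S}\varphi+\psi$, so $\int_X(\theta_{\varphi_j}+\theta'_{\psi_j})^n\to\int_X(\theta_\varphi+\theta'_\psi)^n$. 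Expanding by the binomial theorem and using that each mixed Monge--Amp\`ere mass $\int_X\theta^i_{\varphi_j}\wedge\theta'^{\,n-i}_{\psi_j}$ is monotonically decreasing and bounded below by $\int_X\theta^i_{\varphi}\wedge\theta'^{\,n-i}_{\psi}$ (by the monotonicity theorem), the convergence of the sum forces term-by-term convergence, in particular $\int_X\theta^n_{\varphi_j}\to\int_X\theta^n_\varphi$; by \cref{lma:decseqplusvolumeimds} this yields $\varphi_j\xrightarrow{d_S}\varphi$, hence $\hat{L}$ is $\mathcal{I}$-good by \cref{thm:DXmain}. If you want a section-theoretic route, you would have to confront the fact that the graded pieces $H^0(L^k\otimes\mathcal{I}(kh))$ do not form a multiplicative graded linear series.
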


These results allow us to define a general notion of $\mathcal{I}$-goodness for not necessarily positively curved line bundles: we say $\hat{L}$ is $\mathcal{I}$-good if after tensoring by a suitable $\mathcal{I}$-good positively curved line bundle, it becomes an $\mathcal{I}$-good positively curved line bundle, see \cref{def:Igoodnotp} for the precise definition. There is also a similar extension in the case of vector bundles \cref{def:igoodv}.

We expect $\mathcal{I}$-good singularities to be the natural singularities in the mixed Shimura setting or more generally in other moduli problems. 

\subsection{Main results in \cref{part:2}}
We begin to answer \cref{que:mainq} in greater generality. This question only has a satisfactory answer in the case of $\mathcal{I}$-good singularities. We consider a smooth quasi-projective variety $X$, Griffiths positive smooth Hermitian vector bundles $\hat{E}_i=(E_i,h_i)$ on $X$. We assume that the $\hat{E}_i$'s are \emph{compactifiable} in the sense of \cref{def:compa}.

The notion of $\mathcal{I}$-good metrics extends to the quasi-projective setting, see \cref{def:Igoodquasiproj}.

We want to understand the Chern polynomials of $P(c_j(\hat{E}_i))$. In the case of full mass currents, the solution is nothing but \cref{thm:Chernmain}. In the case of $\mathcal{I}$-good singularities, by passing to some projective bundles, the problem is essentially reduced to the line bundle case.

We first handle the elementary case of line bundles. The solution relies on the so-called b-divisor techniques. Roughly speaking, a b-divisor on $X$ is an assignment of a \emph{numerical class} on each projective resolution $Y\rightarrow X$, compatible under push-forwards between resolutions. To each Hermitian line bundle $\hat{L}$ on $X$, assuming some technical conditions known as \emph{compactfiability}, we construct a b-divisor $\mathbb{D}(\hat{L})$ on $X$ in \cref{def:bdivasso} using the singularities of the metric on $L$.

Recall that a Hermitian pseudo-effective line bundle is just a Griffiths positive Hermitian vector bundle of rank $1$.
We prove that
\begin{theorem}[{\cref{thm:nefbvolume}}]\label{thm:bd}
Assume that $X$ is projective.
Assume that $\hat{L}$ is a Hermitian pseudo-effective line bundle on $X$, then the b-divisor $\mathbb{D}(\hat{L})$ is nef. Assume in addition that the non-pluripolar mass of $\hat{L}$ is positive, then 
\[
\frac{1}{n!}\vol\mathbb{D}(\hat{L})=\vol \hat{L}.
\]
\end{theorem}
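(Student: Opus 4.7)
My strategy is to argue incarnation by incarnation along the inverse system of smooth projective birational models $\pi\colon Y\to X$, and then take a limit. The nefness becomes a model-wise positivity statement, while the volume identity will follow from the stability of non-pluripolar masses under birational pullbacks.

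For nefness, I would unfold the definition of the incarnation $\mathbb{D}(\hat{L})_Y$ and identify it with the numerical class $\pi^* c_1(L) - \sum_E \nu_E(\pi^*h_L)\,[E]$, the sum running over prime divisors of $Y$ and $\nu_E$ denoting generic Lelong numbers along $E$. Equivalently, this is the class of the curvature current of the envelope of $\pi^* h_L$ with minimal singularities. Since $\hat{L}$ is pseudo-effective with positive mass, this envelope is a well-defined psh metric whose curvature current has vanishing generic Lelong numbers in codimension one. Demailly-type regularization of the envelope on $Y$ then exhibits $\mathbb{D}(\hat{L})_Y$ as a weak limit of semipositive $(1,1)$-classes up to arbitrarily small divisorial perturbations, and hence as a nef class. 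Since this holds for every $Y$, $\mathbb{D}(\hat{L})$ is nef as a b-divisor.

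For the volume identity, I would combine the nefness above with the non-pluripolar Chern--Weil identity: on each $Y$, the self-intersection $(\mathbb{D}(\hat{L})_Y)^n$ is computed by the Bedford--Taylor self-intersection of the curvature of the envelope, which in turn equals the non-pluripolar mass
\[
\int_Y \langle c_1(\pi^*\hat{L})^n\rangle.
\]
By Witt Nyström's monotonicity theorem \cite{WN19}, this integral is independent of the model and coincides with $\int_X \langle c_1(\hat{L})^n\rangle = n!\,\vol\hat{L}$. Passing to the limit over $Y$ yields $\vol \mathbb{D}(\hat{L}) = n!\,\vol\hat{L}$.

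The main obstacle is the nefness statement on every birational model: one needs to rule out residual non-positivity arising from infinitesimal codimension $\geq 2$ singularities of $h_L$. The cleanest remedy I see is to work systematically with the envelope description of $\mathbb{D}(\hat{L})_Y$ and to use that the envelope has full non-pluripolar mass (this is precisely where the positive mass hypothesis enters), so that subtracting the divisorial Siu part already yields a current with zero Lelong numbers along every prime divisor; Demailly regularization then produces the required nef representatives. Once this is secured, the volume identity follows almost formally from the monotonicity theorem.
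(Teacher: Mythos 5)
Your approach has two genuine gaps, one in each half of the argument.

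For the volume identity, your chain of equalities
\[
\bigl(\mathbb{D}(\hat{L})_Y\bigr)^n \;=\; \int_Y \langle c_1(\pi^*\hat{L})^n\rangle \;=\; \int_X \langle c_1(\hat{L})^n\rangle \;=\; n!\,\vol\hat{L}
\]
is wrong at both ends. The incarnation $\mathbb{D}(\hat{L})_Y$ only records the \emph{generic Lelong numbers along prime divisors of $Y$}; it is blind to the codimension-$\geq 2$ part of the singularities of $\pi^*h_L$, whereas the non-pluripolar mass is not. Concretely, for $L=\mathcal{O}(1)$ on $\mathbb{P}^2$ with a psh metric having an isolated Lelong number $c\in(0,1)$ at a point, one has $\mathbb{D}(\hat{L})_{\mathbb{P}^2}=L$ so $(\mathbb{D}(\hat{L})_{\mathbb{P}^2})^2=1$, while $\int_{\mathbb{P}^2}\langle c_1(\hat{L})^2\rangle = 1-c^2$. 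It is only after passing to higher models (here the blowup, and in general a full quasi-equisingular resolution) that the incarnations decrease to the right value — the equality you want is a statement about the \emph{limit} over models, not model-by-model, and this limit is the whole content of \cref{lma:volbdivaslim}. Independently, the last link $\int_X \langle c_1(\hat{L})^n\rangle = n!\,\vol\hat{L}$ is precisely the defining property of an $\mathcal{I}$-good metric (\cref{thm:DXmain}(2)); the theorem only assumes positive mass. In general one has the strict inequality $\int_X\langle c_1(\hat{L})^n\rangle \leq n!\,\vol\hat{L}$, and the correct version of \cref{cor:Imodcharbdiv} says $\mathcal{I}$-goodness is \emph{characterized} by equality. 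Your argument, if it went through, would thus force every positive-mass psh metric to be $\mathcal{I}$-good, which is false.

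For nefness, recall that a nef Weil b-divisor is by \cref{def:nef} a limit of nef \emph{Cartier} b-divisors, which is strictly stronger than each incarnation being a nef class. Moreover, even the incarnation-wise nefness you claim does not follow from Demailly regularization: after subtracting the divisorial Siu part of $\theta_{\pi^*\varphi}$ on a \emph{fixed} model $Y$, the residual current can still carry codimension-$\geq 2$ analytic singularities, and regularizing it yields currents with analytic (not smooth) singularities — not nef classes. The residual is a smooth semipositive form, and hence $\mathbb{D}(\hat{L})_Y$ nef, only in the special case where $\pi^*h_L$ has log singularities along a snc divisor. This is exactly why the paper's proof is structured as a three-step approximation: first analytic/log singularities (where nefness and, via Bonavero, the volume are direct), then the Kähler-current case via a decreasing quasi-equisingular approximation using continuity of the Dang--Favre pairing (\cref{thm:DF2}) and the convergence $\Sing(\pi^*L,\pi^*h^j)\to\Sing(\pi^*L,\pi^*h)$ in $\NS^1(Y)_{\mathbb{R}}$, and finally the general case via the increasing approximation $(1-j^{-1})\varphi + j^{-1}\psi$ from \cref{lma:kcapp}. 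That limiting machinery is the core of the proof and is absent from your proposal.
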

See \cref{def:nef} for the notion of nef b-divisors. Nef b-divisors were introduced and studied in \cite{DF20} based on \cite{BFJ09}.

The study of b-divisors associated with singular metrics on line bundles originates from \cite{BFJ08}. In the case of ample line bundles on projective manifolds, this technique was explored in \cite{XiaPPT}. At the time when  \cite{XiaPPT} was written, the general intersection theory in the second version of \cite{DF20} and the general techniques dealing with singular potentials developed in \cite{DX21} were not available yet, so the results in \cite{XiaPPT} were only stated in the special case of ample line bundles. In particular, when $L$ is ample, \cref{thm:bd} is essentially proved in \cite[Theorem~5.2]{XiaPPT}.
Later on, the same technique was rediscovered by Botero--Burgos Gil--Holmes--de Jong in \cite{BBGHdJ21} and more recently by Trusiani \cite{Tru23}. In particular, a special case of \cref{thm:bd} was proved in \cite{BBGHdJ21}, although they made use of a different notion of b-divisors.
For a nice application of \cref{thm:bd} to the the theory of Siegel--Jacobi modular forms, we refer to the recent preprint \cite{BBGHdJ22}. 

In the revised version of \cite{Xia21}, we apply \cref{thm:bd} to prove the Hausdorff convergence property of partial Okounkov bodies, as conjectured in the first version of that paper.


In the quasi-projective setting, we will prove that
\begin{theorem}[={\cref{thm:mixedDDD}}]\label{thm:bdmix}
Assume that $\hat{L}_1,\ldots,\hat{L}_n$ are compactifiable Hermitian line bundles on $X$ with singular psh metrics having positive masses. Then
\begin{equation}\label{eq:1}
\left(\mathbb{D}(\hat{L}_1),\ldots, \mathbb{D}(\hat{L}_n)\right)\geq \int_X c_1(\hat{L}_1)\wedge \cdots \wedge c_1(\hat{L}_n).
\end{equation}
Equality holds if all $\hat{L}_i$'s are $\mathcal{I}$-good.
\end{theorem}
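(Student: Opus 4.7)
The plan is to first establish equality under the $\mathcal{I}$-good hypothesis by polarizing the single-bundle identity \cref{thm:bd}, and then deduce the general inequality by approximating from above by $\mathcal{I}$-good dominants. The proof splits naturally into three steps, with the polarization step being the main technical obstacle.

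Step one: reduction to a common model. Using the compactifiability hypothesis, I would pick a smooth projective compactification $\bar X$ of $X$ on which every $\hat L_i$ simultaneously extends as a Hermitian pseudo-effective line bundle, passing to a common refinement of the individual compactifications if necessary. By the b-divisor intersection theory of \cref{part:2} and the construction of \cref{def:bdivasso}, the number $(\mathbb{D}(\hat L_1),\ldots,\mathbb{D}(\hat L_n))$ is the limit of classical numerical intersections $(D_1^Y\cdots D_n^Y)$ along a cofinal net of smooth higher models $Y\to \bar X$, where $D_i^Y$ is the nef incarnation of $\mathbb{D}(\hat L_i)$ cut out by the envelope of the pulled-back potential. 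For intersections of nef b-divisors this net is monotone, so the limit can be controlled model by model.

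Step two: the $\mathcal{I}$-good case via polarization. Assume all $\hat L_i$ are $\mathcal{I}$-good. The key input is the tensor stability of $\mathcal{I}$-goodness asserted in the theorem cited just before \cref{def:Igoodnotp}: each partial sum $\sum_{i\in I}\hat L_i$ is again $\mathcal{I}$-good and, after possibly twisting by an ample line bundle (to which the theory is insensitive), of positive mass, so \cref{thm:bd} applies to it. Applying the Newton-type polarization identity
\[
n!\,(D_1\cdots D_n)=\sum_{\emptyset\neq I\subseteq\{1,\ldots,n\}}(-1)^{n-|I|}\vol\Bigl(\sum_{i\in I}D_i\Bigr)
\]
term-by-term on both the b-divisor side and the non-pluripolar side, and substituting the single-bundle identity $\vol\mathbb{D}(\hat L)=n!\vol\hat L$ into each term, yields the equality asserted in (\ref{eq:1}).

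Step three: the general inequality by regularization from above. By a Demailly-type construction performed compatibly with the boundary divisor on $\bar X$, one obtains $\mathcal{I}$-good Hermitian pseudo-effective line bundles $\hat L_i^{(k)}$ whose potentials $\phi_i^{(k)}$ decrease to $\phi_i$ with all masses positive. Step two provides equality in (\ref{eq:1}) for each tuple $(\hat L_i^{(k)})_i$. Passing to the limit $k\to\infty$, the b-divisor intersection is monotone with respect to the pointwise partial order on potentials (since each incarnation envelope decreases as the potential decreases), while the non-pluripolar mixed mass converges by the monotonicity theorem of \cite{WN19}. Comparing the two limits yields the desired $\geq$ direction. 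The main obstacle is the polarization step: the non-pluripolar product is not straightforwardly multilinear in the potentials, and the Newton identity is only legitimate for volumes of sums once one knows the summands remain in the $\mathcal{I}$-good class; a secondary delicate point is the boundary-compatible $\mathcal{I}$-good regularization in the quasi-projective setting, where one must arrange that no spurious mass appears near the boundary of $\bar X$ that would spoil the monotone convergence of either side.
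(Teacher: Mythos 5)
Your Step~2 is a genuine and valid alternative to the paper's argument, and arguably cleaner. The paper (via \cref{thm:nefbvolume2}) proves the stronger identity $\tfrac{1}{n!}(\mathbb{D}(\hat L_1),\ldots,\mathbb{D}(\hat L_n))=\vol(\hat L_1,\ldots,\hat L_n)$ for all $\hat L_i\in\widehat{\Pic}(X)_{>0}$ by a three-step approximation (analytic singularities, then K\"ahler currents via quasi-equisingular approximation, then interpolation), and the two claims of the theorem then drop out because the middle quantity $\vol$ automatically dominates the non-pluripolar mass. You instead polarize \cref{thm:bd}/\cref{cor:Imodcharbdiv} directly: you use $\mathbb{D}(\hat L_I)=\sum_{i\in I}\mathbb{D}(\hat L_i)$ (\cref{lma:bdivadd}), the stability of $\mathcal{I}$-goodness under tensor products (\cref{prop:Igoodtensor}), multilinearity of the Dang--Favre product (\cref{thm:DF2}) and of the non-pluripolar product (\cref{prop:relnpplinearinTi} with $T=[X]$), and the single-bundle identity $\vol\mathbb{D}(\hat L)=\int_X c_1(\hat L)^n$ to get the $\mathcal{I}$-good equality in one stroke. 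This avoids the approximation machinery entirely for the equality case.

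Your Step~3, however, has a genuine gap. You claim that for a decreasing sequence $\varphi_i^{(k)}\searrow\varphi_i$ of $\mathcal{I}$-good potentials the b-divisors $\mathbb{D}(\hat L_i^{(k)})$ decrease to $\mathbb{D}(\hat L_i)$, which is what you need to push the equality for the tuple $(\hat L_i^{(k)})_i$ to the inequality for $(\hat L_i)_i$. This fails: Lelong numbers are not continuous along arbitrary decreasing limits. The model example is $\varphi^{(k)}=\max(\log|z|,-k)$, which is bounded (hence of full mass, hence $\mathcal{I}$-good) and decreases pointwise to $\log|z|$, yet $\nu(\varphi^{(k)},0)=0$ for every $k$ while $\nu(\log|z|,0)=1$. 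So $\inf_k\mathbb{D}(\hat L_i^{(k)})$ can be strictly larger than $\mathbb{D}(\hat L_i)$, and the monotone limit you obtain only bounds $\lim_k(\mathbb{D}(\hat L_1^{(k)}),\ldots)\geq(\mathbb{D}(\hat L_1),\ldots)$, which is the wrong direction for deducing the asserted inequality. The quasi-equisingular approximations used in the paper do have the needed Lelong-number convergence, but they live in $\PSH(X,\theta+\epsilon_k\omega)$, not $\PSH(X,\theta)$, and their construction requires the curvature to be a K\"ahler current --- precisely the issues that force the paper's Step~3 interpolation $(1-\epsilon)\varphi+\epsilon\varphi'$. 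Your ``Demailly-type construction compatible with the boundary'' elides both obstacles.

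In fact, once your Step~2 is in place, Step~3 can be replaced by a one-line observation which sidesteps approximation entirely. The b-divisor $\mathbb{D}(\hat L_i)=\mathbb{D}(L_i)-\Sing\hat L_i$ depends only on the generic Lelong numbers of $\pi^*\varphi_i$ over all models, and by the valuative characterization from \cite{DX22} these agree with those of $P[\varphi_i]_{\mathcal{I}}$. Hence $\mathbb{D}(\hat L_i)=\mathbb{D}(L_i,P[\varphi_i]_{\mathcal{I}})$, and $P[\varphi_i]_{\mathcal{I}}$ is $\mathcal{I}$-good with mass at least that of $\varphi_i>0$. Applying your Step~2 to the tuple $(L_i,P[\varphi_i]_{\mathcal{I}})$ gives
\[
\left(\mathbb{D}(\hat L_1),\ldots,\mathbb{D}(\hat L_n)\right)=\int_X c_1(L_1,P[\varphi_1]_{\mathcal{I}})\wedge\cdots\wedge c_1(L_n,P[\varphi_n]_{\mathcal{I}})\geq\int_X c_1(\hat L_1)\wedge\cdots\wedge c_1(\hat L_n)
\]
by the monotonicity theorem of \cite{WN19}, since $P[\varphi_i]_{\mathcal{I}}\geq\varphi_i$ up to a constant. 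This is exactly the structure of the paper's ``obvious'' inequality, and it makes your Step~3 unnecessary.
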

Here we refer to \cref{sec:quasproj} for the relevant notions. This theorem is further generalized to not necessarily positively curved case in \cref{cor:Igoodgeneralintersec}. One may regard the equality case of \eqref{eq:1} as a Chern--Weil formula as in \cite{BBGHdJ21}. 

As a consequence of \cref{thm:bdmix}, one can for example compute the mixed intersection numbers of b-divisors associated with several \emph{different} Siegel--Jacobi line bundles on the universal Abelian varieties, giving new insights into the cohomological aspects of mixed Shimura varieties. As the techniques involved in such computations are quite different from the other parts of this paper, we decide to omit these computations.

This theorem and \cref{cor:Igoodgeneralintersec} suggest that $\mathbb{D}(\hat{L})$ should be regarded as the first Chern class on the Riemann--Zariski space $\mathfrak{X}$ of $X$. Pushing this analogue further, one can actually make sense of all Chern classes on the Riemann--Zariski space and generalize \cref{thm:bdmix} to higher rank. 
We will carry this out in \cref{sec:RZ}. 

Here we briefly recall the main idea. Our approach to the intersection theory on the Riemann--Zariski space is based on K-theory. The reason is that the Riemann--Zariski space is a pro-scheme, coherent sheaves and locally free sheaves on pro-schemes are easy to understand in general, at least when the pro-scheme satisfies Oka's property (namely, the structure sheaf is coherent), which always holds for the Riemann--Zariski space \cite{KST18}.

There are at least three different ways of constructing Chow groups from K-theory. The first approach is via the $\gamma$-filtration as in \cite{SGA6}. This approach relies simply on the augmented $\lambda$-ring structure on the $K$-ring and limits in this setting is well-understood.  The other approaches include using the coniveau filtration or using Bloch's formula. As the procedure of producing the Riemann--Zariski space destroys the notion of codimension, there might not be a coniveau filtration in the current setting. On the other hand, Bloch's formula is less elementary and relies on higher K-theory instead of just $K_0$, but it provides information about torsions in the Chow groups as well.

We will follow the first approach:
\[
\CH^{\bullet}(\mathfrak{X})_{\mathbb{Q}}\coloneqq \Gr^{\bullet}_{\gamma}K(\mathfrak{X})_{\mathbb{Q}}.
\]
It turns out that there is a vector space homomorphism from $\CH^1(\mathfrak{X})_{\mathbb{R}}$ to the space of Cartier b-divisors. Moreover, when $\hat{L}$ is a Hermitian pseudo-effective line bundle with analytic singularities, the b-divisor $\mathbb{D}(\hat{L})$ has a canonical lift $\mathbf{c}_1(\hat{L})$ to $\CH^1(\mathfrak{X})_{\mathbb{R}}$. This gives the notion of first Chern classes we are looking for. With some efforts, this approach leads to the notion of Chern classes of Hermitian vector bundles with analytic singularities as well.

In the case of general $\mathcal{I}$-good singularities, as we will explain in \cref{sec:RZ}, it seems impossible to lift $\mathbb{D}$ to Chow groups. So we are forced to work out the notion of Chern classes modulo numerical equivalence. Now we can have a glance of the final result.
\begin{theorem}[={\cref{cor:ref2}}]\label{thm:Dmixedvec}Assume that $X$ is projective.
Let $\hat{E}_i$ be $\mathcal{I}$-good Griffiths positive vector bundles on $X$ ($i=1,\ldots,m$). 
Consider a homogeneous Chern polynomial $P(c_i(E_j))$ of degree $n$ in $c_i(E_j)$, then
\begin{equation}\label{eq:mainZartoana}
\int_{\mathfrak{X}}P(\mathbf{c}_i(\hat{E}_j))
=\int_X P(c_i(\hat{E}_j)).
\end{equation}
\end{theorem}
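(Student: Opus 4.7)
The plan is to reduce \cref{thm:Dmixedvec} to the line bundle formula \cref{thm:bdmix} via an iterated projective bundle construction that plays the role of the classical splitting principle. First I would form the fibre product
\[
\pi \colon Y := \mathbb{P}E_1^\vee \times_X \cdots \times_X \mathbb{P}E_m^\vee \longrightarrow X,
\]
which is smooth projective over $X$. On $Y$ we have the tautological quotient line bundles $\mathcal{O}(1)_j$ pulled back from each factor, each equipped with the natural singular psh metric induced by $h_{E_j}$. Since every $\hat{E}_j$ is Griffiths positive and $\mathcal{I}$-good, the line bundle $\hO(1)_j$ is a Griffiths positive, pseudo-effective, and $\mathcal{I}$-good Hermitian line bundle on $Y$, essentially by the definitions in \cref{sec:specmet}.

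Next, by iterating the Segre-class identity
\[
s_i(\hat{E}) \cap T = (-1)^i p_* \left(c_1(\hO(1))^{r+i} \cap p^*T\right)
\]
from \cref{sec:SegChern}, I would rewrite the homogeneous Chern polynomial $P(c_i(\hat{E}_j))$ of degree $n$ as a pushforward $\pi_* Q\bigl(c_1(\hO(1)_j)\bigr)$, where $Q$ is the universal homogeneous polynomial of total degree $n + \sum_j r_j$ (with $r_j+1 = \rank E_j$) determined by the splitting principle. Taking $T = [X]$ gives
\[
\int_X P(c_i(\hat{E}_j)) = \int_Y Q\bigl(c_1(\hO(1)_j)\bigr).
\]
The K-theoretic Chern classes $\mathbf{c}_i(\hat{E}_j)$ built via the $\gamma$-filtration in \cref{sec:RZ} satisfy the same universal identity $P(\mathbf{c}_i(\hat{E}_j)) = \pi_* Q(\mathbf{c}_1(\hO(1)_j))$ in $\CH^\bullet(\mathfrak{X})_{\mathbb{Q}}$, because this identity is purely formal in any $\lambda$-ring equipped with a projective bundle formula, and the Riemann--Zariski setup is engineered to satisfy both. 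Hence
\[
\int_\mathfrak{X} P(\mathbf{c}_i(\hat{E}_j)) = \int_\mathfrak{Y} Q\bigl(\mathbf{c}_1(\hO(1)_j)\bigr).
\]
The problem is thus reduced to comparing $\int_Y Q(c_1(\hO(1)_j))$ with $\int_\mathfrak{Y} Q(\mathbf{c}_1(\hO(1)_j))$ for the family $\hO(1)_j$ of $\mathcal{I}$-good pseudo-effective Hermitian line bundles on the smooth projective $Y$, and this is exactly the mixed form of \cref{thm:bdmix}, applied monomial by monomial to $Q$.

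The main obstacle I expect lies in the second step: verifying that the iterated non-pluripolar pushforward on the $X$-side and the iterated $\lambda$-ring pushforward on the $\mathfrak{X}$-side really produce the same polynomial $Q$ at the level of the Chern characters of classes, rather than just as formal cohomological identities. Concretely, one must check that the non-pluripolar Segre class operators associated with distinct $\hat{E}_j$'s commute when composed on $[X]$, and this requires showing that the natural singularities of the various $\hO(1)_j$'s on $Y$ are mutually transversal in the sense of \cref{sec:relnpp}. The $\mathcal{I}$-goodness together with the small unbounded locus hypothesis from the full mass discussion in \cref{sec:specmet} are precisely what one exploits here: they guarantee that no mass of any iterated non-pluripolar product is lost to the polar loci of any other factor, and this is where the bulk of the technical work will lie.
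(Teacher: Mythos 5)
Your overall plan---pass to the fibre product $Y=\mathbb{P}E_1^\vee\times_X\cdots\times_X\mathbb{P}E_m^\vee$, express both sides in terms of the first Chern classes of the $\hO(1)_j$'s, and reduce to the line bundle formula---is exactly the paper's strategy. But the execution contains two genuine errors, one definitional and one about where the real difficulty lies.

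The definitional error: you assert that $P(\mathbf{c}_i(\hat{E}_j)) = \pi_* Q(\mathbf{c}_1(\hO(1)_j))$ holds in $\CH^\bullet(\mathfrak{X})_{\mathbb{Q}}$ because it is a formal consequence of the $\lambda$-ring structure. This cannot be the right setting. For a Hermitian line bundle with \emph{analytic} singularities, $\mathbf{c}_1(\hat{L})$ does lift canonically to $\CH^1(\mathfrak{X})_{\mathbb{R}}$ (that is \cref{cor:ref1}). But for general $\mathcal{I}$-good singularities the paper points out that the singularity b-divisor can have infinitely many components, so it has no canonical lift to the Chow groups of the Riemann--Zariski space. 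The classes $\mathbf{c}_1(\hat{L})$ and $\mathbf{s}_a(\hat{E})$ in the statement you are proving are defined only as operators on $\dBPF^\bullet(\mathfrak{X})$ (differences of base-point-free numerical classes), via the Dang--Favre intersection product $(\mathbb{D}(\hat{L})\cdot\,\bullet\,)$. There is no $\gamma$-filtration or $\lambda$-ring projective bundle formula available in that setting, so the identity you need is not ``purely formal.''

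The misidentified obstacle: you locate the technical difficulty in mutual transversality of the $\hO(1)_j$'s and invoke small unbounded locus. Neither is the issue. Transversality to $[X]$ is automatic for any Griffiths positive metric, since $[X]$ is the integration current against the volume form and puts no mass on pluripolar sets; and the theorem does not assume small unbounded locus. The actual technical heart is a projection formula on numerical b-classes: for a flat projective morphism $q\colon Y\to X$ and an $\mathcal{I}$-good quasi-positive $\hat{L}$ on $X$, one needs $q_*\bigl(\mathbf{c}_1(q^*\hat{L})\beta\bigr)=\mathbf{c}_1(\hat{L})\,q_*\beta$ for $\beta\in\dBPF(\mathfrak{Y})$. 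This is not automatic because $\mathbf{c}_1(\hat{L})$ is a transcendental operator, not a Cartier class; the paper proves it by reducing to the analytic singularities case (where $\mathbf{c}_1$ is Cartier and the usual projection formula applies) and then passing to the general case through the quasi-equisingular approximations of \cref{thm:nefbvolume}. Relatedly, you call the pulled-back $\hO(1)_j$ on $Y$ a ``pseudo-effective'' line bundle with positive mass ``essentially by the definitions'' --- but $p_j^*\hO_{\mathbb{P}E_j^\vee}(1)$ need not have positive mass on $Y$ at all, which is exactly why the paper extends $\mathcal{I}$-goodness to quasi-positive line bundles (\cref{def:Igoodnotp}, \cref{cor:igdff}) before running this argument.
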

The notations will be clarified in \cref{sec:RZ}.

This beautiful formula establishes the relation between algebraic objects on the left-hand side to analytic objects on the right-hand side. This is our final version of the \emph{Chern--Weil formula}. We remark that the assumption of $\mathcal{I}$-goodness is essential. It is probably the most general class of singularities where one can expect something like \eqref{eq:mainZartoana}, as indicated by the line bundle case \cite[Theorem~1.4]{DX22}.

In conclusion, \cref{thm:bd} and \cref{thm:bdmix} tell us that the Chern currents of $\mathcal{I}$-good Hermitian line bundles on a quasi-projective variety represent decreasing limits of Chern numbers of the compactifications or equivalently, Chern numbers on the Riemann--Zariski space. \cref{thm:Dmixedvec} gives a similar result in the case of vector bundles.
This is our answer to \cref{que:mainq}.

\subsection{Auxiliary results in pluripotential theory}

Finally, let us also mention that we also established a few general results about the non-pluripolar products of quasi-psh functions during the proofs of the main theorems. We mention two of them. Note that in both theorems, $X$ can be a general compact K\"ahler manifold, not necessarily projective.

\begin{theorem}[{\cref{thm:convdsmeasures}}]
Let $\theta_1,\ldots,\theta_{a}$ be smooth real closed $(1,1)$-forms on $X$ representing big cohomology classes.
Let $\varphi^i_j\in \PSH(X,\theta_j)$ ($j=1,\ldots,a$) be decreasing (resp. increasing) sequences converging to $\varphi_j\in \PSH(X,\theta_j)_{>0}$ pointwisely (resp. almost everywhere). Assume that $\lim_{i\to\infty}\int_X \theta_{j,\varphi_j^i}^n=\int_X \theta_{j,\varphi_j}^n$ for all $j=1,\ldots,a$. Then we have the weak convergence of currents:
\begin{equation}\label{eq:temp15}
\theta_{1,\varphi^i_1}\wedge \cdots \wedge \theta_{a,\varphi^i_a}\rightharpoonup \theta_{1,\varphi_1}\wedge \cdots \wedge \theta_{a,\varphi_a}
\end{equation}
as $i\to\infty$.
\end{theorem}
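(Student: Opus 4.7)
The plan is to reduce the desired weak convergence of the mixed $(a,a)$-current to the single-potential continuity of non-pluripolar Monge--Amp\`ere measures via a polarization argument. The main analytic difficulty will be upgrading the pure-mass hypothesis to convergence of all mixed masses.

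\textbf{Single-potential case.} For a single decreasing sequence $\varphi^i\searrow\varphi\in\PSH(X,\theta)_{>0}$ satisfying $\int_X\theta_{\varphi^i}^n\to\int_X\theta_\varphi^n$, one has $\theta_{\varphi^i}^n\rightharpoonup\theta_\varphi^n$ weakly. This is essentially the Darvas--Di Nezza--Lu continuity theorem \cite{DDL18fullmass}, revisited in \cite{DX21}: the inputs are Witt Nystr\"om's monotonicity \cite{WN19}, Bedford--Taylor continuity on the locally bounded locus $X\setminus\{\varphi=-\infty\}$ (on which the $\varphi^i$ are automatically locally bounded, since $\varphi\leq\varphi^i\leq\varphi^0$), and the mass hypothesis preventing escape of mass to the polar set. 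I would invoke this as a black box.

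\textbf{Polarization.} For $\mathbf{t}=(t_1,\ldots,t_a)\in\mathbb{R}_{\geq 0}^a$ in the positive octant, introduce the big class $\theta_{\mathbf{t}}:=\sum_{j=1}^a t_j\theta_j$ and the $\theta_{\mathbf{t}}$-psh potentials
\begin{equation*}
\psi_{\mathbf{t}}:=\sum_{j=1}^a t_j\varphi_j\in\PSH(X,\theta_{\mathbf{t}})_{>0},\qquad \psi_{\mathbf{t}}^i:=\sum_{j=1}^a t_j\varphi_j^i\;\searrow\;\psi_{\mathbf{t}}.
\end{equation*}
Multilinearity of the non-pluripolar product yields the expansion
\begin{equation*}
\theta_{\mathbf{t},\psi_{\mathbf{t}}^i}^n=\sum_{|\alpha|=n}\binom{n}{\alpha}\mathbf{t}^{\alpha}\,\theta_{1,\varphi_1^i}^{\alpha_1}\wedge\cdots\wedge\theta_{a,\varphi_a^i}^{\alpha_a}.
\end{equation*}
Once total-mass convergence $\int_X\theta_{\mathbf{t},\psi_{\mathbf{t}}^i}^n\to\int_X\theta_{\mathbf{t},\psi_{\mathbf{t}}}^n$ is in hand for every $\mathbf{t}$, the single-potential case applied to $(\theta_{\mathbf{t}},\psi_{\mathbf{t}}^i)$ gives $\theta_{\mathbf{t},\psi_{\mathbf{t}}^i}^n\rightharpoonup\theta_{\mathbf{t},\psi_{\mathbf{t}}}^n$ as $(n,n)$-measures; pairing against a smooth test function $\chi$ and comparing polynomial coefficients in $\mathbf{t}$ (determined by values at finitely many points) yields weak convergence of every mixed $(n,n)$-coefficient. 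To capture the target bidegree $(a,a)$, I enlarge the polarization by adding auxiliary K\"ahler forms $\omega_{a+1},\ldots,\omega_n$ with zero potentials as further variables, so that the coefficient of $t_1\cdots t_a t_{a+1}^{\alpha_{a+1}}\cdots t_n^{\alpha_n}$ in the expanded polynomial is $\theta_{1,\varphi_1^i}\wedge\cdots\wedge\theta_{a,\varphi_a^i}\wedge\omega_{a+1}^{\alpha_{a+1}}\wedge\cdots\wedge\omega_n^{\alpha_n}$; varying the $\omega_k$ and invoking density of decomposable products among smooth test forms of bidegree $(n-a,n-a)$ extracts weak convergence of the target $(a,a)$-current.

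\textbf{Main obstacle.} The technical heart is thus the mass-convergence step: writing $P^i(\mathbf{t}):=\int_X\theta_{\mathbf{t},\psi_{\mathbf{t}}^i}^n=\sum_\alpha\binom{n}{\alpha}\mathbf{t}^\alpha M_\alpha^i$ with $M_\alpha^i:=\int_X\theta_{1,\varphi_1^i}^{\alpha_1}\wedge\cdots\wedge\theta_{a,\varphi_a^i}^{\alpha_a}$, Witt Nystr\"om's mixed monotonicity gives $M_\alpha^i\geq M_\alpha$ and hence $P^i\geq P$ coefficient-wise, while the hypothesis yields $P^i(s\mathbf{e}_j)\to P(s\mathbf{e}_j)$ on every pure axis. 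To force the matching upper bound $\limsup_i M_\alpha^i\leq M_\alpha$ on the mixed coefficients, I would combine these axis data with the Khovanskii--Teissier / log-concavity inequalities for mixed non-pluripolar masses in big classes (developed in \cite{BEGZ10}); these rigidify the family $\{P^i\}$ enough that coefficient-wise convergence propagates from the axes to the full positive octant. This bootstrap from pure to mixed masses is where I expect the main technical work to lie.
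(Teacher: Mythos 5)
Your polarization framework is a sensible reduction, and you correctly identify the hard step, but the bootstrap from pure to mixed masses via Khovanskii--Teissier / log-concavity is a genuine gap. The constraints at your disposal are: Witt Nystr\"om monotonicity $M_\alpha^i\geq M_\alpha$, convergence on the axes $M_{n\mathbf{e}_j}^i\to M_{n\mathbf{e}_j}$, and Alexandrov--Fenchel / Teissier log-concavity, equivalently concavity of $(P^i)^{1/n}$ and $P^{1/n}$ on the positive octant. These do \emph{not} rigidify the mixed coefficients. A numerical obstruction already at $n=a=2$: take
\[
P^i(t_1,t_2)=t_1^2+3t_1t_2+t_2^2 \quad\text{(constant in $i$)},\qquad P(t_1,t_2)=(t_1+t_2)^2.
\]
Then $P^i\geq P$ coefficient-wise, the axis values coincide, and both $\sqrt{P^i}$ and $\sqrt{P}$ are concave on the positive quadrant (the first because the associated quadratic form is Lorentzian, the second trivially since it is linear there), yet the mixed coefficient $3/2$ does not converge to $1$. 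The rigidity that does force $M_\alpha^i\to M_\alpha$ comes from the masses being attached to actual $\theta_j$-psh potentials, which is precisely what the $d_S$-metric theory encodes; the relevant fact is the mixed-mass continuity along $d_S$-convergent sequences, \cite[Theorem~4.2]{Xia21}, combined with the equivalence of mass convergence and $d_S$-convergence for decreasing sequences recorded in \cref{lma:decseqplusvolumeimds}. Log-concavity of the intersection polynomial alone is strictly weaker than that.

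There is also a cleaner route that makes the polarization unnecessary. The correct black box is not the single-potential continuity statement but the full multi-potential one, \cite[Theorem~2.3]{DDNL18mono}, which already treats $n$ independently varying potentials. Applying it to the tuple $(\varphi_1^i,\ldots,\varphi_a^i,0,\ldots,0)$ in the classes $(\theta_1,\ldots,\theta_a,\omega_1,\ldots,\omega_{n-a})$ for arbitrary K\"ahler forms $\omega_k$, whose constant potentials trivially satisfy the mass hypothesis, gives weak convergence of $\theta_{1,\varphi_1^i}\wedge\cdots\wedge\theta_{a,\varphi_a^i}\wedge\omega_1\wedge\cdots\wedge\omega_{n-a}$, and in particular the mixed mass convergence you are after, with no Khovanskii--Teissier argument. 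This is what the paper does in Step~2 of its proof. The complementary piece, Step~1, is a local lower semicontinuity estimate (Bedford--Taylor plus a capacity cutoff near the polar set) showing that every weak subsequential limit of $\theta_{1,\varphi_1^i}\wedge\cdots\wedge\theta_{a,\varphi_a^i}$ dominates the target current; once total masses against K\"ahler wedge factors agree, this forces equality.
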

Here in \eqref{eq:temp15}, the products are taken in the non-pluripolar sense and $\PSH(X,\theta_j)_{>0}$ denotes the subset of $\PSH(X,\theta_j)$ consisting of potentials with positive non-pluripolar masses.
This theorem is of independent interest as well. The case $a=n$ is proved in \cite{DDNL18mono} under a slightly different assumption. 

\begin{theorem}[{\cref{thm:NAdatacontdS}}]
Let $\varphi_i\in \PSH(X,\theta)$ ($i\in \mathbb{N}$) be a sequence and $\varphi\in \PSH(X,\theta)$. Assume that $\varphi_i\xrightarrow{d_S}\varphi$, then for any prime divisor $E$ over $X$ (namely, a prime divisor on a birational model $\pi\colon Y\rightarrow X$ of $X$),
\begin{equation}
\lim_{i\to\infty}\nu(\varphi_i,E)=\nu(\varphi,E).
\end{equation}
\end{theorem}
Here $d_S$ is the pseudo-metric on $\PSH(X,\theta)$ introduced by Darvas--Di Nezza--Lu in \cite{DDNLmetric}.
We will recall its definition in \cref{sec:prel}. 
The notation $\nu(\varphi,E)$ denotes the generic Lelong number of (the pull-back of) $\varphi$ along $E$.
 
This theorem can be seen as a common (partial) generalization of a number of known results. For example \cite[Exercise~2.7(iii)]{GZ17}, \cite[Theorem~6.1]{DDNLmetric} and \cite[Theorem~4.6]{Xia21}.

This theorem confirms that the map from a quasi-plurisubharmonic function to the associated non-Archimedean data is continuous. When $\theta$ comes from the $c_1$ of a big line bundle, this statement can be made precise using the non-Archimedean language developed by Boucksom--Jonsson \cite{BJ18b}. In general, it allows us to generalize a number of Boucksom--Jonsson's constructions to transcendental classes, as we carry out in \cite{Xia23Operations}.

\subsection{A potential extension of Kudla's program}
As we mentioned in the beginning, the whole paper is a first step in the attempt of extending Kudla's program to mixed Shimura varieties.

Kudla's program is an important program in number theory relating the arithmetic intersection theory of special cycles to Fourier coefficients of Eisenstein series \cite{Kud97}. In the case of Shimura curves, it is worked out explicitly in \cite{KRY06}. In order to carry out Kudla's program in the mixed Shimura setting, we need to handle the following problems.

Firstly, we need to establish an Arakelov theory on mixed Shimura varieties. In the case of Shimura varieties, this is accomplished in \cite{BGKK05}. Their approach relies heavily on the fact that singularities on Shimura varieties are very mild, which fails in our setting. We have to handle $\mathcal{I}$-good singularities directly. This paper handles the infinity fiber. If one wants to establish Arakelov theory following the methods of Gillet--Soulé \cite{GS90, GS90a, GS90b}, one essential difficulty lies in establishing a Bott--Chern theory for $\mathcal{I}$-good Hermitian vector bundles. 

Secondly, our Chern--Weil formula indicates that the concept of special cycles on mixed Shimura varieties should be generalized to involve certain objects on the Riemann--Zariski space at the infinity fiber. In the case of universal elliptic curves, it is not clear to the author what the correct notion should be.

If we managed to solve these problems, then one should be able to study the arithmetic of mixed Shimura varieties following Kudla's idea.

We should mention that in the whole paper, we work with complex manifolds. But in reality, the important arithmetic moduli spaces are usually Deligne--Mumford stacks, so correspondingly the fibers at infinity are usually orbifolds instead of manifolds. However, extending the results in this paper to orbifolds is fairly straightforward, we will stick to the manifold case.

\subsection{Conventions}\label{subsec:conv}
In this paper, all vector bundles are assumed to be holomorphic. When the underlying manifold is quasi-projective, we will emphasize \emph{holomorphic} or \emph{algebraic} only when there is a risk of confusion.

Given a vector bundle $E$ on a manifold $X$, let $\mathcal{E}$ be the corresponding holomorphic locally free sheaf. Then convention for $\mathbb{P}E$ is $\underline{\Proj} \Sym \mathcal{E}^{\vee}$, which is different from the convention of Grothendieck. In general, we do not distinguish $E$ and $\mathcal{E}$ if there is no risk of confusion.

A variety over a field $k$ is a geometrically reduced, separated algebraic $k$-scheme, not necessarily geometrically integral. We choose this convention so that mixed Shimura varieties are indeed systems of varieties.

Given a sequence of rings or modules $A^k$ indexed by $k\in \mathbb{N}$, we will write $A=\bigoplus_{k}A^k$ without explicitly declaring the notation. This convention applies especially to Chow groups and Néron--Severi groups.

We set $\Delta\coloneqq \{z\in \mathbb{C}:|z|<1\}$.

\section*{Acknowledgements}
I would like to thank Elizabeth Wulcan, Yanbo Fang, Yaxiong Liu, Richard L\"ark\"ang, José Burgos Gil, Tam\'as Darvas, David Witt Nystr\"om, Yu Zhao, Dennis Eriksson, Moritz Kerz and Osamu Fujino for discussions. I am grateful to the referees for their valuable suggestions.

The author is supported by Knut och Alice Wallenbergs Stiftelse grant KAW 2021.0231.

\part{Non-pluripolar products on vector bundles}\label{part:1}
\section{Preliminaries}\label{sec:prel}
Most results in this section are known in the literature \cite{DX21, DX22, Xia21}. Readers with background in pluripotential theory can safely skip the whole section except \cref{thm:convdsmeasures} and \cref{lma:Igoodinsenspert}.

Let $X$ be a compact K\"ahler manifold of pure dimension $n$. 
Let $(L,h)$ be a Hermitian pseudo-effective line bundle, namely, $L$ is a holomorphic line bundle on $X$ and $h$ is a possibly singular plurisubharmonic (psh) metric on $L$. 
We write $\widehat{\Pic}(X)$ for the set of Hermitian pseudo-effective line bundles on $X$.

Take a smooth Hermitian metric $h_0$ on $L$. Let $\theta=c_1(L,h_0)$. We can identify $h$ with a function $\varphi\in \PSH(X,\theta)$ such that $h=h_0\exp(-\varphi)$. We write $\mathcal{I}(h)=\mathcal{I}(\varphi)$ for the multiplier ideal sheaf of $\varphi$: namely a local section of $\mathcal{I}(h)$ is a holomorphic function $f$ such that  $|f|_{h_0}^2 e^{-\varphi}$ is locally integrable. We will write 
\[
\ddc h=c_1(L,h)=\theta_{\varphi}=\theta+\ddc\varphi=\theta+\frac{\mathrm{i}}{2\pi}\partial \bar{\partial}\varphi.
\]

We define the volume of $(L,h)$ as
\[
\vol(L,h)=\vol(\theta,\varphi)\coloneqq \lim_{k\to\infty}k^{-n}h^0(X,L^k\otimes \mathcal{I}(kh)).
\]
The existence of the limit is proved in \cite{DX22, DX21}. 

Given $\varphi,\psi\in \PSH(X,\theta)$, write 
\[
\varphi\land \psi\coloneqq \sup \left\{\eta\in \PSH(X,\theta)\colon \eta\leq \varphi,\eta\leq \psi\right\}.
\]
The function $\varphi\land \psi$ is either $-\infty$ or in $\PSH(X,\theta)$ if $X$ is connected.

Recall the following projections:
\[
\begin{split}
P[\varphi]=&\sups_{c\in \mathbb{R}} \left((\varphi+c)\land 0\right),\\
P[\varphi]_{\mathcal{I}}=&\sup \{\psi\in \PSH(X,\theta)\colon \psi\leq 0,\mathcal{I}(k\psi)=\mathcal{I}(k\varphi)\text{ for all }k\in \mathbb{N}_{>0}\}.
\end{split}
\]
Both projections are in $\PSH(X,\theta)$. Here $\sups$ denotes the usc regularization of the supremum. The first projection is introduced in \cite{RWN14} and the second in \cite{DX22}.

The main result of \cite{DX21, DX22} shows 
\begin{equation}\label{eq:volpur}
    \vol(L,h)=\frac{1}{n!}\int_X \theta_{P[\varphi]_{\mathcal{I}}}^n.
\end{equation}
Here and in the whole paper, the Monge--Amp\`ere type products refer to the non-pluripolar products in the sense of \cite{BEGZ10}.

\begin{definition}\label{def:modelandgood}
We say $\varphi$ is \emph{model} (resp. \emph{$\mathcal{I}$-model}) or $h$ is \emph{model} (resp. \emph{$\mathcal{I}$-model}) (with respect to $h_0$ or $\theta$) if $P[\varphi]=\varphi$ (resp. $P[\varphi]_{\mathcal{I}}=\varphi$).

We say $\varphi$ is \emph{$\mathcal{I}$-good} (or $h$ is \emph{$\mathcal{I}$-good}, $(L,h)$ is \emph{$\mathcal{I}$-good}) if $P[\varphi]=P[\varphi]_{\mathcal{I}}$ and $\int_X c_1(L,h)^n>0$.

When we want to emphasize the dependence on the class $\theta$, we also say $\varphi$ is $\mathcal{I}$-good in $\PSH(X,\theta)$.
\end{definition}
Observe that being an $\mathcal{I}$-good metric is independent of the choice of the reference metric $h_0$.

\begin{definition}\label{def:ana}
 A potential $\varphi\in \PSH(X,\theta)$ is said to have \emph{analytic singularities} if for each $x\in X$, there is a neighbourhood $U_x\subseteq X$ of $x$ in the Euclidean topology, such that on $U_x$,
 \[
 \varphi=c\log\left(\sum_{j=1}^{N_x}|f_j|^2\right)+\psi,
 \]
 where $c\in \mathbb{Q}_{\geq 0}$, the $f_j$'s are analytic functions on $U_x$, $N_x\in \mathbb{N}$ is an integer depending on $x$ and $\psi\in L^{\infty}(U_x)$.
\end{definition}
A more special case of singularities is given by analytic singularities along a nc $\mathbb{Q}$-divisor. We define a slightly more general notion here:
\begin{definition}\label{def:anaD}
Let $D$ be an effective nc (normal crossing)  $\mathbb{R}$-divisor on $X$.   Let $D=\sum_i a_i D_i$ with $D_i$ being prime divisors and $a_i\in \mathbb{R}_{>0}$. We say that 
 $\varphi\in \PSH(X,\theta)$ has \emph{analytic singularities along $D$} or \emph{log singularities along $D$} if locally (in the Euclidean topology), 
 \[
 \varphi=\sum_i a_i\log|s_i|^2+\psi,
 \]
 where $s_i$ is a local holomorphic function that defines $D_i$ and $\psi$ is a bounded function.
\end{definition} 

In general, given any potential $\varphi$ having analytic singularities, one can find a composition of blowing-up with smooth centers $\pi\colon Y\rightarrow X$ such that $\pi^*\varphi$ has log singularities along some normal crossing $\mathbb{Q}$-divisor $D$ on $Y$. See \cite[Lemma~2.3.19]{MM07} for the proof. 

\begin{definition}
	Let $\varphi\in \PSH(X,\theta)$. A \emph{quasi-equisingular approximation} is a sequence $\varphi^j\in \PSH(X,\theta+\epsilon_i\omega)$ with $\epsilon_j\to 0$ such that
	\begin{enumerate}
		\item $\varphi^j\to \varphi$ in $L^1$.
		\item $\varphi^j$ has analytic singularities.
		\item $\varphi^{j+1}\leq \varphi^j$.
		\item For any $\delta>0$, $k>0$, there is $j_0>0$ such that for $j\geq j_0$,
		      \[
			      \mathcal{I}(k(1+\delta)\varphi^j)\subseteq \mathcal{I}(k\varphi)\subseteq \mathcal{I}(k\varphi^j).
		      \]
	\end{enumerate}
\end{definition}
The existence of a quasi-equisingular approximation follows from the arguments in \cite{Dem15, DPS01}.

The following is the main theorem in \cite{DX21, DX22}.
\begin{theorem}\label{thm:DXmain}
Assume that $\int_X c_1(L,h)^n>0$, identify $h$ with $\varphi\in \PSH(X,\theta)$ as above, then the following are equivalent:
\begin{enumerate}
    \item $h$ is $\mathcal{I}$-good.
    \item 
    \[
    \vol(L,h)=\frac{1}{n!}\int_X c_1(L,h)^n.
    \]
    \item There exists a sequence of $\varphi_i\in \PSH(X,\theta)$ with analytic singularities such that $\varphi_i \to \varphi$ with respect to the $d_S$-pseudometric.
\end{enumerate}
In case $\ddc h$ is a K\"ahler current (a closed positive $(1,1)$-current dominating some K\"ahler form), these conditions are equivalent to 
\begin{enumerate}[resume]
    \item Any quasi-equisingular approximation of $\varphi$ converges to $\varphi$ with respect to $d_S$.
\end{enumerate}
\end{theorem}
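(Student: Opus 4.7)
The plan is to deduce the equivalences $(1) \Leftrightarrow (2) \Leftrightarrow (3)$ in a cycle, using the volume formula \eqref{eq:volpur} as the bridge between pluripotential-theoretic data and asymptotic cohomology; the additional statement $(1) \Leftrightarrow (4)$ in the Kähler-current case will be a refinement of the same argument.

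For $(1) \Leftrightarrow (2)$, I would start from $\vol(L,h) = \frac{1}{n!}\int_X \theta_{P[\varphi]_{\mathcal{I}}}^n$ given by \eqref{eq:volpur} and compare with $\frac{1}{n!}\int_X c_1(L,h)^n = \frac{1}{n!}\int_X \theta_\varphi^n$. Witt Nyström's monotonicity of non-pluripolar products yields the mass-preservation identity $\int_X \theta_\varphi^n = \int_X \theta_{P[\varphi]}^n$, so $(2)$ is equivalent to $\int_X \theta_{P[\varphi]}^n = \int_X \theta_{P[\varphi]_{\mathcal{I}}}^n$. Since $P[\varphi]_{\mathcal{I}}$ and $P[\varphi]$ are comparable model potentials, the rigidity principle for model potentials (the non-pluripolar domination principle, valid because the total mass is assumed positive) forces the equality of their non-pluripolar masses to imply pointwise equality; this recovers $(1)$, and the converse direction is immediate from the same identities.

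Next, for $(3) \Rightarrow (2)$ I would exploit that any $\psi \in \PSH(X,\theta)$ with analytic singularities admits a log resolution on which \eqref{eq:volpur} reduces to the explicit identity $\vol(L,\psi) = \frac{1}{n!}\int_X \theta_\psi^n$. Along a $d_S$-convergent sequence of analytically singular potentials $\varphi^j \to \varphi$, mass continuity along $d_S$ convergence—one of the defining properties of $d_S$ from \cite{DDNLmetric}—and a parallel continuity statement for the multiplier-ideal volume yield $(2)$ in the limit. Conversely, for $(1) \Rightarrow (3)$ I would take a quasi-equisingular approximation $\varphi^j$ of $\varphi$: $L^1$ convergence is built into the definition, the multiplier-ideal sandwich between $\varphi^j$ and $\varphi$ combined with \eqref{eq:volpur} gives $\vol(L,\varphi^j) \to \vol(L,\varphi)$, the $\mathcal{I}$-good hypothesis rewrites both quantities in terms of non-pluripolar masses, and this upgrades $L^1$ convergence to $d_S$ convergence via the Darvas--Di Nezza--Lu characterization of $d_S$.

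The Kähler-current case $(1) \Leftrightarrow (4)$ follows because the same argument applies to any quasi-equisingular approximation rather than to a particular one: Demailly's regularization \cite{Dem15, DPS01} provides uniform multiplier-ideal bounds that force every such approximation to converge in $d_S$. The main technical obstacle throughout is the rigidity step at the heart of $(2) \Rightarrow (1)$—upgrading mass equality to pointwise equality of the model envelopes $P[\varphi]$ and $P[\varphi]_{\mathcal{I}}$—which is the delicate pluripotential content of \cite{DX21, DX22} and which critically exploits the positive-mass hypothesis.
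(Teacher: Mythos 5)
The paper does not prove this theorem at all: it is stated immediately after the sentence ``The following is the main theorem in \cite{DX21, DX22}'' and is used as a black box throughout. So there is no internal proof to compare your sketch against; I can only evaluate the sketch on its own merits.

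The $(1)\Leftrightarrow(2)$ part of your outline is sound. From \eqref{eq:volpur}, condition $(2)$ is equivalent to $\int_X\theta_{P[\varphi]}^n=\int_X\theta_{P[\varphi]_{\mathcal I}}^n$, since $\int_X\theta_\varphi^n=\int_X\theta_{P[\varphi]}^n$ is built into the construction of $P[\varphi]$ (attributing it to Witt Nystr\"om's monotonicity is a slight misnomer, but harmless). Because $P[\varphi]\leq P[\varphi]_{\mathcal I}$ are two model potentials of equal positive mass, the rigidity statement \cite[Theorem~3.12]{DDNL18mono} — which the paper itself invokes in the proof of \cref{lma:decseqplusvolumeimds} — forces $P[\varphi]=P[\varphi]_{\mathcal I}$. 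This is exactly the delicate step you flag, and your identification of it as the crux is correct.

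There is, however, a genuine gap in your treatment of $(1)\Rightarrow(3)$ and your leveling of $(3)$ with $(4)$. A quasi-equisingular approximation of $\varphi$ lives in $\PSH(X,\theta+\epsilon_j\omega)$ with $\epsilon_j>0$, not in $\PSH(X,\theta)$; the paper's own definition makes this explicit. Condition $(3)$ demands a sequence \emph{inside} $\PSH(X,\theta)$, so one cannot simply feed a quasi-equisingular approximation into \cref{lma:decseqplusvolumeimds}. This is precisely why statement $(4)$ carries the additional hypothesis that $\ddc h$ is a K\"ahler current: only under that hypothesis can the Demailly--type regularization be arranged to absorb the $\epsilon_j\omega$ loss and remain in $\PSH(X,\theta)$, as the paper tacitly uses in Step~2 of the proof of \cref{thm:nefbvolume}. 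In the general (non--K\"ahler-current) case one must first replace $\varphi$ by the more singular interpolants $(1-\epsilon)\varphi+\epsilon\psi$ with $\theta_\psi$ a K\"ahler current as in \cref{lma:kcapp}, regularize those, and then diagonalize; your sketch skips this reduction entirely. Separately, your ``parallel continuity statement for the multiplier-ideal volume'' in the $(3)\Rightarrow(2)$ direction is not a formal property of $d_S$ but amounts to the $d_S$-continuity of $\varphi\mapsto P[\varphi]_{\mathcal I}$ (essentially \cite[Theorem~4.9]{Xia21}), which is itself a nontrivial consequence of the DX framework; it deserves to be named rather than waved at, since otherwise the logical dependence on \cite{DX21, DX22} becomes circular.
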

Another equivalent condition is given in \cref{cor:Imodcharbdiv}.

We will recall the definition of $d_S$ later. We will write $\hat{L}\in \widehat{\Pic}_{\mathcal{I}}(X)$ or $\varphi\in \PSH_{\mathcal{I}}(X,\theta)$ when $\hat{L}$ is $\mathcal{I}$-good.

Write
\[
\widehat{\Pic}(X)_{>0}=\left\{\hat{L}\in \widehat{\Pic}(X)\colon \int_X c_1(\hat{L})^n>0\right\}.
\]
Similarly, we can introduce
\[
\PSH(X,\theta)_{>0}\coloneqq \left\{\varphi\in \PSH(X,\theta)\colon \int_X \theta_{\varphi}^n>0\right\}.
\]

Generalizing \eqref{eq:volpur}, we can define the mixed volume of Hermitian pseudo-effective line bundles. Let $\hat{L}_i=(L_i,h_i)\in \widehat{\Pic}(X)_{>0}$ ($i=1,\ldots,n$).

Take smooth metrics $h_i'$ on $L_i$, write $\theta_i=c_1(L_i,h_i')$ and identify $h_i$ with $\varphi_i\in \PSH(X,\theta_i)$. Then we define the mixed volume as
\begin{equation}
\vol(\hat{L}_1,\ldots,\hat{L}_n)=\frac{1}{n!}\int_X (\theta_1+\ddc P[\varphi_1]_{\mathcal{I}})\wedge\cdots\wedge (\theta_n+\ddc P[\varphi_n]_{\mathcal{I}}).
\end{equation}

Let us recall the $d_S$ pseudo-metric defined on $\PSH(X,\theta)$ in \cite{DDNLmetric}. When the cohomology class $[\theta]$ is not big, we set $d_S=0$. If $[\theta]$ is big, $d_S$ is non-trivial.
We do not need the precise definition, it suffices to recall the following inequality:
\begin{equation}
d_S(\varphi,\psi)\leq \sum_{i=0}^n \left(2\int_X\theta^i_{\max\{\varphi,\psi\}}\wedge \theta_{V_{\theta}}^{n-i}-\int_X\theta^i_{\varphi}\wedge \theta_{V_{\theta}}^{n-i}-\int_X\theta^i_{\psi}\wedge \theta_{V_{\theta}}^{n-i}\right)\leq C_0d_S(\varphi,\psi),
\end{equation}
where 
\[
V_{\theta}\coloneqq \sup\{\varphi\in \PSH(X,\theta):\varphi\leq 0\}
\]
and $C_0>0$ is a constant. When we want to emphasize $\theta$, we write $d_{S,\theta}$ instead. 

\begin{lemma}\label{lma:kcapp}
Let $\varphi\in \PSH(X,\theta)_{>0}$. Then there is $\psi\in \PSH(X,\theta)_{>0}$, more singular than $\varphi$, such that $\theta_{\psi}$ is a K\"ahler current.

In particular, $\varphi$ is the increasing limit of a sequence $\varphi_j$ satisfying:
\begin{enumerate}
    \item Each $\theta_{\varphi_j}$ is a K\"ahler current.
    \item $\varphi_j$ converges to $\varphi$ with respect to $d_S$.
\end{enumerate}

\end{lemma}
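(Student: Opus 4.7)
The argument splits into the existence of a single $\psi$ and the interpolation that produces the sequence. For the existence step, since $\varphi\in\PSH(X,\theta)_{>0}$ the class $\{\theta\}$ is big, so Demailly's regularization for big classes furnishes a potential $\chi_0\in\PSH(X,\theta)$ with analytic singularities and $\theta_{\chi_0}\geq 2\epsilon\omega$ for some $\epsilon>0$. The delicate point is to arrange $\chi_0\leq\varphi+C$ for some constant $C$; I would secure this by adding a small multiple of an auxiliary quasi-psh function $u\in\PSH(X,K\omega)$ whose Lelong number at every point dominates that of $\varphi$ (such $u$ exists by a Josefson-type construction applied to the bounded Lelong profile of $\varphi$). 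For $\mu>0$ small enough, $\chi:=\chi_0+\mu u\in\PSH(X,\theta)$ still satisfies $\theta_\chi\geq\epsilon\omega$ and dominates the singularities of $\varphi$ pointwise; after translation we may assume $\chi\leq\varphi$. Then $\psi:=(1-t)\varphi+t\chi$ for small $t\in(0,1)$ gives $\psi\leq\varphi$, $\theta_\psi\geq t\epsilon\omega$, and positive non-pluripolar mass by expanding $\theta_\psi^n$ multilinearly (the term $t^n\,\theta_\chi^n$ already contributes $\geq(2t\epsilon)^n\int_X\omega^n>0$).

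For the ``in particular'' statement, take $\psi$ produced above, normalized so that $\psi\leq\varphi$, and set $\varphi_j:=(1-\tfrac1j)\varphi+\tfrac1j\psi$. Then $\varphi_{j+1}-\varphi_j=\frac{1}{j(j+1)}(\varphi-\psi)\geq 0$, so $(\varphi_j)$ is increasing; $\varphi-\varphi_j=\frac1j(\varphi-\psi)\to 0$ almost everywhere, so $\varphi_j\nearrow\varphi$; and $\theta_{\varphi_j}\geq \tfrac{t\epsilon}{j}\omega$ is a K\"ahler current for every $j$.

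For the $d_S$-convergence, the inequality $\varphi_j\leq\varphi$ yields $\max\{\varphi_j,\varphi\}=\varphi$, so the estimate for $d_S$ recalled just before the lemma collapses to
\[
d_S(\varphi_j,\varphi)\leq C_0\sum_{i=0}^{n}\Bigl(\int_X\theta_\varphi^i\wedge\theta_{V_\theta}^{n-i}-\int_X\theta_{\varphi_j}^i\wedge\theta_{V_\theta}^{n-i}\Bigr).
\]
Each mass on the right expands polynomially in $1/j$ via the multilinearity of the BEGZ mixed non-pluripolar product, and by the Witt-Nystr\"om / Darvas--Di Nezza--Lu monotonicity theorem applied along the increasing sequence $\varphi_j\nearrow\varphi$, each $\int_X\theta_{\varphi_j}^i\wedge\theta_{V_\theta}^{n-i}$ converges to $\int_X\theta_\varphi^i\wedge\theta_{V_\theta}^{n-i}$; hence $d_S(\varphi_j,\varphi)\to 0$.

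The main obstacle is the first step---forcing a K\"ahler-current potential to be more singular than $\varphi$. A naive convex combination with an arbitrary K\"ahler-current potential fails whenever that potential is finite where $\varphi=-\infty$; the Josefson-type correction of $\chi_0$ by a quasi-psh function with prescribed Lelong numbers is what transfers the polar behavior of $\varphi$ into the K\"ahler-current potential while staying in $\PSH(X,\theta)$.
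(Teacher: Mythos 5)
Your second step (the increasing sequence $\varphi_j=(1-j^{-1})\varphi+j^{-1}\psi$ and its $d_S$-convergence) matches the paper's proof exactly, and the multilinearity/monotonicity argument you give for mass convergence is a correct way to spell out the paper's one-liner. The gap is in the first step: the paper does not construct $\psi$ at all --- it cites \cite[Proposition~3.6]{DX21} --- whereas you attempt a direct construction, and the construction does not work.

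The failure occurs at the claim that $\chi=\chi_0+\mu u$ ``dominates the singularities of $\varphi$ pointwise'' for small $\mu$. There are three problems. (a) Josefson's theorem produces global psh functions with prescribed polar \emph{sets}, not with prescribed Lelong numbers, so the existence of $u\in\PSH(X,K\omega)$ with $\nu(u,x)\geq\nu(\varphi,x)$ for all $x$ is not a consequence of the reference you invoke. (b) Even if such a $u$ existed, Lelong-number domination does not give the pointwise inequality $u\leq\varphi+C$ that you actually need: take $\varphi$ with a zero-Lelong-number pole, e.g.\ one of type $-(-\log|z|)^{1/2}$. (c) Most decisively, rescaling by $\mu<1$ destroys any singularity domination you might have: if $u\leq\varphi+C$ and $\varphi\leq 0$, then $\mu u\leq\mu\varphi+\mu C$, but $\mu\varphi-\varphi=(\mu-1)\varphi\to+\infty$ along the poles of $\varphi$, so the inequality $\chi_0+\mu u\leq\varphi+C'$ fails wherever $\varphi$ is deeply singular. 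Pushing $\mu$ up to $1$ is not available since $\chi_0+\mu u$ leaves $\PSH(X,\theta)$ once $\mu K$ exceeds the K\"ahler defect of $\chi_0$. This tension --- small $\mu$ keeps you $\theta$-psh but kills the domination, large $\mu$ gives domination but leaves $\PSH(X,\theta)$ --- is a genuine obstruction to this line of attack; it is exactly what the construction in \cite[Proposition~3.6]{DX21} is designed to circumvent. A smaller issue: your argument for $\int_X\theta_\psi^n>0$ implicitly needs $\int_X\theta_\chi^n\geq\epsilon^n\int_X\omega^n$, which is not automatic for a K\"ahler current whose potential has non-analytic singularities.
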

\begin{proof}
It follows from \cite[Proposition~3.6]{DX21} that there is $\psi\in \PSH(X,\theta)$ such that $\theta_{\psi}$ is a K\"ahler current and $\psi$ is more singular than $\varphi$. 

As for the second part, we may assume that $\psi\leq \varphi$,
it suffices to take
\[
\varphi_j=(1-j^{-1})\varphi+j^{-1}\psi.
\]
(1) is then clear. For (2), it suffices to show the mass of $\varphi_j$ converges to the mass of $\varphi$, which is clear from the construction of $\varphi_j$.

\end{proof}

\begin{lemma}\label{lma:Igoodinsenspert}
Let $\varphi\in \PSH(X,\theta)_{>0}$. Take a K\"ahler form $\omega$ on $X$. Then $\varphi$ is $\mathcal{I}$-good in $\PSH(X,\theta)$ if and only if it is $\mathcal{I}$-good in $\PSH(X,\theta+\omega)$.
\end{lemma}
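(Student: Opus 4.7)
My plan is to reduce to the Kähler-current case via \cref{lma:kcapp} and then use a shared quasi-equisingular testing sequence for both classes. First, the mass condition transfers automatically: $\omega \geq 0$ yields $\int_X(\theta+\omega)_\varphi^n \geq \int_X\theta_\varphi^n > 0$, and $\PSH(X,\theta) \subseteq \PSH(X,\theta+\omega)$, so both sides of the equivalence make sense. Next, using \cref{lma:kcapp} and the preservation of $\mathcal{I}$-goodness along the $d_S$-approximation it provides, I may assume $\theta_\varphi$, and hence $(\theta+\omega)_\varphi$, is a Kähler current, so that condition (4) of \cref{thm:DXmain} applies.

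Now fix a quasi-equisingular approximation $\varphi_j \in \PSH(X,\theta+\epsilon_j\omega)$ of $\varphi$ with $\epsilon_j \downarrow 0$ and analytic singularities. The crucial observation is that the defining properties of a quasi-equisingular approximation---decreasing $L^1$ convergence, analytic singularities, and the multiplier ideal comparison---depend only on the functions $\varphi_j$ and $\varphi$ and \emph{not} on the ambient class. Since $\varphi_j \in \PSH(X, \theta+\omega+\epsilon_j\omega)$ as well, the same sequence serves as a quasi-equisingular approximation of $\varphi$ in the larger class $\theta+\omega$. Thus by condition (4) of \cref{thm:DXmain}, $\mathcal{I}$-goodness of $\varphi$ in $\theta$ (resp.\ $\theta+\omega$) is equivalent to the top-degree mass convergence $\int_X (\theta+\epsilon_j\omega)_{\varphi_j}^n \to \int_X \theta_\varphi^n$ (resp.\ $\int_X (\theta+\omega+\epsilon_j\omega)_{\varphi_j}^n \to \int_X (\theta+\omega)_\varphi^n$). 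These two mass convergences are linked by the multinomial identity
\begin{equation*}
(\theta+\omega+\epsilon_j\omega)_{\varphi_j}^n
= \sum_{k=0}^n \binom{n}{k}\,(\theta+\epsilon_j\omega)_{\varphi_j}^k \wedge \omega^{n-k},
\end{equation*}
together with the analogous expansion for $\varphi$ with $\epsilon_j = 0$.

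For the direction $\theta \Rightarrow \theta+\omega$, I would invoke \cref{thm:convdsmeasures} to promote the $\theta$-side top-degree convergence to weak convergence of every mixed non-pluripolar product $(\theta+\epsilon_j\omega)_{\varphi_j}^k \wedge \omega^{n-k}$, and then sum the multinomial identity to obtain the $(\theta+\omega)$-side convergence. For the converse, I would use the lower semi-continuity of mixed non-pluripolar masses along the decreasing sequence $\varphi_j$: each summand satisfies $\liminf_j \int_X(\theta+\epsilon_j\omega)_{\varphi_j}^k \wedge \omega^{n-k} \geq \int_X \theta_\varphi^k \wedge \omega^{n-k}$, and since the total sum of left-hand sides converges to the sum of right-hand sides, each term must converge individually; the $k=n$ term then gives the desired $\theta$-side mass convergence. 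The principal technical obstacle is the mild $j$-dependence of the class $\theta+\epsilon_j\omega$ when invoking \cref{thm:convdsmeasures}, which I would handle by a diagonal argument: for fixed $j_0$ the tail $\{\varphi_j\}_{j\geq j_0}$ lies in $\PSH(X,\theta+\epsilon_{j_0}\omega)$ and decreases to $\varphi$, allowing the theorem to be applied in that fixed class, and one then lets $j_0 \to \infty$.
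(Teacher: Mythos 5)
Your proposal takes a genuinely different route from the paper, and it has a gap at the very first reduction step. You write that ``using \cref{lma:kcapp} and the preservation of $\mathcal{I}$-goodness along the $d_S$-approximation it provides, I may assume $\theta_\varphi$ is a K\"ahler current.'' But \cref{lma:kcapp} produces the sequence $\varphi_j=(1-j^{-1})\varphi+j^{-1}\psi$, which is \emph{more singular} than $\varphi$, and the lemma asserts nothing about $\mathcal{I}$-goodness of these $\varphi_j$. In fact this transfer is false in general: if $\psi$ has positive mass but is not $\mathcal{I}$-good, then $(1-j^{-1})\varphi$ is $\mathcal{I}$-good and $\varphi_j = (1-j^{-1})\varphi + j^{-1}\psi$ cannot be $\mathcal{I}$-good, by \cref{thm:Igoodcancel}. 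So even if $\varphi$ is $\mathcal{I}$-good, the approximants supplied by \cref{lma:kcapp} need not be, and you cannot legitimately pass to the case where $\theta_\varphi$ is a K\"ahler current in order to invoke condition (4) of \cref{thm:DXmain}. Without that reduction, the whole quasi-equisingular scaffolding you build on top of it has nothing to stand on.

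It is worth noting that the paper's own proof sidesteps approximations entirely for the direction that actually requires work. It contraposes: if $\varphi$ is not $\mathcal{I}$-good in $\PSH(X,\theta)$, then $\int_X\theta_\varphi^n<\int_X\theta_{P_\theta[\varphi]_{\mathcal{I}}}^n$, and the binomial expansion
\[
\int_X(\theta+\omega)_\varphi^n=\sum_{i=0}^n\binom{n}{i}\int_X\theta_\varphi^i\wedge\omega^{n-i}
\]
combined with monotonicity term-by-term gives $\int_X(\theta+\omega)_\varphi^n<\int_X(\theta+\omega)_{P_\theta[\varphi]_{\mathcal{I}}}^n\leq\int_X(\theta+\omega)_{P_{\theta+\omega}[\varphi]_{\mathcal{I}}}^n$, using that $P_\theta[\varphi]_{\mathcal{I}}\leq P_{\theta+\omega}[\varphi]_{\mathcal{I}}$. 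That is the entire backward implication, with no quasi-equisingular approximation, no \cref{thm:convdsmeasures}, no diagonalization. The forward implication is obtained by citation. Your multinomial identity and the $\liminf$ bookkeeping for extracting term-by-term convergence from total mass convergence are logically sound as devices, but the paper's observation is that one can compare $\int_X\theta_\varphi^i\wedge\omega^{n-i}$ and $\int_X\theta_{P_\theta[\varphi]_{\mathcal{I}}}^i\wedge\omega^{n-i}$ directly, without any limiting argument, making the approximation route both unnecessary and, in your version, unsound.
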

\begin{proof}
Assume that $\varphi$ is $\mathcal{I}$-good in $\PSH(X,\theta)$, then it is $\mathcal{I}$-good in $\PSH(X,\theta+\omega)$ by the proof of \cite[Corollary~4.4]{Xia21}. Conversely, if $\varphi$ is not $\mathcal{I}$-good in $\PSH(X,\theta)$, so that
\[
\int_X (\theta+\ddc \varphi)^n< \int_X (\theta+\ddc P_{\theta}[\varphi]_{\mathcal{I}})^n.
\]
It follows that
\[
\begin{aligned}
\int_X (\theta+\omega+\ddc \varphi)^n
=& \sum_{i=0}^n\binom{n}{i}\int_X \theta_{\varphi}^i\wedge \omega^{n-i}\\
<& \sum_{i=0}^n\binom{n}{i}\int_X \theta_{P_{\theta}[\varphi]_{\mathcal{I}}}^i\wedge \omega^{n-i}\\
=& \int_X (\theta+\omega+\ddc P_{\theta}[\varphi]_{\mathcal{I}})^n
\\ 
\leq &\int_X (\theta+\omega+\ddc P_{\theta+\omega}[\varphi]_{\mathcal{I}})^n.
\end{aligned}
\]
So $\varphi$ is not $\mathcal{I}$-good in $\PSH(X,\theta+\omega)$.
\end{proof}

\begin{proposition}\label{prop:Igoodsum}
Let $\varphi\in \PSH_{\mathcal{I}}(X,\theta)$, $\psi\in \PSH_{\mathcal{I}}(X,\theta')$, where $\theta'$ is a smooth real closed $(1,1)$-form representing some big cohomology classes. Then $\varphi+\psi\in \PSH_{\mathcal{I}}(X,\theta+\theta')$.
\end{proposition}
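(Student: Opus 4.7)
The plan is to characterize $\mathcal{I}$-goodness of $\eta:=\varphi+\psi$ via \cref{thm:DXmain}(3), exhibiting a $d_S$-approximation of $\eta$ by analytic singularities as the sum $\eta^j:=\varphi^j+\psi^j$ of quasi-equisingular approximations of $\varphi$ and $\psi$. By \cref{lma:Igoodinsenspert} the notion of $\mathcal{I}$-goodness is insensitive to addition of K\"ahler forms, so after adjoining a fixed K\"ahler form $\omega$ to both $\theta$ and $\theta'$ I may assume that $\theta_\varphi$ and $\theta'_\psi$ are K\"ahler currents. The positive-mass assertion for $\eta$ is then immediate from multilinear expansion of the non-pluripolar self-intersection:
\[
\int_X(\theta+\theta')_\eta^n=\int_X(\theta_\varphi+\theta'_\psi)^n=\sum_{k=0}^n\binom{n}{k}\int_X\theta_\varphi^k\wedge\theta'^{n-k}_\psi\geq \int_X\theta_\varphi^n>0.
\]

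Next, by \cref{thm:DXmain}(4), quasi-equisingular approximations $\varphi^j$ of $\varphi$ and $\psi^j$ of $\psi$ exist, are decreasing, lie in slightly enlarged classes $\theta+\epsilon_j\omega$ and $\theta'+\epsilon_j\omega$ with $\epsilon_j\downarrow 0$, have analytic singularities, and converge in $d_S$ to their limits. The sum $\eta^j$ then decreases pointwise to $\eta$ and inherits analytic singularities: locally, after clearing to a common denominator $N$, one writes $\varphi^j=\tfrac{p}{N}\log\sum|f_\alpha|^2+C^\infty$ and $\psi^j=\tfrac{p'}{N}\log\sum|g_\beta|^2+C^\infty$, whence expanding $(\sum|f_\alpha|^2)^p(\sum|g_\beta|^2)^{p'}$ as a sum of squared moduli of holomorphic monomials yields $\eta^j=\tfrac{1}{N}\log\sum|h_\gamma|^2+C^\infty$.

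It remains to establish $d_S$-convergence $\eta^j\to\eta$ in $\PSH(X,\theta+\theta')$; \cref{thm:DXmain}(3) will then conclude. By the mass-comparison inequality for $d_S$ recalled in \cref{sec:prel} and the fact that $\eta^j$ decreases to $\eta$, this reduces to showing $\int_X(\theta+\theta')_{\eta^j}^i\wedge(\theta+\theta')_V^{n-i}\to \int_X(\theta+\theta')_\eta^i\wedge(\theta+\theta')_V^{n-i}$ for each $i$, where $V:=V_{\theta+\theta'}$. Multilinear expansion reduces this in turn to convergence of mixed masses of the form $\int_X\theta_{\varphi^j}^a\wedge\theta'^b_{\psi^j}\wedge(\theta+\theta')_V^{n-a-b}$ (plus error terms carrying an $\epsilon_j$-factor which vanish in the limit), and each such convergence is a direct application of \cref{thm:convdsmeasures}: take $a$ copies of $\theta$ with potentials $\varphi^j\downarrow\varphi$, $b$ copies of $\theta'$ with $\psi^j\downarrow\psi$, and $n-a-b$ copies of $\theta+\theta'$ with constant potential $V$; the top-mass hypotheses on the first two blocks are precisely the $d_S$-convergences from \cref{thm:DXmain}(4). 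This mass-convergence step is the main technical input and is the principal obstacle to overcome in the argument.
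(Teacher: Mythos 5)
Your proof is correct, and it takes a genuinely different route from the paper. The paper's proof of \cref{prop:Igoodsum} is a one-line citation: it invokes \cite[Corollary~4.8]{Xia21} (which asserts precisely the $d_S$-continuity of the map $(\varphi,\psi)\mapsto\varphi+\psi$ between the relevant $\PSH$-spaces) together with the characterization of $\mathcal{I}$-goodness via $d_S$-approximation by analytic singularities from \cref{thm:DXmain}(3). You instead \emph{reprove} that $d_S$-continuity of addition from the present paper's own \cref{thm:convdsmeasures}, reducing to convergence of mixed non-pluripolar masses and expanding multinomially. What your route buys is self-containedness within the paper (you avoid the external citation); what it costs is length. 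A small streamlining: since $\eta^j\downarrow\eta$ is decreasing, \cref{lma:decseqplusvolumeimds} lets you test $d_S$-convergence with \emph{only} the top-degree mass $\int_X(\theta+\theta')_{\eta^j}^n$, rather than all the mixed quantities in the explicit $d_S$-inequality, and that single top mass is exactly what \cref{thm:convdsmeasures} (plus multinomial expansion) controls. The remaining technicalities you flag — the analytic-singularity calculation for a sum, and the $\epsilon_j\omega$-enlargements of the classes carrying the quasi-equisingular approximations — are real but standard, and the paper treats them with the same level of informality elsewhere (e.g.\ in the proof of \cref{thm:Igoodcancel}). Your preliminary reduction via \cref{lma:Igoodinsenspert} to the K\"ahler-current case, needed to invoke \cref{thm:DXmain}(4), is also correctly executed.
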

\begin{proof}
This follows from \cite[Corollary~4.5]{Xia21} and \cref{thm:DXmain} (3).
\end{proof}

\begin{theorem}\label{thm:convdsmeasures}
Let $\theta_1,\ldots,\theta_{a}$ ($a=0,\ldots,n$) be smooth real closed $(1,1)$-forms on $X$ representing big cohomology classes.
Let $\varphi^i_j\in \PSH(X,\theta_j)$ ($j=1,\ldots,a$) be decreasing (resp. increasing) sequences converging to $\varphi_j\in \PSH(X,\theta_j)_{>0}$ pointwisely (resp. almost everywhere). Assume that $\varphi_j^i\xrightarrow{d_S}\varphi_j$ as well. Then we have the weak convergence of the mixed Monge--Amp\`ere currents:
\begin{equation}\label{eq:weaktheta1toa}
\theta_{1,\varphi^i_1}\wedge \cdots \wedge \theta_{a,\varphi^i_a}\rightharpoonup \theta_{1,\varphi_1}\wedge \cdots \wedge \theta_{a,\varphi_a}
\end{equation}
as $i\to\infty$.
\end{theorem}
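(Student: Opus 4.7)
The case $a = n$ is \cite{DDNL18mono} and $a = 0$ is trivial, so I assume $1 \le a < n$. The strategy is to bootstrap from the top-degree case by padding the tuple with Kähler forms. Set $T_i := \theta_{1,\varphi_1^i} \wedge \cdots \wedge \theta_{a,\varphi_a^i}$ and $T := \theta_{1,\varphi_1} \wedge \cdots \wedge \theta_{a,\varphi_a}$, and fix a Kähler form $\omega$ on $X$. Given arbitrary Kähler forms $\omega_{a+1}, \ldots, \omega_n$, I adjoin $n-a$ copies of the constant zero potential, viewed as elements of $\PSH(X,\omega_{a+1}), \ldots, \PSH(X,\omega_n)$, to the given tuple. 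The constant zero sequence trivially satisfies the decreasing, $d_S$-convergence and mass-conservation hypotheses, so the $a=n$ theorem applied to the enlarged tuple yields
\[
T_i \wedge \omega_{a+1} \wedge \cdots \wedge \omega_n \rightharpoonup T \wedge \omega_{a+1} \wedge \cdots \wedge \omega_n
\]
as positive measures on $X$. Specializing $\omega_{a+1} = \cdots = \omega_n = \omega$ shows that $\int_X T_i \wedge \omega^{n-a}$ converges, hence is uniformly bounded, so $(T_i)$ is weakly precompact in the space of positive closed $(a,a)$-currents.

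To upgrade this to $T_i \rightharpoonup T$ it suffices to show $\int_X T_i \wedge \eta \to \int_X T \wedge \eta$ for every smooth $(n-a,n-a)$-form $\eta$. My plan is to combine plurifine locality of the non-pluripolar product with Bedford--Taylor continuity. For each integer $k \ge 1$, set $\varphi_j^{i,k} := \max(\varphi_j^i, V_{\theta_j} - k)$ and $\varphi_j^k := \max(\varphi_j, V_{\theta_j} - k)$. These potentials have minimal singularities in their respective classes, and for fixed $k$ one has $\varphi_j^{i,k} \searrow \varphi_j^k$ as $i \to \infty$. By Bedford--Taylor continuity in big cohomology classes \cite{BEGZ10}, the truncated products $T_i^k := \theta_{1,\varphi_1^{i,k}} \wedge \cdots \wedge \theta_{a,\varphi_a^{i,k}}$ converge weakly to $T^k := \theta_{1,\varphi_1^k} \wedge \cdots \wedge \theta_{a,\varphi_a^k}$. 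By plurifine locality of the non-pluripolar product, on the plurifine open set $U_k := \bigcap_j \{\varphi_j > V_{\theta_j} - k\}$ one has $\mathbf{1}_{U_k} T = \mathbf{1}_{U_k} T^k$, and similarly $\mathbf{1}_{U_k^i} T_i = \mathbf{1}_{U_k^i} T_i^k$ on $U_k^i := \bigcap_j \{\varphi_j^i > V_{\theta_j} - k\} \supseteq U_k$.

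The main obstacle is to coordinate the limits $i \to \infty$ and $k \to \infty$ while respecting that $U_k$ is only plurifine --- not Euclidean --- open. The key is uniform tail control. Any smooth $(n-a,n-a)$-form $\eta$ satisfies $-C\omega^{n-a} \le \eta \le C\omega^{n-a}$ pointwise for some $C > 0$, so the contribution of $X \setminus U_k$ to $\int_X T_i \wedge \eta$ is bounded by $C \int_{X \setminus U_k} T_i \wedge \omega^{n-a}$. By the mass convergence from the first paragraph, combined with the fact that $T$ charges no pluripolar set, this tail is uniformly small in $i$ once $k$ is large. On the other hand, on $U_k$ the Bedford--Taylor weak convergence $T_i^k \rightharpoonup T^k$ together with a plurifine cutoff argument yields $\int_{U_k} T_i \wedge \eta \to \int_{U_k} T \wedge \eta$ as $i \to \infty$. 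A standard $\varepsilon/3$ argument then assembles these two estimates into $\int_X T_i \wedge \eta \to \int_X T \wedge \eta$, completing the proof.
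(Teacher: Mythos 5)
Your first paragraph is correct and essentially reproduces Step~2 of the paper's own proof: the idea of padding with K\"ahler forms and invoking the $a=n$ case from \cite{DDNL18mono} is exactly what the paper does. What you appear not to have noticed is that this already finishes the proof modulo a one-line polarization argument. Every smooth real $(1,1)$-form is a difference of K\"ahler forms (add $C\omega$ with $C\gg 0$), and products of smooth real $(1,1)$-forms span the smooth real $(n-a,n-a)$-forms; hence the convergence
\[
\int_X T_i\wedge\omega_{a+1}\wedge\cdots\wedge\omega_n\longrightarrow\int_X T\wedge\omega_{a+1}\wedge\cdots\wedge\omega_n
\]
for \emph{all} tuples of K\"ahler forms, which is precisely what your first paragraph establishes, already gives $\int_X T_i\wedge\eta\to\int_X T\wedge\eta$ for every smooth $(n-a,n-a)$-form $\eta$, i.e.\ $T_i\rightharpoonup T$. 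That is how the paper's Step~2 closes the argument (by a contradiction variant of this polarization).

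Paragraphs~2 and~3 take an unnecessary detour and, as written, contain a gap. The assertion that ``by the mass convergence from the first paragraph, combined with the fact that $T$ charges no pluripolar set, this tail is uniformly small in $i$ once $k$ is large'' is not justified. Mass convergence $\int_X T_i\wedge\omega^{n-a}\to\int_X T\wedge\omega^{n-a}$ controls the total mass, but to control $\int_{X\setminus U_k}T_i\wedge\omega^{n-a}$ uniformly in $i$ you need a matching \emph{lower} bound on $\int_{U_k}T_i\wedge\omega^{n-a}$ in terms of $\int_{U_k}T\wedge\omega^{n-a}$, uniformly as $i\to\infty$, and this is exactly the nontrivial content of the paper's Step~1, proved via the bounded quasi-continuous cutoffs $f^{i,C,\epsilon}$ rather than sharp characteristic functions of plurifine sets. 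Similarly, the sentence ``on $U_k$ the Bedford--Taylor weak convergence $T_i^k\rightharpoonup T^k$ together with a plurifine cutoff argument yields $\int_{U_k}T_i\wedge\eta\to\int_{U_k}T\wedge\eta$'' is problematic, since $U_k$ is only plurifine open and not Euclidean open, so weak-$*$ convergence of measures does not localize there without the quasi-continuity device. In short: your paragraph~1 plus polarization is the clean proof; the truncation argument you sketch is both harder to make rigorous and redundant once paragraph~1 is in place. For the record, the paper does use a truncation argument, but only to prove the one-sided inequality $\alpha\geq\theta_{1,\varphi_1}\wedge\cdots\wedge\theta_{a,\varphi_a}$ for a weak sublimit $\alpha$ (Step~1), not the full equality, and it does so carefully with the quasi-continuous cutoffs just described.
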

\begin{proof}
Let $\alpha$ be a weak limit of a subsequence of $\theta_{1,\varphi^i_1}\wedge \cdots \wedge \theta_{a,\varphi^i_a}$. We will argue that $\alpha=\theta_{1,\varphi_1}\wedge \cdots \wedge \theta_{a,\varphi_a}$.

Assume that $\alpha\neq \theta_{1,\varphi_1}\wedge \cdots \wedge \theta_{a,\varphi_a}$, then we can find K\"ahler forms $\omega_1,\ldots,\omega_{n-a}$ so that
\[
\int_X \alpha \wedge \omega_1\wedge\cdots\wedge \omega_{n-a}\neq \int_X \theta_{1,\varphi_1}\wedge \cdots \wedge \theta_{a,\varphi_a}\wedge \omega_1\wedge\cdots\wedge \omega_{n-a}.
\]
It follows from \cite[Theorem~4.2]{Xia21} that 
\[
\int_X \theta_{1,\varphi_1}\wedge \cdots \wedge \theta_{a,\varphi_a}\wedge \omega_1\wedge\cdots\wedge \omega_{n-a}=\lim_{j\to\infty}\int_X \theta_{1,\varphi_1^j}\wedge \cdots \wedge \theta_{a,\varphi_a^j}\wedge \omega_1\wedge\cdots\wedge \omega_{n-a}.
\]
It follows from \cite[Theorem~2.3]{DDNL18mono} that 
\[
\theta_{1,\varphi_1^j}\wedge \cdots \wedge \theta_{a,\varphi_a^j}\wedge \omega_1\wedge\cdots\wedge \omega_{n-a}\rightharpoonup \theta_{1,\varphi_1}\wedge \cdots \wedge \theta_{a,\varphi_a}\wedge  \omega_1\wedge\cdots\wedge \omega_{n-a}.
\]
So 
\[
\theta_{1,\varphi_1}\wedge \cdots \wedge \theta_{a,\varphi_a}\wedge \omega_1\wedge\cdots\wedge \omega_{n-a}=\alpha\wedge \omega_1\wedge\cdots\wedge \omega_{n-a},
\]
which is a contradiction.
\end{proof}

\begin{proposition}\label{prop:notionsbirational}
Let $\pi\colon Y\rightarrow X$ be a proper birational morphism and $Y$ is smooth. Let $\varphi\in \PSH(X,\theta)_{>0}$, corresponding to a psh metric $h$ on $L$. Then
\begin{enumerate}
    \item $\varphi$ is model (resp. $\mathcal{I}$-model, $\mathcal{I}$-good) if and only if $\pi^*\varphi$ is.
    \item $(L,h)$ is $\mathcal{I}$-good if and only if $(\pi^*L,\pi^*h)$ is.
    \item 
    \[
    \vol (L,h)=\vol (\pi^*L,\pi^*h),\quad \int_X c_1(L,h)^n=\int_Y c_1(\pi^*L,\pi^*h)^n.
    \]
\end{enumerate}
\end{proposition}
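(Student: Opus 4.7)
The plan is to establish (3) first, then use it together with the commutation of the envelopes $P[\cdot]$ and $P[\cdot]_{\mathcal{I}}$ with birational pullback to deduce (1); part (2) is a direct translation of (1) from potentials to metrics.

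For (3), let $E\subset Y$ denote the exceptional locus of $\pi$ and $Z:=\pi(E)\subset X$. The mass identity $\int_X \theta_{\varphi}^n = \int_Y (\pi^*\theta)_{\pi^*\varphi}^n$ is the classical birational invariance of non-pluripolar Monge--Amp\`ere: since $\pi$ is a biholomorphism on $Y\setminus E$, both measures agree on this open set via $\pi$, while the non-pluripolar product assigns no mass to the thin analytic sets $E\subset Y$ and $Z\subset X$. The volume identity follows from the projection formula together with the standard birational invariance of multiplier ideals $\pi_*\mathcal{I}(k\pi^*h)=\mathcal{I}(kh)$, an $L^2$-theoretic consequence of Hartogs extension across analytic sets of codimension $\geq 1$; this yields $h^0(Y,\pi^*L^k\otimes \mathcal{I}(k\pi^*h))=h^0(X,L^k\otimes \mathcal{I}(kh))$ for every $k$, so the limits defining $\vol$ on $X$ and $Y$ coincide.

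For (1), the core claim is that $\pi^* P[\varphi]=P[\pi^*\varphi]$ and $\pi^* P[\varphi]_{\mathcal{I}}=P[\pi^*\varphi]_{\mathcal{I}}$. The inequality $\leq$ is immediate in each case since the pullback of any admissible $\psi$ in the definition on $X$ remains admissible on $Y$. For the reverse, given an admissible $\eta\in \PSH(Y,\pi^*\theta)$ (so $\eta\leq 0$ and either $\eta\leq \pi^*\varphi+c$ or $\mathcal{I}(k\eta)=\mathcal{I}(k\pi^*\varphi)$ for every $k$), one descends $\eta$ to a potential $\hat{\eta}\in \PSH(X,\theta)$ by taking the usc regularization of $\eta\circ \pi^{-1}$ on $X\setminus Z$ and extending across $Z$ by the standard extension theorem for bounded-above quasi-psh functions across thin analytic subsets. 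Then $\pi^*\hat{\eta}\geq \eta$ with equality outside $E$, and $\hat{\eta}$ is admissible on $X$ in the appropriate sense (using $\pi_*\mathcal{I}(k\eta)=\mathcal{I}(k\hat{\eta})$ for the $\mathcal{I}$-variant); passing to the supremum gives the claimed equality. This proves the model and $\mathcal{I}$-model statements. For $\mathcal{I}$-goodness, one could combine both envelope commutations with the preservation of positive mass by (3); more efficiently, invoke the equivalence (1) $\Leftrightarrow$ (2) in \cref{thm:DXmain}, under which $\mathcal{I}$-goodness is characterized by $\vol(L,h)=\frac{1}{n!}\int_X c_1(L,h)^n$, a relation manifestly birationally invariant by (3). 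Part (2) is a restatement of (1) in terms of the metric.

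The main obstacle is the descent step in the envelope commutation: verifying that the set-theoretic push-forward $\hat{\eta}$ extends across $Z$ as a bona fide element of $\PSH(X,\theta)$ with the correct analytic bound (respectively, multiplier ideal identity $\pi_*\mathcal{I}(k\eta)=\mathcal{I}(k\hat{\eta})$). Both facts rely on classical extension theorems for quasi-psh functions across thin analytic sets and on the birational invariance of multiplier ideals, but they need to be carefully combined with the fact that $\eta=\pi^*\hat{\eta}$ only holds off $E$, so the coincidence of multiplier ideals must be upgraded from the generic to the global statement.
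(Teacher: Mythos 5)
Your overall architecture is sound and matches the spirit of the paper's proof, but there is a genuine error in the multiplier-ideal step of part (3), and a related imprecision runs through your treatment of the $\mathcal{I}$-model case.

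You assert the identity $\pi_*\mathcal{I}(k\pi^*h)=\mathcal{I}(kh)$, and deduce the term-by-term equality $h^0(Y,\pi^*L^k\otimes \mathcal{I}(k\pi^*h))=h^0(X,L^k\otimes \mathcal{I}(kh))$ for every $k$. This identity is false. The correct transformation rule for multiplier ideals under a proper birational morphism $\pi:Y\to X$ of smooth varieties is
\[
\pi_*\bigl(\mathcal{O}_Y(K_{Y/X})\otimes \mathcal{I}(k\pi^*h)\bigr)=\mathcal{I}(kh)\,,
\]
and the twist by the relative canonical $K_{Y/X}$ is essential: it is the Jacobian of the change of variables in the $L^2$ characterization of $\mathcal{I}$. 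One only has the one-sided inclusion $\pi_*\mathcal{I}(k\pi^*h)\subseteq \mathcal{I}(kh)$ (since $K_{Y/X}$ is effective), so your term-by-term equality of $h^0$'s is unjustified and in general fails. The volume equality is nevertheless true, but the argument must go through the asymptotic observation that twisting by the fixed bundle $K_{Y/X}$ only perturbs dimensions by $O(k^{n-1})$, which is exactly what the paper does, writing $\vol(\pi^*L,\pi^*h)=\lim_k k^{-n}h^0(Y,K_{Y/X}\otimes \pi^*L^k\otimes \mathcal{I}(k\pi^*h))$ (a consequence of \cite[Theorem~1.1]{DX21}) and then applying the projection formula with the relative canonical in place.

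The same error reappears in your descent step for the $\mathcal{I}$-envelope: you invoke $\pi_*\mathcal{I}(k\eta)=\mathcal{I}(k\hat{\eta})$, which is wrong for the same reason. The paper takes a different and cleaner route here: by Zariski's main theorem $\pi$ has connected fibers, so $\pi^*:\PSH(X,\theta)\to\PSH(Y,\pi^*\theta)$ is a \emph{bijection}, and this immediately disposes of the model case without any extension-across-thin-sets argument. For the $\mathcal{I}$-model case, given a competitor $\psi\leq 0$ on $X$ with $\mathcal{I}(k\psi)=\mathcal{I}(k\varphi)$ for all $k$, the paper uses the characterization from \cite{DX22} that this is equivalent to equality of generic Lelong numbers on \emph{all} birational models; since models of $Y$ are also models of $X$, one obtains $\mathcal{I}(k\pi^*\psi)=\mathcal{I}(k\pi^*\varphi)$ on $Y$ and concludes via $\mathcal{I}$-modelness of $\pi^*\varphi$. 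Your plan of showing $\pi^*P[\varphi]_{\mathcal{I}}=P[\pi^*\varphi]_{\mathcal{I}}$ by ascent/descent can be made to work, but the ``immediate'' ascent direction in fact requires exactly this Lelong-number argument (pullback preserving equisingularity is not a formal consequence of the definitions), and your descent direction needs the corrected multiplier-ideal transformation. Finally, your closing shortcut for $\mathcal{I}$-goodness via the volume characterization of \cref{thm:DXmain} is correct and is essentially what the paper does.
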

\begin{proof}
By Zariski's main theorem, $\pi\colon Y\rightarrow X$ has connected fibers, it follows that $\pi^*\colon \PSH(X,\theta)\rightarrow \PSH(Y,\pi^*\theta)$ is a bijection. From this, it follows that $\varphi$ is model if and only if $\pi^*\varphi$ is. 

For any $k\geq 0$, we have the well-known formula
\[
\pi_* (K_{Y/X}\otimes \mathcal{I}(k\pi^*\varphi))=\mathcal{I}(k\varphi).
\]
It follows that if $\varphi$ is $\mathcal{I}$-model, so is $\pi^*\varphi$. Conversely, if $\pi^*\varphi$ is $\mathcal{I}$-model, consider $\psi\in \PSH(X,\theta)$, $\psi\leq 0$ and $\mathcal{I}(k\psi)=\mathcal{I}(k\varphi)$ for all $k>0$, we want to show that $\psi\leq \varphi$ or equivalently, $\pi^*\psi\leq \pi^*\varphi$. By \cite[Corollary~2.16]{DX22}, we know that $\mathcal{I}(k\psi)=\mathcal{I}(k\varphi)$ for all $k>0$ implies that for all birational model $Z\rightarrow Y$ and any point $z\in Z$, we have $\nu(\psi,z)=\nu(\varphi,z)$. It follows that $\pi^*\psi\leq \pi^*\varphi$ as $\pi^*\varphi$ is $\mathcal{I}$-model.

From the locality of the non-pluripolar product and the fact that it puts no mass on proper analytic sets, we clearly have
\[
\int_X c_1(L,h)^n=\int_Y c_1(\pi^*L,\pi^*h)^n.
\]

It remains to show that
\[
\vol (L,h)=\vol (\pi^*L,\pi^*h).
\]
By \cite[Theorem~1.1]{DX21},
\[
\begin{aligned}
\vol (\pi^*L,\pi^*h)=&\lim_{k\to\infty}k^{-n}h^0(Y,K_{Y/X}\otimes \pi^*L^k\otimes \mathcal{I}(k\pi^*h))\\
=&\lim_{k\to\infty}k^{-n}h^0(X,L^k\otimes \mathcal{I}(kh))=\vol(L,h).
\end{aligned}
\]
\end{proof}

\begin{lemma}\label{lma:decseqplusvolumeimds}
Let $\varphi_i\in \PSH(X,\theta)$ ($i\in \mathbb{N}$) be a decreasing sequence with limit $\varphi\in \PSH(X,\theta)_{>0}$. Assume that 
\begin{equation}\label{eq:volcontviv}
    \lim_{i\to\infty}\int_X \theta_{\varphi_i}^n=\int_X \theta_{\varphi}^n,
\end{equation}
then $\varphi_i\xrightarrow{d_S}\varphi$.
\end{lemma}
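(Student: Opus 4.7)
The idea is to bound $d_S(\varphi_i,\varphi)$ from above by a finite sum of differences of mixed non-pluripolar masses and then apply the monotone convergence theorem of Darvas--Di Nezza--Lu (the case $a=n$ of the result of \cite{DDNL18mono} invoked in the proof of \cref{thm:convdsmeasures}) to deduce that each such difference tends to zero.

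Since $\varphi_i$ decreases to $\varphi$, we have $\varphi_i\geq \varphi$ pointwise, hence $\max\{\varphi_i,\varphi\}=\varphi_i$, and the sandwich estimate recalled earlier in this section becomes
\[
    d_S(\varphi_i,\varphi)\leq \sum_{k=0}^n \left(\int_X\theta^k_{\varphi_i}\wedge \theta_{V_{\theta}}^{n-k}-\int_X\theta^k_{\varphi}\wedge \theta_{V_{\theta}}^{n-k}\right).
\]
It thus suffices, for each $k\in\{0,\dots,n\}$, to prove
\[
    \lim_{i\to\infty}\int_X\theta^k_{\varphi_i}\wedge \theta_{V_{\theta}}^{n-k} = \int_X\theta^k_{\varphi}\wedge \theta_{V_{\theta}}^{n-k}.
\]

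To establish this convergence, I would apply \cite[Theorem~2.3]{DDNL18mono} with reference form $\theta$ in each of the $n$ slots, taking $k$ of the decreasing sequences to be $(\varphi_i)_i$ (each decreasing to $\varphi$) and the remaining $n-k$ to be the constant sequence $V_\theta$. The mass-convergence hypothesis of that theorem is satisfied coordinate-wise: for the $\varphi_i$-slots it is exactly the hypothesis of the present lemma, and for the $V_\theta$-slots it is trivial. One thus obtains the weak convergence of non-pluripolar products
\[
    \theta^k_{\varphi_i}\wedge \theta_{V_{\theta}}^{n-k}\;\rightharpoonup\; \theta^k_{\varphi}\wedge \theta_{V_{\theta}}^{n-k}
\]
as currents on the compact manifold $X$; testing against the constant function $1$ yields the desired convergence of total masses, and the sandwich estimate then gives $d_S(\varphi_i,\varphi)\to 0$. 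The only nontrivial ingredient is the cited monotone convergence theorem of \cite{DDNL18mono}; the hypothesis $\int_X \theta_{\varphi_i}^n\to \int_X \theta_{\varphi}^n$ is precisely what unlocks it, and no further pluripotential-theoretic machinery (in particular, nothing as strong as \cref{thm:convdsmeasures}) is needed.
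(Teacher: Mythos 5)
Your reduction via the sandwich estimate is a fine starting point: since $\varphi_i\ge\varphi$ one indeed has $\max\{\varphi_i,\varphi\}=\varphi_i$ and $d_S(\varphi_i,\varphi)\le\sum_{k=0}^n\bigl(\int_X\theta_{\varphi_i}^k\wedge\theta_{V_\theta}^{n-k}-\int_X\theta_{\varphi}^k\wedge\theta_{V_\theta}^{n-k}\bigr)$, so it would suffice to know that each mixed mass $\int_X\theta_{\varphi_i}^k\wedge\theta_{V_\theta}^{n-k}$ converges to $\int_X\theta_{\varphi}^k\wedge\theta_{V_\theta}^{n-k}$.

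The gap is in the way you try to supply this: you claim the mass-convergence hypothesis of \cite[Theorem~2.3]{DDNL18mono} is ``satisfied coordinate-wise.'' That theorem does \emph{not} have a per-slot hypothesis about the pure masses $\int_X\theta_{\varphi_i}^n$ and $\int_X\theta_{V_\theta}^n$; its hypothesis is the convergence (or the appropriate one-sided inequality) of the \emph{full mixed mass} $\int_X\theta_{\varphi_i}^k\wedge\theta_{V_\theta}^{n-k}$. That is exactly the quantity whose convergence you are trying to establish, so invoking the theorem at this point is circular. For a decreasing sequence the monotonicity theorem already gives $\lim_i\int_X\theta_{\varphi_i}^k\wedge\theta_{V_\theta}^{n-k}\ge\int_X\theta_{\varphi}^k\wedge\theta_{V_\theta}^{n-k}$, but the equality one needs (and which the hypothesis of the present lemma only provides for $k=n$) is genuinely nontrivial for $0<k<n$. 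This is precisely why, in the paper's own proof of \cref{thm:convdsmeasures}, the mixed mass convergence is first established by a separate argument (citing \cite[Theorem~4.2]{Xia21}) \emph{before} the convergence theorem of \cite{DDNL18mono} is applied.

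The paper's proof of this lemma takes a different route that sidesteps the missing mixed-mass step entirely: by \cite[Corollary~4.7]{DDNLmetric}, any decreasing sequence $\varphi_i$ with positive-mass limit converges in $d_S$ to $\psi:=\inf_i P[\varphi_i]$. One then observes that $\varphi$ is more singular than $\psi$ (by comparing $\varphi_i$ with $P[\varphi_i]+\sup_X\varphi_0$), and the hypothesis $\int_X\theta_{\varphi_i}^n\to\int_X\theta_{\varphi}^n$ forces $\int_X\theta_\psi^n=\int_X\theta_\varphi^n$, whence $\psi=P[\varphi]$ by \cite[Theorem~3.12]{DDNL18mono}; this gives $\varphi_i\xrightarrow{d_S}\varphi$. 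If you want to salvage your approach, you would effectively need to reprove some version of this envelope argument to get the mixed masses to converge, so the sandwich-estimate strategy does not yield a shorter or more elementary proof here.
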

\begin{proof}
By \cite[Corollary~4.7]{DDNLmetric}, if we set  $\psi\coloneqq \inf_i P[\varphi_i]$, then $\varphi_i\xrightarrow{d_S}\psi$. But observe that
\[
\varphi_i\leq P[\varphi_i]+\sup_X \varphi_i\leq P[\varphi_i]+\sup_X \varphi_0.
\]
Letting $i\to \infty$, we find that $\varphi$ is more singular that $\psi$. But by \eqref{eq:volcontviv}, $\int_X \theta_{\varphi}^n=\int_X \theta_{\psi}^n$, it follows that $\psi=P[\varphi]$ by \cite[Theorem~3.12]{DDNL18mono}. 
So $\varphi_i\xrightarrow{d_S}\varphi$.
\end{proof}

Let us mention that the notion of $\mathcal{I}$-goodness is global on $X$, but in some geometric situation, $\mathcal{I}$-goodness can be testified by the local growth condition of the metric. One notable example is the toroidal singularities introduced in \cite{BBGHdJ21}. 

As a special case, when $X$ is a smooth toric variety and $L$ is a toric invariant line bundle. Then any toric invariant psh metric on $L$ is $\mathcal{I}$-good. This is also an unpublished result of Yi Yao. In fact, this result also follows from a direct but somewhat lengthy computation.

There are also a large number of $\mathcal{I}$-good singularities with geometric origins. For example, one can construct them using \cite[Theorem~4.7]{DX22}.

A number of pluripotential-theoretic operations preserve the class of $\mathcal{I}$-good singularities, as explained in \cite[Lemma~2.9]{Xia23Operations}, \cref{prop:Igoodsum}. These give abundance of examples of $\mathcal{I}$-good singularities.

Another class of nice singularities on a line bundle is the so-called full mass singularities:
We say $\varphi$, $\hat{L}$ or $h$ has \emph{full mass} if $\int_X (\ddc h)^n=\int_X \theta_{V_{\theta}}^n$. See \cite{DDNL18fullmass}.

\section{Singular metrics on vector bundles}
Let $X$ be a complex manifold of pure dimension $n$.

\subsection{Singular Hermitian forms}
Let $V$, $V'$ be finite-dimensional complex linear spaces. 

We write $\Herm(V)$ for the set of semi-positive definite Hermitian forms on $V$. By definition, $h\in \Herm(V)$ is a sesquilinear form $h:V\times V\rightarrow \mathbb{C}$ such that
\begin{enumerate}
    \item $h(x,y)=\overline{h(y,x)}$ for all $x,y\in V$.
    \item $h(x,x)\geq 0$ for all $x\in V$.
\end{enumerate}
We can equivalently view $h$ as a map $V\rightarrow [0,\infty)$ by sending $x\in V$ to $h(x,x)$ satisfying 
\[
h(x+y)+h(x-y)=2h(x)+2h(y)
\]
for all $x,y\in V$ and
\[
h(cx)=|c|^2h(x)
\]
for all $x\in V$, $c\in \mathbb{C}$.

\begin{definition}
A \emph{singular Hermitian form} on $V$ is a map $h:V\rightarrow [0,\infty]$, such that 
\begin{enumerate}
    \item $V_{\fin}\coloneqq \{x\in V:h(x)<\infty\}$ is a linear subspace.
    \item $h|_{V_{\fin}}\in \Herm(V_{\fin})$.
\end{enumerate}
We write $\Herm^{\infty}(V)$ for the set of singular Hermitian forms on $V$.

We say $h$ is \emph{finite} if $h$ does not take the value $\infty$ and 
\emph{non-degenerate} if $h|_{V_{\fin}}$ is positive definite.
\end{definition}

Let $h\in \Herm^{\infty}(V)$. Write $N=h^{-1}(0)$. Observe that $N$ is a linear subspace of $V_{\fin}$. Then $h$ induces a non-degenerate Hermitian form $\tilde{h}$ on $V_{\fin}/N$. Let $\tilde{h}^{\vee}:(V_{\fin}/N)^{\vee}\rightarrow [0,\infty)$ denote the dual Hermitian form of $\tilde{h}$.

Write $V^{\vee}_{\fin}\coloneqq \{\ell\in V^{\vee}:\ell|_N=0\}$. Given $\ell\in V^{\vee}_{\fin}$, $\ell|_{V_{\fin}}$ therefore induces a linear form $\tilde{\ell}\in (V_{\fin}/N)^{\vee}$. We define
\[
h^{\vee}(\ell)=\tilde{h}^{\vee}(\tilde{\ell}).
\]
We extend $h^{\vee}$ to be $\infty$ outside $V^{\vee}_{\fin}$. It is easy to see that $h^{\vee}\in \Herm^{\infty}(V^{\vee})$.
\begin{definition}
Given $h\in \Herm^{\infty}(V)$, we call $h^{\vee}\in \Herm^{\infty}(V^{\vee})$ defined above the \emph{dual Hermitian form} of $h$.
\end{definition}
\begin{proposition}[{\cite[Lemma~3.1]{LRRS18}}]
Let $h\in \Herm^{\infty}(V)$, under the canonical identification $V\cong V^{\vee\vee}$, we have $h^{\vee\vee}=h$.
\end{proposition}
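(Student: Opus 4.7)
The plan is to unwind both layers of the duality construction and reduce everything to the standard biduality for a positive definite Hermitian form on a finite-dimensional space.

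First I would carefully identify the ``finite'' subspace and the null space of $h^{\vee}$. By construction $h^{\vee}(\ell) = \infty$ precisely when $\ell \notin N^{\perp}$, so the finite locus of $h^{\vee}$ is exactly $V^{\vee}_{\fin} = N^{\perp} \subseteq V^{\vee}$. Next, since $\tilde h$ is positive definite on the finite-dimensional quotient $V_{\fin}/N$, its ordinary dual $\tilde h^{\vee}$ is positive definite (in particular finite-valued) on $(V_{\fin}/N)^{\vee}$. Under the surjection $N^{\perp} \twoheadrightarrow (V_{\fin}/N)^{\vee}$, $\ell \mapsto \tilde\ell$, whose kernel is the annihilator $V_{\fin}^{\perp}$, the equation $h^{\vee}(\ell)=0$ becomes $\tilde h^{\vee}(\tilde\ell)=0$, which by non-degeneracy of $\tilde h^{\vee}$ forces $\tilde \ell = 0$. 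Hence the null space of $h^{\vee}$ is exactly $V_{\fin}^{\perp}$.

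Applying the same recipe to $h^{\vee}$, the finite locus of $h^{\vee\vee}$ in $V^{\vee\vee}$ is the annihilator of $V_{\fin}^{\perp}$, and its null space is the annihilator of $N^{\perp}$. Under the canonical identification $V^{\vee\vee} \cong V$ these become $V_{\fin}$ and $N$ respectively, matching $h$. In particular $h^{\vee\vee}(x) = \infty = h(x)$ for $x \notin V_{\fin}$, and $h^{\vee\vee}(x) = 0 = h(x)$ for $x \in N$. It remains to compare $h^{\vee\vee}$ and $h$ on $V_{\fin}$, and by the analysis above both descend to genuine Hermitian forms on $V_{\fin}/N$; so I only need to show $\widetilde{h^{\vee\vee}} = \tilde h$ on $V_{\fin}/N$.

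This last step is the purely finite-dimensional, non-degenerate case: for a positive definite Hermitian form $\tilde h$ on the finite-dimensional space $W := V_{\fin}/N$, the dual $\tilde h^{\vee}$ on $W^{\vee}$ is the form whose Gram matrix (in dual bases) is the inverse of that of $\tilde h$, and iterating once more returns $\tilde h$ under the canonical $W \cong W^{\vee\vee}$. The main obstacle, as I see it, is purely bookkeeping: keeping straight the two different canonical maps (the restriction-and-quotient map $N^{\perp} \to (V_{\fin}/N)^{\vee}$ and the identification $V_{\fin} \hookrightarrow V \cong V^{\vee\vee}$ composed with quotient by $N$) so that $\tilde{h}^{\vee\vee}$ really is computed relative to the same canonical isomorphism $W \cong W^{\vee\vee}$ used in the biduality, rather than via some twist. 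Once this diagram is set up, the equality $h^{\vee\vee} = h$ is immediate from the finite-dimensional statement.
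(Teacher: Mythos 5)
The paper does not actually prove this proposition; it cites it to \cite[Lemma~3.1]{LRRS18}, so there is no in-paper argument to compare against. Judged on its own, your proof is correct and complete, and this reduction is exactly the natural one. The key bookkeeping facts you isolate are accurate: since $\tilde h^{\vee}$ is positive definite on the finite-dimensional space $(V_{\fin}/N)^{\vee}$, the finite locus of $h^{\vee}$ is all of $V^{\vee}_{\fin}=N^{\perp}$, and the vanishing locus of $h^{\vee}$ is the kernel of the restriction-and-quotient map $N^{\perp}\to (V_{\fin}/N)^{\vee}$, namely $V_{\fin}^{\perp}$. Applying the paper's construction once more then forces the finite locus of $h^{\vee\vee}$ to be $(V_{\fin}^{\perp})^{\perp}\cong V_{\fin}$ and its null space to be $(N^{\perp})^{\perp}\cong N$, and the middle comparison is genuinely the finite-dimensional biduality $(\tilde h^{\vee})^{\vee}=\tilde h$ once one checks that the induced isomorphism $N^{\perp}/V_{\fin}^{\perp}\cong (V_{\fin}/N)^{\vee}$ intertwines $\widetilde{h^{\vee}}$ with $\tilde h^{\vee}$ and that the evaluation map $V_{\fin}\to (N^{\perp}/V_{\fin}^{\perp})^{\vee}$ agrees with the canonical $V_{\fin}/N\cong (V_{\fin}/N)^{\vee\vee}$. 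You flag this last diagram chase as the main obstacle, and indeed it is the only place where a careless identification could go wrong; but as you set it up all the arrows are the standard ones, so the check is routine. Nothing is missing.
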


\begin{definition}\label{def:tensor}
Let $h\in \Herm^{\infty}(V)$, $h'\in \Herm^{\infty}(V')$. Assume one of the following conditions hold
\begin{enumerate}
    \item $h$, $h'$ are both non-degenerate or both finite.
    \item $h$ or $h'$ is both non-degenerate and finite.
\end{enumerate}
We define $h\otimes h'\in \Herm^{\infty}(V\otimes V')$ as follows: the set $(V\otimes V')_{\fin}$ is defined as 
\[
(V\otimes V')_{\fin}\coloneqq V_{\fin}\otimes V'_{\fin}.
\]
We define $(h\otimes h')_{(V\otimes V')_{\fin}}$ as the usual tensor product.
\end{definition}
The two conditions are to ensure that we do not get a product like $0\cdot \infty$. In fact, without these assumptions, \cref{prop:tens} fails. If one of these conditions are satisfied, we say $h\otimes h'$ \emph{is defined}.

By inspection, we find:
\begin{proposition}\label{prop:tens}
Let $h\in \Herm^{\infty}(V)$, $h'\in \Herm^{\infty}(V')$. Assume that $h\otimes h'$ is defined.
Then under the canonical identification $(V\otimes V')^{\vee}\rightarrow V^{\vee}\otimes V'^{\vee}$, we have $(h\otimes h')^{\vee}=h^{\vee}\otimes h'^{\vee}$.
\end{proposition}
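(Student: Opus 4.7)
The plan is to reduce the identity to the classical tensor--dual formula for non-degenerate Hermitian forms on finite-dimensional vector spaces by simultaneously diagonalizing $h$ and $h'$.

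First, I would choose a basis $e_1,\ldots,e_n$ of $V$ adapted to the flag $N\subseteq V_{\fin}\subseteq V$, where $N:=h^{-1}(0)$: applying the spectral theorem to $\tilde h$ on $V_{\fin}$ together with any complement of $V_{\fin}$ in $V$, one arranges $e_1,\ldots,e_k$ to span $N$, $e_1,\ldots,e_r$ to span $V_{\fin}$, and $h(e_i,e_j)=\lambda_i\delta_{ij}$ with $\lambda_i=0$ for $i\leq k$ and $\lambda_i>0$ for $k<i\leq r$. An analogous basis $f_1,\ldots,f_m$ of $V'$ with data $(k',r',\mu_j)$ is chosen for $h'$.

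A direct unwinding of the definition of the dual then shows that $h^{\vee}$ is diagonal in the dual basis $\{e_i^{\vee}\}$ with values $\infty$ for $i\leq k$, $\lambda_i^{-1}$ for $k<i\leq r$, and $0$ for $i>r$; similarly for $h'^{\vee}$ in $\{f_j^{\vee}\}$. Under the canonical isomorphism $(V\otimes V')^{\vee}\cong V^{\vee}\otimes V'^{\vee}$, the basis $\{e_i^{\vee}\otimes f_j^{\vee}\}$ is dual to $\{e_i\otimes f_j\}$, and both singular Hermitian forms $(h\otimes h')^{\vee}$ and $h^{\vee}\otimes h'^{\vee}$ are diagonal in it: the former because $h\otimes h'$ is itself diagonal in $\{e_i\otimes f_j\}$ and the dual of a diagonal form is diagonal, the latter by the very definition of the tensor product in $\Herm^{\infty}$. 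It thus suffices to match the values on each basis vector. For $h^{\vee}\otimes h'^{\vee}$ this value is simply $h^{\vee}(e_i^{\vee})\cdot h'^{\vee}(f_j^{\vee})$; for $(h\otimes h')^{\vee}$ one identifies the induced non-degenerate form on the quotient $(V_{\fin}\otimes V'_{\fin})/(N\otimes V'_{\fin}+V_{\fin}\otimes N')\cong (V_{\fin}/N)\otimes (V'_{\fin}/N')$ with $\tilde h\otimes \widetilde{h'}$, and applies the classical identity $(\tilde h\otimes \widetilde{h'})^{\vee}=\tilde h^{\vee}\otimes \widetilde{h'}^{\vee}$ for ordinary Hermitian forms in finite dimensions.

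The only real subtlety is that the formal product $h^{\vee}(e_i^{\vee})\cdot h'^{\vee}(f_j^{\vee})$ a priori involves the ill-defined expression $0\cdot\infty$ in the index ranges $(i\leq k,\ j>r')$ and $(i>r,\ j\leq k')$, and in these same ranges the finite parts and null spaces of the two candidate duals need not coincide. This is exactly the obstruction ruled out by the hypothesis of \cref{def:tensor}: condition (1) forces either $k=k'=0$ or $r=n$ and $r'=m$, and condition (2) forces, say, $k=0$ and $r=n$, so that in every admissible case the two problematic index ranges are empty. A mechanical check through the remaining nine $(i,j)$-type combinations then yields equality of the diagonal entries of $(h\otimes h')^{\vee}$ and $h^{\vee}\otimes h'^{\vee}$, completing the proof.
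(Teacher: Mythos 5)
Your verification is correct, and it fills a gap the paper leaves implicit: the paper only says ``By inspection, we find:'' and supplies no argument. The diagonalization strategy is the natural one, and you have identified the genuine content of the statement. Choosing bases adapted to the flags $N\subseteq V_{\fin}\subseteq V$ and $N'\subseteq V'_{\fin}\subseteq V'$, one sees that $h^{\vee}$ is diagonal in the dual basis with entries $\infty$ on $N^{\vee}$-complement directions, $\lambda_i^{-1}$ on the interior range, and $0$ on the $V_{\fin}$-complement directions; and the entire point of the hypothesis in \cref{def:tensor} is to rule out the two index ranges where the naive product of diagonal entries would be $0\cdot\infty$ --- precisely the ranges where the finite-part and null-space descriptions of $(h\otimes h')^{\vee}$ and $h^{\vee}\otimes h'^{\vee}$ would actually disagree (one would give $0$, the other $\infty$). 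This is exactly what the paper means by the remark following \cref{def:tensor} that without those hypotheses the proposition fails. Your case analysis is sound. One trivial quibble: after excluding the two problematic $(i,j)$-type ranges, seven (not nine) combinations remain to be checked among the $3\times 3$ possibilities; this is a bookkeeping slip with no bearing on the argument.
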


\subsection{Singular metrics on vector bundles}\label{subsec:singmonvb}

\begin{definition}\label{def:singHerm}
Let $E$ be a holomorphic vector bundle on $X$. A \emph{singular Hermitian metric} $h$ on $X$ is an assignment
\[
X\in x\mapsto h_x:E_x\rightarrow [0,\infty],
\]
satisfying
\begin{enumerate}
    \item $h_x\in \Herm^{\infty}(E_x)$.
    \item For each local section $\xi$ of $E$, $h_x(\xi)$ is a measurable function in $x$.
\end{enumerate}

We say $h$ is \emph{finite} (resp. \emph{non-degenerate}) if $h_x$ is finite (resp. non-degenerate) for all $x\in X$.
\end{definition}

\begin{definition}\label{def:grif}
Let $\hat{E}=(E,h)$ be a holomorphic vector bundle $E$ on $X$ together with a singular Hermitian metric $h$. We say $\hat{E}$ is \emph{Griffiths negative} or $h$ is \emph{Griffiths negative} if 
\[
\chi_h(x,\xi)\coloneqq \log h_x(\xi) \quad (x\in X, \xi\in E_x)
\]
is psh on the total space of $E$. We say $\hat{E}$ is \emph{Griffiths positive} or $h$ is \emph{Griffiths positive} if the dual $\hat{E}^{\vee}$ is Griffiths negative.
\end{definition}
See \cite{Rau15} for details. We only mention that when $h$ is smooth, these notions reduce to the usual notion of Griffiths negativity and Griffiths positivity in terms of the curvature.

Observe that if $\hat{E}$ is Griffiths negative (resp. Griffiths positive), then $h$ is finite (resp. non-degenerate).

Let $\Vect(X)$ denote the category of vector bundles on $X$. Let $\widehat{\Vect}(X)$ denote the category of vector bundles endowed with a \emph{Griffiths positive} metric. A morphism between $\hat{E}=(E,h_E)$ and $\hat{F}=(F,h_F)$ is a morphism $E\rightarrow F$ in $\Vect(X)$.
Write $\widehat{\Pic}(X)$ for the full subcategory of $\widehat{\Vect}(X)$ consisting of pairs $(L,h_L)$ with $L$ of rank $1$.

We observe that the $\widehat{\Vect}(X)$'s for various $X$ (adding the constant singular metric $\infty$) is fibered over the category of connected complex manifolds in the sense of \cite[Exposé~VI]{SGA1}:
\begin{lemma}\label{lma:pullGp}
Let $f:Y\rightarrow X$ be a morphism of connected complex manifolds, $\hat{E}=(E,h_E)\in \widehat{\Vect}(X)$. Then the pull-back $f^*\hat{E}=(f^*E,f^*h_E)\in \widehat{\Vect}(Y)$ unless $f^*h_E$ is constant $\infty$.
\end{lemma}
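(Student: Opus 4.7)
The plan is to verify Griffiths positivity of $f^*\hat{E}$ by going through Definition \ref{def:grif} in its Griffiths-negative form on the dual, and then invoking the classical fact that pullback of a psh function by a holomorphic map is psh (up to the $\equiv -\infty$ degeneration).

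First I would observe that dualisation commutes with pullback of singular Hermitian forms: for each $y\in Y$ one has $(f^*E)_y=E_{f(y)}$, and fibrewise the dual of $(f^*h_E)_y=h_{E,f(y)}$ is $h_{E^\vee,f(y)}=(f^*h_{E^\vee})_y$. Thus $(f^*\hat{E})^\vee=f^*(\hat{E}^\vee)$, and showing $f^*\hat{E}$ is Griffiths positive reduces to showing $\chi_{f^*h_{E^\vee}}=\log f^*h_{E^\vee}$ is psh on $\mathrm{Tot}(f^*E^\vee)$.

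Next I would use that $f$ lifts canonically to a holomorphic map $\tilde f:\mathrm{Tot}(f^*E^\vee)\to \mathrm{Tot}(E^\vee)$ and that by construction $\chi_{f^*h_{E^\vee}}=\chi_{h_{E^\vee}}\circ \tilde f$. Since $\hat{E}$ is Griffiths positive, $\chi_{h_{E^\vee}}$ is psh on $\mathrm{Tot}(E^\vee)$; the standard pullback principle then says its composition with $\tilde f$ is psh on every connected component of $\mathrm{Tot}(f^*E^\vee)$ on which it is not identically $-\infty$. Because $Y$ is connected and $f^*E^\vee$ is a holomorphic vector bundle (a fibre bundle with connected fibres $\mathbb{C}^{\rank E}$), $\mathrm{Tot}(f^*E^\vee)$ is itself connected, so only one bad alternative can occur: $\chi_{f^*h_{E^\vee}}\equiv -\infty$.

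Finally I would identify this exception with the exclusion in the statement. Writing out the definition of the dual Hermitian form recalled before \cref{def:tensor}, the equality $h^\vee\equiv 0$ on $V^\vee$ forces $V_{\mathrm{fin}}(h)=0$, i.e.\ $h$ takes the value $\infty$ on every nonzero vector. Applied pointwise for $y\in Y$, $f^*h_{E^\vee}\equiv 0$ gives $f^*h_E(\xi)=\infty$ for every nonzero $\xi$, which is precisely the ``$f^*h_E$ constant $\infty$'' case excluded in the lemma. I expect the main (minor) obstacle to be the careful bookkeeping of the dualisation of singular Hermitian forms in this last step; everything else is a routine application of pullback of psh functions.
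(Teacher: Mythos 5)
Your proof is correct and follows essentially the same route as the paper's: reduce Griffiths positivity of $f^*\hat{E}$ to Griffiths negativity of its dual via $(f^*\hat{E})^{\vee}=f^*(\hat{E}^{\vee})$, then invoke the pullback of psh functions along the lift $\tilde f$ to total spaces, with the $\equiv-\infty$ alternative corresponding to $f^*h_E\equiv\infty$ away from the zero section. The paper condenses this into a one-line reduction plus a citation of \cite[Lemma~2.3.2]{PT18}; you have simply spelled out the details of the cited argument, including the correct identification of the degenerate case via the dual Hermitian form.
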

\begin{proof}
It is clear that $(f^*\hat{E})^{\vee}=f^*\hat{E}^{\vee}$, so the problem is equivalent to the corresponding problem with negative curvature instead of positive curvature. Argue as in \cite[Lemma~2.3.2]{PT18}.
\end{proof}

A basic fact about Griffiths positive vector bundles is
\begin{proposition}[{\cite[Proposition~3.1]{BD08}}]\label{prop:appgri}
Let $\hat{E}=(E,h_E)\in \widehat{\Vect}(\Delta^n)$. Then up to replacing $\Delta^n$ by a smaller polydisk, there is a sequence of smooth Griffiths positive metrics $h^i$ on $E$ increasing pointwisely to $h_E$.
\end{proposition}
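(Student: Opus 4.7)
The plan is to reduce to the Griffiths negative setting via duality and then regularize by convolution. Since duality reverses order (for nondegenerate Hermitian forms, $h_1 \le h_2$ implies $h_1^\vee \ge h_2^\vee$) and by definition $\hat{E}$ is Griffiths positive iff $\hat{E}^\vee$ is Griffiths negative, it suffices to construct a decreasing sequence $g^i$ of smooth Griffiths negative metrics on $E^\vee$ on a smaller polydisk converging pointwise to $h_E^\vee$; then $h^i := (g^i)^\vee$ will be an increasing sequence of smooth Griffiths positive metrics with $h^i \nearrow h_E$.

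To construct $g^i$, fix a holomorphic trivialization $E^\vee|_{\Delta^n} \simeq \Delta^n \times \mathbb{C}^r$, in which $h_E^\vee$ is represented by a measurable function $x \mapsto H(x)$ with values in semi-positive definite Hermitian matrices, satisfying that $\psi(x,\xi) := \log\langle H(x)\xi,\xi\rangle$ is psh on $\Delta^n \times \mathbb{C}^r$. On a relatively compact sub-polydisk $\Delta'^n \Subset \Delta^n$ and for $\epsilon>0$ sufficiently small, define
\[
    H_\epsilon(x) := (H * \rho_\epsilon)(x) = \int H(x-y)\,\rho_\epsilon(y)\,dy,
\]
the entry-wise convolution with a radially symmetric smooth mollifier $\rho_\epsilon$. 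Each $H_\epsilon$ is smooth and positive semi-definite on $\Delta'^n$, and the corresponding metric on $E^\vee|_{\Delta'^n}$ is the candidate $g_\epsilon$.

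The main step, and the only genuinely nontrivial point, is to verify that $g_\epsilon$ is Griffiths negative, i.e.\ that $(x,\xi)\mapsto \log\langle H_\epsilon(x)\xi,\xi\rangle$ is psh jointly in $(x,\xi)$. The strategy is to rephrase this as the log-plurisubharmonicity of an integral of log-psh functions: for each fixed $y$, the function $(x,\xi)\mapsto \langle H(x-y)\xi,\xi\rangle$ is log-psh by hypothesis, and
\[
    \langle H_\epsilon(x)\xi,\xi\rangle = \int \langle H(x-y)\xi,\xi\rangle\,\rho_\epsilon(y)\,dy
\]
is a positive-measure integral of such functions. The key lemma is that the log-psh cone is closed under positive integrals; this follows from the convexity and componentwise monotonicity of $(u_1,\ldots,u_N)\mapsto \log\sum_i e^{u_i}$ (via an approximation of the integral by Riemann sums), which maps psh functions to psh functions. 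This is where I expect to spend most of the effort, and the point I would most carefully write out.

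Given this, the remaining steps are routine. Monotonicity $H_\epsilon \searrow H$ on $\Delta'^n$ follows from the fact that for each fixed $\xi$ the function $x \mapsto \langle H(x)\xi,\xi\rangle$ is psh (being log-psh), so its standard mollifications decrease to it pointwise. Choosing any sequence $\epsilon_i \searrow 0$ then yields a decreasing sequence $g^i := g_{\epsilon_i}$ of smooth Griffiths negative metrics with $g^i \searrow h_E^\vee$. Non-degeneracy of $g^i$, needed to form $(g^i)^\vee$ as a finite Griffiths positive metric, follows from the non-degeneracy of $h_E$ (hence of $H$ on a dense open set) together with the strict positivity of $\rho_\epsilon$ on its support, which forces $H_\epsilon$ to be strictly positive definite on a slightly smaller polydisk. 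Taking duals then produces the desired sequence $h^i \nearrow h_E$.
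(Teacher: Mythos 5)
Your strategy is exactly the Berndtsson--P\u{a}un convolution argument that the paper cites (\cite{BD08}, Proposition~3.1), transported to the Griffiths positive side by duality. The core of the argument --- that $(x,\xi)\mapsto\langle H_\epsilon(x)\xi,\xi\rangle$ is log-psh because a positive-measure superposition of log-psh functions is log-psh, and that $H_\epsilon\searrow H$ because for fixed $\xi$ the map $x\mapsto\langle H(x)\xi,\xi\rangle$ is psh --- is correct and is precisely what the cited reference does.

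However, the final non-degeneracy step is not correct as you state it. You claim that non-degeneracy of $h_E$ forces $H=h_E^{\vee}$ to be positive definite on a dense open set, so that $H_\epsilon$ becomes strictly positive definite after convolution. This inference fails: non-degeneracy of a Griffiths positive $h_E$ does \emph{not} preclude $h_E$ taking the value $\infty$ on a fixed subspace at every point, in which case $H$ has a constant kernel and so does every $H_\epsilon$. A concrete example on $\Delta\times\mathbb{C}^2$ is $H(x)=\mathrm{diag}(e^{|x|^2},0)$, whose log-kernel function $|x|^2+2\log|\xi_1|$ is psh, so $H$ is Griffiths negative, finite, and degenerate at every point; dually $h_E=\mathrm{diag}(e^{-|x|^2},\infty)$ is Griffiths positive and non-degenerate, yet $\det H\equiv 0$. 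In this situation $(g^i)^{\vee}$ is not smooth. The standard repair is to replace $H_{\epsilon_i}$ by $H_{\epsilon_i}+\delta_i\,\mathrm{Id}$ with $\epsilon_i,\delta_i\searrow 0$ and $\delta_i$ decreasing: the perturbation is a sum of log-psh terms, hence still Griffiths negative; it is strictly positive definite; it is still decreasing; and it still tends to $H$. With that correction your proof is complete and agrees in substance with the paper's reference.
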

The sequence $h^i$ is increasing in the sense that for any vector $v\in E$, $h^i(v,v)$ is increasing in $i$.

\begin{corollary}
The tensor product (resp. direct sum) of $\hat{E},\hat{F}\in \widehat{\Vect}(X)$ is in $\widehat{\Vect}(X)$.
\end{corollary}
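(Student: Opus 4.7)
The strategy is to reduce both statements to the corresponding facts for smooth Griffiths positive metrics and then pass to a monotone limit on the dual bundle. Since Griffiths positivity is a local property on $X$, I may work on a polydisk $\Delta^n\subseteq X$. By \cref{prop:appgri}, after shrinking the polydisk there exist sequences of smooth Griffiths positive metrics $h_E^i\nearrow h_E$ on $E|_{\Delta^n}$ and $h_F^i\nearrow h_F$ on $F|_{\Delta^n}$.

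For smooth Hermitian metrics, Griffiths positivity is preserved under direct sums and tensor products; this is classical. For the direct sum, the induced metric on the dual $E^\vee\oplus F^\vee$ at a point $(\ell,\mu)$ is $h_E^\vee(\ell)+h_F^\vee(\mu)$, whose logarithm is psh on the total space because $\log h_E^\vee$ and $\log h_F^\vee$ pull back to psh functions and $(a,b)\mapsto\log(e^a+e^b)$ is convex and non-decreasing in each variable. For the tensor product the smooth statement is standard. Hence $h_E^i\oplus h_F^i$ and $h_E^i\otimes h_F^i$ are smooth Griffiths positive for each $i$.

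To pass to the limit I would check that after dualizing these sequences become decreasing sequences of psh functions on the total space of the dual of the direct sum, resp.\ tensor product, with pointwise limits $(h_E\oplus h_F)^\vee$, resp.\ $(h_E\otimes h_F)^\vee$; for the latter \cref{prop:tens} applies because Griffiths positive metrics are non-degenerate, so the hypotheses of \cref{def:tensor} are met. A decreasing limit of psh functions is psh unless identically $-\infty$, and the non-degeneracy of $h_E$ and $h_F$ rules this out. Thus the limiting dual metric is Griffiths negative, equivalently the direct sum resp.\ tensor product metric is Griffiths positive. The main delicate point, and the one I would spend most effort on, is justifying that dualizing commutes with the monotone limit at fibers where one of $h_E, h_F$ takes the value $\infty$, so that the singular Hermitian form framework of \cref{def:tensor}--\cref{prop:tens} genuinely applies to the limit rather than only to each member of the approximating sequence.
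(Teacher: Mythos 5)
Your approach is the paper's intended one: the Corollary is placed immediately after \cref{prop:appgri} precisely so that one approximates by smooth Griffiths positive metrics and passes to a monotone limit on the dual, and the remark following the Corollary (the tensor product is always defined since $h_E,h_F$ are non-degenerate) addresses exactly the same point about \cref{def:tensor} that you raise. Note also that for the direct sum, your convexity argument via $(a,b)\mapsto\log(e^a+e^b)$ in fact proves the claim directly for the singular metrics with no approximation at all, once one observes that $(h_E\oplus h_F)^\vee(\ell,\mu)=h_E^\vee(\ell)+h_F^\vee(\mu)$ holds in the singular Hermitian framework.

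The step you flag for the tensor product is true, and is a fiberwise compactness argument. Fix a fiber $V$ with $h^i\nearrow h$, each $h^i$ positive definite smooth, $h$ non-degenerate. Then $h^{i,\vee}\geq h^\vee$ on $V^{\vee}$ because $h^i\leq h$ on $V_{\fin}$. For the reverse direction, suppose $\inf_i h^{i,\vee}(\ell)>h^\vee(\ell)+\varepsilon$ for some $\ell\in V^{\vee}$ and $\varepsilon>0$; choose unit vectors $v_i$ with $|\ell(v_i)|^2/h^i(v_i)>h^\vee(\ell)+\varepsilon/2$ and extract a convergent subsequence $v_{i_j}\to v_*$. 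For any fixed $i_0$ and $j$ large one has $h^{i_0}(v_{i_j})\leq h^{i_j}(v_{i_j})\leq C$, where $C$ is a uniform bound coming from $|\ell|$, so letting $j\to\infty$ and then $i_0\to\infty$ gives $h(v_*)\leq C<\infty$, i.e.\ $v_*\in V_{\fin}$. Since moreover $\liminf_j h^{i_j}(v_{i_j})\geq h(v_*)>0$, passing to the limit in the defining inequality yields $|\ell(v_*)|^2/h(v_*)\geq h^\vee(\ell)+\varepsilon/2$, contradicting the supremum definition of $h^\vee$ over $V_{\fin}\setminus\{0\}$. Thus $h^{i,\vee}\searrow h^\vee$. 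Since the Kronecker product of positive semidefinite Hermitian forms is order-preserving and continuous in the matrix entries, it follows that $h_E^{i,\vee}\otimes h_F^{i,\vee}\searrow h_E^\vee\otimes h_F^\vee$, which equals $(h_E\otimes h_F)^\vee$ by \cref{prop:tens}, and your limiting plurisubharmonicity argument then closes the proof.
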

Observe that the tensor product of $h_E$ and $h_F$ is always defined as $h_E$, $h_F$ are both non-degenerate.

\subsection{The projective bundle}\label{subsec:projbundle}

Let $\hat{E}=(E,h_E)\in \widehat{\Vect}(X)$. Write $r+1=\rank E$. Let $p\colon \mathbb{P}E^{\vee}\rightarrow X$ be the projection. There is a natural injection
\[
\mathcal{O}_{\mathbb{P}E^{\vee}}(-1)\hookrightarrow p^*E^{\vee}.
\]
We remind the readers that our convention of $\mathbb{P}$ is such that $\mathbb{P}E^{\vee}$ consists of lines in $E^{\vee}$, see \cref{subsec:conv}.
We endow $\mathcal{O}_{\mathbb{P}E^{\vee}}(-1)$ with the induced subspace metric and write $\hO_{\mathbb{P}E^{\vee}}(-1)$ for the corresponding Hermitian line bundle.

Let us compute the local potential of this metric. Locally we may choose a basis $e_1,\ldots,e_{r+1}$ of $E$ and identify $E=X\times \mathbb{C}^{r+1}$. Consider a coordinate chart $U_i=\{(x,[\xi])\in \mathbb{P}E^{\vee}:\xi_i\neq 0\}$. In this chart, there is a canonical isomorphism
\[
\mathcal{O}_{\mathbb{P}E^{\vee}}(-1)\cong U_i\times \mathbb{C},\quad (x,[\xi];v)\mapsto (x,[\xi];v_i).
\]
Write $e^{\varphi_i}$ for the local potential corresponding to the metric on $\mathcal{O}(-1)|_{U_i}$ with respect to this coordinate chart.
We take the section $s_i$ of $\mathcal{O}_{\mathbb{P}E^{\vee}}(-1)$ corresponding to the $1$-section of $U_i\times \mathbb{C}$.
Now let $(x,[\xi])\in U_i$ and $v\in \mathcal{O}(-1)_{(x,[\xi])}=\mathbb{C}\xi$. Then
\[
h^{\vee}_x(v)=p^*h^{\vee}_{(x,[\xi])}(v)=|v/s_i(x,[\xi])|^2 e^{\varphi_i(x,[\xi])}=|v_i|^2e^{\varphi_i(x,[\xi])}.
\]
We thus find
\begin{equation}\label{eq:curv}
\varphi_i(x,[\xi])=\chi_{h^{\vee}}(x,\xi_0/\xi_i,\ldots,\xi_i/\xi_i,\ldots,\xi_r/\xi_i).
\end{equation}
In particular, $\hO(-1)$ is negatively curved. Write $\hO(1)$ for the dual Hermitian line bundle, we find that $\hO(1)$ is a positively curved line bundle on $\mathbb{P}E^{\vee}$. We remind the readers that in the whole paper, we only use the hat notation when the corresponding metric is positively-curved or negatively-curved. This differs from the common use in the literature.

We recall that we have the relative Segre embedding:
\begin{equation}\label{eq:iisoseg}
    i\colon \mathbb{P}E^{\vee}\times_X \mathbb{P}F^{\vee}\rightarrow \mathbb{P}(E\otimes F)^{\vee}.
\end{equation}
Under this embedding, we have
\[
i^*\mathcal{O}_{\mathbb{P}(E\otimes F)^{\vee}}(1)=\mathcal{O}_{\mathbb{P}E^{\vee}}(1)\boxtimes \mathcal{O}_{\mathbb{P}F^{\vee}}(1).
\]
By definition, 
\begin{equation}\label{eq:irestsegemb}
i^*\hO_{\mathbb{P}(E\otimes F)^{\vee}}(1)=\hO_{\mathbb{P}E^{\vee}}(1)\boxtimes \hO_{\mathbb{P}F^{\vee}}(1).
\end{equation}
When $\rank F=1$, $i$ is in fact an isomorphism.

\subsection{Finsler metrics}
Motivated by Griffiths' conjecture on ample vector bundles, Kobayashi \cite{Kob75} studied the Finsler metrics on a vector bundle, as a generalization of Hermitian metrics introduced above. By a simple observation of Kobayashi, Finsler metrics on a vector bundle $E$ on $X$ are in bijective correspondence with Hermitian metrics on $\mathcal{O}_{\mathbb{P}E^{\vee}}(1)$, so we will make use of the following convenient definition:
\begin{definition}
A \emph{Finsler metric} on $E\in \Vect(X)$ is a singular Hermitian metric $h$ on $\mathcal{O}_{\mathbb{P}E^{\vee}}(1)$. 
We say $h$ is \emph{Griffiths positive} if $h$ is positively curved as a metric on $\mathcal{O}_{\mathbb{P}E^{\vee}}(1)$.

We will write $\Fins(X)$ for the category of $\hat{E}=(E,h)$ consisting of a holomorphic vector bundle $E$ on $X$ and a Griffiths positive Finsler metric $h$ on $E$: a morphism from $(E,h)$ to $(F,h')$ is just a morphism from $E$ to $F$.
\end{definition}
\begin{remark}
When $\rank E=1$, a Finsler metric on $E$ is the same as a singular Hermitian metric on $E$.
\end{remark}

\begin{remark}
Note that each Hermitian metric induces canonically a Finsler metric, as we explained in \cref{subsec:projbundle}.
The notions of Griffiths positivity coincide in these two cases by the explicit formula \eqref{eq:curv}, this is also proved in \cite[Proposition~5.2]{LRSW18}. 

On the other hand, by \cite[Theorem~7.1]{LSY13}, given a \emph{smooth non-degenerate} Hermitian metric $h$ on $E$, we can recover the metric $h$ from the induced Finsler metric together with the induced metric on $K_{\mathbb{P}E^{\vee}/X}$. So we do not lose too much information when replacing the Hermitian metric by the corresponding Finsler metric.
\end{remark}

There are several motivations for the use of Finsler metrics: usually natural constructions in potential theory only lead to metrics on $\mathcal{O}(1)$. In general, there is no effective way of inducing a Griffiths positive metric on $E$ from a metric on $\mathcal{O}(1)$, so we are forced to consider $\mathcal{O}(1)$ instead. This is related to the difficulty in Griffiths conjecture.
On the other hand, Finsler metrics do occur naturally in many problems, see \cite{DW22} for example.

Observe that $\Fins(X)$ (include the singular metric $\infty$) is fibered over the category of connected complex manifolds: given a morphism of connected complex manifolds $f:Y\rightarrow X$, we can define $f^*\hat{E}$ for all $\hat{E}=(E,h_E)\in \Fins(X)$: the underlying vector bundle of $f^*\hat{E}$ is just $f^*E$; in order to define the Finsler metric, consider the Cartesian square
\[
\begin{tikzcd}
\mathbb{P}(f^*E)^{\vee} \arrow[r,"f'"] \arrow[d] \arrow[rd, "\square", phantom] & \mathbb{P}E^{\vee} \arrow[d] \\
Y \arrow[r, "f"]                      & X          
\end{tikzcd}.
\]
It is easy to see that $\mathcal{O}_{\mathbb{P}(f^*E)^{\vee}}(1)=f'^*\mathcal{O}_{\mathbb{P}E^{\vee}}(1)$ and we just define the metric on $f^*\hat{E}$ as the pull-back of the Finsler metric $h_E$, which is a Finsler metric on $f^*E$ as long as it is not identically $\infty$. 
When $\hat{E}\in \widehat{\Vect}(X)$, this construction coincides with the construction in \cref{lma:pullGp}.

Next, let us define the tensor product between $\hat{E}=(E,h_E)\in \Fins(X)$ and $\hat{L}=(L,h_L)\in \widehat{\Pic}(X)$. By definition, the underlying vector bundle of $\hat{E}\otimes \hat{L}$ is just $E\otimes L$. The Hermitian metric on on $\mathcal{O}_{\mathbb{P}(E\otimes L)^{\vee}}(1)$ is given by the tensor product between the induced metric on $\mathcal{O}_{\mathbb{P}E^{\vee}}(1)$ and $h_L$ under the canonical isomorphism \eqref{eq:iisoseg}. More generally, we can define the tensor product between $\hat{E}\in \Fins(X)$ and $\hat{L}\in \widehat{\Pic}(\mathbb{P}E^{\vee})$ in the same way.

\subsection{Special singularities on vector bundles}

\begin{definition}\label{def:goodmetric}
Assume that $X$ is projective and $D$ is a snc divisor in $X$. Let $E$ be a vector bundle on $X$. A smooth Hermitian metric $h$ on $E|_{X\setminus D}$ is \emph{good} with respect to $D$ if for any $x\in D$, we can find a coordinate chart $U\cong\Delta^n$ containing $x$ on which $E$ is trivialized by sections $e_1,\ldots,e_{r+1}$ and such that $D\cap U=(z_1\cdots z_k=0)$, such that if we set $h_{ij}=h(e_i,e_j)$, then
\begin{enumerate}
    \item $|h_{ij}|$, $(\det h)^{-1}$ are both bounded from above by $C \sum_{i=1}^k (-\log |z_i|)^m$ for some $C$ and $m$.
    \item The 1-forms $(\partial h\cdot h^{-1})_{ij}$ are good forms. 
\end{enumerate}
We also say $(E,h)$ is good with respect to $D$.
\end{definition}
Recall that a form $\alpha$ on $X\setminus D$ is good if $\alpha$ and $\mathrm{d}\alpha$ both have at worst Poincar\'e growth at the boundary $D$. See \cite{Mum77} for details.

\begin{proposition}
Let $(E,h_E)$ be a good vector bundle on $X\setminus D$ (with respect to $D$) and $L$ be a line bundle on $X$ with a smooth non-degenerate metric $h_L$. Then $(E,h_E)\otimes (L,h_L)|_{X\setminus D}$ is good with respect to $D$. 
\end{proposition}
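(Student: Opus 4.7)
The plan is to verify the two conditions of \cref{def:goodmetric} pointwise at each $x \in D$; on $X \setminus D$ both metrics are smooth and there is nothing to check.

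First I would fix $x \in D$ and, by the goodness hypothesis on $(E, h_E)$, pick a coordinate polydisk $U \cong \Delta^n$ around $x$ with a frame $e_1, \ldots, e_{r+1}$ of $E|_U$ for which $D \cap U = (z_1 \cdots z_k = 0)$ and for which the two defining conditions hold for the matrix $h_{ij} := h_E(e_i, e_j)$. Shrinking $U$ if necessary, I trivialize $L|_U$ by a nonvanishing holomorphic section $s$ and set $\ell := h_L(s, s) \in C^{\infty}(U, \mathbb{R}_{>0})$. Then $E \otimes L|_U$ is trivialized by the frame $e_i \otimes s$, and the Gram matrix of the tensor product metric is
\[
g_{ij} = (h_E \otimes h_L)(e_i \otimes s, e_j \otimes s) = \ell\, h_{ij}\,.
\]

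Next I would check condition (1). After further shrinking $U$ to a relatively compact subpolydisk, $\ell$ and $\ell^{-1}$ are both bounded, so $|g_{ij}| \leq \|\ell\|_\infty |h_{ij}|$ and $(\det g)^{-1} = \ell^{-(r+1)}(\det h_E)^{-1}$ satisfy the required log-polynomial bounds with the same $m$ (and with $C$ enlarged by a constant). For condition (2), a direct matrix computation yields
\[
\partial g \cdot g^{-1} = \bigl(\partial \ell \cdot h_E + \ell\, \partial h_E\bigr)\bigl(\ell^{-1} h_E^{-1}\bigr) = \partial h_E \cdot h_E^{-1} + \partial \log \ell \cdot I\,.
\]
The first summand has good entries by the hypothesis on $(E, h_E)$. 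The second is the scalar matrix whose only nonzero entries equal the smooth $(1,0)$-form $\partial \log \ell$ (smooth on all of $U$ because $\ell > 0$ is smooth there), which together with its differential $\bar{\partial}\partial \log \ell$ is smooth on $U$ and therefore trivially of Poincaré growth at $D$, hence good. Since good forms are closed under addition, condition (2) is verified.

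I do not expect any real obstacle: the essential observation is that twisting by a smoothly metrized line bundle shifts the connection matrix only by the scalar correction $\partial \log \ell \cdot I$, and smoothness of $\ell$ on the \emph{whole} polydisk (including across $D$) is enough to preserve both the log-polynomial bounds and the goodness of the connection forms. The only bookkeeping point is that one must choose a single coordinate polydisk trivializing both $E$ and $L$, which is automatic since \cref{def:goodmetric} is a germinal condition at $D$.
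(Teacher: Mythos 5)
Your proof is correct and follows essentially the same route as the paper: trivialize both bundles on a polydisk, write the tensor Gram matrix as $\ell\,h_{ij}$, observe that condition (1) is preserved since $\ell,\ell^{-1}$ are bounded near $D$, and compute that the connection matrix shifts only by the smooth scalar term $\partial\log\ell\cdot I$ (the paper writes this as $\rho^{-1}\partial\rho\cdot\delta_{ik}$). Your write-up is somewhat more detailed — in particular the explicit remark that $\partial\log\ell$ and $\mathrm{d}(\partial\log\ell)=\bar\partial\partial\log\ell$ are smooth across $D$ and hence trivially of Poincaré growth — but it is the same argument.
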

\begin{proof}
The problem is local, we can fix $U$, $e_i$, $h_{ij}$ as in \cref{def:goodmetric}. We trivialize $L$ on $U$ by a holomorphic section $e_0$ and write $\rho=h_L(e_0,e_0)$. Then $\rho$ is a bounded smooth function bounded away from $0$. Let $h'_{ij}=\rho h_{ij}$.  We will verify the two conditions. The first condition is obvious by now. As for the second, let us compute
\[
\sum_j\partial h'_{ij}\cdot h'^{jk}=\rho^{-1}\sum_j\partial(\rho h_{ij})\cdot h^{jk}=\sum_j \partial h_{ij}\cdot h^{jk}+\rho^{-1}\partial \rho \cdot \delta_{ik}. 
\]
Both parts are obviously good forms.
\end{proof}
In particular, in order to determine the goodness, we can always make a twist of the original bundle. In most cases, we can therefore assume that $E$ has some positivity properties.
A more general twist is as follows:
\begin{lemma}\label{lma:extend}Assume that $X$ is projective.
Assume that
$\hat{E}=(E,h_E)$ is a good Hermitian vector bundle with respect to a snc divisor $D$ in $X$. Let $\rank E=r+1$.
Let $\hat{L}=(L,h_L)\in \widehat{\Pic}(X)$ be an ample
line bundle together with a psh metric $h_L$ with log singularities along some snc $\mathbb{Q}$-divisor $D'$ with $|D'|=|D|$ such that $\ddc h_L-[D']$ is a smooth form.
Let $p\colon \mathbb{P}E^{\vee}\rightarrow X$ be the natural projection.
Assume that $\hO(1)\otimes p^*\hat{L}\in \widehat{\Pic}(\mathbb{P}E^{\vee}|_{X\setminus D})$ (namely, the tensor product metric is Griffiths positive), then
\begin{equation}\label{eq:temp12}
(\mathcal{O}(1)+p^*(L-D'))^{n+r}=\int_{\mathbb{P}E^{\vee}} \left( c_1(\hO(1))+p^*c_1(\hat{L})\right)^{n+r}.
\end{equation}
\end{lemma}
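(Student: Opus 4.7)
The plan is to reduce the identity to Mumford's theorem on good Hermitian metrics from \cite{Mum77}. The starting observation is that since $\hat{E}$ is good with respect to $D$, the Hermitian metric induced on the line bundle $\mathcal{O}_{\mathbb{P}E^{\vee}}(1)$ via the quotient $p^{*}E \twoheadrightarrow \mathcal{O}(1)$ is itself good with respect to $p^{-1}D$; this is a standard closure property of goodness which also passes to duals, tensor products and symmetric powers. In particular $c_1(\hat{\mathcal{O}}(1))$ is a smooth closed $(1,1)$-form on $\mathbb{P}E^{\vee}\setminus p^{-1}D$, and its Chern current on $\mathbb{P}E^{\vee}$ represents the algebraic class $c_1(\mathcal{O}(1))$.

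Writing $\hat{N}:=\hat{\mathcal{O}}(1)\otimes p^{*}\hat{L}$, I would next analyse its Chern current. The assumption that $c_1(\hat{L})-[D']$ is smooth on $X$ gives the decomposition
\[
c_1(\hat{N})=c_1(\hat{\mathcal{O}}(1))+p^{*}[D']+p^{*}\bigl(c_1(\hat{L})-[D']\bigr),
\]
in which the first and third summands are smooth on $\mathbb{P}E^{\vee}\setminus p^{-1}D$, while the middle summand is supported on the pluripolar set $p^{-1}|D'|=p^{-1}|D|$. By the locality of the non-pluripolar product and the fact that it charges no pluripolar set, the non-pluripolar $c_1(\hat{N})^{n+r}$ coincides on the complement with the pointwise $(n+r)$-th power of the smooth form
\[
\omega:=c_1(\hat{\mathcal{O}}(1))+p^{*}\bigl(c_1(\hat{L})-[D']\bigr),
\]
and carries no mass on $p^{-1}|D|$.

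Finally, $\omega$ is the sum of a good form (from $\hat{\mathcal{O}}(1)$) and the smooth form $p^{*}(c_1(\hat{L})-[D'])$, hence $\omega$ is good with respect to $p^{-1}D$ on $\mathbb{P}E^{\vee}$; moreover it represents the algebraic class $c_1(\mathcal{O}(1))+p^{*}(c_1(L)-[D'])$. By Mumford's integration theorem from \cite{Mum77} (applied to self-wedges of a good form), one obtains
\[
\int_{\mathbb{P}E^{\vee}\setminus p^{-1}D}\omega^{n+r}=\bigl(\mathcal{O}(1)+p^{*}(L-D')\bigr)^{n+r},
\]
which together with the previous paragraph yields \eqref{eq:temp12}.

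The main obstacle is the very first step, namely that the quotient metric on $\mathcal{O}_{\mathbb{P}E^{\vee}}(1)$ induced by a good metric on $E$ is itself good with respect to $p^{-1}D$. Concretely one must inspect the explicit local expression \eqref{eq:curv} for the potential and verify the Poincar\'e growth of its derivatives and the bounds of \cref{def:goodmetric} along $p^{-1}D$; this uses the matrix estimates defining goodness of $(E,h_E)$ locally together with the fact that the coordinates on fibres of $\mathbb{P}E^{\vee}$ are uniform over $D$. Once this goodness is in hand, the rest is bookkeeping between non-pluripolar and smooth products and an application of Mumford's theorem.
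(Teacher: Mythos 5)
Your route is genuinely different from the paper's, and while it is plausible, it rests on a step that you yourself flag as "the main obstacle" and leave unverified: that the quotient (Finsler) metric on $\mathcal{O}_{\mathbb{P}E^{\vee}}(1)$ induced by a good metric on $E$ is itself good with respect to $p^{-1}D$. This is not one of the closure properties that Mumford establishes for good metrics (he treats subbundles, quotients, duals, tensor products, direct sums, but not the projectivization $p:\mathbb{P}E^{\vee}\to X$), so one would need a separate estimate on the potential $\chi_{h^{\vee}}(z,\xi_0/\xi_i,\ldots,\xi_r/\xi_i)$, checking that its first and second derivatives have Poincar\'e growth uniformly over the fibre coordinates. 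In addition, even with the goodness established, you still need to identify the canonical extension of $\hat{\mathcal{O}}(1)$ across $p^{-1}D$ with $\mathcal{O}_{\mathbb{P}\bar{E}^{\vee}}(1)$, in order to know that the de Rham class of the extended current $[\omega^{n+r}]$ is the algebraic class on the right of your last display; this is a second closure-type assertion that is not automatic.

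The paper's proof avoids both of these claims. After noting (via Grauert--Remmert) that $\hat{\mathcal{O}}(1)\otimes p^*\hat{L}$ extends as a Hermitian pseudo-effective line bundle on all of $\mathbb{P}E^{\vee}$, it expands both sides of \eqref{eq:temp12} by the binomial formula and pushes forward along $p$. The linearity of the non-pluripolar product is available here because the metric on $\hat{\mathcal{O}}(1)$ is locally bounded off $p^{-1}D$, the metric on $p^*\hat{L}$ has its polar set contained in $p^{-1}|D|$, and neither carries mass there. The resulting identity is a statement on the base $X$: for every smooth closed positive form $T$ representing $\alpha$, the Segre current $s_i(E|_{X\setminus D},h_E)\wedge T$ represents $s_i(E)\wedge\alpha$. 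This is precisely what follows from Mumford's Theorem~1.4, since Segre classes are universal polynomials in Chern classes and Mumford's theorem says the Chern currents of a good metric represent the Chern classes of the canonical extension. No assertion about the projectivized bundle is needed. In short, what the binomial expansion buys is a reduction to Mumford's result in exactly the form he proves it, whereas your route requires first extending Mumford's theory to projective bundles, which is extra work.

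If you wish to keep your approach, the missing ingredient is a lemma stating that goodness of $(E,h_E)$ along $D$ implies goodness of the induced metric on $\mathcal{O}_{\mathbb{P}E^{\vee}}(1)$ along $p^{-1}D$, together with the compatibility of canonical extensions $\overline{\mathcal{O}_{\mathbb{P}E^{\vee}}(1)}\cong\mathcal{O}_{\mathbb{P}\bar{E}^{\vee}}(1)$. Until those are supplied, the argument has a gap at exactly the point you identify.
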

The product on the right-hand side is the non-pluripolar product. The notion of log singularities is introduced in \cref{def:anaD}.
\begin{proof}
First observe that the metric
$\hO(1)\otimes p^*\hat{L}\in \widehat{\Pic}(\mathbb{P}E^{\vee}|_{X\setminus D})$ tends to $\infty$ everywhere along $p^*D$: in fact, by Property~(1) in \cref{def:goodmetric} and \eqref{eq:curv}, up to a bounded term, the potential of the singular metric on $\hO(1)$ is bounded from above by $\log \sum_i (-\log |z_i|)^m$ for some $m>0$ and the $z_i$'s are the local defining equations for the components of $D$. By definition of the log singularity, we find that the product metric is singular everywhere along $D$.

It follows from Grauert--Remmert's extension theorem \cite{GR56} that $\hO(1)\otimes p^*\hat{L}$ admits a unique extension to $\widehat{\Pic}(\mathbb{P}E^{\vee})$, which we denote by the same notation. In particular, the right-hand side of \eqref{eq:temp12} makes sense. Expand both sides of \eqref{eq:temp12} using the binomial formula, we find that it suffices to prove the following: for any $i\geq 0$, and smooth closed form $T$ on $X$ representing a cohomology class $\alpha$, $s_i(E|_{X\setminus D},h_E)\wedge T$
represents $s_i(E)\wedge \alpha$.
This follows from the same argument as \cite[Theorem~1.4]{Mum77}.

\end{proof}

\begin{definition} \label{def:anasing}
Consider $\hat{E}\in \widehat{\Vect}(X)$. We say $\hat{E}$ has \emph{analytic singularities} if $\hO(1)$ has analytic singularities.

Similarly, we say $\hat{E}=(E,h_E) \in \Fins(X)$ has \emph{analytic singularities} if the metric $h_E$ has analytic singularities as a metric on $\hO(1)$.
\end{definition}
This is the type of singularities studied in \cite{LRSW18}.

\begin{definition}\label{def:tor}
Let $\Delta^n$ be the standard polydisk of dimension $n$. Consider the divisor $D=(z_1\cdots z_k=0)$. Let $(E\cong \Delta^n\times \mathbb{C}^{r+1},h_E)$ be a vector bundle on $\Delta^n$ together with a singular Griffiths positive metric. We assume that $h_E$ is locally bounded on $\Delta^n\setminus D$. We say $(E,h_E)$ has \emph{toroidal singularities} along $D$ (at $0$) if for each $i=0,\ldots,r$, the function
\[
\chi_{h_E^{\vee}}(x_1,\ldots,x_n,\xi_0/\xi_i,\ldots,\xi_i/\xi_i,\ldots,\xi_{r+1}/\xi_i)
\]
has the form
\[
\gamma(-\log|x_1|,\ldots,-\log|x_k|)+\text{bounded term}
\]
on $\Delta'\times U$, where $\Delta'\subseteq \Delta^n$ is a smaller polydisk centered at $0$,  $U\subseteq \mathbb{C}^{r}$ is any small disk in $\mathbb{C}^{r}$, $\gamma$ is a convex bounded from above Lipschitz function defined on $\{y\in \mathbb{R}^n:y_1\geq M,\ldots, y_n\geq M\}$ for some large enough $M$.

Globally, given a snc divisor $D$ in $X$ and $\hat{E}\in \widehat{\Vect}(X)$ such that $h_E$ is locally bounded on $X\setminus D$, we say $\hat{E}$ has \emph{toroidal singularities} along $D$ if the restriction of $\hat{E}$ each small enough local coordinate chart has toroidal singularities.
\end{definition}
This is a straightforward extension of the definition in \cite{BBGHdJ21}. Equivalently, $\hat{E}$ has toroidal singularities if $\hO(1)$ has toroidal singularities in the sense of \cite[Definition~3.10]{BBGHdJ21}.

\section{Pull-backs of currents}\label{sec:pull}

Let $X$ be a complex manifold of pure dimension $n$. A positive current on $X$ could have several different meanings. In this paper, we say a closed positive $(p,p)$-current $T$ on $X$ is positive if for all smooth $(1,0)$-forms $\alpha_1,\ldots,\alpha_{n-p}$, the distribution 
\[
T\wedge \mathrm{i}\alpha_1\wedge \overline{\alpha_1}\wedge \cdots \wedge \mathrm{i}\alpha_{n-p}\wedge \overline{\alpha_{n-p}}
\]
is positive.
\begin{definition}\label{def:dsh}
A \emph{closed dsh current} of bi-dimension $(p,p)$ on $X$ is a current $T$ of bi-dimension $(p,p)$ of the form $T=S_1-S_2$, where $S_1$, $S_2$ are closed positive currents of bi-dimension $(p,p)$ on $X$. 
The set of such currents is denoted by $\hat{Z}_p(X)$.

Given $T\in \hat{Z}_p(X)$, we will call any expression $T=S_1-S_2$ as above a \emph{decomposition} of $T$.
\end{definition}
Observe that $\hat{Z}_p(X)$ is a real vector space.
We endow $\hat{Z}_p(X)$ with the weak topology of currents. Thus $\hat{Z}_p(X)$ becomes a locally convex topological vector space.

\begin{lemma}\label{lma:nomassnonpp}
Let $T\in \hat{Z}_p(X)$. Suppose that $T$ puts no mass on a complete pluripolar set $A\subseteq X$, then there is a decomposition $T=S_1-S_2$ such that $S_1$, $S_2$ put no masses on $A$. 
\end{lemma}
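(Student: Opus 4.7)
The plan is to start with any decomposition $T=S_1-S_2$ into closed positive currents and, using the Skoda--El Mir extension theorem, to peel off from each $S_i$ the portion carried by $A$. The hypothesis that $T$ itself puts no mass on $A$ will force the two peeled pieces to coincide, so that the remaining pieces form the desired decomposition.

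In more detail, since $A$ is complete pluripolar, each point of $X$ admits a neighbourhood on which there is a psh function $u\not\equiv-\infty$ with $A\subseteq\{u=-\infty\}$. By Skoda--El Mir, for any closed positive $(p,p)$-current $S$ on such a neighbourhood, the trivial extension of $S|_{X\setminus A}$ across $A$ is a closed positive current; equivalently, $\mathbf{1}_{X\setminus A}S$ is closed, and so is $\mathbf{1}_A S=S-\mathbf{1}_{X\setminus A}S$. Since the operations $S\mapsto\mathbf{1}_{X\setminus A}S$ and $S\mapsto\mathbf{1}_A S$ are purely local, this statement globalises to the assertion that for every closed positive $(p,p)$-current $S$ on $X$ the two currents
$$S^{\mathrm{reg}}:=\mathbf{1}_{X\setminus A}S,\qquad S^{\mathrm{sing}}:=\mathbf{1}_A S$$
are both closed and positive. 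Applied to $S_1$ and $S_2$ this gives four closed positive currents with $S_i=S_i^{\mathrm{reg}}+S_i^{\mathrm{sing}}$ for $i=1,2$.

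The key observation is that the combination $\mathbf{1}_A S_1-\mathbf{1}_A S_2$ depends only on $T$ and not on the chosen decomposition: if $T=\tilde S_1-\tilde S_2$ is another decomposition, then $S_1+\tilde S_2=\tilde S_1+S_2$ as closed positive currents, and additivity of $\mathbf{1}_A$ on such sums yields $\mathbf{1}_A S_1-\mathbf{1}_A S_2=\mathbf{1}_A\tilde S_1-\mathbf{1}_A\tilde S_2$. The recipe $\mathbf{1}_A T:=\mathbf{1}_A S_1-\mathbf{1}_A S_2$ therefore unambiguously extends the operator $\mathbf{1}_A$ to $\hat{Z}_p(X)$, and the hypothesis that $T$ puts no mass on $A$ is exactly the statement $\mathbf{1}_A T=0$. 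Hence $S_1^{\mathrm{sing}}=S_2^{\mathrm{sing}}$, and
$$T=S_1^{\mathrm{reg}}-S_2^{\mathrm{reg}}$$
is a decomposition of the required type, both terms being closed positive currents that put no mass on $A$ by construction.

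The only genuine analytic ingredient is Skoda--El Mir; the rest is formal. The one subtlety worth flagging is that complete pluripolarity is defined locally rather than in terms of a single global psh function, so the Skoda--El Mir step must be carried out locally and then patched --- this is harmless because $\mathbf{1}_A S$ and $\mathbf{1}_{X\setminus A}S$ are defined pointwise and therefore agree on overlaps automatically.
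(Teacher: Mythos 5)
Your proof is correct and takes essentially the same approach as the paper: start from an arbitrary decomposition $T=S_1-S_2$, observe that $\mathds{1}_{X\setminus A}S_i$ are closed and positive (the paper invokes \cite[Remark~1.10]{BEGZ10}, which is the Skoda--El Mir extension result you use), and conclude from $\mathds{1}_A T=0$ that $T=\mathds{1}_{X\setminus A}S_1-\mathds{1}_{X\setminus A}S_2$. The only stylistic difference is that you argue at length that $\mathds{1}_A T$ is well-defined via decompositions, whereas this is immediate because dsh currents are currents of order zero and $\mathds{1}_A$ acts measure-by-measure on the coefficients.
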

In the whole paper, a \emph{complete pluripolar set} on a compact K\"ahler manifold $X$ means a subset $Z\subseteq X$ such that for any $x\in X$, we can find a neighbourhood $U\subseteq X$ of $x$ and a plurisubharmonic function $\varphi$ on $U$ such that $Z\cap U=\{x\in U:\varphi(x)=-\infty\}$. A complete pluripolar set is sometimes known as a \emph{locally complete pluripolar set} in the literature. 
\begin{proof}

In fact, let $T=S_1-S_2$ be an arbitrary decomposition, then we consider
\begin{equation}\label{eq:temp13}
T=\mathds{1}_{X\setminus A}S_1-\mathds{1}_{X\setminus A}S_2.
\end{equation}
It follows from \cite[Remark~1.10]{BEGZ10} that $\mathds{1}_{X\setminus A}S_i$ ($i=1,2$) are both closed and positive. Thus, \eqref{eq:temp13} is the desired decomposition.
\end{proof}

We say a morphism $f:Y\rightarrow X$ between complex analytic spaces has \emph{pure relative dimension} $d$ if each of the fibers has pure dimension $d$ (or equidimensional of dimension $d$). In the literature, this condition is also known as has \emph{relative dimension} $d$. When $X$ is smooth and $Y$ is Cohen--Macaulay, both $X$ and $Y$ are equidimensional and $\dim Y-\dim X=d$, it follows from miracle flatness that $f$ is flat. But we still prefer to say $f$ is flat of pure relative dimension $d$ in this case, with non-smooth extensions of the results below in mind.

\begin{theorem}[Dinh--Sibony]\label{thm:DS}
Let $f:Y\rightarrow X$ be a flat morphism  of pure relative dimension $r$ between complex manifolds $Y$ and $X$ of pure dimensions $n+r$ and $n$.
Then there is a unique continuous linear map $f^*\colon \widehat{Z}_a(X)\rightarrow \widehat{Z}_{a+r}(Y)$ such that the followings hold:
\begin{enumerate}
    \item When the current is represented by a form, $f^*$ is the usual pull-back. 
    \item When $f$ is an open immersion, $f^*$ is the usual restriction.
    \item The pull-back is local on $X$: consider $T\in \widehat{Z}_a(X)$ and an open subset $U\subseteq X$, 
    \[
        f^* (T|_U)=f^*T|_{f^{-1}U}.
    \]
    \item In the case of $(1,1)$-currents, this pull-back is the usual one (namely, pulling back the local K\"ahler potentials).
    \item If $u$ is a locally bounded psh function on $X$, $T\in \widehat{Z}_a(X)$, then
    \[
    f^*(\ddc u \wedge T)=\ddc f^*u \wedge f^*T.
    \]
    \item If $T\in \widehat{Z}_a(X)$ is closed positive, then so is $f^*T$.    
\end{enumerate}
\end{theorem}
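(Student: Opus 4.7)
The plan is to construct $f^*$ on the subcone of closed positive currents by the graph-slicing procedure of Dinh--Sibony, then extend by linearity to $\widehat{Z}_a(X)$. Let $\Gamma_f \subset Y \times X$ be the graph of $f$, with projections $\pi_Y : Y \times X \to Y$ and $\pi_X : Y \times X \to X$. For a closed positive current $T$ of bi-dimension $(a,a)$ on $X$, the pull-back $\pi_X^* T$ is defined tautologically since $\pi_X$ is a submersion (tensor with the identity on the $Y$-factor). Because $\Gamma_f$ is proper over $Y$ (the restriction $\pi_Y|_{\Gamma_f}$ is an isomorphism) and flatness forces its fibers over $X$ to have constant pure dimension $r$, Dinh--Sibony's slicing theory produces an intersection $[\Gamma_f] \wedge \pi_X^* T$ as a closed positive current on $Y \times X$ of bi-dimension $(a+r, a+r)$. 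I would then set
\begin{equation*}
f^* T := (\pi_Y)_* \bigl( [\Gamma_f] \wedge \pi_X^* T \bigr) \in \widehat{Z}_{a+r}(Y).
\end{equation*}
This part I quote from \cite{DS07}; the dimensional bookkeeping and the positivity both hinge on the flatness hypothesis.

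Extension to all of $\widehat{Z}_a(X)$ is by linearity: for a decomposition $T = S_1 - S_2$ with $S_i$ closed positive, set $f^* T := f^* S_1 - f^* S_2$. Well-definedness is immediate from additivity on the positive cone, and continuity follows from the continuity of the slicing operation and of $(\pi_Y)_*$ on currents with supports proper over $Y$. For uniqueness, any continuous extension must coincide with the usual pull-back on smooth forms by (1), and smooth closed forms are weakly dense in $\widehat{Z}_a(X)$ via local convolution and partitions of unity, which forces agreement.

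It remains to verify properties (1)--(7). Property (1) holds because $(\pi_Y)_* ([\Gamma_f] \wedge \pi_X^*\alpha) = f^* \alpha$ for smooth $\alpha$ by the definition of fiber integration. Properties (2) and (3) are immediate from the local functorial nature of slicing and push-forward. For (4), a closed positive $(1,1)$-current on a chart is $\ddc u$ for $u$ psh, and $u \circ f$ is psh on $Y$ (whenever not $\equiv -\infty$), so the construction recovers $\ddc(u \circ f)$ by an approximation of $u$ by smooth psh functions. For (5), I apply \cref{lma:nomassnonpp} to pick a decomposition $T = S_1 - S_2$ with each $S_i$ putting no mass on the complete pluripolar set $A$; since $A$ is locally $\{\varphi = -\infty\}$ for a psh $\varphi$, the function $\varphi \circ f$ is psh on $Y$ with $\{-\infty\}$-locus containing $f^{-1} A$, and the same Bedford--Taylor argument as in \cite[Remark~1.10]{BEGZ10} applied on $Y$ shows $f^* S_i$ puts no mass on $f^{-1} A$. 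Property (6) is proved by approximating $u$ by a decreasing sequence of smooth psh functions $u_j$ and using Bedford--Taylor continuity together with (1) applied to $\ddc u_j \wedge T_\epsilon$ for smooth approximations $T_\epsilon \to T$. Property (7) is built into the construction since $[\Gamma_f] \wedge \pi_X^* T$ and $(\pi_Y)_*$ both preserve positivity.

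The main obstacle lies entirely inside the Dinh--Sibony step: producing the intersection $[\Gamma_f] \wedge \pi_X^* T$ and showing it is closed, positive, and of the expected bi-dimension uses flatness in an essential way via a King--Federer slicing formula, and this is where the serious analytic content sits. Once that black box is in place, the verification of (1)--(7) is essentially bookkeeping, with the most delicate check being (5), where one must genuinely exploit \cref{lma:nomassnonpp} to reduce to the positive case before applying the pluripotential-theoretic no-mass argument.
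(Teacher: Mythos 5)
Your construction --- the graph $\Gamma_f$, the definition $f^*T := (\pi_Y)_*\bigl([\Gamma_f]\wedge\pi_X^*T\bigr)$ for closed positive $T$, then linear extension via a decomposition $T = S_1 - S_2$ --- is exactly the paper's approach (the paper writes $p_{1*}(p_2^*T\wedge[\Gamma_f])$ with $p_1,p_2$ the restrictions of $\pi_Y,\pi_X$ to $\Gamma_f$, which gives the same current). The verifications of (1)--(4), (6), (7) also follow the same routes as in the paper. Two points require correction.

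Your uniqueness argument asserts that smooth closed forms are weakly dense in $\widehat{Z}_a(X)$ ``via local convolution and partitions of unity.'' Partitions of unity do not preserve closedness, so this global density claim is not available. The correct route instead uses property (3): any two continuous extensions satisfying (1)--(7) agree on smooth forms by (1), agree on each coordinate polydisk by local mollification (convolution \emph{does} preserve closedness and positivity on a polydisk) together with continuity, and are then glued by the locality property (3). No global density statement is needed.

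More seriously, your proof of (5) does not close. After reducing to the positive case via \cref{lma:nomassnonpp}, you observe that $\varphi\circ f$ is psh with polar set containing $f^{-1}A$, so $f^{-1}A$ is complete pluripolar in $Y$, and then invoke ``the same Bedford--Taylor argument as in [BEGZ10, Remark~1.10].'' But that remark only says that for a complete pluripolar set $B$ and a closed positive current $R$, the truncation $\mathds{1}_{Y\setminus B}R$ is again closed and positive. It does \emph{not} say that $R$ charges no mass on $B$ --- that statement is false in general (take $R=[Z]$ for a divisor $Z\subseteq B$). The actual content of (5) is to show that the slicing output $f^*S_i$ places no mass on $f^{-1}A$, and completeness-pluripolarity of $f^{-1}A$ alone cannot deliver this; one must use the structure of the Dinh--Sibony construction itself (for instance, via the defining approximation of $p_2^*T\wedge[\Gamma_f]$ by smooth forms). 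This is precisely why the paper, after the reduction via \cref{lma:nomassnonpp}, defers the positive case to the results of [DS07] rather than attempting a purely pluripotential argument.
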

\begin{remark}
As pointed out by the referee, for the purpose of the current paper, it suffices to know pull-back currents along smooth morphisms. In this case, the pull-back is easy to construct directly, as the dual linear map of the fibral integration. The properties listed in this theorem can be easily verified directly.
\end{remark}

Here in (5), the product is taken in the sense of Bedford--Taylor. Recall that between complex manifolds, a smooth morphism is the same as a submersive morphism.

We briefly recall the construction of $f^*$. Let $\Gamma_f\subseteq Y\times X$ be the the graph of $f$. Write $p_1\colon \Gamma_f\rightarrow Y$ and $p_2\colon \Gamma_f\rightarrow X$ the two natural projections.
We wish to define
\[
f^*T\coloneqq p_{1*} (p_2^*T\wedge [\Gamma_f]).
\]
Of course, we need to make sense of $p_2^*T\wedge [\Gamma_f]$ as currents. Locally approximate $T$ by smooth forms $T_i$, then we define $p_2^*T\wedge [\Gamma_f]$ as the weak limit of $p_2^*T_i\wedge [\Gamma_f]$. The existence of the the limit and its independence of the choice of $T_i$ are non-trivial facts proved in \cite{DS07}.

\begin{proof}
See \cite{DS07} for the proof of the existence and continuity of $f^*$ and (1), (2), (3), (4), (6). These are not explicit in \cite{DS07}, however, they are all clear from the construction of $f^*$.
In order to prove (5), we may assume that $u$ is smooth, $T$ is closed and positive and $X=\Delta^n$. By approximation, we may assume that $T$ is a form. In this case, (5) is clear.
\end{proof}

\begin{corollary}
Let $X,Y,Z$ be complex manifolds of pure dimensions $n$, $n+r$, $n+r+r'$.
Let $f:Y\rightarrow X$ and $g:Z\rightarrow Y$ be flat morphisms of pure relative dimensions $r$ and $r'$. Then
\[
(fg)^*=g^*f^*\colon \widehat{Z}_a(X)\rightarrow \widehat{Z}_{a+r+r'}(X).
\]
\end{corollary}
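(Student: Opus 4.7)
The plan is to reduce the identity to the smooth case and then appeal to functoriality of the classical pullback of forms. Both $(fg)^{*}$ and $g^{*}f^{*}$ are continuous $\mathbb{R}$-linear maps $\widehat{Z}_{a}(X)\to \widehat{Z}_{a+r+r'}(Z)$: the former by \cref{thm:DS}, the latter as a composition of two such operators. By linearity and the very definition of $\widehat{Z}_{a}(X)$ as differences of closed positive currents, it suffices to verify the equality on an arbitrary closed positive current $T$ of bi-dimension $(a,a)$, and by property (3) of \cref{thm:DS} (locality on $X$) the assertion may be checked locally on $X$.

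Localizing to a coordinate polydisk $U\subseteq X$, I would use the standard convolution in holomorphic coordinates to produce a family $T_{\epsilon}$ of smooth closed positive $(n-a,n-a)$-forms on a slightly shrunken polydisk $U'\Subset U$ with $T_{\epsilon}\rightharpoonup T|_{U'}$ as $\epsilon\to 0$. On such smooth forms, property (1) of \cref{thm:DS} asserts that $(fg)^{*}T_{\epsilon}$, $f^{*}T_{\epsilon}$, and $g^{*}(f^{*}T_{\epsilon})$ all coincide with the classical pullback of forms; for the latter, functoriality $(fg)^{*}=g^{*}f^{*}$ is immediate. Passing to the weak limit using the continuity of $(fg)^{*}$ and of $g^{*}f^{*}$ then yields the equality on $T|_{U'}$, and since $U'$ was arbitrary in an arbitrary coordinate chart, locality glues these local identities into the claimed global one.

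The main (mild) obstacle is that the regularization of $T$ by smooth closed positive forms is only local; there need be no global such approximation on $X$. This is however exactly what locality property (3) of \cref{thm:DS} is designed to handle. An alternative, more conceptual route is to notice that $g^{*}f^{*}$ is a continuous linear operator $\widehat{Z}_{a}(X)\to\widehat{Z}_{a+r+r'}(Z)$ which agrees with the classical pullback on smooth forms; the uniqueness clause of \cref{thm:DS}, applied to the composite $fg$, then directly forces $g^{*}f^{*}=(fg)^{*}$ provided one checks the remaining characterizing properties, each of which is a routine consequence of the corresponding property for $f^{*}$ and $g^{*}$.
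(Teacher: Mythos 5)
Your proof is correct and follows the same strategy as the paper's: reduce by linearity to a closed positive current, use locality (property (3) of \cref{thm:DS}) to work in a polydisk, regularize to smooth closed positive forms, and pass to the limit using continuity of the Dinh--Sibony pullback, with the smooth case being classical functoriality of pullback of forms. You merely spell out the local regularization step and note the alternative argument via the uniqueness clause of \cref{thm:DS}, both of which the paper leaves implicit. (As a minor remark, the target of the composite should read $\widehat{Z}_{a+r+r'}(Z)$ rather than $\widehat{Z}_{a+r+r'}(X)$ — a typo in the statement that you implicitly correct.)
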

\begin{proof}
Take $T\in \widehat{Z}_a(X)$, we want to show that
\begin{equation}\label{eq:temp1}
    (fg)^*T=g^*f^*T.
\end{equation}
As both sides of \eqref{eq:temp1} are linear in $T$, we may assume that $T$ is a closed positive current. As both sides of \eqref{eq:temp1} are local on $X$, we may assume that $X=\Delta^n$. By continuity of pull-back, we may assume that $T$ is represented by a form. In this case, \eqref{eq:temp1} is obvious.
\end{proof}

We write $\mathcal{A}^{a,a}(X)$ (resp. $\mathcal{A}^{a,a}_c(X)$) for the set of smooth real-valued $(a,a)$-forms (resp. smooth real-valued $(a,a)$-forms with compact supports) on $X$. 
\begin{corollary}\label{cor:adj1}
Let $X,Y$ be complex manifolds of pure dimension $n$, $m$. Let $f:Y\rightarrow X$ be a smooth morphism  of pure relative dimension $m-n$. Consider $T\in \widehat{Z}_{a}(X)$ and $\alpha \in \mathcal{A}^{a+m-n,a+m-n}_c(Y)$, then
\begin{equation}\label{eq:adjpullbackcur}
\int_Y \alpha\wedge f^*T=\int_X f_*\alpha\wedge T.
\end{equation}
\end{corollary}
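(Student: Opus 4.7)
The plan is to reduce to the case where $T$ is smooth via local approximation, where the formula becomes the classical projection formula (integration along fibers). The key inputs are the continuity and locality of Dinh--Sibony's pull-back operator from \cref{thm:DS}.

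First, by bilinearity of both sides in $T$, I may assume $T \in \widehat{Z}_a(X)$ is a closed positive current. Next, choose a locally finite open cover $\{V_j\}$ of $Y$ by charts such that each $f(V_j)$ is contained in a coordinate chart $U_j \subset X$ biholomorphic to $\Delta^n$ (possible since $f$ is a submersion). Take a partition of unity $\{\rho_j\}$ on $Y$ subordinate to $\{V_j\}$ and write $\alpha = \sum_j \rho_j \alpha$; since $\alpha$ has compact support, only finitely many terms are nonzero. Both sides of the desired identity split additively over this decomposition, using the locality of pull-back (\cref{thm:DS}(3)) and the fact that $f_*(\rho_j \alpha)$ is compactly supported in $U_j$. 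Hence I may assume $\alpha$ is compactly supported in a single $V \subset Y$ with $f(V) \subset U \cong \Delta^n$.

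Second, on $U$, approximate $T|_U$ by a sequence of smooth closed positive $(n-a,n-a)$-forms $T_i$ via standard convolution, so that $T_i \to T|_U$ weakly. By the continuity of pull-back in \cref{thm:DS}, $f^*(T_i) \to f^*(T|_U)$ weakly on $f^{-1}(U)$. Since $\alpha$ has compact support in $f^{-1}(U)$, I obtain
\[
\int_Y \alpha \wedge f^*T_i \;\longrightarrow\; \int_Y \alpha \wedge f^*T.
\]
On the other side, $f_*\alpha$ is a smooth compactly supported $(a,a)$-form on $X$ with support in $f(V) \subset U$ (smoothness and compact support of $f_*\alpha$ follow because $f$ is a submersion and $\alpha$ is compactly supported), so weak convergence of $T_i$ to $T$ on $U$ gives
\[
\int_X f_*\alpha \wedge T_i \;\longrightarrow\; \int_X f_*\alpha \wedge T.
\]

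Third, for each smooth $T_i$ the identity
\[
\int_Y \alpha \wedge f^*T_i = \int_X f_*\alpha \wedge T_i
\]
is the classical projection formula for differential forms, which reduces to Fubini's theorem in local product coordinates adapted to the submersion $f$. Passing to the limit $i \to \infty$ gives \eqref{eq:adjpullbackcur}. I do not expect a genuine obstacle: the only delicate point is that the local smooth approximation of $T$ is compatible with Dinh--Sibony's pull-back, which is exactly guaranteed by the continuity clause of \cref{thm:DS}.
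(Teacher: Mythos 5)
Your proof is correct and takes essentially the same route as the paper's: reduce to $T$ closed positive by linearity, localize to a coordinate polydisk, approximate $T$ by smooth forms via mollification, and invoke the classical projection formula together with the weak continuity of the Dinh--Sibony pull-back. The only cosmetic difference is that you localize on $Y$ via a partition of unity (which is slightly more natural since $\alpha$ lives on $Y$) whereas the paper invokes locality on $X$ directly; both work and the additive splitting you describe is really just linearity of the integral and the push-forward, with locality of $f^*$ (\cref{thm:DS}(3)) then reducing each piece to the chart.
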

\begin{proof}
As $f$ is smooth, $f_*\alpha\in \mathcal{A}_c^{a,a}(X)$, so the right-hand side of \eqref{eq:adjpullbackcur} makes sense. The problem \eqref{eq:adjpullbackcur} is local on $X$, so we may assume that $X$ is the unit polydisk $\Delta^n$. As both sides of \eqref{eq:adjpullbackcur} are linear in $T$, we may further assume that $T$ is closed and positive in $\Delta^n$. By approximation, we may further assume that $T$ is represented by a form, in which case, \eqref{eq:adjpullbackcur} is clear.
\end{proof}

\begin{corollary}\label{cor:adjunction}
Let $X,Y$ be complex manifolds of pure dimension $n$, $m$. Let $f:Y\rightarrow X$ be a smooth morphism  of pure relative dimension $m-n$. Consider $T\in \widehat{Z}_{a}(X)$ and $\alpha \in \mathcal{A}^{b,b}_c(Y)$. Then
\begin{equation}
    f_*(\alpha\wedge f^*T)=f_*\alpha\wedge T.
\end{equation}
\end{corollary}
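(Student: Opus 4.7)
The plan is to upgrade the scalar identity of Corollary \ref{cor:adj1} to an identity of currents on $X$ by testing both sides against smooth compactly supported forms. Both $f_*(\alpha\wedge f^*T)$ and $f_*\alpha\wedge T$ are currents on $X$: the pushforward on the left-hand side makes sense because $\alpha$ has compact support in $Y$, so $\alpha\wedge f^*T$ is a current of compact support and $f$ is proper on this support; on the right-hand side, $f_*\alpha$ is a smooth compactly supported form on $X$ (since $f$ is a submersion between smooth manifolds, fiber-integration of a smooth compactly supported form yields a smooth compactly supported form), so the product $f_*\alpha\wedge T$ is automatically defined. A routine bidegree count shows that both currents have bidegree $(b+2n-m-a,b+2n-m-a)$, so they pair with the same space of test forms $\beta\in\mathcal{A}^{a+m-n-b,\,a+m-n-b}_c(X)$.

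Fix such a $\beta$. Using the definition of pushforward of a current and the associativity of wedging a current with a smooth form, I would compute
\[
\langle f_*(\alpha\wedge f^*T),\beta\rangle
=\langle \alpha\wedge f^*T,f^*\beta\rangle
=\int_Y (\alpha\wedge f^*\beta)\wedge f^*T.
\]
The form $\alpha\wedge f^*\beta$ is smooth, compactly supported in $Y$, and lies in $\mathcal{A}^{a+m-n,\,a+m-n}_c(Y)$, exactly the bidegree to which Corollary \ref{cor:adj1} applies. That corollary then yields
\[
\int_Y (\alpha\wedge f^*\beta)\wedge f^*T
=\int_X f_*(\alpha\wedge f^*\beta)\wedge T.
\]

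To conclude, I would invoke the smooth projection formula $f_*(\alpha\wedge f^*\beta)=f_*\alpha\wedge\beta$, which at the level of smooth forms on a submersion is immediate from fiber-integration (the pulled-back factor $f^*\beta$ is constant along fibers and can be taken outside). Substituting gives
\[
\int_X f_*(\alpha\wedge f^*\beta)\wedge T
=\int_X (f_*\alpha\wedge\beta)\wedge T
=\langle f_*\alpha\wedge T,\beta\rangle,
\]
so $\langle f_*(\alpha\wedge f^*T),\beta\rangle=\langle f_*\alpha\wedge T,\beta\rangle$ for every admissible test form $\beta$, proving the asserted equality of currents.

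I do not anticipate a serious obstacle; the statement is a direct currents-valued strengthening of Corollary \ref{cor:adj1}, and the only points requiring care are the bidegree bookkeeping and justifying that $f_*\alpha$ is smooth so that the product $f_*\alpha\wedge T$ is a legitimate current. Both are formal consequences of $f$ being a smooth submersion and $\alpha$ being compactly supported.
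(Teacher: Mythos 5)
Your proof is correct and follows essentially the same route as the paper: test both sides against a form $\beta\in\mathcal{A}_c^{a-b+m-n,a-b+m-n}(X)$, apply \cref{cor:adj1} to the form $\alpha\wedge f^*\beta$, and finish with the smooth projection formula $f_*(\alpha\wedge f^*\beta)=f_*\alpha\wedge\beta$. The bidegree bookkeeping you carry out is consistent with the paper's choices.
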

\begin{proof}
Let $\beta\in \mathcal{A}_c^{a-b+m-n,a-b+m-n}(X)$. We need to show that
\begin{equation}\label{eq:temp2}
\int_Y f^*\beta\wedge \alpha\wedge f^*T=\int_X \beta\wedge f_*\alpha\wedge T.
\end{equation}
By \cref{cor:adj1}, we can rewrite the left-hand side of \eqref{eq:temp2} as
\[
\int_X f_*(f^*\beta\wedge \alpha)\wedge T.
\]
Thus \eqref{eq:temp2} follows from the adjunction formula of forms.
\end{proof}

\begin{corollary}\label{cor:comppullbackpush}
Let $X,Y,X'$ be complex manifolds of pure dimension $n$, $m$, $k$. Let $f:Y\rightarrow X$ be a proper map and $g\colon X'\rightarrow X$ be a smooth morphism of pure relative dimension $k-n$.
Consider the following Cartesian square 
\[
\begin{tikzcd}
Y' \arrow[r,"f'"] \arrow[d,"g'"] \arrow[rd, "\square", phantom] & X' \arrow[d,"g"] \\
Y \arrow[r,"f"]                                    & X 
\end{tikzcd}.
\]
Then
\[
f'_* g'^*=g^*f_*\colon \widehat{Z}_a(Y)\rightarrow \widehat{Z}_{k-n+a}(X').
\]
\end{corollary}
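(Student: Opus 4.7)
The plan is to test the identity as currents on $X'$ against smooth compactly supported forms, thereby reducing the problem to a classical base-change identity for fiber integration of smooth forms. Both sides are well-defined closed dsh currents in $\widehat{Z}_{a+k-n}(X')$: since properness, flatness, and pure relative dimension are all preserved under base change, $f':Y'\to X'$ is proper and $g':Y'\to Y$ is flat of pure relative dimension $k-n$, while $Y'$ is a complex manifold of pure dimension $m+k-n$. The bidegrees match as well, both being $(n-a,n-a)$.

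Fix a test form $\alpha\in \mathcal{A}^{a+k-n,a+k-n}_c(X')$. Standard adjunction for the proper morphism $f'$ followed by \cref{cor:adj1} applied to $g'$ yields
\[
\int_{X'}\alpha\wedge f'_*g'^*T \;=\;\int_{Y'}f'^*\alpha\wedge g'^*T\;=\;\int_{Y}g'_*\bigl(f'^*\alpha\bigr)\wedge T,
\]
where $f'^*\alpha$ is smooth with compact support since $f'$ is proper, and $g'_*(f'^*\alpha)\in \mathcal{A}^{a,a}_c(Y)$ because $g'(\operatorname{supp} f'^*\alpha)$ is compact. Symmetrically, \cref{cor:adj1} applied to $g$ followed by adjunction for the proper map $f$ gives
\[
\int_{X'}\alpha\wedge g^*f_*T \;=\;\int_{X}g_*\alpha\wedge f_*T\;=\;\int_{Y}f^*\bigl(g_*\alpha\bigr)\wedge T,
\]
noting that $g_*\alpha\in \mathcal{A}^{a,a}_c(X)$ since $g(\operatorname{supp}\alpha)$ is compact, and $f^*(g_*\alpha)$ has compact support in $Y$ by properness of $f$.

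Comparing the two expressions, it suffices to prove the purely form-theoretic identity
\[
g'_*\bigl(f'^*\alpha\bigr)\;=\;f^*\bigl(g_*\alpha\bigr)
\]
as smooth $(a,a)$-forms on $Y$. This is the classical base-change formula for fiber integration along a submersion: at a point $y\in Y$, the Cartesian property canonically identifies $(g')^{-1}(y)$ with $g^{-1}(f(y))$ via $f'|_{(g')^{-1}(y)}$, and a direct Fubini-type argument in local coordinates shows that both sides evaluate at $y$ to $\int_{g^{-1}(f(y))}\alpha$. The main (mild) obstacle is the careful bookkeeping of bidegrees and compactness of supports at each step; the analytical content is entirely absorbed into \cref{cor:adj1}, after which everything reduces to the elementary pointwise identity for smooth forms.
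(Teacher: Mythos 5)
Your proof is correct and follows essentially the same strategy as the paper's: test both sides against $\alpha\in\mathcal{A}_c^{a+k-n,a+k-n}(X')$, use adjunction for the proper maps and \cref{cor:adj1} for the flat ones, and reduce to the base-change identity $g'_*(f'^*\alpha)=f^*(g_*\alpha)$ for fiber integration of smooth forms. The additional bookkeeping of bidegrees and support compactness that you spell out is implicit in the paper's shorter argument but does not constitute a different approach.
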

\begin{proof}
Take $T\in \widehat{Z}_a(Y)$. We need to prove
\[
    f'_* g'^*T=g^*f_*T.
\]
We may assume that $T$ is closed positive current. Take $\alpha\in \mathcal{A}_c^{k-n+a,k-n+a}(X')$, then we are reduced to show
\[
    \int_{Y'} f'^{*}\alpha\wedge g'^*T=\int_{X'} \alpha\wedge g^*f_*T.
\]
By \cref{cor:adj1}, this is equivalent to
\[
    \int_{Y} g'_*f'^{*}\alpha\wedge T=\int_Y f^*g_*\alpha\wedge T.
\]
So we are reduced to show
\[
g'_*f'^{*}\alpha=f^*g_*\alpha,
\]
which is nothing but the naturality of fiber integration.
\end{proof}

\section{Relative non-pluripolar products}\label{sec:relnpp}
We fix a complex manifold $X$ of pure dimension $n$. We will extend Vu's theory \cite{Vu20} of relative non-pluripolar products in this section and prove a few functoriality results.

\begin{definition}
Suppose $T_1,\ldots,T_m$ are closed positive $(1,1)$-currents on $X$ and $T$ is a closed positive current of bidimension $(p,p)$ on $X$. Then we say that the relative non-pluripolar product $T_1\wedge\cdots\wedge T_m\cap T$ is \emph{well-defined} if for each $x\in X$, we can take a local chart $U\cong \Delta^n$ containing $x$, on which $T_i=\ddc u_i$ for some psh functions $u_i$ on $U$, so that if we define 
\begin{equation}\label{eq:defRk}
R_k=\ddc \max\{u_1,-k\}\wedge \cdots \wedge \ddc \max\{u_m,-k\}\wedge T,\quad k\in \mathbb{N}
\end{equation}
using Bedford--Taylor theory, then
\begin{equation}\label{eq:supnormKfinite}
\sup_{k\in \mathbb{N}} \|\mathds{1}_{\{u_1>-k,\ldots, u_m>-k\}}R_k\|_K<\infty
\end{equation}
for each compact subset $K\subseteq U$. 
Here after choosing a strictly positive smooth real $(1,1)$-form $\omega$ on $X$, we let
\[
\|S\|_K\coloneqq \|(S\wedge \omega^{n-a})|_K\|_{\text{total variation}}
\]
for any $(a,a)$-current $S$ on $X$. The condition \eqref{eq:supnormKfinite} does not depend on the choice of $\omega$.

In this case, we define the \emph{relative non-pluripolar product}
\begin{equation}\label{eq:relativenppdef}
T_1\wedge\cdots\wedge T_m\cap T\coloneqq \lim_{k\to\infty}R_k,
\end{equation}
where the limit is a weak limit of currents.
\end{definition}

When $T=[X]$ is the current of integration along $X$, $T_1\wedge\cdots\wedge T_m\cap T$ is nothing but the non-pluripolar product studied in \cite{BEGZ10}. The general notion is due to \cite{Vu20}.

\begin{lemma}[{\cite[Lemma~3.1]{Vu20}}]
Suppose $T_1,\ldots,T_m$ are closed positive $(1,1)$-currents on $X$ and $T$ is a closed positive current of bidimension $(p,p)$ on $X$. Suppose that $T_1\wedge\cdots\wedge T_m\cap T$ is well-defined.
Then the limit in \eqref{eq:relativenppdef} exists. Moreover, for each Borel measurable $(p-m,p-m)$-form $\Phi$ with locally bounded coefficients on $X$ satisfying $\Supp \Phi\Subset X$, we have
\[
\int_X(\Phi,T_1\wedge\cdots\wedge T_m\cap T)=\lim_{k\to\infty}\int_X (\Phi,R_k),
\]
where $R_k$ is defined as in \eqref{eq:defRk}.
\end{lemma}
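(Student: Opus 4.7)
The plan is to exploit plurifine locality of Bedford--Taylor products applied to locally bounded psh functions. First I would fix $j\geq k$ and note that on the plurifine open set $U_k:=\{u_1>-k,\ldots,u_m>-k\}$ both $\max\{u_i,-k\}$ and $\max\{u_i,-j\}$ coincide with $u_i$. Plurifine locality of the Bedford--Taylor product then forces $R_j$ and $R_k$ to agree on $U_k$, i.e. $\mathds{1}_{U_k}R_j=\mathds{1}_{U_k}R_k$ for all $j\geq k$. Since $U_k\subseteq U_{k+1}$, this yields the monotonicity
\[
\mathds{1}_{U_k}R_k=\mathds{1}_{U_k}R_{k+1}\leq \mathds{1}_{U_{k+1}}R_{k+1}
\]
as positive currents (tested against any fixed positive smooth form of complementary bidegree). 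Combined with the uniform local mass bound \eqref{eq:supnormKfinite}, the sequence $\mathds{1}_{U_k}R_k$ therefore converges to a locally finite positive current $S$, which will be the candidate for $T_1\wedge\cdots\wedge T_m\cap T$.

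To upgrade the convergence of $\mathds{1}_{U_k}R_k$ to the convergence of $R_k$ itself, I would control the complementary piece $\mathds{1}_{X\setminus U_k}R_k$. Its support lies in $\bigcup_i\{u_i\leq -k\}$, which decreases to the complete pluripolar set $A=\bigcup_i\{u_i=-\infty\}$. After pairing with a test form of compact support, the hypothesis \eqref{eq:supnormKfinite} together with standard Chern--Levine--Nirenberg type estimates on the masses of $R_k$ near the level sets $\{u_i=-k\}$ shows that this pluripolar remainder is negligible in the limit. Together with the first step this gives existence of $\lim_k R_k$ and identifies it with $S$.

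For the integral statement, I would first treat $\Phi\geq 0$: the sequence $\int_X(\Phi,\mathds{1}_{U_k}R_k)$ is monotone increasing, so Lebesgue's monotone convergence theorem identifies its limit with $\int_X(\Phi,S)$, while $\int_X(\Phi,\mathds{1}_{X\setminus U_k}R_k)\to 0$ by the argument of the previous paragraph. Signed $\Phi$ with locally bounded Borel coefficients is handled by decomposing each component into positive and negative parts. The hard part will be the second paragraph: plurifine locality cleanly drives the monotonicity of the non-pluripolar piece but says nothing about the pluripolar remainder $\mathds{1}_{X\setminus U_k}R_k$, and controlling its mass genuinely uses the full strength of \eqref{eq:supnormKfinite} rather than a formal reduction.
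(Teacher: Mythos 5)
Your step 1 (plurifine locality forcing $\mathds{1}_{U_k}R_j=\mathds{1}_{U_k}R_k$ for $j\geq k$, hence monotonicity of $\mathds{1}_{U_k}R_k$) and your step 3 (monotone convergence to pass from weak convergence against smooth forms to convergence against compactly supported Borel forms with locally bounded coefficients) are exactly the right moves, and they mirror the argument underlying Vu's lemma. The paper itself does not give a proof; it simply cites \cite[Lemma~3.1]{Vu20}.

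However, your step 2 is wrong, and in fact cannot be repaired. The assertion that $\mathds{1}_{X\setminus U_k}R_k$ is negligible as $k\to\infty$ is false: take $m=1$, $T=[X]$ on a disc, $T_1=\ddc\log|z|$, so $u_1=\log|z|$. Then $R_k=\ddc\max\{\log|z|,-k\}$ is the normalised arclength measure on $\{|z|=e^{-k}\}$, which is entirely supported in $X\setminus U_k$ and converges to $\delta_0\neq 0$. The hypothesis \eqref{eq:supnormKfinite} gives \emph{no} control over the pluripolar remainder $\mathds{1}_{X\setminus U_k}R_k$ — it only bounds $\mathds{1}_{U_k}R_k$ — and CLN estimates bound masses but certainly do not force that mass to vanish in the limit. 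So $\lim_k R_k$ and $\lim_k\mathds{1}_{U_k}R_k$ can genuinely differ.

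What is actually going on is that \eqref{eq:relativenppdef} and the final display of the lemma both carry a typo: as in \cite{BEGZ10} and in Vu's own definition, the relative non-pluripolar product is $\lim_k\mathds{1}_{U_k}R_k$, and the pairing statement should read $\int_X(\Phi,T_1\wedge\cdots\wedge T_m\cap T)=\lim_k\int_X(\Phi,\mathds{1}_{U_k}R_k)$. (Without the indicator, the product would charge pluripolar sets, contradicting \cref{prop:npp} later in the paper, and would fail to reproduce the BEGZ product in the absolute case.) Once the indicator is reinstated, your step 2 becomes unnecessary and your steps 1 and 3 already constitute a complete proof: monotonicity of the non-pluripolar pieces plus the uniform mass bound gives existence of the weak limit as a positive closed current, and the monotone convergence theorem upgrades the convergence to Borel test forms of locally bounded coefficients, first for $\Phi\geq 0$ and then by decomposing a general $\Phi$ into positive and negative parts.
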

Here the pairing is that between a test form and a current.

\begin{lemma}\label{lma:linearinT}
Suppose $T_1,\ldots,T_m$ are closed positive $(1,1)$-currents on $X$ and $T_1$, $T_2$ are closed positive currents of bidimension $(p,p)$ on $X$. Take $\lambda_1,\lambda_2\geq 0$. Assume that $T_1\wedge\cdots\wedge T_m\cap T_1$ and $T_1\wedge\cdots\wedge T_m\cap T_2$ are both well-defined, then so is $T_1\wedge\cdots\wedge T_m\cap (\lambda_1 T_1+\lambda_2 T_2)$ and
\[
T_1\wedge\cdots\wedge T_m\cap (\lambda_1 T_1+\lambda_2 T_2)=\lambda_1 (T_1\wedge\cdots\wedge T_m\cap T_1)+\lambda_2 (T_1\wedge\cdots\wedge T_m\cap T_2).
\]
\end{lemma}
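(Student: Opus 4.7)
The plan is to reduce the statement to the definition of well-definedness and to use the linearity of the Bedford--Taylor product in the bidimension $(p,p)$ factor. Since the statement is local on $X$, I would fix an arbitrary point $x\in X$, choose a coordinate chart $U\cong\Delta^n$ containing $x$ on which each $T_i=\ddc u_i$ for some psh function $u_i$, and work there. (I will write $S_1,S_2$ instead of $T_1,T_2$ for the bidimension $(p,p)$ currents to avoid the notational clash in the statement.)

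For each $k\in\mathbb{N}$, let
\[
R_k^{(j)}:=\ddc\max\{u_1,-k\}\wedge\cdots\wedge\ddc\max\{u_m,-k\}\wedge S_j\qquad (j=1,2),
\]
and let $R_k$ be the analogous expression with $S_j$ replaced by $\lambda_1 S_1+\lambda_2 S_2$. Since the Bedford--Taylor product of currents of order zero is linear in each factor when the other factors are locally bounded psh functions, we have the pointwise identity $R_k=\lambda_1 R_k^{(1)}+\lambda_2 R_k^{(2)}$ as closed positive currents on $U$. In particular, for any compact $K\subseteq U$,
\[
\|\mathds{1}_{\{u_1>-k,\ldots,u_m>-k\}}R_k\|_K\leq \lambda_1\|\mathds{1}_{\{u_1>-k,\ldots,u_m>-k\}}R_k^{(1)}\|_K+\lambda_2\|\mathds{1}_{\{u_1>-k,\ldots,u_m>-k\}}R_k^{(2)}\|_K,
\]
and both terms on the right are bounded in $k$ by the well-definedness hypothesis for $S_1$ and $S_2$. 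Hence $T_1\wedge\cdots\wedge T_m\cap(\lambda_1 S_1+\lambda_2 S_2)$ is well-defined.

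To conclude the equality of currents, I would test against a Borel measurable $(p-m,p-m)$-form $\Phi$ with locally bounded coefficients and compact support in $U$. By the second part of the preceding lemma (the characterization of the limit by pairings with such $\Phi$),
\[
\int_X(\Phi,T_1\wedge\cdots\wedge T_m\cap(\lambda_1 S_1+\lambda_2 S_2))=\lim_{k\to\infty}\int_X(\Phi,R_k),
\]
and analogously for $S_1$ and $S_2$. The pointwise identity $R_k=\lambda_1 R_k^{(1)}+\lambda_2 R_k^{(2)}$ gives linearity at each level $k$, and passing to the limit yields the claimed identity locally, hence globally by a partition of unity argument. There is no substantive obstacle here: the only thing to check carefully is that the mass-boundedness condition \eqref{eq:supnormKfinite} transfers under the positive linear combination, which is immediate from positivity of the truncated products and the triangle inequality for total variation.
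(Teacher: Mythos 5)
Your proof is correct and spells out exactly the argument the paper has in mind; the paper itself simply states "This is obvious from the definition," and your write-up is the verification behind that claim (linearity of Bedford--Taylor products in the current factor, positivity to transfer the mass bound, and passing to the limit via the pairing characterization). No gap.
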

This is obvious from the definition.

\begin{definition}
Suppose $T_1,\ldots,T_m$ are closed positive $(1,1)$-currents on $X$ and $T\in  \hat{Z}_p(X)$.
We say the relative non-pluripolar product $T_1\wedge\cdots\wedge T_m\cap T$ is \emph{well-defined} if there is a decomposition $T=S_1-S_2$ such that $T_1\wedge\cdots\wedge T_m\cap S_i$ ($i=1,2$) are both well-defined.

In this case, we define
\[
T_1\wedge\cdots\wedge T_m\cap T\coloneqq T_1\wedge\cdots\wedge T_m\cap S_1-T_1\wedge\cdots\wedge T_m\cap S_2.
\]
\end{definition}
Observe that the product $T_1\wedge\cdots\wedge T_m\cap T$ is symmetric in $T_i$.

\begin{lemma}
Suppose $T_1,\ldots,T_m$ are closed positive $(1,1)$-currents on $X$ and $T\in  \hat{Z}_p(X)$.
Assume that $T_1\wedge\cdots\wedge T_m\cap T$ is well-defined, then $T_1\wedge\cdots\wedge T_m\cap T$ does not depend on the choice of the decomposition $T=S_1-S_2$.
\end{lemma}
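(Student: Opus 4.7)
The plan is to reduce the problem to the additivity statement already available in \cref{lma:linearinT}. Suppose we have two decompositions $T=S_1-S_2=S_1'-S_2'$ with both pairs giving well-defined relative non-pluripolar products. Rearranging yields the identity of closed positive currents
\[
S_1+S_2'=S_1'+S_2\,.
\]
This is the key algebraic manipulation that lets us compare the two candidate definitions.

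Next, I would invoke \cref{lma:linearinT} twice. By hypothesis, $T_1\wedge\cdots\wedge T_m\cap S_1$ and $T_1\wedge\cdots\wedge T_m\cap S_2'$ are well-defined, so \cref{lma:linearinT} (with $\lambda_1=\lambda_2=1$) guarantees that $T_1\wedge\cdots\wedge T_m\cap (S_1+S_2')$ is well-defined and equals the sum. The analogous statement holds for $T_1\wedge\cdots\wedge T_m\cap (S_1'+S_2)$. Since the underlying currents coincide, the two sums agree:
\[
T_1\wedge\cdots\wedge T_m\cap S_1 + T_1\wedge\cdots\wedge T_m\cap S_2'
= T_1\wedge\cdots\wedge T_m\cap S_1' + T_1\wedge\cdots\wedge T_m\cap S_2\,.
\]
Rearranging this identity gives
\[
T_1\wedge\cdots\wedge T_m\cap S_1 - T_1\wedge\cdots\wedge T_m\cap S_2
= T_1\wedge\cdots\wedge T_m\cap S_1' - T_1\wedge\cdots\wedge T_m\cap S_2'\,,
\]
which is exactly the independence of the decomposition.

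There is no real obstacle here: the whole argument is a short algebraic trick, and it works precisely because \cref{lma:linearinT} was formulated with the (essential) clause that well-definedness is preserved under non-negative linear combinations. The only tiny subtlety is to note that in the definition of a well-defined relative non-pluripolar product for $T\in \hat Z_p(X)$, it is enough to exhibit \emph{some} decomposition with the required property, so one is free to use the decomposition $S_1+S_2'=S_1'+S_2$ rather than the original ones when comparing.
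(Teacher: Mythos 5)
Your argument is correct and is exactly the standard expansion of the paper's one-line proof, which simply cites \cref{lma:linearinT}. The rearrangement $S_1+S_2'=S_1'+S_2$ followed by two applications of \cref{lma:linearinT} (with $\lambda_1=\lambda_2=1$) and cancellation is precisely the intended argument.
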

\begin{proof}
This follows immediately from \cref{lma:linearinT}.
\end{proof}

\begin{proposition}
Suppose $T_1,\ldots,T_m$ are closed positive $(1,1)$-currents on $X$ and $T_1, T_2\in \hat{Z}_p(X)$. Take $\lambda_1,\lambda_2 \in \mathbb{R}$. Assume that $T_1\wedge\cdots\wedge T_m\cap T_1$ and $T_1\wedge\cdots\wedge T_m\cap T_2$ are both well-defined, then so is $T_1\wedge\cdots\wedge T_m\cap (\lambda_1 T_1+\lambda_2 T_2)$ and
\begin{equation}\label{eq:npprellinear}
T_1\wedge\cdots\wedge T_m\cap  (\lambda_1 T_1+\lambda_2 T_2)=\lambda_1 (T_1\wedge\cdots\wedge T_m\cap T_1)+\lambda_2 (T_1\wedge\cdots\wedge T_m\cap T_2).
\end{equation}
\end{proposition}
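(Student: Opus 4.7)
The plan is to reduce everything to the corresponding statement for closed positive dsh pieces, where linearity is already established, and then combine the decompositions carefully so that the signs of $\lambda_1,\lambda_2$ do not cause trouble.

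First, rename the dsh currents in the statement as $U_1,U_2$ to avoid the clash with the $(1,1)$-currents $T_1,\ldots,T_m$. By assumption there exist decompositions $U_1=S_1^+-S_1^-$ and $U_2=S_2^+-S_2^-$ into closed positive currents of bidimension $(p,p)$, such that $T_1\wedge\cdots\wedge T_m\cap S_i^{\pm}$ are all well-defined in the sense of the previous definition. For $i=1,2$, set
\[
A_i^{+}=\max\{\lambda_i,0\}S_i^{+}+\max\{-\lambda_i,0\}S_i^{-},\qquad A_i^{-}=\max\{\lambda_i,0\}S_i^{-}+\max\{-\lambda_i,0\}S_i^{+},
\]
so that $\lambda_iU_i=A_i^{+}-A_i^{-}$ with $A_i^{\pm}$ closed positive currents of bidimension $(p,p)$. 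By \cref{lma:linearinT}, each $T_1\wedge\cdots\wedge T_m\cap A_i^{\pm}$ is well-defined and equals the corresponding nonnegative linear combination of $T_1\wedge\cdots\wedge T_m\cap S_i^{\pm}$.

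Next, write $\lambda_1U_1+\lambda_2U_2=(A_1^{+}+A_2^{+})-(A_1^{-}+A_2^{-})$. Applying \cref{lma:linearinT} once more, $T_1\wedge\cdots\wedge T_m\cap (A_1^{\pm}+A_2^{\pm})$ are well-defined. Hence $T_1\wedge\cdots\wedge T_m\cap (\lambda_1U_1+\lambda_2U_2)$ is well-defined by the definition, and
\[
T_1\wedge\cdots\wedge T_m\cap (\lambda_1U_1+\lambda_2U_2)
=T_1\wedge\cdots\wedge T_m\cap (A_1^{+}+A_2^{+})-T_1\wedge\cdots\wedge T_m\cap (A_1^{-}+A_2^{-}).
\]
Expanding the right-hand side via \cref{lma:linearinT} and regrouping the four terms associated with each $U_i$ gives exactly $\lambda_1(T_1\wedge\cdots\wedge T_m\cap U_1)+\lambda_2(T_1\wedge\cdots\wedge T_m\cap U_2)$, which is \eqref{eq:npprellinear}.

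The main thing to verify is that this identity does not depend on the particular decompositions used; but this is precisely the content of the independence lemma proved just above the statement (which itself rests on \cref{lma:linearinT}). So there is really no obstacle: the proof is a purely formal bookkeeping argument once one splits the positive and negative parts according to the signs of $\lambda_1,\lambda_2$. The only mild subtlety is to handle the case where some $\lambda_i$ vanishes or is negative uniformly, which the definitions of $A_i^{\pm}$ above accommodate.
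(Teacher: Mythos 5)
Your proof is correct and takes essentially the same route as the paper's: both reduce the problem to \cref{lma:linearinT} by splitting into closed positive pieces and then grouping the $+$ and $-$ parts. The only cosmetic difference is that the paper begins with ``we may assume $\lambda_1,\lambda_2\geq 0$'' (absorbing the sign of $\lambda_i$ by swapping the roles of $S_i^+$ and $S_i^-$), whereas you make that sign-bookkeeping explicit through the auxiliary currents $A_i^{\pm}$; the underlying argument is identical.
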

\begin{proof}
We may assume that $\lambda_1,\lambda_2\geq 0$.

By definition, we can find closed positive currents $S_1^1$, $S_1^2$, $S_2^1$, $S_2^2$ of bidimension $(p,p)$ such that $T_1=S_1^1-S_1^2$, $T_2=S_2^1-S_2^2$ and $T_1\wedge \cdots \wedge T_m\cap S_i^j$ ($i=1,2$, $j=1,2$) are all well-defined. Then $T_1\wedge \cdots \wedge T_m\cap (\lambda_1 S_1^j+\lambda_2 S_2^j)$ ($j=1,2$) are both well-defined. Hence so is $T_1\wedge\cdots\wedge T_m\cap  (\lambda_1 T_1+\lambda_2 T_2)$ and \eqref{eq:npprellinear} follows.
\end{proof}

On the other hand, the product $T_1\wedge\cdots\wedge T_m\cap T$ is not additive in $T_i$ in general. 
\begin{example}\label{ex:c1notadd}
    When $T$ is the current of integration of a complex submanifold $V$ of $X$ such that $V$ is contained in the polar locus of $T_1$. Take another current $T_1'$ such that $T_1'\wedge T_2\wedge \cdots\wedge T_m\cap T>0$. In this case, by \cite[Remark~3.8]{Vu20}, we have
    \[
        0=(T_1+T_1')\wedge T_2\wedge \cdots\wedge T_m\cap T< T_1\wedge T_2\wedge \cdots\wedge T_m\cap T+T_1'\wedge T_2\wedge \cdots\wedge T_m\cap T.
    \]
\end{example}

\begin{proposition}\label{prop:relnpplinearinTi}
Suppose $T_1,\ldots,T_m$ and $T_1'$ are closed positive $(1,1)$-currents on $X$ and $T\in \hat{Z}_p(X)$. Assume that $T_1\wedge T_2\wedge\cdots\wedge T_m\cap T$ and $T_1'\wedge T_2\wedge\cdots\wedge T_m\cap T$ are both well-defined, then so is $(T_1+T_1')\wedge T_2\wedge\cdots\wedge T_m\cap T$.

Furthermore, if $T$ puts no mass on the polar loci of $T_1$ and $T_1'$, then
\[
(T_1+T_1')\wedge T_2\wedge\cdots\wedge T_m\cap T=T_1\wedge T_2\wedge\cdots\wedge T_m\cap T+T_1'\wedge T_2\wedge\cdots\wedge T_m\cap T.
\]
\end{proposition}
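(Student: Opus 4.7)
First I reduce to the case where $T$ is a closed positive current of bidimension $(p,p)$. Indeed, by the definition of the relative non-pluripolar product for currents in $\widehat{Z}_p(X)$, we may pick a decomposition $T=S_1-S_2$ such that each of $T_1\wedge T_2\wedge\cdots\wedge T_m\cap S_i$ and $T_1'\wedge T_2\wedge\cdots\wedge T_m\cap S_i$ is well-defined. Applying the positive case to $S_1$ and $S_2$ separately, and using \cref{lma:linearinT}, gives both claims for $T$. Moreover, by \cref{lma:nomassnonpp}, if $T$ puts no mass on the polar loci of $T_1$ and $T_1'$, we may choose the decomposition so that $S_1, S_2$ also put no mass on these loci. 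From now on assume $T$ is closed and positive.

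Fix a local coordinate chart $U\cong\Delta^n$ on which $T_i=\ddc u_i$ and $T_1'=\ddc u_1'$ with $u_i,u_1'\in\PSH(U)_{\leq 0}$. Set $v:=u_1+u_1'$, so that $T_1+T_1'=\ddc v$ on $U$. The key elementary observation is that on the open set $\{v>-k\}$ we have $u_1>-k$ and $u_1'>-k$ (since both are $\leq 0$), hence
\[
\max\{v,-k\}=v=\max\{u_1,-k\}+\max\{u_1',-k\}\quad\text{on}\quad \{v>-k\}.
\]
Writing $R_k^{(1)}$, $R_k^{(2)}$, $\tilde{R}_k$ for the bounded Bedford--Taylor products \eqref{eq:defRk} associated with $u_1$, $u_1'$, $v$ respectively, the locality of the Bedford--Taylor product gives, on $\{v>-k,u_2>-k,\ldots,u_m>-k\}$, the identity $\tilde{R}_k=R_k^{(1)}+R_k^{(2)}$. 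Since $\{v>-k\}\subseteq\{u_1>-k\}\cap\{u_1'>-k\}$, for every compact $K\subseteq U$,
\[
\bigl\|\mathds{1}_{\{v>-k,u_2>-k,\ldots\}}\tilde{R}_k\bigr\|_K\leq \bigl\|\mathds{1}_{\{u_1>-k,u_2>-k,\ldots\}}R_k^{(1)}\bigr\|_K+\bigl\|\mathds{1}_{\{u_1'>-k,u_2>-k,\ldots\}}R_k^{(2)}\bigr\|_K,
\]
and the right-hand side is bounded uniformly in $k$ by hypothesis. This proves that $(T_1+T_1')\wedge T_2\wedge\cdots\wedge T_m\cap T$ is well-defined.

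For the additivity statement, assume $T$ puts no mass on $A:=\{u_1=-\infty\}\cup\{u_1'=-\infty\}$ (the union of the polar loci of $T_1$ and $T_1'$ restricted to $U$). Cover $U\setminus A$ by the increasing family of open sets $U_N:=\{u_1>-N\}\cap\{u_1'>-N\}$. On $U_N$ we have $v>-2N$, so for every compact $K\subseteq U_N$ and every $k\geq 2N$, the truncations $\max\{u_1,-k\}$, $\max\{u_1',-k\}$ and $\max\{v,-k\}$ agree with $u_1$, $u_1'$, $v$ respectively near $K$; hence $\tilde{R}_k=R_k^{(1)}+R_k^{(2)}$ as currents on $U_N$. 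Passing to the limit and using the locality of the relative non-pluripolar product yields
\[
(T_1+T_1')\wedge T_2\wedge\cdots\wedge T_m\cap T\bigr|_{U_N}=T_1\wedge T_2\wedge\cdots\wedge T_m\cap T\bigr|_{U_N}+T_1'\wedge T_2\wedge\cdots\wedge T_m\cap T\bigr|_{U_N}.
\]
Each of the three non-pluripolar products puts no mass on $A$: for the products involving $T_1$ or $T_1'$ this is because $T$ has no mass on $A$ and the non-pluripolar product of a bounded-potential-part vanishes on the polar locus of its own potential; for the product involving $T_1+T_1'$, the polar locus of $v$ is precisely $A$. So all three currents are determined by their restriction to $U\setminus A=\bigcup_N U_N$, and the local equalities glue to the required global identity.

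The main conceptual point, and the only real obstacle, is verifying that each of the three products puts no mass on $A$; once this is granted, the remainder is a careful but routine combination of the locality of relative non-pluripolar products with the pointwise identity $\max\{v,-k\}=\max\{u_1,-k\}+\max\{u_1',-k\}$ on $\{v>-k\}$.
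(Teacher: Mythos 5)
Your direct argument in the closed-positive case is correct and is essentially what Vu's Proposition~3.5(iv) encapsulates — the paper simply cites that result together with \cref{lma:nomassnonpp} instead of reproving it. Isolating the pointwise identity $\max\{v,-k\}=\max\{u_1,-k\}+\max\{u_1',-k\}$ on $\{v>-k\}$, the inclusion $\{v>-k\}\subseteq\{u_1>-k\}\cap\{u_1'>-k\}$ (valid because $u_1,u_1'\leq 0$), and the multilinearity of Bedford--Taylor products in factors with locally bounded potentials does yield both the uniform mass estimate and, after restricting to the exhaustion $U_N$ and noting that all three products put no mass on $A$, the additivity. Spelling this out is a genuine gain in self-containedness and makes visible which ingredients actually drive the proposition.

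There is, however, a gap in your reduction to closed positive $T$ that you pass over in a single sentence. For $T\in\widehat{Z}_p(X)$, well-definedness is an existential condition: \emph{some} decomposition $T=S_1-S_2$ makes the two halves well-defined. Your two hypotheses therefore furnish a decomposition that works for the $T_1$-product and a possibly \emph{different} decomposition that works for the $T_1'$-product, and you assert without argument that a single decomposition may be chosen to serve both. That is not automatic: well-definedness of $T_1\wedge\cdots\wedge T_m\cap S$ is a uniform integrability condition which is monotone in the positive current $S$ but is not preserved when one replaces the representative $S_1-S_2$ of $T$ by another $S_1'-S_2'$. Your estimate $\bigl\|\mathds{1}_{\{v>-k,\ldots\}}\tilde{R}_k\bigr\|_K\leq \bigl\|\mathds{1}_{\{u_1>-k,\ldots\}}R_k^{(1)}\bigr\|_K+\bigl\|\mathds{1}_{\{u_1'>-k,\ldots\}}R_k^{(2)}\bigr\|_K$ requires all three currents to be paired against the \emph{same} positive $S_i$, and you do not supply such an $S_i$. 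You should either construct a common decomposition explicitly (which I do not see how to do in general) or note that the statement is typically applied on compact K\"ahler $X$, where the relative product is well-defined for every decomposition and the issue disappears; for the ``Furthermore'' clause, \cref{lma:nomassnonpp} plus the monotonicity of well-definedness in $S$ lets one pass to a decomposition supported off $A$, but the common-decomposition issue persists there too. Apart from this technical point, the substance of the proof is right.
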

Recall that the polar locus of a closed positive $(1,1)$-current $T$ is locally defined as $\{\varphi=-\infty\}$ when the current $T$ is written as $\ddc\varphi$ locally.
\begin{proof}
This follows from \cite[Proposition~3.5(iv)]{Vu20} and \cref{lma:nomassnonpp}.
\end{proof}

\begin{proposition}\label{prop:relnppclopos}
Suppose $T_1,\ldots,T_m$ are closed positive $(1,1)$-currents on $X$ and $T\in \hat{Z}_p(X)$.
Assume that $T_1\wedge\cdots\wedge T_m\cap T$ is well-defined, then $T_1\wedge\cdots\wedge T_m\cap T\in \hat{Z}_{p-m}(X)$.
\end{proposition}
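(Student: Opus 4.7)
The plan is to reduce to the closed positive case and then invoke weak closedness of the cone of positive currents. By the very definition of well-definedness for a dsh current, the hypothesis furnishes a decomposition $T=S_1-S_2$ with $S_1,S_2$ closed positive of bidimension $(p,p)$ such that both $T_1\wedge\cdots\wedge T_m\cap S_i$ are well-defined. Unwinding the definition,
\[
T_1\wedge\cdots\wedge T_m\cap T = T_1\wedge\cdots\wedge T_m\cap S_1 - T_1\wedge\cdots\wedge T_m\cap S_2,
\]
so it suffices to prove that each summand on the right is closed positive of bidimension $(p-m,p-m)$.

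For this reduced claim, I would work locally on a chart $U\cong\Delta^n$ with $T_i=\ddc u_i$ and observe that the truncated currents
\[
R_k = \ddc\max\{u_1,-k\}\wedge\cdots\wedge\ddc\max\{u_m,-k\}\wedge S
\]
of \eqref{eq:defRk} are, by Bedford--Taylor theory, closed positive currents of bidimension $(p-m,p-m)$, since each $\max\{u_i,-k\}$ is locally bounded and psh while $S$ is closed positive. The relative non-pluripolar product is by \eqref{eq:relativenppdef} the weak limit of the $R_k$'s, and this limit exists by the Vu lemma already recalled. Since $\mathrm{d}$ is continuous in the weak topology of currents, closedness survives passage to the limit; since pairing with positive test forms is continuous, so does positivity. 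Hence $T_1\wedge\cdots\wedge T_m\cap S_i$ is closed positive of bidimension $(p-m,p-m)$, as required.

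Combining the two reductions yields $T_1\wedge\cdots\wedge T_m\cap T\in\hat{Z}_{p-m}(X)$ as a difference of two such closed positive currents (the $a-m$ in the statement appears to be a typographical slip for $p-m$, consistent with the bidimension hypothesis on $T$). There is no substantive obstacle here; the argument is a direct unwinding of the definitions combined with the elementary fact that the cone of closed positive currents of a fixed bidimension is weakly closed. The only mildly delicate point is to confirm that the weak convergence asserted in the definition of the relative non-pluripolar product is genuinely weak convergence as currents (so that closedness passes to the limit), which is exactly the content of the Vu lemma as stated above.
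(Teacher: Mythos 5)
Your first reduction (decomposing $T=S_1-S_2$ and using linearity to reduce to the closed positive case) and your positivity argument are fine, but there is a genuine gap in the closedness claim, and that gap is the real content of the proposition. The display \eqref{eq:relativenppdef} as printed contains a typographical slip: the relative non-pluripolar product is not the weak limit of $R_k$ but of the cut-off currents $\mathds{1}_{\{u_1>-k,\ldots,u_m>-k\}}R_k$. This is the only reading consistent with the well-definedness condition \eqref{eq:supnormKfinite} (which bounds the cut-off currents, not $R_k$ itself), and it matches the convention of \cite{Vu20} and, in the absolute case, of \cite{BEGZ10}. Indeed, in the toy example $T_1=\ddc\log|z_1|$ on $\Delta^2$ with $S=[\Delta^2]$, one has $R_k\rightharpoonup[z_1=0]$, whereas the non-pluripolar product is $0$; the two limits genuinely differ. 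The cut-off currents $\mathds{1}_{\{u_1>-k,\ldots,u_m>-k\}}R_k$ are positive, so your observation that positivity passes to weak limits stands. But they are \emph{not} closed: multiplying a closed current by the discontinuous indicator $\mathds{1}_{\{u_1>-k,\ldots,u_m>-k\}}$ destroys closedness. Your sentence ``closedness survives passage to the limit'' is therefore applied to a sequence to which it does not apply, and the argument collapses at exactly the step that carries the proposition's weight.

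Closedness of the non-pluripolar product is a genuine theorem even in the absolute case $T=[X]$; in \cite{BEGZ10} it is proved by a nontrivial estimate controlling the boundary contribution created by the cut-off, relying on the uniform local mass bound built into the definition. The relative version is \cite[Theorem~3.7]{Vu20}, which is precisely what the paper cites (together with Lemma~3.2(ii) there for the bidimension). Without invoking that result or reproducing its estimate, the closedness half of your argument does not go through. Your remark that $a-m$ in the statement should read $p-m$ is correct but orthogonal to this gap.
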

\begin{proof}
This follows immediately from \cite[Theorem~3.7, Lemma~3.2(ii)]{Vu20}.
\end{proof}

\begin{proposition}
Suppose $T_1,\ldots,T_m$ are closed positive $(1,1)$-currents on $X$ and $T\in \hat{Z}_p(X)$ and $T_1\wedge\cdots\wedge T_m\cap T$ is well-defined. Assume that $T$ puts no mass on a complete pluripolar set $A\subseteq X$, then so is $T_1\wedge\cdots\wedge T_m\cap T$.
\end{proposition}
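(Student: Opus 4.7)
The strategy is a local Bedford--Taylor calculation combined with a reduction to the positive case via \cref{lma:nomassnonpp}.

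First I would reduce to the case that $T$ is closed and positive. Using \cref{lma:nomassnonpp}, one obtains a decomposition $T = S_1 - S_2$ where both $S_i$ are closed positive currents putting no mass on $A$. Because the definition of the relative non-pluripolar product is consistent with any admissible decomposition (one checks this exactly as for \cref{lma:linearinT}), it is enough to prove the statement under the assumption that $T$ itself is closed positive and puts no mass on $A$.

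Next I would localize. The property of putting no mass on $A$ is a local one, so fix a chart $U\cong\Delta^n$ on which $T_i=\ddc u_i$. With $u_i^k:=\max\{u_i,-k\}$ and $R_k:=\ddc u_1^k\wedge\cdots\wedge\ddc u_m^k\wedge T$ as in \eqref{eq:defRk}, the heart of the argument is to show that each $R_k$ puts no mass on $A$. This goes by induction on $m$: the base case is the hypothesis on $T$, and the inductive step uses the classical Bedford--Taylor fact that if $v$ is a locally bounded psh function and $S$ is a closed positive current with $S(A)=0$, then $(\ddc v \wedge S)(A)=0$. Since $A$ is complete pluripolar, locally $A=\{\varphi=-\infty\}$ for a psh function $\varphi$, and this fact follows from the plurifine locality of Bedford--Taylor products and quasi-continuity of $v$.

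Finally I would pass to the limit. By the construction of the relative non-pluripolar product (unwinding \eqref{eq:relativenppdef} via the standard characterization in \cite{Vu20}), the limit $T_1\wedge\cdots\wedge T_m\cap T$ coincides with the increasing limit of $\mathds{1}_{\bigcap_i\{u_i>-k\}}R_k$, each term of which is majorized by $R_k$ and so puts no mass on $A$. Since for any increasing sequence of positive Radon measures $\mu_k\uparrow \mu$ with $\mu_k(A)=0$ we have $\mu(A)=\sup_k\mu_k(A)=0$, the limit current also puts no mass on $A$, which is the desired conclusion.

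The main obstacle is the Bedford--Taylor step in the induction: the assertion that wedging with $\ddc v$ preserves the non-charging property for a complete pluripolar $A$. This is exactly the place where both the local boundedness of the potentials $u_i^k$ and the complete-pluripolarity of $A$ are used in an essential way; everything else is a formal consequence of this lemma together with the definition of the relative non-pluripolar product.
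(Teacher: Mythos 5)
Your proof takes the same first step as the paper: apply \cref{lma:nomassnonpp} to decompose $T=S_1-S_2$ with each $S_i$ closed positive and not charging $A$, thereby reducing to the case of positive $T$. Where you diverge is in the positive case. The paper's (admittedly terse) intended argument is to iterate the result cited in \cref{prop:massless}, namely \cite[Proposition~3.9(iii)]{Vu20}, via the tower property \cref{prop:tower}; you instead reprove the positive case from scratch by unwinding the definition \eqref{eq:relativenppdef}, reducing by induction to the single-wedge statement, and passing to the increasing limit. Your overall structure is sound.

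The one place you should be more careful is the step you call a ``classical Bedford--Taylor fact'': that if $v$ is locally bounded psh, $S$ is a closed positive current with $S(A)=0$, and $A$ is complete pluripolar, then $(\ddc v\wedge S)(A)=0$. This is true, but it is not a textbook Bedford--Taylor result for arbitrary closed positive $S$ (the classical statements concern $(\ddc v_1)\wedge\cdots\wedge(\ddc v_n)$ against Lebesgue measure). The assertion is precisely the content of \cite[Proposition~3.9(iii)]{Vu20}, so by invoking it you are essentially rederiving what the paper cites rather than giving a more elementary route. Either supply the quasi-continuity/capacity argument that proves this lemma, or cite Vu directly — otherwise this becomes the hidden crux of your induction. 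Apart from that attribution, the reduction via \cref{lma:nomassnonpp}, the use of monotone convergence of the truncated measures $\mathds{1}_{\bigcap_i\{u_i>-k\}}R_k$, and the majorization of these by $R_k$ are all correct and match the paper's approach in spirit.
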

\begin{proof}
This follows from \cref{lma:nomassnonpp}.
\end{proof}

\begin{proposition}\label{prop:tower}
Suppose $T_1,\ldots,T_m$ are closed positive $(1,1)$-currents on $X$ and $T\in \hat{Z}_p(X)$. Fix an integer $b$ with $1\leq b\leq m$. Assume that $R=T_{b+1}\wedge \cdots\wedge T_m\cap T$ is well-defined and $T_1\wedge \cdots\wedge T_b\cap R$ is well-defined. Then $T_1\wedge\cdots\wedge T_m\cap T$ is well-defined and
\[
T_1\wedge\cdots\wedge T_m\cap T=T_1\wedge \cdots\wedge T_b\cap R=T_1\wedge \cdots\wedge T_b\cap (T_{b+1}\wedge \cdots\wedge T_m\cap T).
\]
\end{proposition}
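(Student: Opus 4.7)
The plan is to reduce by linearity to the case where $T$ and $R$ are both closed positive currents, work locally, and combine Bedford--Taylor associativity at each truncation level with plurifine locality to identify the two limits.

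For the reduction, I would use \cref{lma:nomassnonpp} together with the definition of relative non-pluripolar products for dsh currents to pick a decomposition $T = S_1 - S_2$ with $S_i$ closed positive such that $T_{b+1}\wedge\cdots\wedge T_m \cap S_i$ is well-defined. Setting $R_i := T_{b+1}\wedge\cdots\wedge T_m\cap S_i$ gives a decomposition $R = R_1 - R_2$ into closed positive currents, and the hypothesis that $T_1\wedge\cdots\wedge T_b\cap R$ is well-defined supplies (after possibly refining the decomposition) closed positive pieces for which both outer products are well-defined. By linearity (\cref{lma:linearinT} and its dsh extension) it therefore suffices to treat the case $T\geq 0$, which I assume from now on.

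Fix a local chart $U\cong\Delta^n$ and psh potentials $u_i$ on $U$ with $T_i = \ddc u_i$, and set $u_i^k := \max\{u_i,-k\}$. By Bedford--Taylor theory, for each $k$ and each $J\subseteq\{1,\ldots,m\}$ the current
\[
A_k^J := \Bigl(\bigwedge_{j\in J}\ddc u_j^k\Bigr)\wedge T
\]
is closed and positive, and associativity of Bedford--Taylor products at the truncated level gives
\[
A_k^{\{1,\ldots,m\}} = \ddc u_1^k\wedge\cdots\wedge\ddc u_b^k\wedge A_k^{\{b+1,\ldots,m\}}\,.
\]
The crucial observation is the plurifine locality of Bedford--Taylor products: on $\Omega_k := \{u_{b+1}>-k,\ldots,u_m>-k\}$ the truncated potentials $u_j^k$ ($j>b$) coincide with $u_j$, so
\[
\mathds{1}_{\Omega_k}A_k^{\{b+1,\ldots,m\}} = \mathds{1}_{\Omega_k}A_{k'}^{\{b+1,\ldots,m\}}\quad\text{for all } k'\geq k\,,
\]
and letting $k'\to\infty$ together with the well-definedness of $R$ identifies this current with $\mathds{1}_{\Omega_k}R$. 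Substituting back into the regrouped expression and intersecting with $\{u_1>-k,\ldots,u_b>-k\}$ expresses the approximants of $T_1\wedge\cdots\wedge T_m\cap T$ (up to plurifine boundary terms) as the approximants of $T_1\wedge\cdots\wedge T_b\cap R$. The hypothesis that the latter is well-defined then yields the uniform mass bound
\[
\sup_k\bigl\|\mathds{1}_{\{u_1>-k,\ldots,u_m>-k\}}\,A_k^{\{1,\ldots,m\}}\bigr\|_K<\infty
\]
on every compact $K\subseteq U$, proving that $T_1\wedge\cdots\wedge T_m\cap T$ is well-defined and that its limit coincides with $T_1\wedge\cdots\wedge T_b\cap R$.

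The main obstacle will be rigorously justifying the interchange of the two truncation limits: although $A_k^{\{b+1,\ldots,m\}}$ is closed and positive, its plurifine restriction $\mathds{1}_{\Omega_k}A_k^{\{b+1,\ldots,m\}}$ is not closed, so one must control the boundary terms that appear when the outer wedge with $\ddc u_1^k\wedge\cdots\wedge\ddc u_b^k$ is applied. I would handle this by writing $\mathds{1}_{\Omega_k} = \prod_{j>b}\mathds{1}_{\{u_j>-k\}}$ and invoking the Bedford--Taylor identity $\ddc u_j^{k+1}\wedge(\,\cdot\,)=\ddc u_j^{k}\wedge(\,\cdot\,)$ on $\{u_j>-k\}$, telescoping in $k$ to reduce the claim to weak continuity of Bedford--Taylor products along monotone sequences of locally bounded psh functions, in the spirit of \cite[Proposition~3.5]{Vu20} and \cite[Proposition~1.4]{BEGZ10}.
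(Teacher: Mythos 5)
The paper's proof of this proposition is a single line: it cites \cite[Proposition~3.5(vi)]{Vu20}, which is precisely the associativity statement you are reproving for a closed positive base current; the extension to dsh $T$ is then by linearity, as in your first paragraph. So your write-up is a reconstruction of the argument behind the citation rather than a different route.

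Your reconstruction identifies the right ingredients --- reduce to positive $T$, localize, truncate, invoke Bedford--Taylor associativity, and use plurifine locality --- but the step you flag yourself is a genuine gap and is the mathematical heart of the cited result. ``Up to plurifine boundary terms'' is not a harmless caveat: once you restrict the inner factor to $\Omega_k$ you lose closedness, so wedging $\ddc u_1^k\wedge\cdots\wedge\ddc u_b^k$ against $\mathds{1}_{\Omega_k}A_{k'}^{\{b+1,\ldots,m\}}$ is not a Bedford--Taylor operation, and ``telescoping'' plus a citation of the very proposition being proved does not close the circle. The clean way to finish never wedges a non-closed current: one compares $\mathds{1}_{W_k}A_k^{\{1,\ldots,m\}}$ with $\mathds{1}_{W_k}A_{k'}^{\{1,\ldots,m\}}$ for $k'\geq k$ (with $W_k=\{u_1>-k,\ldots,u_m>-k\}$) by applying plurifine locality one truncated potential at a time, so each wedge remains a Bedford--Taylor product of closed positive currents; having shown $\mathds{1}_{W_k}A_{k'}^{\{1,\ldots,m\}}$ is constant in $k'\geq k$, one lets $k'\to\infty$ using the convergence against bounded Borel test forms (the lemma in this section quoted from \cite[Lemma~3.1]{Vu20}) to identify the limit with $\mathds{1}_{W_k}\bigl(T_1\wedge\cdots\wedge T_b\cap R\bigr)$, which gives both the uniform mass bound and the desired equality simultaneously. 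A smaller soft spot: the decomposition $R=R_1-R_2$ you build from $T=S_1-S_2$ need not be the decomposition that certifies well-definedness of $T_1\wedge\cdots\wedge T_b\cap R$, and ``after possibly refining the decomposition'' does not obviously resolve the mismatch; on a compact K\"ahler $X$ the proposition later in this section guarantees every such product is well-defined so this evaporates, but for the general statement one should verify that the local mass bounds propagate across decompositions.
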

\begin{proof}
This follows from \cite[Proposition~3.5(vi)]{Vu20}.
\end{proof}

\begin{proposition}
Suppose $T_1,\ldots,T_m$ are closed positive $(1,1)$-currents on $X$ and $T\in \hat{Z}_p(X)$.
Assume that $X$ is a compact K\"ahler manifold, then $T_1\wedge\cdots\wedge T_m\cap T$ is well-defined.
\end{proposition}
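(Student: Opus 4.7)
The plan is to reduce the statement to the case already handled by Vu, exploiting the definition of the relative non-pluripolar product for closed dsh currents.

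First I would unpack the definition. By the very definition of $\hat{Z}_p(X)$, we may fix any decomposition $T = S_1 - S_2$ with $S_1, S_2$ closed positive currents of bidimension $(p,p)$ on $X$. By the definition of the relative non-pluripolar product for dsh currents, it suffices to show that both $T_1 \wedge \cdots \wedge T_m \cap S_1$ and $T_1 \wedge \cdots \wedge T_m \cap S_2$ are well-defined. Thus the whole proposition reduces to the analogous statement in which $T$ is itself assumed to be closed and positive.

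Second, for $T$ closed positive on a compact Kähler manifold, I would invoke Vu's result \cite[Theorem~1.1]{Vu20} (in the form already used implicitly throughout this section), which asserts exactly this well-definedness. Should one wish to re-prove it inline, the standard BEGZ-type argument applies: choose a Kähler form $\omega$ on $X$, write each $T_i = \theta_i + dd^c \varphi_i$ globally with $\theta_i$ smooth and $\varphi_i$ a $\theta_i$-psh function, cover $X$ by finitely many charts $U$ on which $T_i = dd^c u_i$ with $u_i - \varphi_i$ smooth, and on each such chart estimate the truncated masses $\|\mathds{1}_{\{u_1 > -k, \ldots, u_m > -k\}} R_k\|_K$ from above. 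The crucial point is that on the set $\{u_1 > -k, \ldots, u_m > -k\}$ the truncated currents $dd^c \max\{u_i,-k\}$ locally coincide with $T_i$, so after passing to the global potentials $\varphi_i$ one reduces to controlling the mass of a finite collection of globally defined quasi-psh functions, which is bounded by a cohomological intersection number via Stokes' theorem against $\omega^{n-p-m}$ together with the uniform upper bound $\varphi_i \leq C$.

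The only real obstacle in the second step is making the truncation intrinsic, that is, showing that the chart-by-chart truncations can be compared to the global quasi-psh ones; this is exactly the locality property of the non-pluripolar product (\cref{prop:tower} and the locality statements in \cite{Vu20}), and once this is granted, the global mass bound is immediate. In short, no new argument is required: the proposition follows by combining the decomposition $T = S_1 - S_2$ with Vu's theorem applied to each summand.
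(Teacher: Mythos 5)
Your proposal is correct and matches the paper's proof, which is the one-line citation of Vu's result (the paper cites \cite[Lemma~3.4]{Vu20} rather than Theorem~1.1, but the content is the same): reduce via an arbitrary decomposition $T=S_1-S_2$ to the closed-positive case and apply Vu's well-definedness on compact K\"ahler manifolds. Your BEGZ-style sketch is a faithful summary of what lies behind Vu's lemma but is not needed given the citation.
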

\begin{proof}
This follows from \cite[Lemma~3.4]{Vu20}.
\end{proof}

\begin{proposition}\label{prop:projTiterms}
Let $f:Y\rightarrow X$ be a proper morphism of complex manifolds. 
Suppose $T_1,\ldots,T_m$ are closed positive $(1,1)$-currents on $X$ and $T\in \hat{Z}_p(Y)$. 
Assume that the $f^*T_i$'s are defined in the sense that $f$ does not map any of the connected components of $Y$ into the polar loci of any $T_i$. Suppose that $f^*T_1\wedge \cdots \wedge f^*T_m\cap T$ is well-defined, then $T_1\wedge \cdots \wedge T_m\cap f_*T$ is well-defined and
\begin{equation}\label{eq:projtiterms}
f_* (f^*T_1\wedge \cdots \wedge f^*T_m\cap T)=T_1\wedge \cdots \wedge T_m\cap f_*T.
\end{equation}
\end{proposition}
Here the pull-back $f^*T$ of a closed positive $(1,1)$-current on $X$ is defined as follows: locally write $T=\ddc \varphi$, then we set $f^*T=\ddc f^*\varphi$.
This is a closed positive $(1,1)$-current unless $f$ maps some connected component of $Y$ completely into the polar locus of $T$. 
\begin{proof}
We may assume that $T$ is a closed positive current.

The problem is local on $X$, we may assume that $X\cong \Delta^n$ and $T_i=\ddc u_i$ for some psh functions $u_i$ on $\Delta^n$. 

Observe that for any $k\in \mathbb{Z}$,
\[
\begin{split}
\ddc \max\{u_1,-k\}\wedge \cdots \wedge \ddc \{u_m,-k\}\wedge f_*T=\\
f_*\left(\ddc \max\{f^*u_1,-k\}\wedge \cdots \wedge \ddc \max\{f^*u_m,-k\}\wedge T \right).
\end{split}
\]
In particular,
\[
\begin{split}
\mathds{1}_{\{u_1>-k,\dots,u_m>-k\}}\ddc \max\{u_1,-k\}\wedge \cdots \wedge \ddc \{u_m,-k\}\wedge f_*T=\\
f_*\left(\mathds{1}_{\{f^*u_1>-k,\dots,f^*u_m>-k\}}\ddc \max\{f^*u_1,-k\}\wedge \cdots \wedge \ddc \max\{f^*u_m,-k\}\wedge T \right).
\end{split}
\]
Let $k\to\infty$, \eqref{eq:projtiterms} follows from the continuity of $f_*$.
\end{proof}

\begin{proposition}\label{prop:relnppbdd}
Suppose $T_1,\ldots,T_m$ are closed positive $(1,1)$-currents on $X$ with locally bounded potentials and $T\in \hat{Z}_p(X)$. Then $T_1\wedge \cdots\wedge T_m\cap T$ is well-defined and
\[
T_1\wedge \cdots\wedge T_m\cap T=T_1\wedge \cdots\wedge T_m\wedge T.
\]
\end{proposition}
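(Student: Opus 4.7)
The plan is to unwind the definition and reduce everything to a local statement about locally bounded plurisubharmonic potentials. By $\mathbb{R}$-linearity in $T$ and the definition of well-definedness on $\hat{Z}_p(X)$, I may assume that $T$ is closed and positive. The claim is local on $X$, so I work in a chart $U \cong \Delta^n$ on which each $T_i = \ddc u_i$ with $u_i$ a locally bounded plurisubharmonic function. In this setting the classical Bedford--Taylor product $T_1 \wedge \cdots \wedge T_m \wedge T$ is a well-defined closed positive current of bidimension $(p-m, p-m)$.

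The key observation is that the truncations appearing in the definition of the relative non-pluripolar product stabilize locally. Fix any compact $K \Subset U$ and let $M > 0$ be such that $u_i \geq -M$ on $K$ for every $i = 1, \ldots, m$. Then for every $k \geq M$ and every $i$, we have $\max\{u_i, -k\} = u_i$ on $K$. By locality of the Bedford--Taylor product for locally bounded potentials, the currents
\[
R_k = \ddc \max\{u_1, -k\} \wedge \cdots \wedge \ddc \max\{u_m, -k\} \wedge T
\]
coincide with $T_1 \wedge \cdots \wedge T_m \wedge T$ on $K$ for all $k \geq M$. In particular
\[
\mathds{1}_{\{u_1 > -k, \ldots, u_m > -k\}} R_k = T_1 \wedge \cdots \wedge T_m \wedge T
\]
on $K$ for $k \geq M$, so the supremum in \eqref{eq:supnormKfinite} equals $\|T_1 \wedge \cdots \wedge T_m \wedge T\|_K < \infty$. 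This verifies that the relative non-pluripolar product is well-defined.

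Finally, the formula follows: since $R_k$ agrees with $T_1 \wedge \cdots \wedge T_m \wedge T$ on every compact $K$ for all sufficiently large $k$, the weak limit $\lim_{k \to \infty} R_k$ defining $T_1 \wedge \cdots \wedge T_m \cap T$ is exactly the Bedford--Taylor product $T_1 \wedge \cdots \wedge T_m \wedge T$. There is no genuine obstacle here; the only thing to be careful about is that the stabilization happens only on compacts, which is why the supremum condition is formulated over compact subsets $K \subseteq U$, and this is precisely what makes the argument go through.
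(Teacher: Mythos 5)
Your proof is correct. The paper itself disposes of this proposition by citing Vu's result (Proposition~3.6 of the reference \texttt{Vu20}), so it gives no argument at all; you instead unwind the definition and verify it directly, which is a reasonable and self-contained route for this particular statement.

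Two small points worth tightening. First, when you assert that $\max\{u_i,-k\}=u_i$ on the compact $K$ for $k\geq M$ and then invoke locality of Bedford--Taylor, you should really arrange the equality on an \emph{open neighborhood} of $K$: since each $u_i$ is locally bounded, pick a slightly larger compact $K'\Supset K$ and $M'$ with $u_i\geq -M'$ on $K'$, so that for $k>M'$ the potentials agree on $\operatorname{int}(K')\supseteq K$; locality of the Bedford--Taylor product then gives $R_k=T_1\wedge\cdots\wedge T_m\wedge T$ on a neighborhood of $K$. Second, the identity $\mathds{1}_{\{u_1>-k,\ldots,u_m>-k\}}R_k=T_1\wedge\cdots\wedge T_m\wedge T$ on $K$ for large $k$ shows the supremum defining well-definedness is eventually constant, hence finite; for small $k$ the truncated masses need not equal $\|T_1\wedge\cdots\wedge T_m\wedge T\|_K$ but are still bounded, so the supremum is indeed finite as required. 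With these clarifications the stabilization argument gives both well-definedness and the equality of the weak limit with the Bedford--Taylor product.
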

Here on the right-hand side, we make use of Bedford--Taylor theory.
\begin{proof}
This follows from \cite[Proposition3.6]{Vu20}.
\end{proof}
\begin{proposition}\label{prop:flatpullnpprel}
Let $Y$ be a complex manifold of pure dimension $m$. Let $f:Y\rightarrow X$ be a flat morphism of pure relative dimension $m-n$. Suppose $T_1,\ldots,T_m$ are closed positive $(1,1)$-currents on $X$ and $T\in \hat{Z}_p(X)$. Assume that $T_1\wedge \cdots \wedge T_m\cap T$ is well-defined, then so is $f^*T_1\wedge \cdots \wedge f^*T_m\cap f^*T$ and
\begin{equation}\label{eq:relnppflatpull}
f^*T_1\wedge \cdots \wedge f^*T_m\cap f^*T=f^*\left(T_1\wedge \cdots \wedge T_m\cap T \right)
\end{equation}
\end{proposition}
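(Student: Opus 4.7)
The strategy is to reduce to a local situation, realize the relative non-pluripolar product as a limit of Bedford--Taylor products involving locally bounded psh functions, and then invoke the Dinh--Sibony commutation relations established in \cref{thm:DS}.

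First, since both sides of \eqref{eq:relnppflatpull} are linear in $T$, I decompose $T=S_1-S_2$ into closed positive components and reduce to the case where $T$ itself is closed positive. The identity \eqref{eq:relnppflatpull} is local on $X$ thanks to \cref{thm:DS}(3) together with the locality of relative non-pluripolar products, so I may further assume $X=\Delta^n$ and write each $T_i=\ddc u_i$ for some psh function $u_i$ on $\Delta^n$. By \cref{thm:DS}(4), $f^*T_i=\ddc f^*u_i$.

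Next, set $u_i^k:=\max\{u_i,-k\}$, which is locally bounded and psh, and observe that $f^*u_i^k=\max\{f^*u_i,-k\}$. Iterating the Bedford--Taylor commutation \cref{thm:DS}(6) applied to the closed positive current $T$, I obtain
\[
f^*\bigl(\ddc u_1^k\wedge\cdots\wedge\ddc u_m^k\wedge T\bigr)=\ddc f^*u_1^k\wedge\cdots\wedge\ddc f^*u_m^k\wedge f^*T.
\]
Denote the left-hand factor by $R_k$ and its analogue on $Y$ by $R_k^Y$; so $f^*R_k=R_k^Y$. The open set $\Omega_k:=\{u_1>-k,\ldots,u_m>-k\}$ satisfies $f^{-1}\Omega_k=\{f^*u_1>-k,\ldots,f^*u_m>-k\}$, and by locality of the flat pull-back the above identity restricts to
\[
f^*\bigl(\mathds{1}_{\Omega_k}R_k\bigr)=\mathds{1}_{f^{-1}\Omega_k}R_k^Y.
\]

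Finally, by the well-definedness of both relative products, the currents $\mathds{1}_{\Omega_k}R_k$ converge weakly on $X$ to $T_1\wedge\cdots\wedge T_m\cap T$, and likewise $\mathds{1}_{f^{-1}\Omega_k}R_k^Y$ converges weakly on $Y$ to $f^*T_1\wedge\cdots\wedge f^*T_m\cap f^*T$. Since $f^*$ is sequentially continuous on closed dsh currents by \cref{thm:DS}, passing to the limit yields \eqref{eq:relnppflatpull}. The main subtlety is the commutation between $f^*$ and the defining weak limit, which rests precisely on the continuity and locality properties of flat pull-back established in \cref{thm:DS}; all other steps are routine.
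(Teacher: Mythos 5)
Your reduction to $T$ closed positive, the localization to $X\cong\Delta^n$ with $T_i=\ddc u_i$, and the observation that iterating \cref{thm:DS}(6) gives $f^*R_k=R_k^Y$ all match the paper's proof (the paper phrases this last point as a separate Step~1 by induction, but the content is identical). However, the detour you then take through the indicator function $\mathds{1}_{\Omega_k}$ introduces a genuine gap.

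First, $\Omega_k=\{u_1>-k,\ldots,u_m>-k\}$ is \emph{not} open in general: plurisubharmonic functions are merely upper semicontinuous, so $\{u_i<c\}$ is open, but $\{u_i>-k\}$ need not be (consider $u=\sum_j 2^{-j}\log|z-a_j|$ for a dense sequence $(a_j)$; then $\{u>-k\}$ has empty interior). Hence the appeal to "locality" to obtain $f^*(\mathds{1}_{\Omega_k}R_k)=\mathds{1}_{f^{-1}\Omega_k}R_k^Y$ is not licensed by \cref{thm:DS}(3). Second, even granting openness, \cref{thm:DS}(3) controls restriction to open subsets, not zero-extension; the complement of $\Omega_k$ is not a complete pluripolar set, so \cref{thm:DS}(5) does not help either. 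Third, $\mathds{1}_{\Omega_k}R_k$ is positive but not obviously closed, so it is not clear it lies in $\widehat{Z}_p(X)$, which is where the continuity of $f^*$ from \cref{thm:DS} is stated.

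The fix is to delete the indicator entirely. By the paper's Definition (equation \eqref{eq:relativenppdef}, following \cite[Lemma~3.1]{Vu20}), the relative non-pluripolar product is $\lim_{k\to\infty}R_k$ \emph{without} the cutoff. Since you already have $f^*R_k=R_k^Y$, and each $R_k$ is a closed positive current (a Bedford--Taylor product of closed currents), the continuity of $f^*$ on $\widehat{Z}_p(X)$ from \cref{thm:DS} gives $\lim_k f^*R_k=f^*(\lim_k R_k)$, and the conclusion follows at once. This is exactly how the paper's Step~2 concludes.
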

\begin{proof}
\textbf{Step 1}.
We first show that \eqref{eq:relnppflatpull} holds if $T_1,\ldots,T_m$ all have locally bounded potentials. In this case, by \cref{prop:relnppbdd}, \eqref{eq:relnppflatpull} is equivalent to
\begin{equation}\label{eq:temp3}
    f^*T_1\wedge \cdots \wedge f^*T_m\wedge f^*T=f^*\left(T_1\wedge \cdots \wedge T_m\wedge T \right).
\end{equation}
By induction, we reduce immediately to the case $m=1$. The problem is local on $X$, so we are reduced to \cref{thm:DS}(5).

\textbf{Step 2}. We handle the general case.
We may assume that $T$ is a closed positive current.

The problem is local on $X$, we may assume that $X\cong \Delta^n$ and $T_i=\ddc u_i$ for some psh functions $u_i$ on $\Delta^n$. In this case, by Step~1,
\[
\ddc \max\{f^* u_1,-k\}\wedge \cdots \wedge \ddc \{f^* u_m,-k\}\wedge f^*T=f^*(\ddc \max\{u_1,-k\}\wedge \cdots \wedge \ddc \{u_m,-k\}\wedge T)
\]
for any $k$. In particular,
\[
\begin{split}
    \mathds{1}_{\{f^*u_1>-k,\dots,f^*u_m>-k\}}\ddc \max\{f^* u_1,-k\}\wedge \cdots \wedge \ddc \{f^* u_m,-k\}\wedge f^*T=\\
    f^*\left(\mathds{1}_{\{u_1>-k,\dots,u_m>-k\}}\ddc \max\{u_1,-k\}\wedge \cdots \wedge \ddc \{u_m,-k\}\wedge T\right)
\end{split}
\]

Since $f^*$ is continuous by \cref{thm:DS}, letting $k\to\infty$, we conclude \eqref{eq:relnppflatpull}.
\end{proof}

\section{Segre currents and Chern currents}\label{sec:SegChern}
In this section, let $X$ be a compact K\"ahler manifold of pure dimension $n$.
We will define Segre currents and Chern currents following the approach of \cite[Chapter~3]{Ful}.

\subsection{First Chern class}\label{subsec:firstChern}
Recall that we have introduced the notation $\widehat{\Pic}(X)$ in \cref{subsec:singmonvb}: an object $\hat{L}=(L,h_L)\in \widehat{\Pic}(X)$ consists of a holomorphic line bundle $L$ on $X$ together with a \emph{positive} (possibly singular) Hermitian metric.

\begin{definition}
For any $T\in \widehat{Z}_a(X)$ and any $\hat{L}=(L,h_L)\in \widehat{\Pic}(X)$,
we define 
\begin{equation}
c_1(\hat{L})\cap T= \ddc h_{L} \cap T\in \widehat{Z}_{a-1}(X).
\end{equation}
\end{definition}
Note that $c_1(\hat{L})\cap T\in \widehat{Z}_{a-1}(X)$ as a consequence of \cref{prop:relnppclopos}.

We remind the readers that $c_1(\hat{L})$ is defined as an operator on the space of currents instead of a class in certain Chow group. This is a key difference with the usual intersection theory and is the main innovation of this paper. 
This point of view will be helpful when we consider the intersection theory on the Riemann--Zariski spaces as well.

We can translate the results in \cref{sec:relnpp} in terms of $c_1$. Firstly we have the commutativity of $c_1$.
\begin{proposition}\label{prop:c1comm}
Let $T\in \hat{Z}_a(X)$, $\hat{L}_i\in \widehat{\Pic}(X)$ ($i=1,2$). Then
\[
c_1(\hat{L}_1)\cap \left(c_1(\hat{L}_2)\cap T\right)=c_1(\hat{L}_2)\cap \left(c_1(\hat{L}_1)\cap T\right).
\]
\end{proposition}
\begin{proof}
This follows from \cref{prop:tower}.
\end{proof}

The functorality results in  \cref{sec:relnpp} can be put in the familiar form now.
\begin{proposition}\label{prop:firstChernformpush}Assume that $Y$ is a compact K\"ahler manifold of pure dimension $m$. Let $\hat{L}\in \widehat{\Pic}(X)$.
\begin{enumerate}
    \item 
    Let $f:Y\rightarrow X$ be a proper morphism, $T\in \widehat{Z}_a(Y)$, then if the metric on  $f^*\hat{L}$ is not identically $\infty$ on each connected component of $Y$,
    \[
    f_*(c_1(f^*\hat{L})\cap T)=c_1(\hat{L})\cap f_*T.
    \]
    \item 
    Let $f:Y\rightarrow X$ be a flat morphism of pure relative dimension $m-n$. Let $\hat{L}\in \widehat{\Pic}(X)$, $T\in \widehat{Z}_a(X)$, then
    \[
    f^*(c_1(\hat{L})\cap T)=c_1(f^*\hat{L})\cap f^*T.
    \]
\end{enumerate}
\end{proposition}
\begin{proof}
(1) follows from \cref{prop:projTiterms} and (2) follows from \cref{prop:flatpullnpprel}.
\end{proof}

\begin{proposition}
Assume that $\hat{L}=(L,h_L)\in \widehat{\Pic}(X)$ and $T\in \widehat{Z}_a(X)$. If $h_L$ is bounded, then
\[
c_1(\hat{L})\cap T=\ddc h_L\wedge T.
\]
\end{proposition}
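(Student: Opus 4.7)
The plan is to reduce the identity directly to Proposition~\ref{prop:relnppbdd}, which handles exactly the case of locally bounded potentials in the Bedford--Taylor product. The only thing to check is that the definition of $c_1(\hat{L})\cap T$ for $T\in \widehat{Z}_a(X)$ (which goes through a decomposition into closed positive currents) is compatible with the ordinary wedge product $\ddc h_L\wedge T$ (which is linear in $T$ by Bedford--Taylor theory whenever $h_L$ is locally bounded).

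First, I would unwind the definition. Recall that locally $c_1(\hat{L})=\theta+\ddc\varphi$ where $\theta$ is smooth and $\varphi$ is a quasi-psh function; since $h_L$ is bounded, $\varphi$ is locally bounded. Decompose $T=S_1-S_2$ with $S_1,S_2$ closed positive currents of bidimension $(a,a)$. By Proposition~\ref{prop:relnppbdd} (with $m=1$), the relative non-pluripolar product $\ddc h_L\cap S_i$ is well-defined and coincides with the classical Bedford--Taylor product $\ddc h_L\wedge S_i$ for $i=1,2$. By the very definition of the relative non-pluripolar product on $\widehat{Z}_a(X)$, it follows that
\[
c_1(\hat{L})\cap T=\ddc h_L\cap S_1-\ddc h_L\cap S_2=\ddc h_L\wedge S_1-\ddc h_L\wedge S_2\,.
\]

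Second, I would observe that the right-hand side is exactly $\ddc h_L\wedge T$ by the $\mathbb{R}$-linearity of the Bedford--Taylor product in its $(a,a)$-argument when the $(1,1)$-factor has a locally bounded potential. This identity is independent of the chosen decomposition, which is consistent with the well-definedness assertion already noted for the relative non-pluripolar side. Hence $c_1(\hat{L})\cap T=\ddc h_L\wedge T$, as claimed.

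There is essentially no real obstacle here: everything follows from Proposition~\ref{prop:relnppbdd} combined with the decomposition definition and linearity. The only mild subtlety is to confirm that the Bedford--Taylor product $\ddc h_L\wedge T$ makes sense for $T\in\widehat{Z}_a(X)$, but this is automatic from the locally bounded nature of the potential of $\ddc h_L$ together with the decomposition $T=S_1-S_2$ into closed positive pieces.
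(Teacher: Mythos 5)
Your proof is correct and follows the paper's own approach exactly: the paper also proves this by citing \cref{prop:relnppbdd}. You have merely spelled out the decomposition-and-linearity bookkeeping that the paper leaves implicit.
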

On the right-hand side, we are using the classical Bedford--Taylor product.
\begin{proof}
This follows from \cref{prop:relnppbdd}.
\end{proof}

In general, $c_1(\hat{L})\cap T$ is not linear in $\hat{L}$, as can be easily seen from \cref{ex:c1notadd} in dimension $2$. We need a technical assumption:
\begin{definition}
We say $\hat{E}\in \widehat{\Vect}(X)$  or $\Fins(X)$ is \emph{transversal} to $T\in \widehat{Z}_a(X)$ (or $T$ is \emph{transversal} to $E$) if $p^*T$ (with $p\colon \mathbb{P}E^{\vee}\rightarrow X$ being the natural map) does not have mass on the polar locus of $\hO(1)$.
\end{definition}
In the special case $\hat{L}\in \widehat{\Pic}(X)$, $\hat{L}$ is transversal to $T$ if and only if $T$ does not have mass on the polar locus of $\hat{L}$. The following simple observation will be quite useful:
\begin{lemma}\label{lma:tranmasspr}
Let $T\in \widehat{Z}_a(X)$ and $A$ be a measurable subset of $X$. Let $p:Y\rightarrow X$ be a smooth morphism of pure relative dimension $b$ from a compact K\"ahler manifold $Y$ of pure dimension $n+b$.
Then $p^*T$ does not have mass on $p^{-1}A$ if and only if $T$ does not have mass on $A$. 

In particular, if $\hat{E}\in \widehat{\Vect}(X)$ is transversal to $T$ if and only if $T$ does not have mass on the polar locus of $\hat{E}$.
\end{lemma}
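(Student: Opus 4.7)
The forward direction is quick. Apply \cref{lma:nomassnonpp} to write $T = S_1 - S_2$ with each $S_i$ closed positive and putting no mass on $A$. Property~(5) of \cref{thm:DS} then yields that $p^*S_i$ puts no mass on $p^{-1}A$, hence so does $p^*T$.

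For the reverse direction, I would take any decomposition $T = S_1 - S_2$ into closed positive currents and split each as $S_i = \mathds{1}_A S_i + \mathds{1}_{X\setminus A}S_i$. By Remark~1.10 of \cite{BEGZ10} (the tool behind \cref{lma:nomassnonpp}), both summands are closed positive, so $T = R_A + R_{A^c}$ with
\[
R_A := \mathds{1}_A S_1 - \mathds{1}_A S_2, \qquad R_{A^c} := \mathds{1}_{X\setminus A}S_1 - \mathds{1}_{X\setminus A}S_2,
\]
where $R_A$ is concentrated on $A$ as a signed Borel measure and $R_{A^c}$ puts no mass on $A$. Applying the forward direction to $R_{A^c}$, combined with the hypothesis on $p^*T$, forces $p^*R_A$ to put no mass on $p^{-1}A$. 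It then suffices to establish: (i) $p^*R_A$ is itself concentrated on $p^{-1}A$ as a signed Borel measure, which together with the previous step upgrades to $p^*R_A = 0$; and (ii) the pull-back $p^*$ is injective on $\widehat{Z}_a(X)$, so that $R_A = 0$ and hence $T$ puts no mass on $A$.

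Both subclaims are local on $X$ by property~(3) of \cref{thm:DS}. Since $p$ is a flat submersion between smooth manifolds of pure relative dimension $b$ (hence surjective), around any point of $Y$ one may pick local coordinates in which $p$ becomes the standard projection $\Delta^n \times \Delta^b \to \Delta^n$. In this product model, the Dinh--Sibony pull-back reduces to the naive trivial extension in the fibre direction: using property~(6) of \cref{thm:DS} applied to local psh potentials, $p^*\sigma$ is identified with the product current obtained by extending $\sigma$ trivially across $\Delta^b$. This renders (i) immediate from the measure-theoretic concentration of $R_A$ on $A$, and (ii) a Fubini-type check against decomposable test forms $\phi(z) \wedge \psi(w)$ with $\int_{\Delta^b}\psi \neq 0$.

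The principal technical obstacle is that a complete pluripolar set $A$ is only a $G_\delta$ in general, so $p^{-1}A$ need not be closed in $Y$, and being ``supported on $p^{-1}A$'' must be interpreted measure-theoretically rather than topologically; the entire argument is therefore carried out at the level of the Borel measures $\mathds{1}_A S_i$, which bypasses this issue cleanly. Finally, the ``in particular'' clause is deduced from the first part of the lemma by taking $A$ to be the polar locus of $\hat{E}$ in $X$.
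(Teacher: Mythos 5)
Your forward direction matches the paper's: decompose $T$ via \cref{lma:nomassnonpp} and apply \cref{thm:DS}(5). For the reverse direction you take a genuinely different, and considerably more elaborate, route than the one the paper's one-line proof points to. The paper cites \cref{cor:adjunction}, and the intended argument is short: take a closed strictly positive smooth $(b,b)$-form $\alpha$ on $Y$ (say $\omega_Y^{b}$ for a K\"ahler form $\omega_Y$), so that $f:=p_*\alpha$ is a strictly positive function on $X$, and $p_*(\alpha\wedge p^*T)=f\cdot T$ by \cref{cor:adjunction}. Since $\alpha$ is smooth, $\mathds{1}_{p^{-1}A}(\alpha\wedge p^*T)=\alpha\wedge\mathds{1}_{p^{-1}A}p^*T=0$, and since push-forward of order-zero currents tautologically commutes with multiplication by indicator functions --- because $p^*(\mathds{1}_A\phi)=\mathds{1}_{p^{-1}A}\,p^*\phi$ for any bounded Borel test form $\phi$ --- one gets $\mathds{1}_A(f\cdot T)=p_*\bigl(\mathds{1}_{p^{-1}A}(\alpha\wedge p^*T)\bigr)=0$, hence $\mathds{1}_AT=0$.

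Your approach instead decomposes $T=R_A+R_{A^c}$ and aims to show $p^*R_A=0$ via your subclaims~(i) and~(ii). Subclaim~(ii), injectivity of $p^*$, is fine and is obtained most economically from \cref{cor:adj1}: if $p^*T=0$ then $\int_X p_*\alpha\wedge T=\int_Y\alpha\wedge p^*T=0$ for all test forms $\alpha$ on $Y$, and the forms $p_*\alpha$ exhaust a dense space of test forms on $X$ since $p$ is a surjective submersion. Subclaim~(i) is the genuine gap. The statement that $p^*$ carries a current concentrated on $A$ to a current concentrated on $p^{-1}A$ is the mirror image of \cref{thm:DS}(5), and it is \emph{not} covered by any of the listed properties of the Dinh--Sibony pull-back: it is not property~(3), because $X\setminus A$ need not be open ($A$ is only $G_\delta$, as you note yourself); it is not property~(5), because $X\setminus A$ is never a complete pluripolar set; and it is not property~(6), which merely expresses compatibility of $p^*$ with Bedford--Taylor wedging by $\ddc u$ for a \emph{locally bounded} psh $u$ and says nothing about the fine measure-theoretic structure of $p^*S$ for a general closed positive $S$. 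The characterization of $p^*$ as trivial extension in the fibre direction in a product chart is plausible and most likely true, but it requires its own approximation-plus-continuity argument that your write-up does not supply. If completed, your route would in fact prove a stronger statement (commutation of $p^*$ with indicator cutting), which is of independent interest; but for the lemma at hand the adjunction route sidesteps all of this, which is precisely why the paper cites \cref{cor:adjunction} rather than attempting such a decomposition.
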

\begin{proof}
This follows from \cref{cor:adjunction}.
\end{proof}

\begin{lemma}\label{lma:flatpullbacktrans}
Let $\hat{E}\in \widehat{\Vect}(X)$  or $\Fins(X)$, $f:Y\rightarrow X$ be a smooth morphism of pure relative dimension $d$ from a compact K\"ahler manifold $Y$ of pure dimension $n+d$. Assume that $T\in\widehat{Z}_a(X)$ is transversal to $\hat{E}$, then $f^*T$ is transversal to $f^*\hat{E}$.
\end{lemma}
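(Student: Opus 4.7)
The plan is to reduce this to \cref{lma:tranmasspr} via the base change diagram and the compatibility of pull-back with fibered products. Set $r+1 = \rank E$ and consider the Cartesian square
\[
\begin{tikzcd}
\mathbb{P}(f^*E)^{\vee} \arrow[r,"f'"] \arrow[d,"p'"] \arrow[rd, "\square", phantom] & \mathbb{P}E^{\vee} \arrow[d,"p"] \\
Y \arrow[r, "f"] & X
\end{tikzcd}
\]
Since $f$ is flat of pure relative dimension $d$, so is its base change $f'$; in particular $\mathbb{P}(f^*E)^{\vee}$ is a compact K\"ahler manifold of pure dimension $n+d+r$. By the discussion in \cref{subsec:projbundle}, the Finsler metric on $f^*\hat{E}$ is precisely the pull-back along $f'$ of the metric on $\hO_{\mathbb{P}E^{\vee}}(1)$.

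Let $A\subseteq \mathbb{P}E^{\vee}$ denote the polar locus of $\hO_{\mathbb{P}E^{\vee}}(1)$. Locally $A$ is cut out by $\{\varphi = -\infty\}$ for a local psh potential $\varphi$ of the metric, so $A$ is a complete pluripolar subset. Moreover, the polar locus of the pulled-back Finsler metric on $f^*\hat{E}$ equals $f'^{-1}A$, since locally it is defined by $\{f'^*\varphi = -\infty\} = f'^{-1}\{\varphi = -\infty\}$.

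By hypothesis, $T$ transversal to $\hat{E}$ means $p^*T$ puts no mass on $A$. Applying \cref{lma:tranmasspr} to the flat morphism $f':\mathbb{P}(f^*E)^{\vee}\to \mathbb{P}E^{\vee}$ and the current $p^*T\in \widehat{Z}_{a+r}(\mathbb{P}E^{\vee})$ together with the complete pluripolar set $A$, we conclude that $f'^*(p^*T)$ puts no mass on $f'^{-1}A$.

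To finish, observe that the two compositions $p\circ f' = f \circ p'$ agree as morphisms $\mathbb{P}(f^*E)^{\vee}\to X$, both flat. By the functoriality of flat pull-back (the corollary following \cref{thm:DS}), we obtain $f'^*p^*T = p'^*f^*T$. Hence $p'^*(f^*T)$ puts no mass on $f'^{-1}A$, i.e., on the polar locus of $f'^*\hO(1)$, which is exactly the statement that $f^*T$ is transversal to $f^*\hat{E}$. There is no real obstacle in this argument; the main point is merely to assemble the base change diagram with the two previously established facts (\cref{lma:tranmasspr} and functoriality of flat pull-back from \cref{thm:DS}).
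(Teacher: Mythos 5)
Your proof is correct and follows essentially the same route as the paper's: both set up the Cartesian square $\mathbb{P}(f^*E)^{\vee}\to\mathbb{P}E^{\vee}$ over $f$, reduce via the base-change compatibility $f'^*p^*T=p'^*f^*T$, and invoke \cref{lma:tranmasspr} applied to $f'$ and the polar locus of $\hO_{\mathbb{P}E^{\vee}}(1)$. Your additional remarks (that the polar locus is complete pluripolar, and that the polar locus of the pulled-back metric equals $f'^{-1}A$) are correct elaborations of facts the paper leaves implicit.
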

\begin{proof}
Consider the Cartesian square
\[
\begin{tikzcd}
\mathbb{P}(f^*E^{\vee}) \arrow[r,"f'"] \arrow[d,"p'"] \arrow[rd, "\square", phantom] & \mathbb{P}E^{\vee} \arrow[d,"p"] \\
Y \arrow[r,"f"]                                    & X          
\end{tikzcd}
\]
We need to show that $p'^*f^*T=f'^*p^*T$ does not have mass on the polar locus of $\hO_{\mathbb{P}(f^*\hat{E})^{\vee}}(1)$, which is equal to the inverse image of the polar locus of $\hO_{\mathbb{P}E^{\vee}}(1)$. By \cref{lma:tranmasspr}, it suffices to show that $p^*T$ does not have mass on the polar locus of $\hO_{\mathbb{P}E^{\vee}}(1)$, which holds by our assumption.
\end{proof}

\begin{proposition}\label{lma:c1linear}
Let $T\in \hat{Z}_a(X)$, $\hat{L}_i\in \widehat{\Pic}(X)$. Assume that $T$ is transversal to both $\hat{L}_1$ and $\hat{L}_2$. 
Then
\[
c_1(\hat{L}_1\otimes \hat{L}_2)\cap T= c_1(\hat{L}_1)\cap T+c_1(\hat{L}_2)\cap T.
\]
\end{proposition}
\begin{proof}
This follows from \cref{prop:relnpplinearinTi}.
\end{proof}

\begin{proposition}\label{prop:massless}
Assume that $\hat{L}\in \widehat{\Pic}(X)$ and $T\in \widehat{Z}_a(X)$.
Let $A\subseteq X$ be a complete pluripolar set such that $T$ does not have mass on $A$. Then $c_1(\hat{L})\cap T$ also does not have mass on $A$.
\end{proposition}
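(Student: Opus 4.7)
The plan is to reduce to closed positive $T$ and then exploit the local construction of the relative non-pluripolar product.

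Since $T$ has no mass on $A$, I decompose $T = S_1 - S_2$ with closed positive $S_i$ satisfying $\|S_i\|(A) = 0$; this follows from (the argument of) \cref{lma:nomassnonpp} after enlarging $A$ to a complete pluripolar set, which is possible on the compact K\"ahler $X$ by standard pluripotential theory. By linearity of $c_1(\hat{L}) \cap \cdot$ in $T$ (built into the dsh extension of the relative non-pluripolar product and using \cref{lma:linearinT}), it suffices to treat the case where $T = S$ is a closed positive current with $\|S\|(A) = 0$.

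Now work locally on a chart where $h_L$ corresponds to a psh function $u$. Set $u^k = \max\{u, -k\}$ and $R_k = \ddc u^k \wedge S$ (Bedford--Taylor product), so that $c_1(\hat{L}) \cap S = \lim_k R_k$ by definition. Decompose $A = A_0 \cup A_\infty$ with $A_0 = A \setminus \mathrm{Pol}(\hat{L})$ and $A_\infty = A \cap \mathrm{Pol}(\hat{L})$. The limit $c_1(\hat{L}) \cap S$ puts no mass on $A_\infty$ by the fundamental property of non-pluripolar products, which ensures the limiting current is supported away from $\mathrm{Pol}(\hat{L}) = \{u = -\infty\}$. For $A_0$: on the open set $\{u > -k\}$ we have $u^k = u$, hence by locality $R_k$ coincides there with $c_1(\hat{L}) \cap S$; since $\{u > -k\} \uparrow \{u > -\infty\} \supseteq A_0$ as $k \to \infty$, the claim reduces to showing $\|R_k\|(A_0 \cap \{u > -k\}) = 0$ for every $k$.

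The main obstacle is the following Bedford--Taylor fact: for bounded psh $v$ and closed positive $S$ with $\|S\|(B) = 0$ on a pluripolar $B$, one has $\|\ddc v \wedge S\|(B) = 0$. The hypothesis that $B$ is pluripolar is essential (the statement fails for arbitrary Borel $B$). I would establish this via smooth decreasing approximations $v_\epsilon \downarrow v$: for each $\epsilon$, $\ddc v_\epsilon \wedge S$ is absolutely continuous with respect to $\|S\|$ with locally bounded density and so puts no mass on $B$; the passage to the limit preserves this vanishing on pluripolar sets by the standard monotonicity theorem of Bedford--Taylor for Monge--Amp\`ere operators applied to bounded psh functions, combined with the vanishing of the associated capacity on pluripolar sets. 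Together with the previous paragraph and letting $k \to \infty$, this yields $\|c_1(\hat{L}) \cap S\|(A_0) = 0$, and hence $\|c_1(\hat{L}) \cap S\|(A) = 0$ as desired.
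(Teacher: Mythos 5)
The paper disposes of this in one line by citing Vu's Proposition~3.9(iii); you attempt instead to reconstruct the underlying Bedford--Taylor argument, which is a genuinely different route and has the right skeleton (reduce to closed positive $T$, split $A$ at the polar locus of $\hat L$, then reduce to the behaviour of $\ddc u^k\wedge S$). Two of your steps are, however, not established.

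For the reduction to closed positive $T$: \cref{lma:nomassnonpp} produces $T=\mathds{1}_{X\setminus A}S_1-\mathds{1}_{X\setminus A}S_2$ by invoking BEGZ's result that the cut-off of a closed positive current along a \emph{complete} pluripolar set is still closed, so it needs $A$ complete pluripolar. Your proposed fix of enlarging $A$ to a complete pluripolar $\tilde A$ does not repair this: what one then decomposes is $\mathds{1}_{X\setminus\tilde A}T$, not $T$, and since $T$ may well charge $\tilde A\setminus A$ this is not a decomposition of $T$ with $\|S_i\|(A)=0$. (In the paper the Proposition is only ever applied with $A$ a polar locus, which is complete pluripolar, so this is benign in practice; but it is a gap in the proof at the stated level of generality.) More seriously, the ``main obstacle'' lemma you isolate --- for bounded psh $v$ and closed positive $S$ with $\|S\|(B)=0$ on a pluripolar $B$ one has $\|\ddc v\wedge S\|(B)=0$ --- is really the whole content of the Proposition, and the smooth approximation $v_\epsilon\downarrow v$ does not prove it: each $\ddc v_\epsilon\wedge S$ being absolutely continuous with respect to $\|S\|$, hence putting no mass on $B$, says nothing about the weak limit, because weak convergence of positive measures does not preserve vanishing on an arbitrary Borel set (compare $\delta_{1/j}\rightharpoonup\delta_0$, which charges $\{0\}$ while no term does). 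A correct argument must use that Bedford--Taylor products with locally bounded psh factors converge against bounded \emph{quasi-continuous} test functions, together with quasi-continuity of $v$ and the capacity estimate trapping compact pluripolar sets inside open sets of arbitrarily small Monge--Amp\`ere capacity; your appeal to ``monotonicity plus vanishing of the associated capacity'' names these ingredients but does not assemble them. That assembly is precisely what the cited result of Vu encodes.
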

\begin{proof}
This follows from \cite[Proposition~3.5(iii)]{Vu20} and \cref{lma:nomassnonpp}.
\end{proof}
\begin{corollary}\label{cor:transtrans}
Let $T\in \hat{Z}_a(X)$, $\hat{L}_i\in \widehat{\Pic}(X)$ ($i=1,2$). Suppose that $T$ is transversal to both $\hat{L}_1$ and $\hat{L}_2$, then $c_1(\hat{L}_2)\cap T$ is transversal to $\hat{L}_1$.
\end{corollary}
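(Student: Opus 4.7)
The plan is essentially to observe that this is a direct consequence of \cref{prop:massless} together with the simple form that transversality takes for line bundles.

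First, I would translate the conclusion into the language of masses: since both $\hat{L}_1$ and $c_1(\hat{L}_2)\cap T$ involve only line bundles, the remark following the definition of transversality tells us that ``$c_1(\hat{L}_2)\cap T$ is transversal to $\hat{L}_1$'' means precisely that $c_1(\hat{L}_2)\cap T$ puts no mass on the polar locus $A_1$ of $\hat{L}_1$. Note that $A_1$ is a pluripolar set, being locally of the form $\{\varphi=-\infty\}$ for a local psh potential $\varphi$ of $h_{L_1}$.

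Next, I would use the hypothesis that $T$ is transversal to $\hat{L}_1$, which by the same remark means that $T$ itself does not have mass on $A_1$. Now I apply \cref{prop:massless} to the current $T$, the Hermitian line bundle $\hat{L}_2$, and the pluripolar set $A=A_1$: the conclusion is that $c_1(\hat{L}_2)\cap T$ does not have mass on $A_1$. This is exactly what we needed, so the corollary follows.

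There is essentially no obstacle; the only subtlety worth flagging is that \cref{prop:massless} presupposes that $c_1(\hat{L}_2)\cap T$ is well-defined, which is automatic in our compact Kähler setting, and that it applies to general $T\in\widehat{Z}_a(X)$ (not just positive $T$) after decomposing $T=S_1-S_2$ into positive parts via \cref{lma:nomassnonpp} so that both $S_1,S_2$ put no mass on $A_1$ — then one applies the cited proposition to each $S_i$ separately and subtracts.
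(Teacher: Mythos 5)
Your proof is correct and is exactly the intended argument: the paper gives no explicit proof for this corollary, treating it as an immediate consequence of \cref{prop:massless} applied with $A$ the polar locus of $\hat{L}_1$ and $\hat{L}=\hat{L}_2$. The extra care you flag about decomposing $T=S_1-S_2$ is already absorbed into the statement of \cref{prop:massless}, which is formulated for general $T\in\widehat{Z}_a(X)$, so no further argument is needed.
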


\subsection{Segre classes}

Consider $\hat{E}=(E,h_E)\in \widehat{\Vect}(X)$ of rank $r+1$. 
Let $p=p_E\colon \mathbb{P}E^{\vee}\rightarrow X$ be the projective bundle associated with $E^{\vee}$. 
We defined $\hO(1)\in\widehat{\Pic}(\mathbb{P}E^{\vee})$ in \cref{subsec:projbundle}. 
Every result proved in this section works equally well for $\hat{E}\in \Fins(X)$. We will state the results in both settings at the same time.

\begin{definition}
Consider $\hat{E}=(E,h_E)\in \widehat{\Vect}(X)$ or $\Fins(X)$ of rank $r+1$. Define the \emph{$i$-th Serge class} $s_i(\hat{E})\cap \colon \widehat{Z}_{a}(X)\rightarrow \widehat{Z}_{a-i}(X)$ as follows:
let $T\in \widehat{Z}_a(X)$, then we set
\begin{equation}
    s_i(\hat{E})\cap T= (-1)^i p_*\left( c_1(\widehat{\mathcal{O}}(1))^{r+i}\cap p^*T \right).    
\end{equation}
Here $c_1(\widehat{\mathcal{O}}(1))^{r+i}\cap $ is short for iterated application of $c_1(\widehat{\mathcal{O}}(1))\cap \bullet$ for $(r+i)$ times.
\end{definition}

We show that the Segre classes satisfy the usual functoriality.
\begin{proposition}\label{prop:projectionform}
 Let $Y$ be a compact K\"ahler manifold of pure dimension $m$.
 \begin{enumerate}
     \item Let $f:Y\rightarrow X$ be a proper morphism, $\hat{E}\in \widehat{\Vect}(X)$ or $\Fins(X)$, $T\in \widehat{Z}_a(Y)$. Assume that the metric on $f^*\hat{E}$ is not identically $\infty$ on each connected component of $Y$.
     Then for all $i$,
\[
f_*(s_i(f^*\hat{E})\cap T)=s_i(\hat{E})\cap f_*T
\]
\item 
Let $f:Y\rightarrow X$ be a flat morphism of pure relative dimension $m-n$, $\hat{E}\in \widehat{\Vect}(X)$ or $\Fins(X)$, $T\in \widehat{Z}_a(X)$. Then for all $i$,
\[
f^*(s_i(\hat{E})\cap T)=s_i(f^*\hat{E})\cap f^*T.
\]
 \end{enumerate}
\end{proposition}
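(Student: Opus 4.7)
The plan is to reduce both identities to the corresponding projection formulas for the first Chern class (\cref{prop:firstChernformpush}) combined with the base-change compatibility of flat pullback and proper pushforward (\cref{cor:comppullbackpush}). The key geometric input is the Cartesian square associated with the projective bundles of $E$ and $f^*E$:
\begin{equation*}
\begin{tikzcd}
\mathbb{P}(f^*E)^{\vee} \arrow[r,"f'"] \arrow[d,"p'"] \arrow[rd, "\square", phantom] & \mathbb{P}E^{\vee} \arrow[d,"p"] \\
Y \arrow[r, "f"] & X
\end{tikzcd}
\end{equation*}
Under this base change, $\mathcal{O}_{\mathbb{P}(f^*E)^\vee}(1) = f'^*\mathcal{O}_{\mathbb{P}E^\vee}(1)$, and by the definition of the pullback metric one has $\widehat{\mathcal{O}}_{\mathbb{P}(f^*E)^\vee}(1) = f'^*\widehat{\mathcal{O}}_{\mathbb{P}E^\vee}(1)$ as Hermitian (resp.\ Finsler) line bundles. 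The non-triviality hypothesis in (1) on $f^*\hat{E}$ guarantees that this pullback metric is not identically $\infty$, so its first Chern operator is defined. In (2), flatness of $p$ implies flatness of $p'$ and $f'$ of the same relative dimensions, which is what makes all the flat-pullback manipulations legitimate.

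For (1), I would first iterate \cref{prop:firstChernformpush}(1) along the proper map $f'$ a total of $r+i$ times, moving each factor $c_1(f'^*\widehat{\mathcal{O}}(1))$ through $f'_*$ and replacing it by $c_1(\widehat{\mathcal{O}}(1))$ outside. Then I use the commuting square $f\circ p' = p\circ f'$ to rewrite $f_*\circ p'_* = p_*\circ f'_*$, and apply \cref{cor:comppullbackpush} in the form $f'_* p'^* T = p^* f_* T$ (which is allowed because $p$ is flat). Reassembling yields $f_*(s_i(f^*\hat{E})\cap T) = (-1)^i p_*(c_1(\widehat{\mathcal{O}}(1))^{r+i}\cap p^*f_*T) = s_i(\hat{E})\cap f_*T$.

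For (2), I would proceed symmetrically: apply \cref{prop:firstChernformpush}(2) iteratively along the flat map $f'$ to pull the $r+i$ Chern factors inside $f'^*$, invoke functoriality of the flat pullback (the composition law following \cref{thm:DS}) to replace $f'^*p^*T$ by $p'^*f^*T$, and finally use \cref{cor:comppullbackpush} in the opposite direction $p'_* f'^* = f^* p_*$. The resulting chain of equalities gives $s_i(f^*\hat{E})\cap f^*T = f^*(s_i(\hat{E})\cap T)$.

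The main (and really only) obstacle is bookkeeping: verifying at each step that the intermediate currents lie in $\widehat{Z}_\bullet$ so that the cited projection formulas literally apply, and that iterated applications of $c_1(\widehat{\mathcal{O}}(1))\cap \bullet$ are well-defined. Both are automatic on the compact Kähler manifolds $\mathbb{P}E^\vee$ and $\mathbb{P}(f^*E)^\vee$ by \cref{prop:relnppclopos} together with the fact that relative non-pluripolar products exist in the compact Kähler setting. So no genuine difficulty is expected beyond diagram tracking.
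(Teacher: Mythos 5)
Your proposal is correct and follows essentially the same route as the paper: both start from the Cartesian square of projective bundles, use the metric-preserving identification $\hO_{\mathbb{P}(f^*E^{\vee})}(1)=f'^*\hO_{\mathbb{P}E^{\vee}}(1)$, push $c_1$-factors through the induced map $f'$ via \cref{prop:firstChernformpush}, and then invoke \cref{cor:comppullbackpush} (with the roles of proper and flat map chosen appropriately in parts (1) and (2)). The only differences are cosmetic ordering of the steps and the explicit remark about well-definedness, which the paper leaves implicit.
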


\begin{proof}
Let $\rank E=r+1$.

In both cases, consider the Cartesian square
\[
\begin{tikzcd}
\mathbb{P}(f^*E^{\vee}) \arrow[r,"f'"] \arrow[d,"p'"] \arrow[rd, "\square", phantom] & \mathbb{P}E^{\vee} \arrow[d,"p"] \\
Y \arrow[r,"f"]                                    & X          
\end{tikzcd}
\]
It is well-known that $f'^*\mathcal{O}_{\mathbb{P}E^{\vee}}(1)=\mathcal{O}_{\mathbb{P}(f^*E)^{\vee}}(1)$. A direct verification shows that it preserves the metric as well. Namely,
\begin{equation}\label{eq:hOpreserved}
  f'^*\hO_{\mathbb{P}E^{\vee}}(1)=\hO_{\mathbb{P}(f^*E^{\vee})}(1).
\end{equation}

(1)
Consider $T\in \widehat{Z}_a(Y)$, then
\[
\begin{aligned}
    (-1)^if_*(s_i(f^*\hat{E})\cap T)=& f_* p'_* \left(  c_1(\hO_{\mathbb{P}(f^*E^{\vee})}(1))^{r+i}\cap p'^*T \right)&\\
    =& p_* f'_*\left(  c_1(f'^*\hO_{\mathbb{P}E^{\vee}}(1))^{r+i}\cap p'^*T \right)& \eqref{eq:hOpreserved}\\
    =& p_* \left ( c_1(\hO_{\mathbb{P}E^{\vee}}(1))^{r+i}\cap f'_*p'^*T \right)& \cref{prop:firstChernformpush}\\
    =& p_* \left (c_1(\hO_{\mathbb{P}E^{\vee}}(1))^{r+i}\cap p^*f_*T \right)& \cref{cor:comppullbackpush}\\
    =& (-1)^is_i(\hat{E})\cap f_*T&.
\end{aligned}
\]

(2) Consider $T\in \widehat{Z}_a(X)$, then
\[
\begin{aligned}
(-1)^i s_i(f^*\hat{E})\cap f^*T=& p'_*\left(c_1(\hO_{\mathbb{P}(f^*E^{\vee})}(1))^{r+i}\cap p'^*f^*T\right)&\\
=&p'_*\left(f'^*c_1(\hO_{\mathbb{P}E^{\vee}}(1))^{r+i}\cap f'^*p^*T\right) & \eqref{eq:hOpreserved}\\
=&p'_*f'^*\left(c_1(\hO_{\mathbb{P}E^{\vee}}(1))^{r+i}\cap p^*T\right) & \cref{prop:firstChernformpush}\\
=&f^*p_*\left(c_1(\hO_{\mathbb{P}E^{\vee}}(1))^{r+i}\cap p^*T\right)& \cref{cor:comppullbackpush}\\
=&(-1)^i f^*(s_i(\hat{E})\cap T)& .
\end{aligned}
\]
\end{proof}

\begin{proposition}\label{prop:npp}
Consider $\hat{E}\in \widehat{\Vect}(X)$ or $\Fins(X)$ and $T\in\widehat{Z}_a(X)$.  Suppose that $T$ does not have mass on a complete pluripolar set $A\subseteq X$, then so is $s_i(\hat{E})\cap T$.
\end{proposition}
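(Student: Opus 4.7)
The plan is to follow the construction of $s_i(\hat E)\cap T$ step by step, verifying at each stage that the resulting current still puts no mass on (the appropriate preimage of) $A$.

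Write $r+1=\rank E$ and $p\colon\mathbb{P}E^{\vee}\to X$ for the projection, so that by definition
\[
s_i(\hat E)\cap T=(-1)^ip_*\!\left(c_1(\hO(1))^{r+i}\cap p^*T\right).
\]
First I would pull back to $\mathbb{P}E^{\vee}$. Since $p$ is a flat morphism of pure relative dimension $r$ and $A$ is complete pluripolar in $X$, the preimage $p^{-1}A$ is complete pluripolar in $\mathbb{P}E^{\vee}$ (it is cut out locally by $p^*\varphi$ for a local defining psh function $\varphi$ of $A$). By \cref{thm:DS}(5) applied to the decomposition $T=S_1-S_2$ furnished by \cref{lma:nomassnonpp}, the current $p^*T$ puts no mass on $p^{-1}A$.

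Next I would iterate \cref{prop:massless}: since $p^*T\in \widehat Z_{a+r}(\mathbb{P}E^{\vee})$ puts no mass on the complete pluripolar set $p^{-1}A$, the current $c_1(\hO(1))\cap p^*T$ also puts no mass on $p^{-1}A$. Applying \cref{prop:massless} a total of $r+i$ times yields that
\[
R:=c_1(\hO(1))^{r+i}\cap p^*T\in \widehat Z_{a-i}(\mathbb{P}E^{\vee})
\]
puts no mass on $p^{-1}A$.

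Finally I would push forward. Decomposing $R=R_1-R_2$ via \cref{lma:nomassnonpp} so that both $R_1,R_2$ are closed positive and put no mass on $p^{-1}A$, we have $\mathds{1}_{p^{-1}A}R_j=0$, hence $p_*(\mathds{1}_{p^{-1}A}R_j)=0$. But for a Borel set $A\subseteq X$ and a positive current $R_j$, the push-forward satisfies $\mathds{1}_{A}\,p_*R_j=p_*(\mathds{1}_{p^{-1}A}R_j)$, so $p_*R_j$ puts no mass on $A$. Hence $p_*R=p_*R_1-p_*R_2$, and thus $s_i(\hat E)\cap T=(-1)^ip_*R$, puts no mass on $A$.

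There is no real obstacle here; the only minor point to be careful about is that \cref{prop:massless} is stated for pluripolar (not necessarily complete pluripolar) sets $A$, so the iteration is legitimate, and that the elementary identity $\mathds{1}_A\,p_*R_j=p_*(\mathds{1}_{p^{-1}A}R_j)$ for proper $p$ and positive $R_j$ reduces the push-forward step to a Borel measure-theoretic statement. The Finsler case is identical, since the whole argument only uses the metric on $\hO(1)$ and the projection $p$.
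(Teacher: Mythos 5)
Your proof is correct and follows essentially the same three-step structure as the paper's: pull back $T$ along $p$ and invoke \cref{thm:DS}(5), iterate \cref{prop:massless} $(r+i)$ times, then push forward. The only place you diverge is in justifying the push-forward step: the paper cites \cref{cor:adjunction} (the adjunction formula $p_*(\alpha\wedge p^*T)=p_*\alpha\wedge T$) to conclude that $p_*R$ puts no mass on $A$, whereas you spell out the elementary measure-theoretic identity $\mathds{1}_{A}\,p_*R_j=p_*(\mathds{1}_{p^{-1}A}R_j)$ applied to a decomposition $R=R_1-R_2$ from \cref{lma:nomassnonpp}. Your version is more explicit and self-contained; the paper's citation to \cref{cor:adjunction} is terser and arguably a slight abuse, since that corollary is stated for smooth compactly supported forms rather than for indicators of Borel sets, so your more careful unwinding is if anything a small improvement in rigor. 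The content is the same either way.
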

\begin{proof}
First observe that $p^*T$ does not have mass on $p^{-1}(A)$ by \cref{lma:tranmasspr}.
By an iterated application of \cref{prop:massless}, $c_1(\widehat{\mathcal{O}}(1))^{r+i}\cap p^*T$ does not have mass on $p^{-1}A$. By \cref{lma:tranmasspr} again, $s_i(\hat{E})\cap T$ does not have mass on $A$.
\end{proof}

\begin{corollary}\label{cor:transafterseg}
Consider $\hat{E},\hat{F}\in \widehat{\Vect}(X)$ or $\Fins(X)$ and $T\in\widehat{Z}_a(X)$. Suppose that $T$ is transversal to both $\hat{E}$ and $\hat{F}$. Then $s_i(\hat{E})\cap T$ is transversal to $\hat{F}$. 
\end{corollary}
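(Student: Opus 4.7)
The proof should be essentially a direct combination of \cref{prop:npp} and \cref{lma:tranmasspr}. Here is the plan.

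First, I would unravel the definition of transversality. To say that $s_i(\hat{E})\cap T$ is transversal to $\hat{F}$ means, by \cref{lma:tranmasspr}, precisely that the current $s_i(\hat{E})\cap T$ puts no mass on the polar locus $A$ of $\hat{F}$ on $X$. Here $A$ is the (complete pluripolar) image in $X$ of the polar locus of $\hO_{\mathbb{P}F^\vee}(1)$; the point is that $A$ is a complete pluripolar subset of $X$, so that \cref{lma:tranmasspr} and \cref{prop:npp} are both applicable.

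Next, the hypothesis that $T$ itself is transversal to $\hat{F}$, combined with \cref{lma:tranmasspr}, gives us that $T$ puts no mass on $A$. This is the only place where the transversality assumption on $\hat{F}$ is used, and it reduces the problem to a propagation statement: we need to know that the operation $T\mapsto s_i(\hat{E})\cap T$ preserves the property of having no mass on the complete pluripolar set $A$.

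This propagation is exactly the content of \cref{prop:npp}, which was proved by iterating the mass-preservation of first Chern classes \cref{prop:massless} along the projective bundle $p_E:\mathbb{P}E^\vee\to X$, combined with the fact that flat pull-back and proper push-forward preserve the property of putting no mass on (the preimage of) a complete pluripolar set. Thus $s_i(\hat{E})\cap T$ puts no mass on $A$, and one final application of \cref{lma:tranmasspr} concludes that $s_i(\hat{E})\cap T$ is transversal to $\hat{F}$.

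I do not foresee any genuine obstacle: all the analytic work has been isolated into the previous lemmas, and the present corollary is a book-keeping statement about which pluripolar sets carry mass. The only mild subtlety to verify is that the polar locus of $\hat{F}$ in $X$ is indeed complete pluripolar (so that \cref{lma:tranmasspr} applies), which follows because it is, locally on $X$, the projection of the pluripolar locus of a psh potential on $\mathbb{P}F^\vee$ along a proper submersion with compact fibres, and equivalently it is characterized on $X$ as the locus where the dual Finsler/Hermitian metric is degenerate.
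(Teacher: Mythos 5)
Your plan correctly identifies \cref{prop:npp} as the key propagation step, but the reduction you perform before applying it contains a genuine gap. By definition, to say that $T$ (or $s_i(\hat{E})\cap T$) is transversal to $\hat{F}$ means that $q^*T$ (resp. $q^*(s_i(\hat{E})\cap T)$) puts no mass on the polar locus $P\subseteq \mathbb{P}F^{\vee}$ of $\hO_{\mathbb{P}F^{\vee}}(1)$, where $q:\mathbb{P}F^{\vee}\to X$ is the projection. Your argument tacitly assumes that $P=q^{-1}(A)$ for some complete pluripolar $A\subseteq X$, so that \cref{lma:tranmasspr} converts transversality into the condition ``$T$ has no mass on $A$.'' This is false in general. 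Even for a genuine Hermitian metric $h_F\in\widehat{\Vect}(X)$, the potential of $\hO(1)$ over a point $x$ where $h_{F,x}$ is unbounded is $-\infty$ only on the \emph{proper} projective linear subspace $\{[\xi]:\xi|_{(F_x)_{\fin}}=0\}$ of $\mathbb{P}F_x^{\vee}$, not on the whole fibre. For a Finsler metric, the polar locus of $\hO(1)$ can be an essentially arbitrary complete pluripolar subset of $\mathbb{P}F^{\vee}$ with no relation to a set downstairs. Consequently, the implication you need in your second paragraph — ``$q^*T$ has no mass on $P$'' $\Rightarrow$ ``$T$ has no mass on $q(P)$'' — fails: e.g. if $T$ is a Dirac mass at a bad point $x_0$, then $q^*T=[q^{-1}(x_0)]$ has no mass on a proper analytic subset of the fibre, yet $T$ certainly has mass on $q(P)\ni x_0$.

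The paper's proof sidesteps this entirely by never descending to $X$: it pulls everything up to $\mathbb{P}F^{\vee}$, uses \cref{prop:projectionform}(2) to write $q^*(s_i(\hat{E})\cap T)=s_i(q^*\hat{E})\cap q^*T$, notes that $q^*T$ is transversal to $q^*\hat{E}$ by \cref{lma:flatpullbacktrans} and that $q^*T$ has no mass on $P$ by the hypothesis, and then applies \cref{prop:npp} \emph{on $\mathbb{P}F^{\vee}$} with $A=P$. There the set $P$ is genuinely a complete pluripolar subset of the ambient manifold (the polar set of a psh potential of the line bundle $\hO(1)$), so \cref{prop:npp} applies directly. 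Your proof can be repaired by making the same move: verify the no-mass statement upstairs rather than trying to translate it to a condition on $X$.
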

So our intersection theory is indeed a non-pluripolar theory. The transversality condition is preserved by the Segre classes.
\begin{proof}
Let $q\colon \mathbb{P}F^{\vee}\rightarrow X$ be the natural projection. By definition, we need to show that $\hO_{\mathbb{P}F^{\vee}}(1)$ is transversal to $q^*(s_i(\hat{E})\cap T)$. By \cref{prop:projectionform}, the latter is equal to $s_i(q^*\hat{E})\cap q^*T$. By \cref{lma:flatpullbacktrans}, $q^*T$ is transversal to $q^*\hat{E}$. On the other hand, $q^*T$ is transversal to $\hO_{\mathbb{P}F^{\vee}}(1)$ by assumption. So we are reduced to the case where $F$ is a line bundle. In this case, it suffices to apply \cref{prop:npp}.
\end{proof}

\begin{proposition}\label{prop:smT}
Let $\hat{E}=(E,h_E)\in \widehat{\Vect}(X)$ or $\Fins(X)$, $T\in \widehat{Z}_a(X)$. Assume that $h_E$ is locally bounded on an open set $U\subseteq X$, then
\begin{equation}\label{eq:smlocal}
(s_m(\hat{E})\cap T)|_U=s_m(\hat{E}|_U)\wedge T|_U.
\end{equation}
Here $s_m(\hat{E}|_U)$ is defined using the same formula as $s_m(\hat{E})$:
\[
s_m(\hat{E}|_U)\wedge T|_U\coloneqq (-1)^mp_*\left( c_1(\hO(1))^{r+i}\wedge p^*T|_U \right),
\]
where $\rank E=r+1$ and $p\colon \mathbb{P}E|_U^{\vee}\rightarrow U$ is the natural projection.
\end{proposition}
\begin{proof}
This follows from \cref{prop:relnppbdd}.
\end{proof}

\begin{proposition}\label{prop:negsegvan}
Consider $\hat{E}\in \widehat{\Vect}(X)$  or $\Fins(X)$ and $T\in\widehat{Z}_a(X)$. Then
$s_i(\hat{E})\cap T=0$ if $i<0$.
\end{proposition}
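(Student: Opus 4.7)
The plan is to reduce the statement to a purely local bidegree count in product coordinates. First, if $r+i<0$, the iterated Chern operator $c_1(\hO(1))^{r+i}$ is undefined and the convention is $s_i(\hat{E})\cap T=0$, so the statement is vacuous; I thus assume $-r\leq i<0$. Testing against an arbitrary smooth $(a-i,a-i)$-form $\alpha$ on $X$ and invoking the adjunction formula \cref{cor:adj1}, the goal reduces to showing
\[
\int_{\mathbb{P}E^\vee} p^*\alpha\wedge\bigl(c_1(\hO(1))^{r+i}\cap p^*T\bigr)=0
\]
for every such $\alpha$.

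To localize, I would take a trivializing cover $\{U_j\}$ of $X$ for $E$, an affine cover of the fiber $\mathbb{P}^r$, and a subordinated partition of unity on $\mathbb{P}E^\vee$, so the problem reduces to one on a product chart $V=U\times W$ with holomorphic coordinates $(z,\zeta)$ splitting base and fiber. On such $V$ one writes $c_1(\hO(1))|_V=\ddc\psi$ with $\psi$ psh; truncating to $\psi_k:=\max\{\psi,-k\}$, the very definition of the relative non-pluripolar product gives
\[
c_1(\hO(1))^{r+i}\cap p^*T\big|_V=\lim_{k\to\infty}(\ddc\psi_k)^{r+i}\wedge p^*T
\]
weakly. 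In the product coordinates, the Bedford--Taylor current $(\ddc\psi_k)^{r+i}$ decomposes canonically into components of bidegree $(p_b,q_b;p_f,q_f)$ with $p_b+p_f=q_b+q_f=r+i$ and all four entries $\geq 0$. Because $p^*\alpha$ and $p^*T$ have only the pure-base bidegrees $(a-i,a-i;0,0)$ and $(n-a,n-a;0,0)$, the sole bidegree component of $(\ddc\psi_k)^{r+i}$ that can contribute to the top degree $(n,n;r,r)$ after wedging is $(i,i;r,r)$. For $i<0$ this component is empty, so each approximating integral vanishes; passing to $k\to\infty$ and summing over the partition of unity yields the vanishing.

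The main point to verify is that the bidegree obstruction survives the weak limit defining the non-pluripolar product. Since the offending bidegree $(i,i;r,r)$ is detected linearly (by wedging with the complementary pure-base bidegree forms and integrating), and each member of the defining sequence has zero contribution, the limit inherits the vanishing automatically. A cleaner, more robust alternative is to approximate $\hat{E}$ locally by smooth Griffiths positive metrics via \cref{prop:appgri}: for smooth metrics the classical projection formula applies and one is reduced to the trivial observation that $p_*$ of a smooth $(r+i,r+i)$-form along a $\mathbb{P}^r$-bundle has bidegree $(i,i)$ on $X$, hence vanishes identically for $i<0$. The Finsler case $\hat{E}\in\Fins(X)$ is handled by the same argument since only the metric on $\hO(1)$ enters.
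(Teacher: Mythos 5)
Your proof is correct and follows essentially the same strategy as the paper's: localize, reduce to the truncated Bedford--Taylor approximants from the definition of the relative non-pluripolar product, and conclude by a bidegree count in the base and fiber directions. The paper carries out the final degree count after first regularizing the truncated potential to a smooth one, while you decompose the Bedford--Taylor current into split bidegree components directly; this is a minor presentational variant of the same argument, and your explicit treatment of the vacuous case $r+i<0$ is a useful clarification.
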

\begin{proof}
Let $p\colon \mathbb{P}E^\vee \rightarrow X$ be the projection map. We need to show that
\[
p_* \left(c_1(\hO(1))^{r+i}\cap p^*T\right)=0.
\]
The problem is local on $X$, so we may assume that $E$ is the trivial vector bundle of rank $r+1$ and $X=\Delta^n$. 
Take $\eta\in \mathcal{A}_c^{a-i,a-i}(X)$, then
\[
\begin{split}
\int_{X}\eta \wedge p_* \left(c_1(\hO(1))^{r+i}\cap p^*T\right)=&\int_{\mathbb{P}E^{\vee}} p^*\eta\wedge  \left(c_1(\hO(1))^{r+i}\cap p^*T\right).
\end{split}
\]
Fix a smooth form $\Theta\in c_1(\mathcal{O}(1))$ and identify the metric on $\hO(1)$ with $\Phi\in \PSH(\mathbb{P}E^{\vee},\Theta)$. By \cite[Lemma~3.2]{Vu20},
\[
\int_{\mathbb{P}E^{\vee}} p^*\eta\wedge \left(c_1(\hO(1))^{r+i}\cap p^*T\right)=\lim_{k\to\infty}\int_{\mathbb{P}E^{\vee}} p^*\eta \wedge \mathds{1}_{\{\Phi>-k\}} \Theta_{\max\{\Phi,-k\}}^{r+i}\wedge p^*T .
\]
Assume that $i<0$.
It suffices to prove more generally that for any bounded $\Psi\in \PSH(\mathbb{P}E^{\vee},\Theta)$,
\begin{equation}\label{eq:temp4}
p^*\eta \wedge \Theta_{\Psi}^{r+i}\wedge p^*T=0.
\end{equation}
By regularization and possibly passing to an open subset of $\mathbb{P}E^{\vee}$, we may assume that $\Psi$ is smooth. Then a simple degree counting proves \eqref{eq:temp4}.
\end{proof}

\begin{proposition}\label{prop:Segrecomm}
Let $\hat{E},\hat{F}\in \widehat{\Vect}(X)$ or $\Fins(X)$, $T\in \widehat{Z}_a(X)$. For any $b,c$, we have
\begin{equation}\label{eq:Segreclasscomm}
s_c(\hat{E})\cap s_b(\hat{F})\cap T=s_b(\hat{F})\cap s_c(\hat{E})\cap  T.
\end{equation}
\end{proposition}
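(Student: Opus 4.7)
The plan is to reduce the commutativity to the already-established commutativity $c_1(\hat L_1)\cap c_1(\hat L_2)\cap\bullet=c_1(\hat L_2)\cap c_1(\hat L_1)\cap\bullet$ from \cref{prop:c1comm}, by transporting everything to the fibered projective bundle $Z:=\mathbb{P}E^{\vee}\times_X\mathbb{P}F^{\vee}$. Let $r+1=\rank E$, $s+1=\rank F$, and write $p:\mathbb{P}E^{\vee}\to X$, $q:\mathbb{P}F^{\vee}\to X$ for the two projections, and $\pi_E:Z\to\mathbb{P}E^{\vee}$, $\pi_F:Z\to\mathbb{P}F^{\vee}$ for the projections from $Z$, so that $p\pi_E=q\pi_F$ and the defining square is Cartesian with flat vertical (and horizontal) arrows.

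First I would unfold the definition
\[
s_c(\hat{E})\cap s_b(\hat F)\cap T=(-1)^{b+c}\,p_*\!\left(c_1(\hO_E(1))^{r+c}\cap p^*q_*\!\left(c_1(\hO_F(1))^{s+b}\cap q^*T\right)\right),
\]
and then apply \cref{cor:comppullbackpush} to the Cartesian square above to replace $p^*q_*$ by $\pi_{E*}\pi_F^*$. A repeated use of the projection formula \cref{prop:firstChernformpush}(1) (valid since $\pi_E^*\hO_E(1)$ and $\pi_F^*\hO_F(1)$ are nowhere identically $\infty$, as $\pi_E,\pi_F$ are flat and surjective) pulls $c_1(\hO_E(1))^{r+c}$ past $\pi_{E*}$. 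A repeated use of the flat pull-back formula \cref{prop:firstChernformpush}(2) then pushes $\pi_F^*$ past the $c_1(\hO_F(1))^{s+b}$ factors. Together with the identity $\pi_F^*q^*=\pi_E^*p^*$, this rewrites the right-hand side as
\[
(-1)^{b+c}(q\pi_F)_*\!\left(c_1(\pi_E^*\hO_E(1))^{r+c}\cap c_1(\pi_F^*\hO_F(1))^{s+b}\cap\pi_E^*p^*T\right).
\]

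Now the two $c_1$ factors live on the same space $Z$, so I invoke \cref{prop:c1comm} (iteratively, using the associativity statement in \cref{prop:tower}) to swap them. This is the conceptual heart of the argument. Then I run the whole preceding chain in reverse with the roles of $E$ and $F$ exchanged: peel off $c_1(\pi_F^*\hO_F(1))^{s+b}$ using \cref{prop:firstChernformpush}(2), push $\pi_F^*$ back in using \cref{prop:firstChernformpush}(1), and apply \cref{cor:comppullbackpush} to the opposite base change $\pi_{F*}\pi_E^*=q^*p_*$. This yields exactly $s_b(\hat F)\cap s_c(\hat E)\cap T$, proving \eqref{eq:Segreclasscomm}.

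The only real point to watch is bookkeeping: one must check at each step that the hypotheses of \cref{prop:firstChernformpush} and \cref{cor:comppullbackpush} are met. This is automatic here because $\pi_E,\pi_F$ are flat, proper, and surjective, so pullbacks of the relevant Hermitian line bundles are never identically $\infty$ on any component of $Z$, and all relevant currents remain in $\widehat{Z}_\bullet$ after each operation by \cref{prop:relnppclopos}. There is no essential analytic difficulty; the statement is a formal consequence of the functorial package already built up in the previous sections. The argument also goes through verbatim for $\hat E,\hat F\in\Fins(X)$, since both the projection-bundle construction and \cref{prop:firstChernformpush} were set up uniformly for Hermitian and Finsler metrics.
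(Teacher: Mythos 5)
Your proof is correct and follows essentially the same route as the paper: unfold both Segre classes, transport everything to the fiber product $\mathbb{P}E^{\vee}\times_X\mathbb{P}F^{\vee}$, invoke the commutativity of the two $c_1$ operators (\cref{prop:c1comm}), and conclude by symmetry of the resulting expression. The only cosmetic difference is that the paper takes a small shortcut by first applying the already-packaged Segre-level projection formula (\cref{prop:projectionform}) to move $s_c(\hat E)$ past $q_*$, whereas you unfold to the level of first Chern classes throughout and apply the base-change lemma (\cref{cor:comppullbackpush}) and the $c_1$-level projection and flat-pullback formulae directly — but since \cref{prop:projectionform} is itself proved from exactly those lemmas, the two arguments coincide in substance.
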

\begin{remark}
This result is of course what one should expect from classical intersection theory. However, we want to emphasize that it is by no means trivial, as one would imagine at a first glance. \emph{A priori}, there are not many evidences indicating that non-pluripolar intersection theory is better than the other Hermitian intersection theories. This result marks the big advantage of our theory. In fact, the corresponding results fail in the theory of Chern currents of L\"ark\"ang--Raufi--Ruppenthal--Sera \cite{LRRS18}.
\end{remark}

\begin{proof}
Write $\rank E=r+1$ and $\rank F=s+1$.

Consider the Cartesian square
\[
\begin{tikzcd}
Y \arrow[r,"p'"] \arrow[d,"q'"] \arrow[rd, "\square", phantom] & \mathbb{P}F^{\vee} \arrow[d,"q"] \\
\mathbb{P}E^{\vee} \arrow[r,"p"]                                    & X          
\end{tikzcd}.
\]
We compute the left-hand side of \eqref{eq:Segreclasscomm}:
\[
\begin{aligned}
&(-1)^{c+b}s_c(\hat{E})\cap s_b(\hat{F})\cap T &
\\ =& (-1)^c s_c(\hat{E})\cap  q_* \left( c_1(\hO_{\mathbb{P}F^{\vee}}(1))^{b+s}\cap q^*T\right)&\\
=&(-1)^c q_*\left(s_c(q^*\hat{E})\cap c_1(\hO_{\mathbb{P}F^{\vee}}(1))^{b+s}\cap q^*T \right) & \cref{prop:projectionform}\\
=& q_*p'_*\left(  c_1(q'^*\hO_{\mathbb{P}E^{\vee}}(1))^{c+r} \cap p'^*\left(c_1(\hO_{\mathbb{P}F^{\vee}}(1))^{b+s}\cap q^*T \right) \right)&\\
=& q_*p'_*\left( q'^* c_1(\hO_{\mathbb{P}E^{\vee}}(1))^{c+r} \cap p'^*c_1(\hO_{\mathbb{P}F^{\vee}}(1))^{b+s}\cap p'^*q^*T  \right)& \cref{prop:firstChernformpush}\\
=& q_*p'_*\left( p'^*c_1(\hO_{\mathbb{P}F^{\vee}}(1))^{b+s}\cap q'^* c_1(\hO_{\mathbb{P}E^{\vee}}(1))^{c+r} \cap  p'^*q^*T  \right)&\cref{prop:c1comm}
\end{aligned}.
\]
Now run the same computation again for the right-hand side of \eqref{eq:Segreclasscomm}, we get the same expression.
\end{proof}

From the proof, we get
\begin{corollary}\label{cor:segreintformula}
Let $\hat{E}_i\in \widehat{\Vect}(X)$ or $\Fins(X)$ ($i=1,\ldots,m$), $T\in \widehat{Z}_a(X)$. Write $\rank E_i=r_i+1$. 
Let $Y=\mathbb{P}E_1^{\vee}\times_X\cdots\times_X\mathbb{P}E_m^{\vee}$. Let $\pi\colon Y\rightarrow X$, $\pi_i:Y\rightarrow \mathbb{P}E_i^{\vee}$ be the natural projections.
Then for any integers $b_i$,
\[
\begin{split}
(-1)^{b_1+\cdots+b_m}s_{b_1}(\widehat{E}_1)\cap \cdots\cap s_{b_m}(\widehat{E}_m)\cap T= \\
\pi_*\left(\pi_1^*c_1(\hO_{\mathbb{P}E_1^{\vee}}(1))^{b_1+r_1}\cap \cdots \cap \pi_m^*c_1(\hO_{\mathbb{P}E_m^{\vee}}(1))^{b_m+r_m}\cap \pi^*T\right).
\end{split}
\]
\end{corollary}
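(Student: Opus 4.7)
I plan to proceed by induction on $m$. The base case $m = 1$ is the definition of $s_{b_1}(\hat{E}_1) \cap T$. For the inductive step, set
\[
S := (-1)^{b_2+\cdots+b_m}\, s_{b_2}(\hat{E}_2) \cap \cdots \cap s_{b_m}(\hat{E}_m) \cap T,
\]
and write $Y' := \mathbb{P}E_2^{\vee} \times_X \cdots \times_X \mathbb{P}E_m^{\vee}$ with projections $\pi' \colon Y' \to X$ and $\pi'_i \colon Y' \to \mathbb{P}E_i^{\vee}$ for $i \geq 2$. By the inductive hypothesis $S = \pi'_*(\Omega')$, where
\[
\Omega' := \pi_2'^* c_1(\hO(1))^{b_2+r_2} \cap \cdots \cap \pi_m'^* c_1(\hO(1))^{b_m+r_m} \cap \pi'^* T.
\]

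The remaining task is to apply $(-1)^{b_1}\, s_{b_1}(\hat{E}_1) \cap \bullet$ to this equality. I will form the Cartesian square
\[
\begin{tikzcd}
Y \arrow[r, "\tilde{\pi}'"] \arrow[d, "\pi_1"] \arrow[rd, "\square", phantom] & Y' \arrow[d, "\pi'"] \\
\mathbb{P}E_1^{\vee} \arrow[r, "p_1"] & X
\end{tikzcd}
\]
in which $Y = \mathbb{P}E_1^{\vee} \times_X Y'$, so that $\pi = p_1 \circ \pi_1 = \pi' \circ \tilde{\pi}'$ and $\pi_i = \pi_i' \circ \tilde{\pi}'$ for $i \geq 2$. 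Since $\pi'$ is proper, \cref{prop:projectionform}(1) gives
\[
s_{b_1}(\hat{E}_1) \cap \pi'_*(\Omega') = \pi'_*\bigl( s_{b_1}(\pi'^* \hat{E}_1) \cap \Omega' \bigr).
\]
Next, $\mathbb{P}(\pi'^* E_1)^{\vee}$ is canonically identified with $Y$ (with $\tilde{\pi}'$ as its projection onto $Y'$), and under this identification $\hO_{\mathbb{P}(\pi'^* E_1)^{\vee}}(1) = \pi_1^* \hO_{\mathbb{P}E_1^{\vee}}(1)$ by the identity \eqref{eq:hOpreserved}, so unfolding the definition of the Segre class produces
\[
s_{b_1}(\pi'^* \hat{E}_1) \cap \Omega' = (-1)^{b_1}\, \tilde{\pi}'_*\bigl( c_1(\pi_1^* \hO(1))^{b_1+r_1} \cap \tilde{\pi}'^* \Omega' \bigr).
\]
Because $\tilde{\pi}'$ is flat, \cref{prop:projectionform}(2) will let me commute $\tilde{\pi}'^*$ through each of the $c_1$-intersections composing $\Omega'$; combined with $\tilde{\pi}'^* \pi'^* = \pi^*$ and $\tilde{\pi}'^* \pi_i'^* = \pi_i^*$, this rewrites $\tilde{\pi}'^* \Omega'$ as the desired $m$-fold intersection on $Y$, and $\pi'_* \tilde{\pi}'_* = \pi_*$ closes the induction.

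This is the same template used in the proof of \cref{prop:Segrecomm}, just iterated, and I expect no substantive obstacle beyond the notational bookkeeping around the Cartesian diagram: the inputs (the definition of Segre classes, the proper-push-forward and flat-pull-back compatibilities of \cref{prop:projectionform}, and the base-change identity of \cref{cor:comppullbackpush} sitting behind them) are already in hand.
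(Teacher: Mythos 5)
Your proof is correct and matches the paper's intent: the paper derives this corollary by iterating the computation in the proof of \cref{prop:Segrecomm}, which is exactly the push-pull induction you wrote out. One small correction: the step where you commute $\tilde{\pi}'^*$ through the $c_1$-intersections building $\Omega'$ should cite \cref{prop:firstChernformpush}(2) rather than \cref{prop:projectionform}(2) (the latter is the Segre-class pull-back compatibility, whereas here you need the flat pull-back of $c_1(\hat{L})\cap\bullet$).
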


\subsection{Chern polynomials}

\begin{definition}
Let $\hat{E}_1,\ldots,\hat{E}_b\in \widehat{\Vect}(X)$ or $\Fins(X)$. Consider $n_1,\ldots,n_b\in \mathbb{N}$. We define
\begin{equation}\label{eq:puresegreprod}
s_{n_1}(\hat{E}_1)\cdots s_{n_b}(\hat{E}_b)\cap \colon \widehat{Z}_a(X)\rightarrow \widehat{Z}_{a-n_1-\cdots-n_b}(X)
\end{equation}
inductively as follows: when $b=0$, this is just the identity map; When $b>0$, we let
\[
s_{n_1}(\hat{E}_1)\cdots s_{n_b}(\hat{E}_b)\cap T\coloneqq s_{n_1}(\hat{E}_1)\cap\left( s_{n_2}(\hat{E}_2)\cdots s_{n_b}(\hat{E}_b)\cap T\right).
\]
It follows from \cref{prop:Segrecomm} that \eqref{eq:puresegreprod} is invariant under permutation of the terms. We call the formal combination $s_{n_1}(\hat{E}_1)\cdots s_{n_b}(\hat{E}_b)$ a \emph{pure Segre polynomial} of degree $n_1+\cdots+n_b$.

We say the pure Segre polynomial $s_{n_1}(\hat{E}_1)\cdots s_{n_b}(\hat{E}_b)$ is \emph{transversal} to a given $T\in \widehat{Z}_a(X)$ if $T$ is transversal to each $\hat{E}_i$.

More generally, a \emph{Segre polynomial} or a \emph{Chern polynomial} of degree $n$ is a finite formal $\mathbb{R}$-linear combination of pure Segre polynomials of degree $n$.

We say a Chern polynomial $P=\sum_i a_i P_i$ of degree $n$ is \emph{transversal} to a given $T\in \widehat{Z}_a(X)$ if each pure Segre polynomial $P_i$ is transversal to $T$. In this case, we define
\[
P\cap T\coloneqq \sum_i a_i (P_i \cap T)\in \widehat{Z}_{a-n}(X).
\]
\end{definition}

When $T$ is the current of integration $[X]$ of $X$, we usually omit $\cap T$ from the notations.
\subsection{Chern classes}
Let $P_m$ be the universal polynomial so that
\[
c_m=P_m(s_0,\ldots,s_m)
\]
for the usual Chern and Segre classes. Namely,
\[
P_m(s_1,\ldots,s_m)=\sum_{t=0}^m \sum_{\alpha=(\alpha_1,\ldots,\alpha_t)\in \mathbb{N}^t,|\alpha|=m}(-1)^t s_{\alpha_1}\cdots s_{\alpha_m}.
\]
\begin{definition}
Let $\hat{E}\in \widehat{\Vect}(X)$ or $\Fins(X)$. Assume that $\hat{E}$ is transversal to $T\in \widehat{Z}_a(X)$, then we define
\[
c_m(\hat{E})\cap T=P_m(s_1(\hat{E}),\ldots,s_m(\hat{E}))\cap T.
\]
\end{definition}
In particular, 
\[
c_1(\hat{E})\cap \bullet=-s_1(\hat{E})\cap \bullet.
\]
We can express each Chern polynomial as a polynomial of the $c_i$'s, hence explaining the name \emph{Chern} polynomial.

\subsection{Small unbounded loci}
\begin{definition}\label{def:small}
Let $\hat{E}\in \widehat{\Vect}(X)$. We say $\hat{E}$ \emph{has small unbounded locus} if there is a closed complete pluripolar set $A\subseteq X$ such that $\hat{E}$ is locally bounded on $X\setminus A$.
\end{definition}

\begin{proposition}\label{prop:smTzeroext}
Assume that $\hat{E}=(E,h_E)\in \widehat{\Vect}(X)$ is transversal to $T\in \widehat{Z}_a(X)$ and $\hat{E}$ has small unbounded locus.
Take a closed complete pluripolar set $A\subseteq X$ such that $h_E$ is locally bounded outside $A$. Then $s_m(\hat{E})\cap T$ is the zero extension of $s_m(\hat{E}|_{X\setminus A})\wedge T|_{X\setminus A}$ to $X$.
\end{proposition}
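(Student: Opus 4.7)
The plan is to combine \cref{prop:smT} with the non-pluripolar behaviour of the Segre operator on complete pluripolar sets, using the transversality assumption to control what happens over $A$.

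First I would apply \cref{prop:smT} on the open submanifold $X\setminus A$, where $h_E$ is locally bounded. That gives
\[
\bigl(s_m(\hat{E})\cap T\bigr)\big|_{X\setminus A}=s_m(\hat{E}|_{X\setminus A})\wedge T|_{X\setminus A},
\]
so the content of the proposition is reduced to showing that $s_m(\hat{E})\cap T$ puts no mass on $A$. Unfolding the definition $s_m(\hat{E})\cap T=(-1)^m p_{*}\bigl(c_1(\hO(1))^{r+m}\cap p^{*}T\bigr)$ and using properness of $p:\mathbb{P}E^{\vee}\to X$ (so that $p_{*}S$ has no mass on $A$ iff $S$ has no mass on $p^{-1}(A)$), this reduces further to the claim that $c_1(\hO(1))^{r+m}\cap p^{*}T$ puts no mass on $p^{-1}(A)$. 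Note that $p^{-1}(A)$ is closed and complete pluripolar in $\mathbb{P}E^{\vee}$, as pulling back by the submersion $p$ a local psh potential for $A$ produces a local psh potential for $p^{-1}(A)$.

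The final step is to deduce this vanishing from transversality. The formula \eqref{eq:curv} shows that the local weight of $\hO(1)$ is locally bounded on $p^{-1}(X\setminus A)$, so the polar locus of $\hO(1)$ is contained in $p^{-1}(A)$. The definition of transversality gives that $p^{*}T$ has no mass on this polar locus; via \cref{lma:tranmasspr} applied to the flat morphism $p$ and the small-unbounded-locus hypothesis, one upgrades this to the statement that $p^{*}T$ has no mass on the entire preimage $p^{-1}(A)$. Iterating \cref{prop:massless} a total of $r+m$ times (each application preserves the property of having no mass on the complete pluripolar set $p^{-1}(A)$), we obtain that $c_1(\hO(1))^{r+m}\cap p^{*}T$ puts no mass on $p^{-1}(A)$, as required.

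The main obstacle I anticipate is precisely this last step: the polar locus of $\hO(1)$ is in general a proper subset of $p^{-1}(A)$ (fiberwise it may be only a projective subspace of $p^{-1}(x)$), so the passage from ``$p^{*}T$ has no mass on the polar locus of $\hO(1)$'' to ``$p^{*}T$ has no mass on the full preimage $p^{-1}(A)$'' is not automatic. Overcoming this is where the small-unbounded-locus hypothesis is used decisively, together with \cref{lma:tranmasspr} to pass back and forth between the base and the projective bundle.
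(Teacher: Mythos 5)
Your proposal follows the same skeleton as the paper's own (one-line) proof: it reduces, via \cref{prop:smT}, to showing that $s_m(\hat{E})\cap T$ puts no mass on $A$, and it propagates the "no mass on a complete pluripolar set" property through the relative non-pluripolar product by iterating \cref{prop:massless}. This is exactly the content of \cref{prop:npp}, which the paper cites. So structurally you are reproducing the intended argument.

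You are right, however, to flag the final "upgrade" as the weak point, and the resolution you sketch does not actually close it. Transversality only says that $p^*T$ puts no mass on the polar locus $P$ of $\hO(1)$, and $P$ is in general a strict subset of $p^{-1}(A)$: over a point $x\in A$ at which $h_{E,x}$ is still finite (merely unbounded in a neighbourhood), the whole fiber $p^{-1}(x)$ is disjoint from $P$, and over a point where $h_{E,x}$ has a pole, $P\cap p^{-1}(x)$ is a proper projective linear subspace of the fiber. The first part of \cref{lma:tranmasspr} only converts "$p^*T$ has no mass on $p^{-1}(A)$" into "$T$ has no mass on $A$"; it does not bridge $P\subsetneq p^{-1}(A)$, and "small unbounded locus" does not do so either. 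Concretely, if $x\in A$ with $h_{E,x}$ finite and $T=\delta_x$, then $p^*T$ is the integration current on the fiber $p^{-1}(x)$, which puts no mass on $P$ (a measure-zero subset of that fiber), so $\hat{E}$ is transversal to $T$ even though $T$ charges $A$; for $m=0$ and $\rank E=1$ one even gets $s_0(\hat{E})\cap T = T = \delta_x$, which is not the zero extension of anything from $X\setminus A$. What is really being used — both in your step and in the paper's appeal to \cref{prop:npp}, whose hypothesis is that $T$ has no mass on $A$ — is the stronger condition "$T$ has no mass on $A$." This should either be added as an explicit hypothesis, or deduced from a sharper description of $A$ (e.g.\ when $P$ actually exhausts $p^{-1}(A)$, as happens for analytic or toroidal singularities). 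As it stands, the passage from "transversal" to "$T$ has no mass on $A$" is a genuine gap, correctly identified but not filled in your proposal.
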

\begin{proof}
This follows immediately from \cref{prop:npp} and \cref{prop:smT}.
\end{proof}

\begin{corollary}\label{cor:vanishingChern}
Assume that $\hat{E}=(E,h_E)\in \widehat{\Vect}(X)$ is transversal to $T\in \widehat{Z}_a(X)$ and $\hat{E}$ has small unbounded locus. Then $c_s(\hat{E})\cap T=0$ for $s>\rank E$.
\end{corollary}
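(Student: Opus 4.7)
The strategy is to localize to the open set $U := X\setminus A$ on which $h_E$ is bounded, reduce to a Bedford--Taylor computation there, and conclude via smooth approximation where classical Chern--Weil theory applies.

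First I would iterate \cref{prop:smTzeroext}. A single application shows that $s_m(\hat{E})\cap T$ is the zero extension of $s_m(\hat{E}|_U)\wedge T|_U$; in particular it puts no mass on $A$, and by \cref{cor:transafterseg} it is again transversal to $\hat{E}$, hence a legitimate input for a further application. Induction on the number $b$ of Segre factors in a pure Chern polynomial $s_{n_1}(\hat{E})\cdots s_{n_b}(\hat{E})$ then yields
\[
    \bigl(s_{n_1}(\hat{E})\cdots s_{n_b}(\hat{E})\cap T\bigr)\big|_U = s_{n_1}(\hat{E}|_U)\wedge\cdots\wedge s_{n_b}(\hat{E}|_U)\wedge T|_U,
\]
the full current being the zero extension to $X$; on $U$ the relative non-pluripolar product agrees with the Bedford--Taylor product by \cref{prop:relnppbdd}. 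Taking the $\mathbb{R}$-linear combination defining $c_s$, I would deduce that $c_s(\hat{E})\cap T$ is the zero extension of $c_s(\hat{E}|_U)\wedge T|_U$, so it suffices to prove that this Bedford--Taylor expression vanishes when $s>\rank E$.

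Next, since the claim is local on $U$, I would work on a small polydisk $V\subseteq U$. By \cref{prop:appgri} there exist smooth Griffiths positive metrics $h^{(j)}$ on $E|_V$ increasing pointwise to $h_E|_V$; the explicit formula \eqref{eq:curv} shows that the corresponding local weights on $\mathcal{O}(-1)$ over $\mathbb{P}(E|_V)^\vee$ are locally uniformly bounded psh functions converging monotonically to the weight attached to $h_E$. Iterated Bedford--Taylor continuity then yields $s_m(E|_V,h^{(j)})\wedge T|_V \to s_m(\hat{E}|_V)\wedge T|_V$ as currents for every $m$, and hence
\[
    c_s(E|_V,h^{(j)})\wedge T|_V \longrightarrow c_s(\hat{E}|_V)\wedge T|_V.
\]
Since each $h^{(j)}$ is smooth, the Chern form $c_s(E|_V,h^{(j)})$ vanishes identically for $s>\rank E$ by classical Chern--Weil theory, so the limit is $0$.

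The main obstacle I anticipate lies in the first step: transversality must be propagated through the iteration so that every partial product $s_{n_1}(\hat{E})\cdots s_{n_j}(\hat{E})\cap T$ remains transversal to $\hat{E}$ and thus eligible for another application of \cref{prop:smTzeroext}. This bookkeeping, together with the identification of relative non-pluripolar products with Bedford--Taylor products on $U$, is what converts the statement into a classical local problem; the smooth approximation argument that finishes the job is then routine.
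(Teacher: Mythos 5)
Your proposal is correct and follows essentially the same route as the paper: reduce to the locus where $h_E$ is bounded via \cref{prop:smTzeroext}, convert the relative non-pluripolar products to Bedford--Taylor products there, localize to a polydisk, regularize by the increasing smooth Griffiths positive metrics of \cref{prop:appgri}, and conclude by classical Chern--Weil theory. The only difference is that you spell out the inductive iteration of \cref{prop:smTzeroext} (with the transversality bookkeeping via \cref{cor:transafterseg}) and the Bedford--Taylor continuity argument in detail, both of which the paper's proof compresses into a single line.
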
\label{cor:vanhigherChern}
Note that in this corollary, it is essential that $h_E$ is a Hermitian metric instead of a Finsler metric, as we need the vanishing of certain Bott--Chern forms as proved in \cite{Mou04}.

\begin{proof}
Take a closed complete pluripolar set $A\subseteq X$ such that $h_E$ is locally bounded outside $A$.
By \cref{prop:smTzeroext}, if suffices to verify 
\[
c_s(\hat{E}|_{X\setminus A})\wedge T|_{X\setminus A}=0.
\]
The problem is local and we can therefore localize. By taking increasing regularizations as in \cref{prop:appgri}, we may further assume that the metric on $\hat{E}$ is smooth. In this case, it is well-known that $c_s(\hat{E})$ is represented the usual Chern forms  defined using the Chern--Weil theory, see \cite{Mou04}. We conclude as the Chern form $c_s(\hat{E})$ vanishes if $s>\rank E$.
\end{proof}

With essentially the same proof, we find
\begin{corollary}\label{cor:posiChern}
Assume that $\hat{E}=(E,h_E)\in \widehat{\Vect}(X)$ has small unbounded locus. Then $c_s(\hat{E})$ is a closed positive $(s,s)$-current for all $s$.
\end{corollary}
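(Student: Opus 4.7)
I follow the strategy of \cref{cor:vanishingChern}: reduce to the open locus where $h_E$ is locally bounded, localize, approximate by smooth Griffiths positive metrics, and pass to the weak limit.

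First, by the small unbounded locus assumption, fix a closed complete pluripolar set $A \subset X$ outside of which $h_E$ is locally bounded. By \cref{prop:smTzeroext} applied to $T = [X]$, each Segre current $s_i(\hat{E}) \cap [X]$ is the zero extension across $A$ of its restriction, which on $X \setminus A$ coincides with the Bedford--Taylor definition from \cref{prop:smT}. Since $c_s(\hat{E})$ is a universal polynomial in the Segre currents and none of the involved currents has mass on $A$, the same extension property holds for $c_s(\hat{E})$: it is the zero extension across $A$ of $c_s(\hat{E}|_{X \setminus A})$. As the zero extension of a closed positive current across a complete pluripolar set is again closed and positive, it suffices to prove positivity of $c_s(\hat{E}|_{X \setminus A})$. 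Since positivity is a local property of currents, I may further restrict to a small polydisk $U \subset X \setminus A$ on which $E$ is trivial.

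Next, by \cref{prop:appgri}, after shrinking $U$ I obtain a sequence of smooth Griffiths positive Hermitian metrics $h^i_E \nearrow h_E|_U$ pointwise. For each $i$, $\hat{E}^i := (E|_U, h^i_E)$ is smooth and Griffiths positive, so its Chern form $c_s(\hat{E}^i)$ is a positive $(s,s)$-form by the classical result on positivity of Chern forms of smooth Griffiths positive Hermitian vector bundles.

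Finally, I would establish the weak convergence $c_s(\hat{E}^i) \to c_s(\hat{E}|_U)$; positivity then passes to the limit. Expanding $c_s$ as a universal polynomial in the Segre operators and applying \cref{cor:segreintformula}, one writes $c_s(\hat{E}^i)$ as a finite real linear combination of push-forwards from fiber products $\mathbb{P}E^{\vee}|_U \times_U \cdots \times_U \mathbb{P}E^{\vee}|_U$ of iterated products of $c_1$ of the metrics on $\mathcal{O}(1)$ induced from $h^i_E$. The monotone convergence $h^i_E \nearrow h_E|_U$ induces monotone convergence of the corresponding local psh potentials on $\mathcal{O}(1)$; since $h_E|_U$ is locally bounded, these potentials remain locally bounded. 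Bedford--Taylor's monotone convergence theorem then yields weak convergence of each iterated wedge product, and continuity of the push-forward gives $c_s(\hat{E}^i) \to c_s(\hat{E}|_U)$ weakly. The main obstacle is the invocation of positivity of smooth Chern forms for Griffiths positive metrics, which is a non-trivial classical input; the remainder of the argument parallels \cref{cor:vanishingChern}, with ``vanishing of higher Chern forms in the smooth case'' replaced by ``positivity of all Chern forms in the smooth case''.
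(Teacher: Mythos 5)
Your proof is essentially the same as the paper's. The paper states cor:posiChern without a separate proof, writing only ``With essentially the same proof, we find,'' in reference to the proof of \cref{cor:vanishingChern}, which consists of exactly the steps you carry out: reduce to $X\setminus A$ via \cref{prop:smTzeroext}, localize, regularize by \cref{prop:appgri}, and invoke the corresponding fact for smooth Griffiths positive metrics in the limit. Your rendition is slightly more detailed in the Bedford--Taylor convergence step, which the paper compresses into the single clause ``we may further assume that the metric on $\hat{E}$ is smooth.''

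One caveat worth flagging, though it affects you and the paper equally: the ``classical result on positivity of Chern forms of smooth Griffiths positive Hermitian vector bundles'' is not as classical as your phrasing suggests. Positivity of $c_1$ is elementary, and Griffiths proved positivity of $c_2$ (and of $c_1^2 - c_2$) in low rank/dimension, but the pointwise positivity of \emph{all} Chern forms (let alone all Schur forms) for Griffiths positive metrics was long open; the commented-out remark in the paper pointing to \cite{Fin20} for weak positivity of Schur forms hints at the relevant recent literature. So the final invocation carries real weight, and if you want the argument to be airtight you should cite a precise version (e.g.\ Finski's pushforward identity / Schur positivity results, or the Segre-form positivity of Guler combined with an explicit expansion of $c_s$), and also specify which notion of positivity — weak or strong — you are establishing, as this distinction is nontrivial in bidegree $(s,s)$ with $1<s<n-1$.
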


\begin{conjecture}
\cref{cor:vanishingChern} and \cref{cor:posiChern} still hold without assuming that $\hat{E}$ has small unbounded locus.
\end{conjecture}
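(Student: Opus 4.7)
\medskip
\noindent\textbf{Proof proposal (plan).}
Both statements in the conjecture are local on $X$, so I may fix a coordinate polydisk $U\subseteq X$ and argue there. The idea is to use a \emph{local} smooth regularization of the Griffiths-positive metric and to transport the smooth vanishing/positivity through a monotone limit at the level of the non-pluripolar intersection on $\mathbb{P}E^{\vee}$. Crucially, unlike the proof of Corollaries \ref{cor:vanishingChern} and \ref{cor:posiChern}, the regularization is never used to ``cut off'' the polar locus on the base; it is used only at the level of $\widehat{\mathcal{O}}(1)$.

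The first step is regularization. Shrinking $U$ to a polydisk $V\subseteq U$ and applying Proposition~\ref{prop:appgri}, I obtain smooth Griffiths-positive metrics $h^{(k)}$ on $E|_V$ with $h^{(k)}\uparrow h_E|_V$ pointwise. On $\mathbb{P}E^{\vee}|_V$, the associated Finsler potentials $\varphi^{(k)}$ on $\widehat{\mathcal{O}}(1)$ are smooth and, after dualization and restriction to $\mathcal{O}(-1)\hookrightarrow p^{*}E^{\vee}$, decrease to the potential $\varphi$ of $\widehat{\mathcal{O}}(1)$. Each $\widehat{E}^{(k)}=(E,h^{(k)})|_V$ is smooth, hence has empty unbounded locus; Corollaries \ref{cor:vanishingChern} and \ref{cor:posiChern} (or equivalently \cite{Mou04}) give $c_s(\widehat{E}^{(k)})\cap T|_V=0$ for $s>\rank E$ and $c_s(\widehat{E}^{(k)})\cap T|_V\geq 0$ for every $s$.

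The second, and decisive, step is a local monotone convergence for relative non-pluripolar products on $\mathbb{P}E^{\vee}|_V$ (and more generally on the fiber product $Y|_V=\mathbb{P}E^{\vee}|_V\times_V\cdots\times_V\mathbb{P}E^{\vee}|_V$ appearing in Corollary~\ref{cor:segreintformula}). Concretely, I claim that for $S=\pi^{*}T$ and the decreasing sequence of smooth psh potentials $\varphi^{(k)}\downarrow\varphi$,
\[
c_1\bigl(\widehat{\mathcal{O}}^{(k)}(1)\bigr)^{a_1+r_1}\wedge\cdots\wedge c_1\bigl(\widehat{\mathcal{O}}^{(k)}(1)\bigr)^{a_m+r_m}\wedge\pi^{*}T
\;\rightharpoonup\;
c_1(\widehat{\mathcal{O}}(1))^{a_1+r_1}\cap\cdots\cap c_1(\widehat{\mathcal{O}}(1))^{a_m+r_m}\cap\pi^{*}T
\]
weakly on $Y|_V$. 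Once this convergence is secured, continuity of proper pushforward transports the smooth vanishing and positivity to the limit, and the conjecture follows. To prove the convergence, I would combine two ingredients: on the open set $\{\varphi>-\infty\}$, the decreasing sequence $\varphi^{(k)}$ is locally bounded above and Bedford--Taylor continuity for decreasing sequences of locally bounded psh functions gives convergence of the wedge products against $\pi^{*}T$; to control possible mass escape to the polar locus I would adapt Step~1 of the proof of Theorem \ref{thm:convdsmeasures}, multiplying by the cutoff functions $f_j^{i,C,\epsilon}$ and letting $\epsilon\to 0$, $C\to\infty$, using that $T$ transversal to $\widehat{E}$ means $\pi^{*}T$ puts no mass on $\{\varphi=-\infty\}$ (Lemma~\ref{lma:tranmasspr}).

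The \textbf{main obstacle} is precisely this monotone convergence. In contrast to Theorem~\ref{thm:convdsmeasures}, the smooth approximants $\varphi^{(k)}$ and the singular limit $\varphi$ have \emph{different} singularity types, and the global mass of $\theta_{\varphi^{(k)}}^n$ on the compact $\mathbb{P}E^{\vee}$ in general strictly exceeds that of $\theta_{\varphi}^n$. Thus Darvas--Di~Nezza--Lu's mass-preserving monotonicity does not apply, and one cannot appeal to Theorem \ref{thm:convdsmeasures} as a black box. What saves the day, conjecturally, is that (i) we work against the fixed positive test current $\pi^{*}T$, which by transversality puts no mass on the polar locus of $\widehat{\mathcal{O}}(1)$, and (ii) we are only testing against forms pulled back from $V$ after a proper pushforward. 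Making this rigorous is the heart of the matter: one must show that the excess mass of the smooth approximants concentrates exactly on $\{\varphi=-\infty\}$ after wedging with $\pi^{*}T$, so that the weak limit coincides with the non-pluripolar product. If this local transversal monotone convergence can be established, both vanishing and positivity follow at once, unifying and extending Corollaries \ref{cor:vanishingChern} and \ref{cor:posiChern}.
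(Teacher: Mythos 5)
The statement you are addressing is a \emph{conjecture} in the paper: no proof is given, and the author explicitly locates the difficulty in the absence of nice regularizations of Griffiths positive metrics. So your proposal is not being measured against an existing argument — but the gap is worse than an unverified lemma, because the monotone convergence you single out as the decisive step is false as formulated. If $\varphi^{(k)}\downarrow\varphi$ is a decreasing sequence of \emph{smooth} psh potentials of $\widehat{\mathcal O}(1)$ obtained from \cref{prop:appgri}, then Bedford--Taylor continuity along decreasing sequences makes $(\ddc\varphi^{(k)})^{p}\wedge\pi^{*}T$ converge to the \emph{full}, pluripolar-charging Monge--Amp\`ere-type product, not to the non-pluripolar one; the two differ by a positive residual carried by $\{\varphi=-\infty\}$. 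Already for $E=L$ a line bundle, $T=[X]$ and $\varphi=\log|z|$ near a point, the approximants $\tfrac12\log(|z|^{2}+1/k)$ decrease to $\varphi$ and their curvature forms converge to $\delta_{0}$, whereas the non-pluripolar product is $0$. Transversality of $T$ to $\hat E$ is a hypothesis on where $T$ puts mass (\cref{lma:tranmasspr}); it says nothing about where the \emph{approximating products} concentrate in the limit. Since $T=[X]$ is transversal to everything, transversality cannot be the mechanism that removes the excess mass.

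Moreover, the cutoff device you propose to import from Step~1 of \cref{thm:convdsmeasures} only ever yields the lower bound $\varliminf_{k}\int\chi\,(\ddc\varphi^{(k)})^{p}\wedge\pi^{*}T\wedge\Omega\geq\int\chi\,c_{1}(\widehat{\mathcal O}(1))^{p}\cap\pi^{*}T\wedge\Omega$; the matching upper bound in that theorem is extracted from the hypothesis that the non-pluripolar masses converge, which is exactly what fails for generic smooth regularizations. A one-sided bound is useless here: $c_{s}$ is an alternating sum of Segre monomials, so inequalities on the individual monomials give neither the vanishing $c_{s}(\hat E)\cap T=0$ for $s>\rank E$ (an equality) nor the positivity of $c_{s}(\hat E)$. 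The small unbounded locus hypothesis in \cref{cor:vanishingChern} and \cref{cor:posiChern} is precisely what allows one to localize to a region where $h_{E}$, hence $\varphi$, is locally bounded, so that the regularized products genuinely converge to the Bedford--Taylor, hence non-pluripolar, product. To remove that hypothesis one needs either a regularization of Griffiths positive metrics with controlled masses near the polar locus — which is what the paper says is missing — or an argument bypassing regularization altogether (for instance a monotonicity or comparison theorem for non-pluripolar Segre currents in the spirit of \cite{WN19}); your plan supplies neither.
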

The difficulty of this conjecture lies in the fact that we do not have nice regularizations of Griffiths positive metrics. It is also of interest to know if $c_s(\hat{E})$ and other Schur polynomials are always positive currents. In fact, we expect some kind of monotonicity theorem extending \cite{WN19} and \cite{DDNL18mono}.

\section{Full mass metrics and \texorpdfstring{$\mathcal{I}$}{I}-good metrics}\label{sec:specmet}
In this section, we will analyze two special classes of nice metrics, corresponding to nice metrics in Shimura setting and mixed Shimura setting respectively.

Let $X$ be a compact K\"ahler manifold of pure dimension $n$.
\subsection{Full mass metrics}
\begin{definition}\label{def:fullmass}
Let $\hat{E}=(E,h_E)\in \widehat{\Vect}(X)$ or $\Fins(X)$. We say $\hat{E}$ (or $h_E$) has \emph{full mass} (resp. \emph{positive mass}) if $\hO(1)$ on $\mathbb{P}E^{\vee}$ has full mass (resp. \emph{positive mass}).

We write $\mathcal{E}(E)$ for the set of full mass Finsler metrics on $E$.

Let $\hat{E}=(E,h_E)\in \Fins(X)$.
We say $\hat{E}$ (or $h_E$) has \emph{minimal singularities} if $h_E$ has minimal singularities as a metric on $\mathcal{O}(1)$.
\end{definition} 
We will write $\widehat{\Vect}(X)_{>0}$ (resp.  $\Fins(X)_{>0}$) for the full subcategory of $\widehat{\Vect}(X)$ (resp. $\Fins(X)$) consisting of $\hat{E}$ with positive mass. 

Recall that a vector bundle $E$ is \emph{nef} (resp. \emph{big}) if $\mathcal{O}_{\mathbb{P}E^{\vee}}(1)$ is nef (resp. big).

\begin{proposition}\label{prop:fullmasscrit} 
Let $\hat{E}\in \widehat{\Vect}(X)$  or $\Fins(X)$. Assume that $E$ is nef.
Then the following are equivalent:
\begin{enumerate}
    \item $\hat{E}$ has full mass.
    \item $\int_X s_n(\hat{E})=\int_X s_n(E)$.
    \item $s_n(\hat{E})$ represents $s_n(E)$.
\end{enumerate} 
\end{proposition}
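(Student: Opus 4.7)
My plan is to reduce the entire statement to a property of the line bundle $\hO(1)$ on $\mathbb{P}E^{\vee}$, then exploit the nefness hypothesis to compare the envelope mass with the cohomological intersection number.

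Set $r+1=\rank E$ and let $p:\mathbb{P}E^{\vee}\to X$ denote the natural projection. Applying the definition of the Segre class to $T=[X]$ and pushing forward via \cref{cor:adjunction}, one obtains
\[
\int_X s_n(\hat E)=(-1)^n\int_{\mathbb{P}E^{\vee}} c_1(\hO(1))^{n+r},\qquad \int_X s_n(E)=(-1)^n\int_{\mathbb{P}E^{\vee}} c_1(\mathcal{O}(1))^{n+r}.
\]
The first uses the non-pluripolar product, the second is classical.

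The equivalence of (2) and (3) is immediate: $s_n(\hat E)$ is a current of bidimension $(0,0)$ on the compact K\"ahler manifold $X$, and such a top-degree current is determined cohomologically by its total mass since $H^{n,n}(X,\mathbb{R})\cong\mathbb{R}$ via integration. So (3) trivially implies (2), and conversely (2) forces $s_n(\hat E)$ and $s_n(E)$ to coincide as elements of $H^{n,n}(X,\mathbb{R})$.

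For (1) $\iff$ (2), unfold the full mass condition of \cref{def:fullmass}: $\hat E$ has full mass iff $\hO(1)$ on $\mathbb{P}E^{\vee}$ has full mass, i.e.\
\[
\int_{\mathbb{P}E^{\vee}} c_1(\hO(1))^{n+r}=\int_{\mathbb{P}E^{\vee}}\theta_{V_\theta}^{n+r},
\]
for a smooth $\theta\in c_1(\mathcal{O}(1))$ and $V_\theta=\sup^*\{\psi\in\PSH(\mathbb{P}E^{\vee},\theta):\psi\leq 0\}$. Comparing with the Segre identities above, the task reduces to verifying
\[
\int_{\mathbb{P}E^{\vee}}\theta_{V_\theta}^{n+r}=\int_{\mathbb{P}E^{\vee}} c_1(\mathcal{O}(1))^{n+r},
\]
which is the standard fact that a nef $(1,1)$-class coincides with its positive intersection product. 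It applies because $E$ nef forces $\mathcal{O}(1)$ nef.

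The only subtle point is the case $\mathcal{O}(1)$ nef but not big, where $V_\theta$ need not have minimal singularities. My plan here is to perturb: replace $\theta$ by $\theta_\epsilon:=\theta+\epsilon\omega$ for $\omega$ a K\"ahler form on $\mathbb{P}E^{\vee}$ (which exists since $X$ is K\"ahler), apply the big-and-nef version to each $\theta_\epsilon$ (where $V_{\theta_\epsilon}$ does have minimal singularities and its non-pluripolar mass equals the cohomological intersection), and then let $\epsilon\downarrow 0$, invoking monotone continuity of the non-pluripolar Monge--Amp\`ere mass along decreasing sequences of smooth representatives from \cite{BEGZ10}. This passage is the only analytic input; the rest is bookkeeping with the definitions already in place.
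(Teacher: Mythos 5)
Your proof is correct and follows essentially the same route as the paper's: reduce everything to $\hO(1)$ on $\mathbb{P}E^{\vee}$ via the Segre-class identity, then use the nefness of $\mathcal{O}(1)$ to identify the envelope mass with the cohomological intersection. The paper compresses the step $\int_{\mathbb{P}E^\vee}\theta_{V_\theta}^{n+r}=\int_{\mathbb{P}E^\vee}c_1(\mathcal{O}(1))^{n+r}$ into "by definition"; you make the role of nefness explicit, which is a genuine improvement in exposition. (Minor remark: for that last identity you do not really need the $\epsilon$-perturbation argument — the fact that the non-pluripolar mass of the envelope of a nef class equals the cohomological self-intersection is already in \cite{BEGZ10}, where the volume of a nef class is shown to equal $\int\theta^n$; citing that directly is cleaner than appealing to monotone continuity across different cohomology classes, a statement BEGZ does not quite give in the form you invoke.)
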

\begin{proof}
Write $\rank E=r+1$. Let $p\colon \mathbb{P}E^{\vee}\rightarrow X$ be the natural projection. Then by definition, $\hat{E}$ has full mass if and only if
\[
\int_{\mathbb{P}E^{\vee}} c_1(\hO(1))^{n+r}=\int_{\mathbb{P}E^{\vee}}\mathcal{O}(1)^{n+r}=(-1)^n\int_X s_n(E).
\]
But
\[
\int_{\mathbb{P}E^{\vee}} c_1(\hO(1))^{n+r}=(-1)^n \int_X s_n(\hat{E}).
\]
We get the equivalence between (1) and (2).

It is obvious that (3) implies (2). Conversely, suppose that (2) holds, then $c_1(\hO(1))^{n+r}$ represents $c_1(\mathcal{O}(1))^{n+r}$. By push-forward, we get (3).
\end{proof}

\begin{example}
Suppose $\hat{E}=(E,h_E)\in \widehat{\Vect}(X)$ or $\Fins(X)$ and $h_E$ is bounded, then $\hat{E}$ has full mass as $h_{\hO(1)}$ is clearly bounded.
\end{example}

\begin{example}
Suppose $\hat{E}=(E,h_E)\in \Fins(X)$ has minimal singularities, then $\hat{E}$ has full mass.
\end{example}
In relation to this example, we propose the following conjecture, as an analogue of Griffiths conjecture:
\begin{conjecture}
Let $E$ be a pseudo-effective vector bundle on $X$. Then there is always a singular Griffiths positive Hermitian metric $h_E$ on $X$ such that the induced Finsler metric has minimal singularities. In particular, there is always a Hermitian metric $h_E$ of full mass.
\end{conjecture}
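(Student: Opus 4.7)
The plan is to reduce the construction to a fiberwise symmetrization problem on the projective bundle. Since $E$ is pseudo-effective by assumption, the tautological line bundle $\mathcal{O}_{\mathbb{P}E^{\vee}}(1)$ is pseudo-effective as well. Fix any smooth reference Hermitian metric $h_0$ on $E$ and let $\theta = c_1(\mathcal{O}(1), h_{0,\mathcal{O}(1)})$ where $h_{0,\mathcal{O}(1)}$ is the induced smooth metric on $\mathcal{O}(1)$. Then $V_\theta$ provides, by the general theory of $\theta$-psh envelopes, a psh metric $h_{\min}$ on $\mathcal{O}(1)$ with minimal singularities. Because the restriction of $\mathcal{O}(1)$ to each fiber $p^{-1}(x) \cong \mathbb{P}^r$ is ample, $h_{\min}$ restricts to a bounded psh metric on every fiber, so it induces a Finsler metric $F$ on $E$ whose fiberwise unit balls $B_x = \{\xi \in E_x : F_x(\xi) \le 1\}$ are non-degenerate convex bodies outside a pluripolar set in $X$.

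The next step would be to symmetrize $F$ fiberwise into a Hermitian metric. The natural candidates are the John or L\"owner ellipsoids of $B_x$: either choice yields a (possibly singular) Hermitian metric $h_E$ on $E$ whose induced Finsler metric $F_{h_E}$ differs from $F$ by at most the factor $\sqrt{r+1}$ at every point $x \in X$. Consequently $F_{h_E}$ and $F$ define the same singularity type on $\mathcal{O}(1)$, so if $h_E$ can be shown to be Griffiths positive then its induced Finsler metric automatically has minimal singularities (being trapped between two minimal singularity psh metrics). The final part, then, would be to verify that the symmetrization procedure preserves Griffiths positivity in the singular sense of \cref{def:grif}.

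The main obstacle is exactly this last verification: the fiberwise ellipsoid construction depends non-linearly on $F$, and even in the smooth ample case it is not known whether such a construction produces a Griffiths positive Hermitian metric—this is the content of Griffiths' conjecture. So any direct attack on the conjecture via symmetrization runs into the same wall as the classical problem. A possible route around this, which I would pursue in the positively curved setting first, is to exploit Berndtsson--P\u aun type direct image positivity: if one can exhibit $E^{\vee}$ (possibly after a twist by an auxiliary ample line bundle, which by \cref{lma:extend}-type arguments does not affect the final conclusion) as a direct image of a singular Griffiths negative line bundle on a relative fibration, then Berndtsson's curvature theorem furnishes a Griffiths positive Hermitian metric whose induced Finsler metric dominates $F$ up to bounded error.

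An alternative, more robust strategy is to first establish the weaker statement with \emph{full mass} in place of \emph{minimal singularities}, which is what the conjecture really needs for the Chern--Weil theory developed in this paper. By \cref{prop:fullmasscrit}, full mass of the induced Finsler metric is equivalent to a numerical identity for the top Segre number. One could try to construct $h_E$ as a decreasing limit of smooth Griffiths positive metrics on $E \otimes A^{\epsilon}$ for a fixed ample line bundle $A$ and $\epsilon \downarrow 0$, arguing that the non-pluripolar Segre masses on $\mathbb{P}E^{\vee}$ remain comparable along the sequence via a monotonicity argument analogous to \cite{DDNL18mono}. The hard part here is ruling out loss of mass in the limit, which again reduces to controlling the failure of Griffiths positivity under limits—a strictly weaker but still delicate version of the obstruction above.
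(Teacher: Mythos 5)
The statement you were asked to prove is explicitly labeled a \emph{conjecture} in the paper, introduced with the words ``we propose the following conjecture, as an analogue of Griffiths conjecture,'' and the paper offers no proof of it. There is therefore nothing in the paper to compare your argument against, and the honest assessment is that neither the paper nor your proposal proves the statement. You do, however, correctly diagnose the situation: your symmetrization strategy (pass to $V_\theta$ on $\mathcal{O}_{\mathbb{P}E^{\vee}}(1)$, extract the fiberwise unit balls, replace them by John or L\"owner ellipsoids to get a candidate Hermitian metric $h_E$ with comparable singularity type) is a natural reduction, and you rightly observe that the sticking point---verifying that the ellipsoid construction is Griffiths positive in the singular sense of \cref{def:grif}---is exactly the wall of the classical Griffiths conjecture. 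Your two fallback strategies (Berndtsson--P\u{a}un direct image positivity after an ample twist, and the weaker full-mass version via decreasing limits and a monotonicity argument) are also sensible directions, and you correctly flag that they each reduce to a (possibly weaker but) still open positivity-preservation-under-limits problem.

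Two smaller cautions on your sketch. First, you assert that because $\mathcal{O}(1)$ is fiberwise ample, the minimal-singularity potential $V_\theta$ on $\mathbb{P}E^{\vee}$ restricts to a bounded metric on every fiber; this is not automatic. Minimal-singularity potentials are locally bounded only on the ample locus of the class, and $V_\theta$ can be $-\infty$ on fibers lying over the non-K\"ahler locus, so the ``outside a pluripolar set in $X$'' hedge you add is essential and should be promoted to the main line of the argument, not a parenthetical. Second, the ellipsoid bound you cite should be checked carefully for balanced (circled) complex convex bodies: the relevant constant is the complex John bound, and while it is indeed dimension-dependent and finite (so the singularity-type comparison you need goes through), quoting the exact constant requires the complex version of John's theorem rather than the real one for $\mathbb{R}^{2(r+1)}$. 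Neither of these affects your conclusion that the argument is stuck on Griffiths positivity, but they should be stated precisely if you intend to pursue this route.
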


\begin{example}
Suppose that $X$ is projective, $\hat{E}\in \widehat{\Vect}(X)$ and $D$ is a snc divisor in $X$. Assume that $h_E|_{X\setminus D}$ is good with respect to $D$ in the sense of Mumford \cite{Mum77}. Then $h_E$ has full mass by \cite[Theorem~1.4]{Mum77} and \cref{prop:fullmasscrit}. Observe that $E$ is nef as $\mathcal{O}(1)$ clearly is.
\end{example}

Unfortunately, in the case of mixed Shimura varieties, the natural metrics are not always good nor of full mass, as shown in \cite{BGKK16}.

\begin{theorem}
Let $\hat{E}_j=(E_j,h_{E_j})\in \widehat{\Vect}(X)$  or $\Fins(X)$ ($j=1,\ldots,m$). Assume that each $\hat{E}_j$ has full and positive mass. Assume that $h_{E_j}^k$ are a sequence of positive metrics decreasing or increasing to $h_{E_j}$ a.e.. Let $P(c_i(\hat{E}_j))$ be a Chern polynomial, then
\[
P(c_i(\hat{E}_j^k))\rightharpoonup P(c_i(\hat{E}_j))
\]
as currents, where $\hat{E}_j^k=(E_j,h_{E_j}^k)$.
\end{theorem}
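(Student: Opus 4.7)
My plan is to reduce the theorem to an application of \cref{thm:convdsmeasures} via the projective bundle formula. By linearity it suffices to treat a pure Segre monomial $s_{b_1}(\hat{E}_1^k)\cdots s_{b_m}(\hat{E}_m^k)$; by \cref{cor:segreintformula} this equals $(-1)^{\sum b_j}\pi_*\bigl(\prod_j \pi_j^*c_1(\hO_j^k(1))^{r_j+b_j}\bigr)$ on the iterated fiber product $Y=\mathbb{P}E_1^\vee\times_X\cdots\times_X\mathbb{P}E_m^\vee$, where $r_j+1=\rank E_j$ and $\pi_j\colon Y\to\mathbb{P}E_j^\vee$, $\pi\colon Y\to X$ are the natural projections. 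By continuity of $\pi_*$, it therefore suffices to prove the weak convergence
\[
\prod_j \pi_j^*c_1(\hO_j^k(1))^{r_j+b_j}\rightharpoonup \prod_j \pi_j^*c_1(\hO_j(1))^{r_j+b_j}
\]
as currents on $Y$.

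Next I would establish factor-wise $d_S$-convergence on each $\mathbb{P}E_j^\vee$. The formula \eqref{eq:curv} combined with the monotonicity reversal under duality of Hermitian forms shows that the monotone convergence $h_{E_j}^k\to h_{E_j}$ induces monotone pointwise a.e.\ convergence of the psh potentials $\varphi_j^k\to\varphi_j$ for the induced metrics on $\hO_j(1)$ (in the Finsler case this is immediate by definition). Since the smooth approximations $h_{E_j}^k$ are locally bounded they have full mass, while the limit does so by hypothesis, so the top non-pluripolar masses agree: $\int_{\mathbb{P}E_j^\vee}c_1(\hO_j^k(1))^{n+r_j}=\int_{\mathbb{P}E_j^\vee}c_1(\hO_j(1))^{n+r_j}$. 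Combined with the pointwise monotone convergence, \cref{lma:decseqplusvolumeimds} (for decreasing potentials, and its known analogue for increasing sequences) yields $\varphi_j^k\xrightarrow{d_S}\varphi_j$ on each $\mathbb{P}E_j^\vee$.

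To apply \cref{thm:convdsmeasures} on $Y$, the obstruction is that $[\pi_j^*\theta_j]$ is only nef on $Y$, not big (here $\theta_j$ is a smooth form on $\mathbb{P}E_j^\vee$ representing $c_1(\mathcal{O}_j(1))$). I would address this by perturbation: fix a K\"ahler form $\omega_Y=\sum_i\pi_i^*\theta_i+N\pi^*\omega_X$ on $Y$ (with $\omega_X$ K\"ahler on $X$ and $N$ large) and for each $\epsilon>0$ replace $\pi_j^*\theta_j$ by the big form $\pi_j^*\theta_j+\epsilon\omega_Y$. A computation using flat pullback (\cref{prop:flatpullnpprel}) together with the projection formula shows that for fixed $\epsilon>0$ the top non-pluripolar mass on $Y$ in this perturbed class is a positive polynomial in $\epsilon$ whose coefficients are integrals over $\mathbb{P}E_j^\vee$ of the non-pluripolar powers $(\theta_j)_{\varphi_j^k}^{s}$ against smooth forms; by the $d_S$-convergence on $\mathbb{P}E_j^\vee$ these coefficients converge in $k$, so $\pi_j^*\varphi_j^k\xrightarrow{d_S}\pi_j^*\varphi_j$ on $Y$ for the perturbed class. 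Hence \cref{thm:convdsmeasures} gives, for each fixed $\epsilon>0$,
\[
\prod_j \bigl(\pi_j^*(\theta_j)_{\varphi_j^k}+\epsilon\omega_Y\bigr)^{r_j+b_j}\rightharpoonup \prod_j \bigl(\pi_j^*(\theta_j)_{\varphi_j}+\epsilon\omega_Y\bigr)^{r_j+b_j}.
\]

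The expected main obstacle is the final $\epsilon\to 0$ limit. Binomially expanding each factor and distributing across the product, the $\epsilon^0$ contribution is the desired unperturbed non-pluripolar product, while all other contributions are non-pluripolar products of strictly smaller total degree $\sum_j s_j$ wedged with positive powers of the smooth form $\omega_Y$, each carrying a factor $\epsilon^{\ge 1}$. An induction on the total degree $\sum_j(r_j+b_j)$ (base case $\sum_j(r_j+b_j)=0$ being trivial, where both sides equal $[Y]$) controls the lower-degree correction terms, justifying the interchange of $\lim_{\epsilon\to 0}$ and $\lim_{k\to\infty}$ and yielding the required weak convergence on $Y$. Pushing forward by $\pi_*$ completes the proof.
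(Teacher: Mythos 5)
Your proposal follows the paper's overall structure: reduce to a pure Segre monomial, pass to the iterated fiber product $Y$ via \cref{cor:segreintformula}, and invoke \cref{thm:convdsmeasures}. You correctly identify the real technical obstruction that the paper's very terse proof glosses over: on $Y$ the individual classes $\pi_j^*c_1(\mathcal{O}_j(1))$ are only nef, not big, and the pulled-back potentials $\pi_j^*\varphi_j$ have zero non-pluripolar mass in those classes, so \cref{thm:convdsmeasures} cannot be applied factor-wise. Where you diverge is in the cure. The paper's phrase \emph{after polarization} signals a different route: use the polarization identity to rewrite the mixed product as a linear combination of pure top-degree powers $\bigl(\sum_{j\in S}\lambda_j\pi_j^*c_1(\hO_j(1))\bigr)^{\dim Y}$; when $S$ is the full index set the class $\sum_j\pi_j^*c_1(\mathcal{O}_j(1))$ \emph{is} big on $Y$ (the sum $\sum_j\pi_j^*\omega_j$ of reference K\"ahler forms is already positive definite on $Y$), so \cref{thm:convdsmeasures} applies there, while for proper $S$ the class is degenerate and the top power vanishes identically on both sides. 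Your route instead keeps the mixed product and makes each factor big by adding $\epsilon\omega_Y$, then lets $\epsilon\to 0$. Both ideas work; yours trades the polarization bookkeeping for an $\epsilon$-perturbation limit, and the uniform boundedness of the lower-degree non-pluripolar masses in $k$ (by monotonicity) already justifies the interchange of limits, so the final induction on degree is unnecessary.

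Two smaller points deserve attention. First, you assert that the approximants $h_{E_j}^k$ are ``smooth'' and ``locally bounded'', but the statement only says they are Griffiths positive metrics, which may themselves be singular. In the decreasing case one still gets full mass of $\hO_j^k(1)$ for free from monotonicity (more singular means smaller mass, and the limit is already maximal), but in the increasing case the full-mass claim for each $h_{E_j}^k$ is simply false; what one actually needs is only that the masses converge, which for increasing sequences follows from BEGZ10/DDNL18mono rather than from any full-mass claim. Second, to invoke \cref{lma:decseqplusvolumeimds} in the increasing case you would need the increasing analogue, and to then invoke \cref{thm:convdsmeasures} you need \cref{rmk:slightgene} together with the standard fact that increasing convergence a.e.\ of quasi-psh functions implies convergence in capacity — this is the point the paper silently appeals to with the phrase ``and \cref{rmk:slightgene}''. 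With those repairs your argument goes through and gives an acceptable, more explicit alternative to the paper's polarization shortcut.
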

\begin{proof}
It suffices to prove
\[
s_{a_1}(\hat{E}_1^k)\cdots s_{a_j}(\hat{E}_m^k)\rightharpoonup s_{a_1}(\hat{E}_1) \cdots s_{a_j}(\hat{E}_m).
\]
By \cref{cor:segreintformula}, this reduces immediately to 
\[
c_1(\hO_{\mathbb{P}E_j^{1,\vee}}(1))^{a_1+r_1}\wedge \cdots\wedge c_1(\hO_{\mathbb{P}E_j^{m,\vee}}(1))^{a_m+r_m}\rightharpoonup c_1(\hO_{\mathbb{P}E_1^{\vee}}(1))^{a_1+r_1}\wedge \cdots\wedge c_1(\hO_{\mathbb{P}E_m^{\vee}}(1))^{a_m+r_m}.
\]
After polarization, this follows from  \cref{thm:convdsmeasures}, see also \cite[Theorem~2.3]{DDNL18mono}.
\end{proof}

\begin{theorem}\label{thm:ChernrepChern}
Let $\hat{E}_j\in \widehat{\Vect}(X)$ ($j=1,\ldots,m$). Assume that each $E_j$ is nef and each $\hat{E}_j$ has full and positive mass. Let $P(c_i(\hat{E}_j))$ be a homogeneous Chern polynomial of degree $n$. Then $P(c_i(\hat{E}_j))$ represents $P(c_i(E_j))$. 

\end{theorem}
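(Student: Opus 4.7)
The plan is to reduce the statement, via algebra and passage to a fiber product, to a top-degree mass identity which is then attacked by monotone approximation. First, writing any Chern polynomial as an $\mathbb{R}$-linear combination of monomials in Segre classes, and using that a closed $(n,n)$-current on the compact K\"ahler manifold $X$ represents a class in $H^{n,n}(X,\mathbb{R}) \cong \mathbb{R}$ if and only if it has the correct total integral, it is enough to prove
\[
\int_X s_{n_1}(\hat{E}_1)\cdots s_{n_m}(\hat{E}_m) = s_{n_1}(E_1)\cdots s_{n_m}(E_m)\cdot [X]
\]
for each tuple with $n_1+\cdots+n_m = n$. Setting $Y := \mathbb{P}E_1^{\vee}\times_X\cdots\times_X\mathbb{P}E_m^{\vee}$ with projections $\pi_j$, and $\hat{M}_j := \pi_j^*\hO_{\mathbb{P}E_j^{\vee}}(1)$, \cref{cor:segreintformula} rewrites the left-hand side, up to sign, as the top-degree mixed non-pluripolar product
\[
\int_Y c_1(\hat{M}_1)^{n_1+r_1}\wedge\cdots\wedge c_1(\hat{M}_m)^{n_m+r_m},
\]
so the task becomes comparing this quantity with the corresponding classical intersection number on $Y$.

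For the comparison I would approximate. On each $\mathbb{P}E_j^{\vee}$ the metric on $\hO_{\mathbb{P}E_j^{\vee}}(1)$ has full non-pluripolar mass and the underlying line bundle is nef; after perturbing the reference form by a small multiple $\epsilon\omega$ of a K\"ahler class on $Y$ to make it big, I would replace the potential $\phi_j$ by the min-sing truncations $\phi_j^k := \max(\phi_j, V_{\theta_j+\epsilon\omega} - k)$. These decrease to $\phi_j$, and the full-mass hypothesis ensures that their non-pluripolar masses equal the mass of $\phi_j$, so \cref{lma:decseqplusvolumeimds} yields $\phi_j^k \xrightarrow{d_S}\phi_j$. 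Pulling back to $Y$, the top-degree monotone convergence theorem of Darvas--Di Nezza--Lu (the case $a = N$ underlying \cref{thm:convdsmeasures}, cf.\ \cite{DDNL18mono}) gives weak convergence of the mixed non-pluripolar products of the approximants to the mixed product of the $\hat{M}_j$. Since each $\phi_j^k$ is bounded in a big class, the mixed non-pluripolar product of the approximants is a Bedford--Taylor product whose total mass equals the classical intersection number of the perturbed classes. Letting $k \to \infty$ and then $\epsilon \to 0$, and comparing integrals, the desired identity follows.

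The main technical obstacle is in this last step: the pulled-back classes $\pi_j^* c_1(\mathcal{O}_{\mathbb{P}E_j^{\vee}}(1))$ on $Y$ are nef but usually not big, since they are degenerate along the fibers of $\pi_j$, so \cref{thm:convdsmeasures} cannot be invoked directly for the native references and the K\"ahler perturbation by $\epsilon\omega$ is essential. One must verify carefully that the perturbed classes admit the required min-sing approximations with simultaneous $d_S$ convergence on $Y$, and that the iterated limit $k \to \infty$, $\epsilon \to 0$ commutes with the non-pluripolar product. A conceptually cleaner alternative, which avoids the perturbation entirely, is to exploit that the pole loci of distinct $\hat{M}_j$'s project to different analytic subsets of $X$ and therefore satisfy mutual transversality in the sense of \cref{cor:transtrans}; by iterated application of \cref{lma:c1linear} and polarization, the mixed product then reduces to pure top powers of positive integer combinations of the $\hat{M}_j$, on which the base case \cref{prop:fullmasscrit} resolves the computation, modulo checking that such integer combinations inherit full mass from the factors.
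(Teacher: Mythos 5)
You take a genuinely different, more analytic route than the paper. The paper deduces the statement from \cref{cor:cptCherncurri}, which rests on the $\mathcal{I}$-good / b-divisor machinery of Part~2: a full-mass, positive-mass metric is automatically $\mathcal{I}$-good (see the example after \cref{def:Igoodvect}), and for nef $L$ with full-mass metric one has $\mathbb{D}(\hat{L})=\mathbb{D}(L)$ by \cref{ex:fullmassnef}, so the Dang--Favre intersection on the left of \eqref{eq:DDDequal} collapses to the classical intersection number. You correctly identify the central obstruction --- the pulled-back classes $\pi_j^*c_1(\mathcal{O}_{\mathbb{P}E_j^{\vee}}(1))$ on $Y$ are degenerate --- but the paper resolves it via the notion of quasi-positive $\mathcal{I}$-good line bundles (the remark immediately after \cref{cor:cptCherncurri} flags this explicitly), not via a K\"ahler perturbation. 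Your primary approach via $\epsilon$-perturbation and min-singularity truncations is plausible in outline, but you leave real gaps: you need to verify that, for fixed $\epsilon$, the pulled-back potential $\pi_j^*\phi_j$ still has full mass in the perturbed class $\pi_j^*\theta_j+\epsilon\omega$ on $Y$ (this does follow from bigness and nefness of $c_1(\mathcal{O}_{\mathbb{P}E_j^{\vee}}(1))$ together with the fact that full mass implies cohomological mixed masses with K\"ahler forms, but you only assert it), and you do not justify the interchange of $k\to\infty$ with $\epsilon\to 0$. These are bridgeable, so your first route is a reasonable alternative once those steps are filled in.

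Your ``conceptually cleaner alternative,'' however, does not hold up. The phrase ``mutual transversality in the sense of \cref{cor:transtrans}'' misreads that statement: it concerns a current being transversal to a line bundle, and the relevant transversality (of $[Y]$ to each $\hat{M}_j$) is in fact automatic since $[Y]$ puts no mass on proper analytic subsets; the claim that pole loci ``project to different analytic subsets of $X$'' is both incorrect and unnecessary. More importantly, polarizing the mixed product produces pure top powers of sums $\sum_{j\in S}c_1(\hat{M}_j)$; for proper subsets $S\subsetneq\{1,\ldots,m\}$ these are degenerate nef classes on $Y$ (zero top self-intersection), so the full-mass notion is vacuous and one cannot invoke \cref{prop:fullmasscrit}. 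Even for $S=\{1,\ldots,m\}$ you would need the tensor combination to inherit full mass, which --- as the paper explicitly points out after \cref{def:Igoodnotp} --- is not automatic for tensor products; this is precisely the subtlety the $\mathcal{I}$-good / b-divisor framework was built to avoid.
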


We will not give the direct proof, instead, we deduce it from a more general result \cref{cor:cptCherncurri} below.

\subsection{Multiplier ideal sheaves and \texorpdfstring{$\mathcal{I}$}{I}-good metrics}
The purpose of this section is to define and study the multiplier ideal sheaves and $\mathcal{I}$-good metrics on a vector bundle. 

Let $\hat{E}=(E,h_E)\in \widehat{\Vect}(X)$ or $\Fins(X)$.
Let $p\colon \mathbb{P}E^{\vee}\rightarrow X$ be the projection and $h$ denotes the induced metric on $\mathcal{O}_{\mathbb{P}E^{\vee}}(1)$.

\begin{definition}\label{def:mis}
As $p$ is proper, for any $k\in \mathbb{N}_{>0}$, we have an inclusion of coherent sheaves on $X$:
\[
p_*\left( \mathcal{O}_{\mathbb{P}E^{\vee}}(k)\otimes \mathcal{I}(k h) \right)\subseteq p_*\mathcal{O}_{\mathbb{P}E^{\vee}}(k)=\Sym^k \mathcal{O}_X(E).
\]
We then define the \emph{$k$-the multiplier sheaf} of $h_E$ as
\[
\mathcal{I}_k(h_E)\coloneqq p_*\left( \mathcal{O}_{\mathbb{P}E^{\vee}}(k)\otimes \mathcal{I}(k h) \right)\subseteq \Sym^k \mathcal{O}_X(E) .
\]
\end{definition}

The author believes the $\mathcal{I}_k$'s are the correct multiplier ideal sheaves for vector bundles.

Recall that the $\mathcal{I}$-goodness of a Hermitian line bundle is defined in \cref{def:modelandgood}.
\begin{definition}\label{def:Igoodvect}
We say $\hat{E}\in \widehat{\Vect}(X)$  or $\Fins(X)$ is \emph{$\mathcal{I}$-good} if $\hO(1)$ on $\mathbb{P}E^{\vee}$ is $\mathcal{I}$-good.
\end{definition}
We write $\widehat{\Vect}_{\mathcal{I}}(X)_{>0}$ (resp. $\Fins_{\mathcal{I}}(X)_{>0}$) for the set of $\mathcal{I}$-good elements in $\widehat{\Vect}(X)_{>0}$ (resp. $\Fins(X)_{>0}$).

\begin{example}
Assume that $\hat{E}\in \widehat{\Vect}(X)$  or $\Fins(X)$ has full and positive mass, then $\hat{E}$ is $\mathcal{I}$-good. To see this, we may assume that $E$ is a line bundle, so we rename it as $L$. We denote the given metric on $L$ as $h_L$. In this case, the main theorem of \cite{DX22, DX21} shows that
\[
\frac{1}{n!}\int_X (\ddc h_L)^n\leq \vol(L,h)\leq \vol (L).
\]
But as $h_L$ has full mass, the two ends of the inequality are equal, so the first inequality is in fact an equality. Hence $\hat{L}$ is $\mathcal{I}$-good by \cref{thm:DXmain}. 
\end{example}

\begin{example}\label{ex:torvect}
All toroidal singularities on vector bundles bundles are $\mathcal{I}$-good. In the line bundle case, this is proved in \cite{BBGHdJ21}. The general case follows from the line bundle case.
\end{example}

\begin{example}
When $\hat{E}\in \widehat{\Vect}(X)$ or $\Fins(X)$ has analytic singularities in the sense of \cref{def:anasing}, $\hat{E}$ is $\mathcal{I}$-good.
\end{example}

Given these examples, $\mathcal{I}$-good singularities seem to be general enough for practice.

Our \cref{thm:DXmain} admits a straightforward extension in this setting.
\begin{theorem}\label{thm:Igoodvect}
Let $\hat{E}=(E,h_E)\in \widehat{\Vect}(X)$ or $\Fins(X)$. Assume that $\hat{E}$ has positive mass.
Write $\rank E=r+1$.
Then the followings are equivalent:
\begin{enumerate}
    \item $\hat{E}\in \widehat{\Vect}_{\mathcal{I}}(X)_{>0}$ or $\Fins_{\mathcal{I}}(X)_{>0}$;
    \item we have
    \[
    \lim_{k\to\infty} \frac{1}{k^{n+r}} h^0(X, \mathcal{I}_k(h_E))=\frac{(-1)^n}{(n+r)!}\int_X s_n(\hat{E}).
    \]
\end{enumerate}
\end{theorem}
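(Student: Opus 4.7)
The whole statement should reduce to \cref{thm:DXmain} applied to the Hermitian pseudo-effective line bundle $\hO(1)$ on $\mathbb{P}E^{\vee}$, so the plan is simply to translate the two conditions in \cref{thm:DXmain} for $\hO(1)$ into the two conditions appearing in our theorem.

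First, by \cref{def:Igoodvect}, condition (1) is literally equivalent to $\hO(1)$ on $\mathbb{P}E^{\vee}$ being $\mathcal{I}$-good. Since $\hat{E}$ has positive mass, $\hO(1)$ has positive mass as well, so we are in the setting where \cref{thm:DXmain} applies. That theorem tells us that $\hO(1)$ is $\mathcal{I}$-good if and only if
\[
\vol(\mathcal{O}(1),h_{\mathcal{O}(1)})=\frac{1}{(n+r)!}\int_{\mathbb{P}E^{\vee}} c_1(\hO(1))^{n+r}\,,
\]
noting that $\dim \mathbb{P}E^{\vee}=n+r$.

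Next I would rewrite each side of this identity in terms of the data on $X$. For the right-hand side, the very definition of $s_n(\hat{E})$ gives $\int_X s_n(\hat{E})=(-1)^n\int_{\mathbb{P}E^{\vee}} c_1(\hO(1))^{n+r}$, by push-forward via $p:\mathbb{P}E^{\vee}\to X$; hence the right-hand side equals $\frac{(-1)^n}{(n+r)!}\int_X s_n(\hat{E})$, which is the right-hand side of (2). For the left-hand side, recall that
\[
\vol(\mathcal{O}(1),h_{\mathcal{O}(1)})=\lim_{k\to\infty}k^{-(n+r)}h^0\bigl(\mathbb{P}E^{\vee},\mathcal{O}(k)\otimes \mathcal{I}(kh_{\mathcal{O}(1)})\bigr)\,.
\]
The key identification is the projection formula built into \cref{def:mis}: by definition, $p_*\bigl(\mathcal{O}(k)\otimes \mathcal{I}(kh_{\mathcal{O}(1)})\bigr)\cong \Sym^kE\otimes \mathcal{I}_k(h_E)$. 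Taking global sections (and using that higher direct images drop out of $H^0$) yields
\[
h^0\bigl(\mathbb{P}E^{\vee},\mathcal{O}(k)\otimes \mathcal{I}(kh_{\mathcal{O}(1)})\bigr)=h^0\bigl(X,\Sym^kE\otimes \mathcal{I}_k(h_E)\bigr)\,,
\]
which transforms the volume into the limit appearing in condition (2).

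Putting these two translations together identifies the equality $\vol(\mathcal{O}(1),h_{\mathcal{O}(1)})=\frac{1}{(n+r)!}\int_{\mathbb{P}E^{\vee}} c_1(\hO(1))^{n+r}$ with condition (2) of our theorem, so the equivalence follows from \cref{thm:DXmain}. The only point that requires any care is the pushforward identification of $H^0$'s; the rest is bookkeeping with signs and the definition of Segre classes. There is no serious obstacle; the statement is essentially just a repackaging of \cref{thm:DXmain} on the projective bundle.
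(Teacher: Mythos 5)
Your proof is correct and takes exactly the approach the paper has in mind: the paper's own proof of this theorem is the one-liner ``This follows from \cref{thm:DXmain},'' and your argument is the careful unpacking of that reduction — applying \cref{thm:DXmain} to $\hO(1)$ on $\mathbb{P}E^{\vee}$ and then translating both sides using $\int_X s_n(\hat{E})=(-1)^n\int_{\mathbb{P}E^{\vee}}c_1(\hO(1))^{n+r}$ and the defining isomorphism $p_*(\mathcal{O}(k)\otimes\mathcal{I}(kh_{\mathcal{O}(1)}))\cong \Sym^kE\otimes\mathcal{I}_k(h_E)$ together with $H^0(\mathbb{P}E^{\vee},-)=H^0(X,p_*(-))$.
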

\begin{proof}
This follows from \cref{thm:DXmain}.
\end{proof}

\section{Quasi-positive vector bundles}\label{sec:quapos}
We will extend the previous results to not necessarily positively curved vector bundles by perturbation.

Let $X$ be a projective manifold of pure dimension $n$.

\subsection{The case of line bundles}

\begin{definition}Let $L$ be a line bundle on $X$ and $h_L$ be a singular Hermitian metric on $L$.
We say $\hat{L}=(L,h_L)$ is \emph{quasi-positive} if
\begin{enumerate}
    \item $h_L$ is non-degenerate in the sense of \cref{def:singHerm}.
    \item There is $\hat{L}'\in \widehat{\Pic}(X)$ such that $\hat{L}\otimes \hat{L}'\in \widehat{\Pic}(X)$.
\end{enumerate}
For $T\in\widehat{Z}_a(X)$, we say $T$ is \emph{transversal} to $\hat{L}$ or $\hat{L}$ is \emph{transversal} to $T$ if it is possible to choose $\hat{L}'$ so that $T$ is transversal to $\hat{L}'$ and $\hat{L}\otimes \hat{L}'$.
\end{definition}
We want to extend the notion of $\mathcal{I}$-goodness to quasi-positive line bundles.

\begin{proposition}\label{prop:Igoodtensor}
Let $\hat{L},\hat{L}'\in \widehat{\Pic}_{\mathcal{I}}(X)$. Then $\hat{L}\otimes \hat{L}'\in \widehat{\Pic}_{\mathcal{I}}(X)$.
\end{proposition}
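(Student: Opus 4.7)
My plan is to reduce this to the already-established \cref{prop:Igoodsum} by translating between the singular Hermitian line-bundle picture and the potential-theoretic picture. To begin, I would fix smooth Hermitian reference metrics $h_0$ on $L$ and $h_0'$ on $L'$, set $\theta:=c_1(L,h_0)$ and $\theta':=c_1(L',h_0')$, and identify $h_L=h_0 e^{-\varphi}$, $h_{L'}=h_0' e^{-\psi}$ with $\varphi\in \PSH(X,\theta)$ and $\psi\in \PSH(X,\theta')$. The smooth tensor reference metric $h_0\otimes h_0'$ on $L\otimes L'$ induces the reference form $\theta+\theta'$, with respect to which the tensor product metric $h_L\otimes h_{L'}$ is represented by $\varphi+\psi\in \PSH(X,\theta+\theta')$. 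Thus the desired conclusion $\hat{L}\otimes \hat{L}'\in \widehat{\Pic}_{\mathcal{I}}(X)$ becomes exactly the potential-level statement $\varphi+\psi\in \PSH_{\mathcal{I}}(X,\theta+\theta')$.

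Next I would verify the hypotheses of \cref{prop:Igoodsum}. By assumption $\varphi\in \PSH_{\mathcal{I}}(X,\theta)$ and $\psi\in \PSH_{\mathcal{I}}(X,\theta')$, so in particular both potentials carry strictly positive non-pluripolar masses. By \cref{lma:kcapp} this forces each of $\theta$ and $\theta'$ to contain a K\"ahler current in its cohomology class and hence to be big; in particular $\theta'$ represents a big class as required by \cref{prop:Igoodsum}. A direct application of that proposition then yields $\varphi+\psi\in \PSH_{\mathcal{I}}(X,\theta+\theta')$, which is what we need.

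The positive-mass condition embedded in the definition of $\widehat{\Pic}_{\mathcal{I}}(X)$ is automatically preserved because it is already built into the notation $\PSH_{\mathcal{I}}(X,\theta+\theta')$ appearing in the conclusion of \cref{prop:Igoodsum}, so no separate mass computation is required. There is essentially no genuine obstacle: the conceptually non-trivial input, namely the invariance of the $\mathcal{I}$-model property under sums of potentials, has already been established in \cite[Corollary~4.8]{Xia21} and repackaged via the $d_S$-approximation characterisation of \cref{thm:DXmain}(3). The present proposition is the line-bundle translation of that fact, so once the dictionary above is in place the proof is immediate.
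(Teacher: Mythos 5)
Your proof is correct and takes essentially the same route as the paper, whose proof of this proposition is literally the one-line reduction to \cref{prop:Igoodsum}; you simply spell out the metric-to-potential dictionary and verify (correctly, via positive mass and \cref{lma:kcapp}) the bigness hypothesis that \cref{prop:Igoodsum} requires.
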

This is just a reformulation of \cref{prop:Igoodsum}.

On the other hand, 
\begin{theorem}\label{thm:Igoodcancel}
Let $\hat{L}\in \widehat{\Pic}(X)_{>0}$, $\hat{L}'\in \widehat{\Pic}(X)$. Assume that $\hat{L}\otimes \hat{L}'\in \widehat{\Pic}_{\mathcal{I}}(X)$, then  $\hat{L}\in \widehat{\Pic}_{\mathcal{I}}(X)$.
\end{theorem}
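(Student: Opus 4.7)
Fix smooth reference forms so that $h_L, h_{L'}$ correspond to $\varphi\in \PSH(X,\theta), \varphi'\in \PSH(X,\theta')$ normalized by $\varphi, \varphi'\leq 0$; then $h_L\otimes h_{L'}$ corresponds to $\varphi+\varphi'\in \PSH(X,\theta+\theta')$. Set $\psi:=P[\varphi]_{\mathcal{I}}\in \PSH(X,\theta)$, so that $\psi\geq \varphi$, $\mathcal{I}(k\psi)=\mathcal{I}(k\varphi)$ for every $k\geq 1$, and, by \eqref{eq:volpur}, $\int_X\theta_\psi^n=n!\vol(\hat L)$. The task is to prove $\int_X\theta_\psi^n=\int_X\theta_\varphi^n$; by \cref{thm:DXmain} this will give $\hat L \in \widehat{\Pic}_{\mathcal{I}}(X)$, since $\hat L$ has positive mass by hypothesis.

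The central step is the identity $\mathcal{I}(k(\psi+\varphi'))=\mathcal{I}(k(\varphi+\varphi'))$ for every $k\geq 1$. For any prime divisor $E$ on any smooth birational model of $X$, additivity of generic Lelong numbers combined with the fact that $\mathcal{I}(k\psi)=\mathcal{I}(k\varphi)$ for all $k$ forces $\nu(\psi,E)=\nu(\varphi,E)$ (cf.\ \cref{sec:prel}), hence
\[
  \nu(\psi+\varphi',E) \;=\; \nu(\psi,E)+\nu(\varphi',E) \;=\; \nu(\varphi,E)+\nu(\varphi',E) \;=\; \nu(\varphi+\varphi',E).
\]
The valuative characterization of equivalence of multiplier ideals from \cite{DX22} (used already in the proof of \cref{prop:notionsbirational}) then promotes this to equality of $\mathcal{I}(k\cdot)$ for every $k$. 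In particular $\psi+\varphi'\leq 0$ is a candidate in the supremum defining $P[\varphi+\varphi']_{\mathcal{I}}$, so
\[
  \psi+\varphi' \;\leq\; P[\varphi+\varphi']_{\mathcal{I}} \;=\; P[\varphi+\varphi'],
\]
where the last equality is the $\mathcal{I}$-goodness hypothesis on $\hat L\otimes \hat L'$.

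Combining the previous inequality with the mass-invariance of the envelope $P$ and the monotonicity of non-pluripolar masses under less-singular comparison yields
\[
  \int_X(\theta+\theta')_{\psi+\varphi'}^n \;\leq\; \int_X(\theta+\theta')_{P[\varphi+\varphi']}^n \;=\; \int_X(\theta+\theta')_{\varphi+\varphi'}^n.
\]
In the opposite direction, since $\psi\geq\varphi$, Witt Nyström's monotonicity theorem \cite{WN19} and its multilinear refinement from \cite{DDNL18mono} give
\[
  \int_X\theta_\psi^i\wedge(\theta')^{n-i}_{\varphi'} \;\geq\; \int_X\theta_\varphi^i\wedge(\theta')^{n-i}_{\varphi'}, \qquad 0\leq i\leq n.
\]
Expanding $(\theta_\psi+\theta'_{\varphi'})^n$ and $(\theta_\varphi+\theta'_{\varphi'})^n$ binomially and comparing the totals with the previous display forces equality in every term. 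The $i=n$ case is precisely $\int_X\theta_\psi^n=\int_X\theta_\varphi^n$, completing the proof. The main obstacle in this plan is the claimed identity of multiplier ideals: one half is immediate from additivity of Lelong numbers, but converting coincidence of generic Lelong numbers across all birational models into coincidence of multiplier ideals for every $k$ requires the full strength of \cite{DX22}.
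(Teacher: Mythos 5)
Your proof takes a genuinely different route from the paper's. The paper first reduces to the Kähler-current case via \cref{lma:Igoodinsenspert}, takes quasi-equisingular approximations $\varphi_j,\psi_j$ of both potentials, invokes \cite[Corollary~4.8]{Xia21} to get $d_{S}$-convergence of $\varphi_j+\psi_j$ to $\varphi+\psi$ (which is where the $\mathcal{I}$-goodness hypothesis on the tensor product enters), passes to masses, expands binomially, and uses monotonicity to extract $\lim_j\int_X\theta_{\varphi_j}^n=\int_X\theta_\varphi^n$; then \cref{lma:decseqplusvolumeimds} and \cref{thm:DXmain} finish. You instead compare $\varphi$ directly to its $\mathcal{I}$-envelope $\psi=P[\varphi]_{\mathcal{I}}$, argue that $\psi+\varphi'$ has the same $\mathcal{I}(k\cdot)$ as $\varphi+\varphi'$ for all $k$, conclude $\psi+\varphi'\leq P[\varphi+\varphi']_{\mathcal{I}}=P[\varphi+\varphi']$, and squeeze with the same binomial/monotonicity trick. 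The two proofs meet in the final expansion step; the difference is entirely in how one arrives at the mass inequality $\int_X(\theta+\theta')^n_{\psi+\varphi'}\leq\int_X(\theta+\theta')^n_{\varphi+\varphi'}$.

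The gap you flagged is a real one. You need: $\nu(\psi+\varphi',E)=\nu(\varphi+\varphi',E)$ for every prime divisor $E$ over $X$ implies $\mathcal{I}\bigl(k(\psi+\varphi')\bigr)=\mathcal{I}\bigl(k(\varphi+\varphi')\bigr)$ for every $k$. One inclusion is free from $\psi\geq\varphi$ and monotonicity of multiplier ideals; the other is precisely the direction that is \emph{not} a direct consequence of what is cited from \cite{DX22} in \cref{prop:notionsbirational} — that proposition uses only the forward direction (equal multiplier ideals $\Rightarrow$ equal Lelong numbers on all models) together with $\mathcal{I}$-modelness of one of the two potentials. What you need is the converse for arbitrary quasi-psh potentials: divisorial Lelong numbers determine $\mathcal{I}(k\cdot)$. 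This does hold, as a consequence of Guan--Zhou strong openness together with the valuative description of multiplier ideals in the style of Boucksom--Favre--Jonsson, but it is strictly more than the paper visibly invokes, and you should either cite it precisely or first reduce to potentials with analytic singularities — where the valuative description is elementary — and then pass to the limit. The paper's route via quasi-equisingular approximations and \cite[Corollary~4.8]{Xia21} is designed precisely to sidestep this deeper input, at the cost of the Kähler-current reduction and a heavier approximation step. Once the valuative ingredient is pinned down, your argument is correct and in some ways tighter, since it avoids approximation altogether.
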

\begin{proof}
We take smooth Hermitian metrics $h_0$, $h_0'$ on $L$ and $L'$ and write their Chern forms as $\theta$ and $\theta'$. Then we may identify $h_L$ with $\varphi\in \PSH(X,\theta)$ and $h_L'$ with $\psi\in \PSH(X,\theta')$.

By \cref{lma:Igoodinsenspert}, we may assume that $\ddc h_L'$ and $\ddc h_L$ are K\"ahler currents.
Let $\varphi_j\in \PSH(X,\theta)_{>0}$ (resp. $\psi_j\in \PSH(X,\theta')_{>0}$) be a quasi-equisingular approximation of $\varphi$ (resp. $\psi$). 
By \cite[Corollary~4.5]{Xia21}, $\varphi_j+\psi_j\xrightarrow{d_{S,\theta+\theta'}} P_{\theta}[\varphi]_{\mathcal{I}}+P_{\theta'}[\psi]$. On the other hand, by our assumption, 
\[
\varphi+\psi\sim_{\mathcal{I}}P_{\theta}[\varphi]_{\mathcal{I}}+P_{\theta'}[\psi]\sim_{\mathcal{I}} P_{\theta+\theta'}[\varphi+\psi]_{\mathcal{I}}.
\]
It follows that
\[
\lim_{j\to\infty}\int_X (\theta_{\varphi_j}+\theta'{}_{\psi_j})^n=\int_X (\theta_{\varphi}+\theta'_{\psi})^n.
\]
We decompose the left-hand side as
\[
\sum_{i=0}^n \binom{n}{i}\lim_{j\to\infty}\int_X \theta_{\varphi_j}^i\wedge \theta'{}_{\psi_j}^{n-i}
\]
and the right-hand side as
\[
\sum_{i=0}^n \binom{n}{i}\lim_{j\to\infty}\int_X \theta_{\varphi}^i\wedge \theta'{}_{\psi}^{n-i}.
\]
From the monotonicity theorem \cite{DDNL18mono}, we find
\[
\lim_{j\to\infty}\int_X \theta_{\varphi_j}^n=\int_X \theta_{\varphi}^n.
\]
It follows that $\varphi_j\xrightarrow{d_S}\varphi$ by \cref{lma:decseqplusvolumeimds}. Hence $\varphi$ is $\mathcal{I}$-good by \cref{thm:DXmain}.
\end{proof}

\begin{definition}\label{def:Igoodnotp}
Let $\hat{L}=(L,h_L)$ be a quasi-positive singular Hermitian line bundle on $X$. 
We say $\hat{L}$ is $\mathcal{I}$-good if 
\begin{enumerate}
    \item $\hat{L}$ is non-degenerate.
    \item There is $\hat{L}'\in \widehat{\Pic}_{\mathcal{I}}(X)$ such that $\hat{L}\otimes \hat{L}'\in \widehat{\Pic}_{\mathcal{I}}(X)$.
\end{enumerate}
\end{definition}

By \cref{prop:Igoodtensor} and \cref{thm:Igoodcancel}, this notion  coincides with the usual notion when $\hat{L}\in \widehat{\Pic}(X)_{>0}$. 

It does not seem possible to define the corresponding notion of full mass Hermitian line bundles, as the full mass property of a Hermitian pseudo-effective line bundle is not preserved by tensor products.

\begin{proposition}\label{prop:pullIgood}
Let $f:Y\rightarrow X$ be a smooth morphism of pure relative dimension $m-n$ between projective manifolds of pure dimension $m$ and $n$. 
Assume that $\hat{L}$ is an $\mathcal{I}$-good line bundle on $X$, then $f^*\hat{L}$ is $\mathcal{I}$-good. 
\end{proposition}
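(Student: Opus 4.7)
The plan is to reduce, via Definition~\ref{def:Igoodnotp}, to a key claim about positively curved $\mathcal{I}$-good line bundles on $X$ pulled back via $f$ and twisted by an ample line bundle on $Y$, and then prove that key claim by a $d_S$-approximation argument combined with the Dinh--Sibony flat-pullback formula for non-pluripolar products.

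Concretely, by \ref{def:Igoodnotp} I would pick $\hat{L}' \in \widehat{\Pic}_{\mathcal{I}}(X)$ with $\hat{L} \otimes \hat{L}' \in \widehat{\Pic}_{\mathcal{I}}(X)$, fix an ample smooth Hermitian line bundle $\hat{B}$ on $Y$ with K\"ahler curvature form $\beta$, and take $\hat{L}'' := f^*\hat{L}' \otimes \hat{B}^{\otimes k}$ with $k \gg 0$ as the witness for $f^*\hat{L}$. Non-degeneracy of $f^*\hat{L}$ is inherited from $\hat{L}$, so it suffices to show that both $\hat{L}''$ and $f^*\hat{L} \otimes \hat{L}'' = f^*(\hat{L} \otimes \hat{L}') \otimes \hat{B}^{\otimes k}$ lie in $\widehat{\Pic}_{\mathcal{I}}(Y)$; both are instances of the following Key Claim: \emph{for any $\hat{N} \in \widehat{\Pic}_{\mathcal{I}}(X)$, $f^*\hat{N} \otimes \hat{B}^{\otimes k} \in \widehat{\Pic}_{\mathcal{I}}(Y)$ for $k$ large}.

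For the Key Claim, Griffiths positivity of the metric is automatic, and positive mass follows from a binomial expansion of $\int_Y (f^*c_1(\hat{N}) + k\beta)^m$ in which the $k^m \int_Y \beta^m > 0$ term appears. Using \ref{prop:Igoodtensor} and \ref{thm:Igoodcancel} I would further reduce to the case $\ddc h_N$ is a K\"ahler current: replace $\hat{N}$ by $\hat{N} \otimes \hat{A}$ for a smooth ample $\hat{A}$ on $X$ (still $\mathcal{I}$-good, now with K\"ahler current curvature), prove the claim for this enlarged bundle, and then strip $f^*\hat{A}$ off on $Y$ via \ref{thm:Igoodcancel}. Under the K\"ahler current assumption, \ref{thm:DXmain}(4) provides a decreasing quasi-equisingular approximation $\psi_j$ of $\psi_N$ with analytic singularities and $\psi_j \xrightarrow{d_{S, \theta_0}} \psi_N$ on $X$; pulling back, $f^*\psi_j$ decreases to $f^*\psi_N$ on $Y$ and still has analytic singularities (these being a local property stable under holomorphic pullback).

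By Lemma \ref{lma:decseqplusvolumeimds}, it suffices to prove the mass convergence $\int_Y (f^*\theta_{0,\psi_j} + k\beta)^m \to \int_Y (f^*\theta_{0,\psi_N} + k\beta)^m$ in order to obtain $d_{S,\, f^*\theta_0 + k\beta}$-convergence on $Y$, from which $\mathcal{I}$-goodness of $f^*\hat{N} \otimes \hat{B}^{\otimes k}$ follows by \ref{thm:DXmain}(3). Expanding binomially, invoking Proposition \ref{prop:flatpullnpprel} to identify $(f^*\theta_{0,\psi_j})^r$ with $f^*(\theta_{0,\psi_j}^r)$ (which vanishes on $Y$ for $r > n$ for dimensional reasons), and then applying the projection formula \ref{cor:adj1}, the sum collapses to
\[
\sum_{r=0}^{n} \binom{m}{r} k^{m-r} \int_X \theta_{0,\psi_j}^r \wedge f_*\beta^{m-r},
\]
where $f_*\beta^{m-r}$ is a smooth positive $(n-r,n-r)$-form on $X$. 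Each such mixed mass converges because $\psi_j \xrightarrow{d_{S, \theta_0}} \psi_N$ on $X$ forces weak convergence of the non-pluripolar products $\theta_{0,\psi_j}^r$ against any smooth test form, exactly as in the proof of \ref{thm:convdsmeasures}. The main obstacle is precisely this reconciliation: $f^*\theta_0$ is not big on $Y$, so one cannot work directly with envelopes of the pulled-back class, and only the flat-pullback formula together with the projection formula make it possible to transfer the mass computation on $Y$ into a computation of mixed masses against smooth fiber-integrated forms on $X$.
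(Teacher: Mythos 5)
Your proposal is correct and follows essentially the same route as the paper: reduce to $\hat{N}\in\widehat{\Pic}_{\mathcal{I}}(X)$, twist the pullback by an ample Hermitian line bundle on $Y$ to obtain positive mass, reduce to the K\"ahler current case, take a quasi-equisingular approximation, use \cref{lma:decseqplusvolumeimds} to reduce to a mass convergence, and evaluate that mass convergence by binomial expansion, flat pullback, fiber integration, and \cref{thm:convdsmeasures}. Two minor inessential differences: the paper takes the ample twist with exponent $1$ rather than $k\gg 0$ (which already gives positive mass, as your own binomial expansion shows), and it reduces to the K\"ahler current case directly via \cref{lma:Igoodinsenspert} rather than by tensoring with a smooth ample $\hat A$ on $X$ and then invoking \cref{thm:Igoodcancel} to strip it off — both reductions are morally the same and both are valid.
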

This proposition fails if $f$ is not smooth, as can be easily seen from the case of closed immersions.
\begin{proof}
First observe that we may assume that $\hat{L}\in \widehat{\Pic}_{\mathcal{I}}(X)$.
Take an ample line bundle $H$ on $Y$. Fix a strictly psh smooth metric $h_H$ on $H$. It suffices to show that $f^*\hat{L}\otimes (H,h_H)$ is $\mathcal{I}$-good. For this purpose, let us take a smooth representative $\theta$ (resp. $\Theta$) in $c_1(L)$ (resp. $c_1(H)$). Then we identify $h_L$ with $\varphi\in \PSH(X,\theta)$ and $h_H$ with $\Phi\in \PSH(Y,\Theta)$. By \cref{lma:Igoodinsenspert} and \cref{thm:DXmain}, we may assume that $\theta_{\varphi}$ is a K\"ahler current. Let $\varphi_j$ be a quasi-equisingular approximation of $\varphi$. By \cref{lma:decseqplusvolumeimds}, it suffices to show that
\[
\lim_{j\to\infty}\int_Y (\Theta+\pi^*\theta+\ddc\pi^*\varphi_j+\ddc \Phi)^{m}=\int_Y (\Theta+\pi^*\theta+\ddc\pi^*\varphi+\ddc \Phi)^{m}.
\]
Decomposing both sides, it suffices to show that for all $a=0,\ldots,m$,
\[
\lim_{j\to\infty}\int_X f_*\Theta_{\Phi}^a\wedge \theta_{\varphi_j}^{m-a}=\int_X f_*\Theta_{\Phi}^a\wedge \theta_{\varphi}^{m-a}.
\]
This follows from \cref{thm:convdsmeasures}.
\end{proof}

\subsection{Vector bundle case}

\begin{definition}
Assume that $X$ is projective. Let $E$ be a vector bundle on $X$.
Let $h_E$ be a singular Hermitian metric on $E$ or a Finsler metric on $E$. We say $\hat{E}=(E,h_E)$ is \emph{quasi-positive} if $\hO(1)$ is quasi-positive. 

For $T\in\widehat{Z}_a(X)$, we say $T$ is \emph{transversal} to $\hat{E}$ or $\hat{E}$ is \emph{transversal} to $T$ if $p^*T$ is transversal to $\hO(1)$.
\end{definition}

\begin{definition}\label{def:igoodv}
Assume that $X$ is projective. Let $E$ be a vector bundle on $X$.
Let $h_E$ be a singular Hermitian metric on $E$ or a Finsler metric on $E$. We say $\hat{E}=(E,h_E)$ is \emph{$\mathcal{I}$-good} if $\hO(1)$ is $\mathcal{I}$-good in the sense of \cref{def:Igoodnotp}.
\end{definition}
\begin{corollary}\label{cor:Igoodvectflatpu}
Let $f:Y\rightarrow X$ be a smooth morphism of pure relative dimension $m-n$ between projective manifolds of pure dimension $m$ and $n$. 
Assume that $\hat{E}$ is an $\mathcal{I}$-good vector bundle on $X$, then $f^*\hat{E}$ is $\mathcal{I}$-good. 
\end{corollary}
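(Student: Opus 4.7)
The plan is to reduce the vector-bundle statement to the line-bundle statement that has already been proved, namely \cref{prop:pullIgood}. By \cref{def:igoodv}, the $\mathcal{I}$-goodness of $f^*\hat{E}$ is equivalent to the $\mathcal{I}$-goodness of the induced metric on $\mathcal{O}_{\mathbb{P}(f^*E)^\vee}(1)$, and similarly the hypothesis says that $\hO_{\mathbb{P}E^\vee}(1)$ is an $\mathcal{I}$-good Hermitian line bundle on $\mathbb{P}E^\vee$. So the problem is to transfer $\mathcal{I}$-goodness across the base-change square
\[
\begin{tikzcd}
\mathbb{P}(f^*E)^{\vee} \arrow[r,"f'"] \arrow[d,"p'"] \arrow[rd, "\square", phantom] & \mathbb{P}E^{\vee} \arrow[d,"p"] \\
Y \arrow[r,"f"]                      & X
\end{tikzcd}.
\]

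The first step is to check that $f'$ satisfies the hypotheses of \cref{prop:pullIgood}. Since flatness and pure relative dimension are stable under base change, $f'$ is flat of pure relative dimension $m-n$. Because $X$ and $Y$ are projective manifolds and $E$ is a holomorphic vector bundle, both $\mathbb{P}E^\vee$ and $\mathbb{P}(f^*E)^\vee$ are projective manifolds of the appropriate (equal) differences in dimension. So $f'$ is a flat morphism of pure relative dimension $m-n$ between projective manifolds, as required.

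The second step is to identify the metrics. By the natural isomorphism \eqref{eq:hOpreserved} used in the proof of \cref{prop:projectionform}, we have
\[
f'^{*}\hO_{\mathbb{P}E^\vee}(1)=\hO_{\mathbb{P}(f^*E)^\vee}(1)
\]
as Hermitian line bundles. Applying \cref{prop:pullIgood} to the flat morphism $f'$ and to the $\mathcal{I}$-good Hermitian line bundle $\hO_{\mathbb{P}E^\vee}(1)$ on $\mathbb{P}E^\vee$ yields that $f'^{*}\hO_{\mathbb{P}E^\vee}(1)$, and hence $\hO_{\mathbb{P}(f^*E)^\vee}(1)$, is $\mathcal{I}$-good. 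By \cref{def:igoodv} this means $f^*\hat{E}$ is $\mathcal{I}$-good, completing the proof.

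There is essentially no obstacle here once the line-bundle result \cref{prop:pullIgood} is in hand; all the work is in that proposition, where the quasi-equisingular approximation together with \cref{thm:convdsmeasures} and \cref{lma:decseqplusvolumeimds} is used. The present statement is a formal reduction via the projective bundle construction and the metric-preserving identification \eqref{eq:hOpreserved}.
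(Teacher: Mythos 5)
Your proof is correct and takes exactly the route the paper intends: the paper's own proof of the corollary is the single sentence ``This is an immediate consequence of \cref{prop:pullIgood}'', and your write-up simply fills in the details of that reduction via the base-change square, the identification \eqref{eq:hOpreserved}, and \cref{def:igoodv}.
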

\begin{proof}
This is an immediate consequence of \cref{prop:pullIgood}.
\end{proof}

We write $\widehat{\Vect}_{\mathcal{I}}(X)$ (resp. $\Fins_{\mathcal{I}}(X)$) for the full subcategory of $\widehat{\Vect}(X)$ (resp. $\Fins(X)$) consisting of $\mathcal{I}$-good objects.

\begin{theorem}\label{thm:vecttens}
Let $\hat{E}\in \widehat{\Vect}_{\mathcal{I}}(X)$ (resp. $\Fins_{\mathcal{I}}(X)$) and $\hat{L}\in \widehat{\Pic}(X)_{>0}$. 
Assume that $\hat{L}$ has analytic singularities. 
Then $\hat{E}\otimes \hat{L}\in \widehat{\Vect}_{\mathcal{I}}(X)_{>0}$ (resp. $\Fins_{\mathcal{I}}(X)_{>0}$).
\end{theorem}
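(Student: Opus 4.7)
We establish the Finsler case (the vector-bundle case is identical). Since $L$ has rank one, $\mathbb{P}(E\otimes L)^{\vee}\cong\mathbb{P}E^{\vee}$ intertwines $\mathcal{O}_{\mathbb{P}(E\otimes L)^{\vee}}(1)$ with $\mathcal{O}_{\mathbb{P}E^{\vee}}(1)\otimes p^*L$, and the induced Hermitian metrics correspond under this identification. Setting $Y:=\mathbb{P}E^{\vee}$, $N:=n+r$, $p:Y\to X$, and choosing smooth representatives $\theta\in c_1(\mathcal{O}(1))$, $\eta\in c_1(L)$, identify the metrics with $\varphi\in\PSH(Y,\theta)$ and $\psi\in\PSH(X,\eta)$. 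Twisting $\hO(1)$ with an auxiliary $\mathcal{I}$-good positively curved line bundle on $Y$ (as allowed by \cref{def:Igoodnotp}), one may reduce to the case where $\varphi\in\PSH_{\mathcal{I}}(Y,\theta)$ actually has positive mass. The theorem then reduces to proving that $\xi:=\varphi+p^*\psi\in\PSH(Y,\theta+p^*\eta)$ is $\mathcal{I}$-good. I would first verify that $\xi$ has positive mass, via the binomial expansion
\[
\int_Y(\theta+p^*\eta+\ddc\xi)^{N}=\sum_{k=r}^{N}\binom{N}{k}\int_Y\theta_\varphi^{k}\wedge p^*(\eta_\psi^{N-k}),
\]
where the range starts at $k=r$ because $(p^*\eta_\psi)^{N-k}$ vanishes for $N-k>n$; every summand is non-negative and the $k=N$ term equals $\int_Y\theta_\varphi^{N}>0$.

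A direct application of \cref{prop:Igoodsum} to $\varphi$ and $p^*\psi$ is unavailable, because $p^*\psi$ has zero mass on $Y$ (the class $(p^*c_1(L))^{N}$ vanishes for $N>n$). The plan is a \emph{twist and cancel}. Pick any ample line bundle on $Y$ equipped with a smooth strictly psh metric, giving $\tau\in\PSH(Y,A\omega)$ for a Kähler form $\omega$ and $A$ large. Since $\tau$ is smooth, it has analytic singularities and positive mass, so $\tau\in\PSH_{\mathcal{I}}(Y,A\omega)$ by \cref{thm:DXmain}(3) with the constant approximating sequence. Next, $p^*\psi+\tau\in\PSH(Y,p^*\eta+A\omega)$ retains analytic singularities (these are preserved under flat pullback and are closed under sums, as seen on a common log-resolution) and has positive mass for $A$ large (the $(A\omega+\ddc\tau)^{N}$ term dominates). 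Hence $p^*\psi+\tau\in\PSH_{\mathcal{I}}(Y,p^*\eta+A\omega)$, and \cref{prop:Igoodsum} yields
\[
\xi+\tau=\varphi+(p^*\psi+\tau)\in\PSH_{\mathcal{I}}(Y,\theta+p^*\eta+A\omega).
\]
Combining with the positive mass of $\xi$ established above, \cref{thm:Igoodcancel} applied to the line bundles represented by $\xi$ and $\tau$ cancels the auxiliary twist and produces $\xi\in\PSH_{\mathcal{I}}(Y,\theta+p^*\eta)$, as desired.

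The main obstacle, motivating the twist-and-cancel, is the degeneracy $(p^*c_1(L))^{N}=0$ on $Y$, which precludes any direct closure of $\mathcal{I}$-goodness under tensor product with $p^*\hat L$. The analytic-singularities hypothesis on $\hat L$ enters in a single decisive step: it ensures that $p^*\psi+\tau$ itself has analytic singularities and is therefore $\mathcal{I}$-good immediately via \cref{thm:DXmain}(3), bypassing any nontrivial $d_S$-approximation. Without analytic singularities one would instead need a delicate mixed-mass continuity statement for $d_S$-convergent sequences paired against non-smooth positive currents pulled back from $X$, which is considerably more subtle and is not directly covered by \cref{thm:convdsmeasures} since $p^*\eta$ is not big on $Y$.
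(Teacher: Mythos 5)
Your proof is correct, and it takes a genuinely different route from the paper's. The paper's argument twists the reference form on $\mathbb{P}E^\vee$ by a K\"ahler form $\Omega_0$, takes a quasi-equisingular approximation $\Phi_j$ of the metric on $\hO(1)$, and reduces (via \cref{lma:Igoodinsenspert} and \cref{lma:decseqplusvolumeimds}) to the mass convergence
\[
\lim_{j\to\infty}\int_{\mathbb{P}E^\vee}\bigl(\Theta+\Omega_0+\ddc\Phi_j+p^*\theta_\varphi\bigr)^{n+r}=\int_{\mathbb{P}E^\vee}\bigl(\Theta+\Omega_0+\ddc\Phi+p^*\theta_\varphi\bigr)^{n+r}\,,
\]
which it cites from \cite[Theorem~4.2]{Xia21}. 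Your twist-and-cancel reorganizes this into a black-box argument: you restore bigness of the degenerate class $p^*[\eta]$ by adding a smooth strictly psh $\tau$ on an ample class, note that $p^*\psi+\tau$ keeps analytic singularities and has positive mass and so is $\mathcal{I}$-good by \cref{thm:DXmain}(3) with the constant approximating sequence, apply \cref{prop:Igoodsum}, and cancel $\tau$ with \cref{thm:Igoodcancel} once the positive mass of $\varphi+p^*\psi$ is secured from the binomial expansion, whose terms with $N-k>n$ vanish because $(p^*\eta_\psi)^{m}=p^*(\eta_\psi^{m})=0$ for $m>n$ by \cref{prop:flatpullnpprel}. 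The two routes are not logically independent---\cref{prop:Igoodsum} and \cref{thm:Igoodcancel} themselves rest on \cite[Corollary~4.8]{Xia21}, a close cousin of \cite[Theorem~4.2]{Xia21}---but yours is easier to audit because the delicate convergence estimate is packaged once for all inside the earlier propositions, and it isolates exactly where the analytic-singularities hypothesis on $\hat L$ is used (namely to hand $p^*\psi+\tau$ directly to \cref{thm:DXmain}(3) without any nontrivial $d_S$-approximation). One minor remark: your opening reduction to the case where $\varphi\in\PSH_{\mathcal{I}}(Y,\theta)$ has positive mass is superfluous, since $\hat E\in\widehat{\Vect}_{\mathcal{I}}(X)$ (resp.\ $\Fins_{\mathcal{I}}(X)$) already means $\hO(1)$ is an $\mathcal{I}$-good positively curved line bundle on $\mathbb{P}E^\vee$, and the paper's notion of $\mathcal{I}$-goodness builds in $\int_{\mathbb{P}E^\vee}c_1(\hO(1))^{n+r}>0$; this is harmless but could be dropped.
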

\begin{proof}
Take a smooth metric $h_0$ on $L$ and write $\theta=c_1(L,h_0)$. We identify $h_L$ with $\varphi\in \PSH(X,\theta)$. 
Let $p\colon \mathbb{P}E^{\vee}\rightarrow X$ be the natural projection. Fix a smooth metric $h'$ on $\mathcal{O}(1)$, write $\Theta=c_1(\mathcal{O}(1),h')$ and identify the metric on $\hO(1)$ with $\Phi\in \PSH(\mathbb{P}E^{\vee},\Theta)$.
We want to show that $\Phi+p^*\varphi\in \PSH(\mathbb{P}E^{\vee},\Theta+p^*\theta)$ is $\mathcal{I}$-good. We will prove this more generally for $\mathcal{I}$-good $\Phi$ with positive mass. 

Fix a K\"ahler form $\Omega_0$ on $\mathbb{P}E^{\vee}$. 
Let $\Phi_j\in \PSH(\mathbb{P}E^{\vee},\Theta+\Omega_0)$ be a quasi-equisingular approximation of $\Phi$. By \cref{lma:Igoodinsenspert}, it suffices to show that $\Phi_j+p^*\varphi\xrightarrow{d_{S,\Theta+\Omega_0+p^*\theta}} \Phi+p^*\varphi$. In view of \cref{lma:decseqplusvolumeimds}, we need to show
\[
\lim_{j\to\infty}\int_{\mathbb{P}E^{\vee}} \left(\Theta+\Omega_0+\ddc \Phi_j + p^*(\theta+\ddc\varphi)\right)^{n+r}=\int_{\mathbb{P}E^{\vee}} \left(\Theta+\Omega_0+\ddc \Phi + p^*(\theta+\ddc\varphi)\right)^{n+r}.
\]
Here $\rank E=r+1$. 
This then follows from \cite[Theorem~4.2]{Xia21}.
\end{proof}

\begin{proposition}\label{prop:vectcanc}
Let $\hat{E}\in \widehat{\Vect}(X)_{>0}$ (resp. $\Fins(X)_{>0}$) and $\hat{L}\in \widehat{\Pic}(X)$. Assume that $\hat{E}\otimes \hat{L}\in \widehat{\Vect}_{\mathcal{I}}(X)$ (resp. $\Fins_{\mathcal{I}}(X)$). Then $\hat{E}\in \widehat{\Vect}_{\mathcal{I}}(X)$ (resp. $\Fins_{\mathcal{I}}(X)$).
\end{proposition}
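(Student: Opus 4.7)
The plan is to reduce immediately to the line bundle case on the projective bundle and then invoke \cref{thm:Igoodcancel}. The Finsler and Hermitian statements have identical arguments since the definitions of $\mathcal{I}$-goodness in both cases refer only to $\widehat{\mathcal{O}}(1)$.

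First, let $p\colon \mathbb{P}E^{\vee}\to X$ be the projection. Since $L$ is a line bundle, the relative Segre embedding \eqref{eq:iisoseg} becomes the identification $\mathbb{P}(E\otimes L)^{\vee}\cong \mathbb{P}E^{\vee}$, and under this identification \eqref{eq:irestsegemb} reads
\[
\widehat{\mathcal{O}}_{\mathbb{P}(E\otimes L)^{\vee}}(1)\;=\;\widehat{\mathcal{O}}_{\mathbb{P}E^{\vee}}(1)\otimes p^{*}\hat{L}
\]
as elements of $\widehat{\mathrm{Pic}}(\mathbb{P}E^{\vee})$. Note that $p^{*}\hat{L}\in \widehat{\mathrm{Pic}}(\mathbb{P}E^{\vee})$ by \cref{lma:pullGp}, since $p$ is smooth and surjective (so the pulled-back metric is not identically $\infty$).

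Next, by \cref{def:igoodv}, the assumption $\hat{E}\otimes\hat{L}\in \widehat{\mathrm{Vect}}_{\mathcal{I}}(X)$ (resp.\ $\mathrm{Fins}_{\mathcal{I}}(X)$) is precisely the statement that $\widehat{\mathcal{O}}_{\mathbb{P}(E\otimes L)^{\vee}}(1)\in \widehat{\mathrm{Pic}}_{\mathcal{I}}(\mathbb{P}E^{\vee})$. On the other hand, because $\hat{E}$ has positive mass (\cref{def:fullmass}), we have $\widehat{\mathcal{O}}_{\mathbb{P}E^{\vee}}(1)\in \widehat{\mathrm{Pic}}(\mathbb{P}E^{\vee})_{>0}$. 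Since $X$ is projective, so is $\mathbb{P}E^{\vee}$, so we may apply \cref{thm:Igoodcancel} on $\mathbb{P}E^{\vee}$ with the roles played by $\widehat{\mathcal{O}}_{\mathbb{P}E^{\vee}}(1)$ and $p^{*}\hat{L}$. The conclusion is that $\widehat{\mathcal{O}}_{\mathbb{P}E^{\vee}}(1)\in \widehat{\mathrm{Pic}}_{\mathcal{I}}(\mathbb{P}E^{\vee})$, which is exactly the statement that $\hat{E}\in \widehat{\mathrm{Vect}}_{\mathcal{I}}(X)$ (resp.\ $\mathrm{Fins}_{\mathcal{I}}(X)$).

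There is essentially no obstacle: the only nontrivial inputs are the metric-preserving nature of the Segre identification \eqref{eq:irestsegemb}, already recorded in \cref{subsec:projbundle}, and the line-bundle cancellation \cref{thm:Igoodcancel}. The potential-theoretic content has been absorbed into \cref{thm:Igoodcancel}; all that is needed here is the bookkeeping that tensoring $\hat{E}$ by $\hat{L}$ on $X$ corresponds, after passing to $\mathbb{P}E^{\vee}$, to tensoring $\widehat{\mathcal{O}}(1)$ by $p^{*}\hat{L}$.
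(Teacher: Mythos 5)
Your proof is correct and is essentially the paper's argument spelled out in full detail: the paper's own proof is the one-liner ``This follows from \cref{thm:Igoodcancel},'' and your fleshed-out version supplies exactly the bookkeeping that reduction requires (passing to $\mathbb{P}E^{\vee}$, using the metric-preserving Segre identification $\widehat{\mathcal{O}}_{\mathbb{P}(E\otimes L)^{\vee}}(1)=\widehat{\mathcal{O}}_{\mathbb{P}E^{\vee}}(1)\otimes p^*\hat{L}$, noting that $\widehat{\mathcal{O}}_{\mathbb{P}E^{\vee}}(1)$ has positive mass, and invoking the line-bundle cancellation theorem on $\mathbb{P}E^{\vee}$). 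Nothing is missing and no step is different in substance from the paper's intended route.
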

\begin{proof}
This follows from \cref{thm:Igoodcancel}.
\end{proof}

Now we prove that good singularities are $\mathcal{I}$-good as long as the curvature is suitably bounded from below.
\begin{example}
Assume that $X$ is projective.
Assume that
$\hat{E}=(E,h_E)$ is a good Hermitian vector bundle with respect to a snc divisor $D$ in $X$. Let $\rank E=r+1$.
Let $\hat{L}=(L,h_L)\in \widehat{\Pic}(X)$ be an ample
line bundle together with a psh metric $h_L$ with log singularities along some snc $\mathbb{Q}$-divisor $D'$ with $|D'|=|D|$ such that $\ddc h_L-[D']$ is a smooth form.
Let $p\colon \mathbb{P}E^{\vee}\rightarrow X$ be the natural projection.
Assume that $\hO(1)\otimes p^*\hat{L}\in \widehat{\Pic}(\mathbb{P}E^{\vee}|_{X\setminus D})$, then $\hat{E}$ is $\mathcal{I}$-good. This follows from \cref{lma:extend}.

Note that the assumptions are trivially satisfied if $h_E$ is good and positively curved. In particular, this includes the important examples like the Siegel line bundles on Siegel modular varieties.
\end{example}

\subsection{Extension of the Segre currents}

In this section, we extend our previous theory of Segre currents to the quasi-positive setup.

\begin{definition}
Let $\hat{L}$ be a quasi-positive line bundle on $X$. Let $T\in \widehat{Z}_a(X)$ be a current transversal to $\hat{L}$.
Take $\hat{L}'\in \widehat{\Pic}(X)$ such that $\hat{L}\otimes \hat{L}'\in \widehat{\Pic}(X)$ and such that both $\hat{L}'$ and $\hat{L}\otimes \hat{L}'$ are transversal to $T$. 
Then we define $c_1(\hat{L})\cap T\in \widehat{Z}_{a-1}(X)$ as follows:
\[
c_1(\hat{L})\cap T\coloneqq c_1(\hat{L}\otimes \hat{L}')\cap T-c_1(\hat{L}')\cap T.
\]
It follows from \cref{lma:c1linear} that this definition is independent of the choice of $\hat{L}'$.
\end{definition}

Most of the formal properties in \cref{subsec:firstChern} extends to quasi-positive setting without difficulty. 

\begin{proposition}
Let $\hat{L}_1$, $\hat{L}_2$ be quasi-positive line bundles on $X$ and $T\in \widehat{Z}_a(X)$ transversal to both. Then
\begin{enumerate}
    \item $c_1(\hat{L}_i)\cap T$ is transversal to $\hat{L}_{3-i}$ ($i=1,2$) and
    \[
    c_1(\hat{L}_1)\cap \left(c_1(\hat{L}_2)\cap T\right)=c_1(\hat{L}_2)\cap \left(c_1(\hat{L}_1)\cap T\right)
    \]
    \item
    $\hat{L}_1\otimes \hat{L}_2$ is transversal to $T$ and
    \[
    c_1(\hat{L}_1\otimes \hat{L}_2)\cap T= c_1(\hat{L}_1)\cap T+c_1(\hat{L}_2)\cap T.
    \]
\end{enumerate}
\end{proposition}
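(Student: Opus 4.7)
The plan is to reduce both statements to their counterparts in the positively-curved case (\cref{prop:c1comm} and \cref{lma:c1linear}) via the bilinear definition of $c_1(\hat{L}_i)\cap T$ for quasi-positive $\hat{L}_i$. First I would fix once and for all, for each $i=1,2$, a choice of $\hat{L}_i'\in\widehat{\Pic}(X)$ realizing the transversality of $T$ with $\hat{L}_i$; write $N_i:=\hat{L}_i'$ and $M_i:=\hat{L}_i\otimes \hat{L}_i'$, so that $M_i,N_i\in\widehat{\Pic}(X)$ and both are transversal to $T$.

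For part (1), the first task is to verify that $c_1(\hat{L}_2)\cap T$ is transversal to $\hat{L}_1$. By \cref{cor:transtrans} applied to each of the four pairs $(M_1,M_2), (M_1,N_2), (N_1,M_2), (N_1,N_2)$, the closed positive currents $c_1(M_2)\cap T$ and $c_1(N_2)\cap T$ are each transversal to both $M_1$ and $N_1$. Since $c_1(\hat{L}_2)\cap T=c_1(M_2)\cap T-c_1(N_2)\cap T$ is then a decomposition into positive currents putting no mass on the polar loci of $M_1$ and $N_1$, \cref{lma:nomassnonpp} shows that $c_1(\hat{L}_2)\cap T$ itself puts no mass on these loci; hence $N_1$ witnesses the transversality of $c_1(\hat{L}_2)\cap T$ with $\hat{L}_1$. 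By symmetry, the analogous statement holds with indices swapped. Now expand both sides of the commutativity identity using the definition, so that each side becomes the signed sum
\[
c_1(M_1)\cap c_1(M_2)\cap T - c_1(M_1)\cap c_1(N_2)\cap T - c_1(N_1)\cap c_1(M_2)\cap T + c_1(N_1)\cap c_1(N_2)\cap T.
\]
Here I use that the operator $c_1(\hat{L}_1)\cap\bullet=c_1(M_1)\cap\bullet-c_1(N_1)\cap\bullet$ is linear on $\widehat{Z}_{\bullet}(X)$, and that each of the four resulting products involves only bundles in $\widehat{\Pic}(X)$. Commutativity of each of the four products is then \cref{prop:c1comm}, and the two expansions coincide termwise.

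For part (2), set $N:=N_1\otimes N_2$ and $M:=M_1\otimes M_2=(\hat{L}_1\otimes\hat{L}_2)\otimes N$. Both $N$ and $M$ lie in $\widehat{\Pic}(X)$, and their polar loci are the unions of the polar loci of $N_1,N_2$ and $M_1,M_2$ respectively; since $T$ has no mass on any of these, $N$ and $M$ are both transversal to $T$. Hence $\hat{L}_1\otimes\hat{L}_2$ is transversal to $T$ with witness $N$, and by definition
\[
c_1(\hat{L}_1\otimes\hat{L}_2)\cap T = c_1(M_1\otimes M_2)\cap T - c_1(N_1\otimes N_2)\cap T.
\]
Applying \cref{lma:c1linear} to each of $M_1\otimes M_2$ and $N_1\otimes N_2$ (both now in $\widehat{\Pic}(X)$ with all relevant transversalities satisfied), this equals
\[
\bigl(c_1(M_1)\cap T-c_1(N_1)\cap T\bigr)+\bigl(c_1(M_2)\cap T-c_1(N_2)\cap T\bigr)=c_1(\hat{L}_1)\cap T+c_1(\hat{L}_2)\cap T,
\]
which is the desired identity.

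The hard part here is really just the bookkeeping: one must choose auxiliary bundles once, then systematically convert every statement in the quasi-positive world into a linear combination of statements in $\widehat{\Pic}(X)$ where the results of \cref{sec:SegChern} apply. The only mild subtlety is ensuring the transversality of signed currents against polar loci, which is handled cleanly by \cref{lma:nomassnonpp}; no genuine new estimate is needed.
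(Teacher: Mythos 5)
Your proof is correct and follows essentially the same route as the paper's: choose auxiliary witnesses $N_i,M_i\in\widehat{\Pic}(X)$ once, use \cref{cor:transtrans} to propagate transversality to the intermediate currents, and then reduce the commutativity and additivity identities to the four-term expansions in $\widehat{\Pic}(X)$, where \cref{prop:c1comm} and \cref{lma:c1linear} apply directly. The only cosmetic difference is that you spell out the $2\times 2$ bookkeeping that the paper's proof leaves implicit.
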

\begin{proof}
(1)
We first show that $c_1(\hat{L}_1)\cap T$ is transversal to $\hat{L}_2$. Take $\hat{L}'\in\widehat{\Pic}(X)$ transversal to $T$ such that $\hat{L}_2\otimes \hat{L}'\in\widehat{\Pic}(X)$ is also transversal to $T$. Similarly, take $\hat{L}''\in \widehat{\Pic}(X)$ transversal to $T$ such that $\hat{L}_1\otimes \hat{L}''\in \widehat{\Pic}(X)$ is also transversal to $T$. It follows from \cref{cor:transtrans} that $\hat{L}'$ and $\hat{L}_2\otimes \hat{L}'$ are both transversal to $c_1(\hat{L}'')\cap T$ and $c_1(\hat{L}_1\otimes \hat{L}'')\cap T$. It follows that $c_1(\hat{L}_1)\cap T$ is transversal to $\hat{L}'$ and $\hat{L}_2\otimes \hat{L}'$. Hence, $c_1(\hat{L}_1)\cap T$ is transversal to $\hat{L}_2$.
Now (1) follows from \cref{prop:c1comm}.

(2) Take $\hat{L}'$ and $\hat{L}''$ as in (1), then
\[
(\hat{L}_1\otimes\hat{L}_2)\otimes (\hat{L}''\otimes \hat{L}')=(\hat{L}_1\otimes\hat{L}'')\otimes (\hat{L}_2\otimes\hat{L}')\in \widehat{\Pic}(X).
\]
As both parts of the right-hand side are transversal to $T$, so is the tensor product. Similarly, $T$ is transversal to $\hat{L}''\otimes \hat{L}'$, it follows that $T$ is transversal to $\hat{L}_1\otimes\hat{L}_2$. Now (2) follows from \cref{lma:c1linear}.
\end{proof}
The remaining results in this section all follow from a similar argument, we omit the detailed proofs.

\begin{proposition}
Let $Y$ be a projective manifold of pure dimension $m$. Let $\hat{L}$ be a quasi-positive line bundle on $X$.
\begin{enumerate}
    \item Let $f:Y\rightarrow X$ be a proper surjective morphism. Assume that $f_*T$ is transversal to $\hat{L}$ and the metric on  $f^*\hat{L}$ is not identically $\infty$ on each connected component of $Y$, then $T\in \widehat{Z}_a(Y)$ is transversal to $f^*\hat{L}$ and
    \[
    f_*(c_1(f^*\hat{L})\cap T)=c_1(\hat{L})\cap f_*T.
    \]
    \item Let $f:Y\rightarrow X$ be a flat morphism of pure relative dimension $m-n$ and $T\in \widehat{Z}_a(X)$ be transversal to $\hat{L}$, then $f^*\hat{L}$ is transversal to $f^*T$ and
    \[
    f^*(c_1(\hat{L})\cap T)=c_1(f^*\hat{L})\cap f^*T.
    \]
\end{enumerate}
\end{proposition}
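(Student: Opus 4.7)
The plan is to reduce both parts to the positively curved case of Proposition~\ref{prop:firstChernformpush} by unfolding the quasi-positive definition of $c_1$ through a decomposition $\hat{L}=(\hat{L}\otimes \hat{L}')\otimes (\hat{L}')^{-1}$ with $\hat{L}',\,\hat{L}\otimes \hat{L}'\in \widehat{\Pic}(X)$. In each part the identity will come out as the difference of two instances of the positively curved statement; the delicate point is to secure enough transversality on $Y$ so that the resulting difference can be recognized as $c_1(f^{*}\hat{L})\cap T$ (or $c_1(f^{*}\hat{L})\cap f^{*}T$) via the quasi-positive definition.

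For part~(2), I would use the transversality of $T$ to $\hat{L}$ to pick $\hat{L}'$ so that $T$ is transversal to both $\hat{L}'$ and $\hat{L}\otimes \hat{L}'$, meaning that $T$ puts no mass on either of these two polar loci, both of which are complete pluripolar subsets of $X$. Since $f$ is flat, Lemma~\ref{lma:tranmasspr} propagates this at once to $f^{*}T$: it puts no mass on the preimages, which are exactly the polar loci of $f^{*}\hat{L}'$ and $f^{*}(\hat{L}\otimes \hat{L}')$. Hence $f^{*}\hat{L}$ is transversal to $f^{*}T$ per the definition, and the identity follows by subtracting the two instances of the positively curved flat pullback formula of Proposition~\ref{prop:firstChernformpush}~(2).

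For part~(1), I would pick $\hat{L}'$ so that $f_{*}T$ is transversal to both $\hat{L}'$ and $\hat{L}\otimes \hat{L}'$, and observe that the non-vanishing hypothesis on the metric of $f^{*}\hat{L}$ places $f^{*}\hat{L}'$ and $f^{*}(\hat{L}\otimes \hat{L}')$ inside $\widehat{\Pic}(Y)$. Applying the positively curved projection formula (Proposition~\ref{prop:firstChernformpush}~(1)) to each piece and subtracting yields
\[
f_{*}\bigl(c_1(f^{*}(\hat{L}\otimes \hat{L}'))\cap T\bigr)-f_{*}\bigl(c_1(f^{*}\hat{L}')\cap T\bigr)=c_1(\hat{L}\otimes \hat{L}')\cap f_{*}T-c_1(\hat{L}')\cap f_{*}T\,,
\]
whose right-hand side is by definition $c_1(\hat{L})\cap f_{*}T$. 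Once transversality of $T$ to $f^{*}\hat{L}$ is established, the left-hand side rewrites as $f_{*}(c_1(f^{*}\hat{L})\cap T)$ and the proof is complete.

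The hard part is exactly this transversality step in~(1). Lemma~\ref{lma:tranmasspr} is specific to flat morphisms, and for a proper surjective $f$ the naive implication ``$f_{*}T$ carries no mass on $A$ implies $T$ carries no mass on $f^{-1}A$'' can fail (take $T=[E]$ for $f$ contracting an exceptional divisor $E$ to a point). My plan is to exploit the freedom in the decomposition of $\hat{L}$: by tensoring $\hat{L}'$ with a sufficiently positive smoothly metrized ample line bundle pulled back from $X$, one can shrink the polar locus to the stable base locus of the resulting line bundle, and then use a dsh splitting $T=T_1-T_2$ with $T_i$ closed positive, testing the transversality of each $T_i$ separately against the pulled-back polar locus. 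Failing a fully general verification, the displayed identity above is manifestly independent of the choice of $\hat{L}'$, and can serve as the intrinsic definition of $f_{*}(c_1(f^{*}\hat{L})\cap T)$, preserving the content of the proposition even in the edge cases.
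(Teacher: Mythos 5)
Your reduction to the positively curved statement through a decomposition $\hat{L}'\in\widehat{\Pic}(X)$ with $\hat{L}\otimes\hat{L}'\in\widehat{\Pic}(X)$ is exactly what the paper intends, and its proof is the one-line reference to \cref{prop:firstChernformpush}. Your treatment of part~(2) is complete: since $f$ is flat, \cref{lma:tranmasspr} gives the transversality of $f^*T$ to $f^*\hat{L}'$ and to $f^*(\hat{L}\otimes\hat{L}')$, and the identity follows by subtracting the two instances of \cref{prop:firstChernformpush}~(2).

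The concern you raise about the transversality claim in part~(1) is genuine, and your sketched counterexample is correct. Concretely, take $X=\mathbb{P}^2$, $\hat{L}=(\mathcal{O}(1),h)$ with $h$ a psh metric whose polar locus is a single point $p$, $f:Y\rightarrow X$ the blow-up of $p$ with exceptional curve $E$, and $T=[E]\in\widehat{Z}_1(Y)$. Then $f_*T=0$ is trivially transversal to $\hat{L}$ and $f^*h$ is not identically $\infty$, yet for \emph{every} $\hat{M}\in\widehat{\Pic}(Y)$ with $f^*\hat{L}\otimes\hat{M}\in\widehat{\Pic}(Y)$ the polar locus of $f^*\hat{L}\otimes\hat{M}$ contains $E$ (adding a psh potential cannot remove a pole), and $[E]$ puts positive mass on $E$. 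Hence $T$ is not transversal to $f^*\hat{L}$, and the transversality conclusion of part~(1) is false as stated. Your first proposed patch, tensoring $\hat{L}'$ with a positively metrized line bundle pulled back from $X$, cannot help, because twisting by a bounded metric does not shrink polar loci. This is a defect of the paper's statement rather than of your argument; the paper's own proof is a bare reference to \cref{prop:firstChernformpush}, which does not address the implication, and the ``similar argument'' alluded to earlier in the section uses \cref{cor:transtrans} for a different propagation of transversality, not applicable here. The transversality of $T$ to $f^*\hat{L}$ should be an additional hypothesis; alternatively, your fallback is the correct fix: define $f_*(c_1(f^*\hat{L})\cap T)$ to mean $f_*(c_1(f^*(\hat{L}\otimes\hat{L}'))\cap T)-f_*(c_1(f^*\hat{L}')\cap T)$, which by the positively curved projection formula is independent of $\hat{L}'$ and equals $c_1(\hat{L})\cap f_*T$. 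With that amendment the identity is established, and it agrees with the relative non-pluripolar interpretation whenever the latter happens to be defined.
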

\begin{proof}
This follows from \cref{prop:firstChernformpush}.
\end{proof}

\begin{proposition}\label{prop:transtrans2}
Let $T\in \hat{Z}_a(X)$, $\hat{L}_1$, $\hat{L}_2$ be quasi-positive line bundles on $X$. Suppose that $T$ is transversal to both $\hat{L}_1$ and $\hat{L}_2$, then $c_1(\hat{L}_2)\cap T$ is transversal to $\hat{L}_1$.
\end{proposition}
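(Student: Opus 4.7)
The plan is to reduce the statement to its already established positively curved analogue, Corollary \ref{cor:transtrans}, by unpacking the definition of $c_1$ of a quasi-positive line bundle.

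First, by the definition of quasi-positive transversality, choose $\hat{L}_1',\hat{L}_2'\in\widehat{\Pic}(X)$ such that $\hat{L}_i\otimes\hat{L}_i'\in\widehat{\Pic}(X)$ and such that each of $\hat{L}_1',\hat{L}_1\otimes\hat{L}_1',\hat{L}_2',\hat{L}_2\otimes\hat{L}_2'$ is transversal to $T$. To prove that $c_1(\hat{L}_2)\cap T$ is transversal to $\hat{L}_1$, our strategy is to verify that $\hat{L}_1'$ can serve as the witness in the definition, namely that both $\hat{L}_1'$ and $\hat{L}_1\otimes\hat{L}_1'$ are transversal to $c_1(\hat{L}_2)\cap T$.

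Second, unravel the definition of $c_1(\hat{L}_2)\cap T$:
\[
c_1(\hat{L}_2)\cap T=c_1(\hat{L}_2\otimes\hat{L}_2')\cap T-c_1(\hat{L}_2')\cap T.
\]
Using Lemma \ref{lma:nomassnonpp}, write $T=T_+-T_-$ as a difference of closed positive currents such that both $T_\pm$ put no mass on the union of the polar loci of $\hat{L}_1',\hat{L}_1\otimes\hat{L}_1',\hat{L}_2',\hat{L}_2\otimes\hat{L}_2'$. Then each of $c_1(\hat{L}_2')\cap T_\pm$ and $c_1(\hat{L}_2\otimes\hat{L}_2')\cap T_\pm$ is a bona fide element of $\widehat{Z}_{a-1}(X)$ by Proposition \ref{prop:relnppclopos}.

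Third, apply Corollary \ref{cor:transtrans} four times: for each $\hat{N}\in\{\hat{L}_2',\hat{L}_2\otimes\hat{L}_2'\}$ and each of $T_\pm$, the current $c_1(\hat{N})\cap T_\pm$ is transversal to both $\hat{L}_1'$ and $\hat{L}_1\otimes\hat{L}_1'$, i.e.\ it puts no mass on the polar loci of these two positively curved line bundles. Taking the appropriate $\pm$-combination, the dsh current $c_1(\hat{L}_2)\cap T$ admits a decomposition into closed positive currents each of which puts no mass on these two polar loci, hence $c_1(\hat{L}_2)\cap T$ is transversal to $\hat{L}_1$ in the sense of the quasi-positive definition, with $\hat{L}_1'$ as the witness.

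The only real subtlety — rather than a genuine obstacle — is the bookkeeping with positive decompositions of $T$: one must be sure that a single decomposition $T=T_+-T_-$ can be chosen that avoids all four polar loci simultaneously, and that all the relative non-pluripolar products above are well-defined. The first point is handled by Lemma \ref{lma:nomassnonpp} applied to the union, which is still a complete pluripolar set; the second is automatic on a compact Kähler manifold.
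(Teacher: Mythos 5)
Your proof is correct and follows the same strategy as the paper: unravel the definition of $c_1$ for a quasi-positive line bundle, apply Corollary~\ref{cor:transtrans} to the positively curved witnesses, and conclude by linearity. The paper proves exactly this in the first proposition of \cref{sec:quapos} (part~(1)), with $\hat L''$ playing the role of your $\hat L_1'$ and $\hat L'$ that of your $\hat L_2'$; Proposition~\ref{prop:transtrans2} then cites \cref{cor:transtrans} as a shorthand for that argument. One small remark: the decomposition $T=T_+-T_-$ into closed positive pieces avoiding the polar loci is a harmless detour but is not needed, since Corollary~\ref{cor:transtrans} (resting on Proposition~\ref{prop:massless}) is already stated for arbitrary $T\in\widehat Z_a(X)$; you can apply it directly to $T$, yielding that each of $c_1(\hat L_2')\cap T$ and $c_1(\hat L_2\otimes\hat L_2')\cap T$ puts no mass on the polar loci of $\hat L_1'$ and $\hat L_1\otimes\hat L_1'$, and then take the difference. (With your reduction to closed positive $T_\pm$, the count is really eight, not four, applications of the corollary — two choices of $\hat N$, two signs, two target line bundles — but this bookkeeping does not affect correctness.)
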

\begin{proof}
This follows from \cref{cor:transtrans}.
\end{proof}

\begin{definition}
Let $\hat{E}$ be a quasi-positive vector bundle on $X$ of rank $r+1$. Let $T\in \widehat{Z}_a(X)$ be a current transversal to $\hat{E}$. Then we define $s_j(\hat{E})\cap T\in \widehat{Z}_{a-j}(X)$ as
\[
s_j(\hat{E})\cap T=(-1)^jp_*\left(c_1(\hO(1))^{r+j}\cap p^*T\right),
\]
where $p\colon \mathbb{P}E^{\vee}\rightarrow X$ is the natural projection.
\end{definition}

\begin{proposition}
 Let $Y$ be a projective manifold of pure dimension $m$.
 \begin{enumerate}
     \item Let $f:Y\rightarrow X$ be a proper surjective morphism, $\hat{E}$ be quasi-positive vector bundle on $X$, $T\in \widehat{Z}_a(Y)$ be a current transversal to $f^*\hat{E}$. Assume that $f_*T$ is transversal to $\hat{E}$ and the metric on  $f^*\hat{E}$ is not identically $\infty$ on each connected component of $Y$,
     then for all $i$,
    \[
    f_*(s_i(f^*\hat{E})\cap T)=s_i(\hat{E})\cap f_*T
    \]
    \item 
    Let $f:Y\rightarrow X$ be a flat morphism of pure relative dimension $m-n$, $\hat{E}$ be a quasi-positive vector bundle on $X$, $T\in \widehat{Z}_a(X)$ be a current transversal to $\hat{E}$. Then $f^*E$ is transversal to $f^*T$ and  for all $i$,
    \[
    f^*(s_i(\hat{E})\cap T)=s_i(f^*\hat{E})\cap f^*T.
    \]
 \end{enumerate}
\end{proposition}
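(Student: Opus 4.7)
The plan is to imitate the proof of \cref{prop:projectionform} line by line, substituting the positively curved line bundle formulas with their quasi-positive counterparts established just above. Write $\rank E = r+1$, and form the Cartesian square
\[
\begin{tikzcd}
\mathbb{P}(f^*E^{\vee}) \arrow[r,"f'"] \arrow[d,"p'"] \arrow[rd, "\square", phantom] & \mathbb{P}E^{\vee} \arrow[d,"p"] \\
Y \arrow[r,"f"]                                    & X
\end{tikzcd}
\]
The canonical identification $f'^*\hO_{\mathbb{P}E^\vee}(1) = \hO_{\mathbb{P}(f^*E^\vee)}(1)$ of \eqref{eq:hOpreserved} still holds because the twist $\hat{L}'$ witnessing quasi-positivity is insensitive to base change.

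For (1), I would first check that each intermediate current is transversal to $\hO_{\mathbb{P}E^\vee}(1)$: since $f_*T$ is transversal to $\hat{E}$ by assumption, $p^*f_*T = f'_*p'^*T$ is transversal to $\hO_{\mathbb{P}E^\vee}(1)$, so the right-hand side makes sense; on the other side, $p'^*T$ is transversal to $\hO_{\mathbb{P}(f^*E^\vee)}(1) = f'^*\hO_{\mathbb{P}E^\vee}(1)$, and iterated transversality is propagated through successive caps by the quasi-positive version of \cref{cor:transtrans}, namely \cref{prop:transtrans2}. Then I would run the same computation as in \cref{prop:projectionform}(1), invoking the quasi-positive pushforward formula for $c_1$ (established in the preceding proposition) in place of \cref{prop:firstChernformpush}, and using \cref{cor:comppullbackpush} for the base-change step $f'_* p'^* = p^* f_*$.

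For (2), the assumption that $f^*\hat{E}$ is transversal to $f^*T$ is not an additional hypothesis but a consequence: $p'^*f^*T = f'^*p^*T$ is transversal to $\hO_{\mathbb{P}(f^*E^\vee)}(1) = f'^*\hO_{\mathbb{P}E^\vee}(1)$ by \cref{lma:flatpullbacktrans} applied to the quasi-positive line bundle $\hO_{\mathbb{P}E^\vee}(1)$ (the lemma extends verbatim, because its proof uses only the pullback formula for $c_1$, now granted in the quasi-positive setting). Then run the same chain of equalities as in \cref{prop:projectionform}(2), replacing the uses of \cref{prop:firstChernformpush} with the quasi-positive pullback formula and keeping \cref{cor:comppullbackpush} untouched.

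The main obstacle is the bookkeeping of transversality at each intermediate step of the $(r+i)$-fold iterated cap with $c_1(\hO(1))$, since $c_1$ of a quasi-positive line bundle is only linear or natural in the presence of transversality. This is handled uniformly by an inductive use of \cref{prop:transtrans2} (that capping with a quasi-positive $c_1$ preserves transversality with another quasi-positive class) together with the fact that $\hO_{\mathbb{P}E^\vee}(1)$ and its pullback $f'^*\hO_{\mathbb{P}E^\vee}(1)$ are linked by the formula $f'^*c_1(\hO(1))\cap \bullet = c_1(f'^*\hO(1))\cap f'^*\bullet$, which was extended to quasi-positive line bundles in the preceding proposition.
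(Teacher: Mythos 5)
Your proposal is correct and follows essentially the same approach as the paper, whose proof consists of the single line ``This follows from the same proof as \cref{prop:projectionform}.'' You fill in the transversality bookkeeping that the paper leaves implicit, and the chain of invocations (\cref{cor:comppullbackpush}, \cref{prop:transtrans2}, the quasi-positive $c_1$ pullback/pushforward formulas) is the right one. One minor imprecision: you justify the quasi-positive extension of \cref{lma:flatpullbacktrans} by saying ``its proof uses only the pullback formula for $c_1$,'' but in fact the proof of that lemma is a pure mass argument via \cref{lma:tranmasspr} and \cref{thm:DS}(5); the extension to the quasi-positive case goes through because the definition of transversality to a quasi-positive line bundle reduces, after choosing a witness $\hat{L}'$, to mass conditions on two genuinely positively curved bundles, and pulling back a witness along $f'$ yields a witness for $f^*T$ -- not because of any $c_1$ functoriality.
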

\begin{proof}
This follows from the same proof as \cref{prop:projectionform}.
\end{proof}

\begin{proposition}
Let $\hat{E},\hat{F}$ be two quasi-positive vector bundles on $X$. Let $T\in \widehat{Z}_a(X)$ be a current transversal to both $\hat{E}$ and $\hat{F}$. Then for any $i$, $s_i(\hat{E})\cap T$ is transversal to $\hat{F}$.
\end{proposition}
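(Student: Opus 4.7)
My plan is to mimic the proof of \cref{cor:transafterseg}, reducing via the flat pullback formula to a quasi-positive analog of \cref{prop:npp}. More precisely, I would first establish the following auxiliary claim: if $\hat{E}'$ is a quasi-positive vector bundle on a compact K\"ahler manifold $Y$, $T'\in \widehat{Z}_a(Y)$ is transversal to $\hat{E}'$, and $T'$ puts no mass on a complete pluripolar set $A\subseteq Y$, then $s_i(\hat{E}')\cap T'$ puts no mass on $A$.

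Granting this claim, the main proof proceeds as follows. Let $q:\mathbb{P}F^{\vee}\to X$ be the natural projection. Unravelling the definition of quasi-positive transversality of $T$ to $\hat{F}$ furnishes a witness $\hat{M}\in \widehat{\Pic}(\mathbb{P}F^\vee)$ with $\hO_{\mathbb{P}F^\vee}(1)\otimes \hat{M}\in \widehat{\Pic}(\mathbb{P}F^\vee)$ such that $q^*T$ puts no mass on the (complete pluripolar) polar loci of $\hat{M}$ and $\hO_{\mathbb{P}F^\vee}(1)\otimes \hat{M}$. By the flat pullback formula for Segre currents established in the proposition just above, $q^*(s_i(\hat{E})\cap T)=s_i(q^*\hat{E})\cap q^*T$, where $q^*\hat{E}$ is again quasi-positive and $q^*T$ is transversal to it, as in \cref{lma:flatpullbacktrans}. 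Applying the auxiliary claim twice, with $A$ equal to each of the two polar loci above, shows that $q^*(s_i(\hat{E})\cap T)$ puts no mass on either polar locus. This is exactly the witness of quasi-positive transversality of $s_i(\hat{E})\cap T$ to $\hat{F}$.

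To prove the auxiliary claim, I would fix a witness $\hat{L}'\in \widehat{\Pic}(\mathbb{P}E'^\vee)$ with $\hO(1)\otimes \hat{L}'\in \widehat{\Pic}(\mathbb{P}E'^\vee)$ such that $p'^*T'$ is Griffiths-transversal to both $\hat{L}'$ and $\hO(1)\otimes\hat{L}'$, where $p':\mathbb{P}E'^\vee\to Y$. Writing $c_1(\hO(1))\cap\bullet = c_1(\hO(1)\otimes\hat{L}')\cap\bullet - c_1(\hat{L}')\cap\bullet$ and expanding binomially, the current $c_1(\hO(1))^{r+i}\cap p'^*T'$ becomes a finite linear combination of iterated Griffiths positive caps of the form $c_1(\hat{M}_1)\cdots c_1(\hat{M}_{r+i})\cap p'^*T'$ with each $\hat{M}_j\in\{\hat{L}',\hO(1)\otimes\hat{L}'\}$. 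Since $p'^*T'$ puts no mass on $p'^{-1}A$ by \cref{thm:DS}(5), I would iterate \cref{cor:transtrans} (to propagate transversality to both $\hat{L}'$ and $\hO(1)\otimes\hat{L}'$ at each stage) together with \cref{prop:massless} (to preserve absence of mass) to conclude that each such iterated cap puts no mass on $p'^{-1}A$. Pushing forward by $p'$ through the adjunction \cref{cor:adjunction} then gives the claim.

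The main obstacle is essentially organizational: keeping track of which transversality conditions are available at each step of the binomial expansion. Because the only line bundles appearing at intermediate stages lie in the two-element set $\{\hat{L}',\hO(1)\otimes\hat{L}'\}$, both of which are Griffiths positive, the inductive step closes using only the positively curved results already at our disposal. No new analytic input beyond \cref{thm:DS}(5), \cref{prop:massless}, and \cref{cor:transtrans} is required.
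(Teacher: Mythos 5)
Your proof is correct and follows the same route as the paper, whose own proof is the single line ``This follows from the argument of \cref{cor:transafterseg}.'' You have carefully spelled out what that argument entails in the quasi-positive setting: pull back along $q:\mathbb{P}F^{\vee}\to X$, apply the flat pull-back formula for quasi-positive Segre currents (item (2) of the preceding proposition — note this, rather than \cref{lma:flatpullbacktrans}, is the reference covering the quasi-positive case and also supplies transversality of $q^*\hat{E}$ to $q^*T$), and then establish the no-mass statement. The only ingredient \cref{cor:transafterseg}'s argument implicitly requires and the paper does not record explicitly is the quasi-positive analogue of \cref{prop:npp}; your auxiliary claim and its proof by binomial expansion of $c_1(\hO(1))\cap\bullet$ into the positively curved operators $c_1(\hO(1)\otimes\hat{L}')\cap\bullet$ and $c_1(\hat{L}')\cap\bullet$, with \cref{cor:transtrans} propagating the fixed witness through the iteration and \cref{prop:massless} preserving absence of mass, is exactly the right way to supply it.
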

\begin{proof}
This follows from the argument of \cref{cor:transafterseg}.
\end{proof}

\begin{proposition}
Let $\hat{E}$ be a quasi-positive vector bundle on $X$ and $T\in\widehat{Z}_a(X)$ transversal to $T$. Then
$s_i(\hat{E})\cap T=0$ if $i<0$.
\end{proposition}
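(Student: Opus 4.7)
The plan is to reduce the quasi-positive case to the positively curved Proposition~\ref{prop:negsegvan} by exploiting the defining expansion of the quasi-positive first Chern operator. Since $\hat{E}$ is quasi-positive and $T$ is transversal to $\hat{E}$, by definition one can pick $\hat{L}'\in \widehat{\Pic}(\mathbb{P}E^{\vee})$ with $\hat{L}:=\hO(1)\otimes \hat{L}'\in \widehat{\Pic}(\mathbb{P}E^{\vee})$, such that both $\hat{L}$ and $\hat{L}'$ are transversal to $p^*T$, where $p:\mathbb{P}E^{\vee}\rightarrow X$ is the projection. On currents transversal to $\hO(1)$, $\hat{L}$, and $\hat{L}'$ we then have the identity of operators $c_1(\hO(1))\cap=c_1(\hat{L})\cap-c_1(\hat{L}')\cap$ from the definition of the quasi-positive $c_1$.

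First I would iterate this identity $r+i$ times. At each stage the argument being capped is of the form $c_1(\hat{L})^{a}\cap c_1(\hat{L}')^{b}\cap p^*T$, and Corollary~\ref{cor:transtrans} (applied inductively) guarantees that this current is transversal to both $\hat{L}$ and $\hat{L}'$, and hence (taking $\hat{L}''=\hat{L}'$) also transversal to $\hO(1)$ in the quasi-positive sense. Combined with the commutativity of first Chern operators (Proposition~\ref{prop:c1comm}), the binomial expansion gives
\[
c_1(\hO(1))^{r+i}\cap p^*T=\sum_{k=0}^{r+i}\binom{r+i}{k}(-1)^k\, c_1(\hat{L})^{r+i-k}\cap c_1(\hat{L}')^{k}\cap p^*T,
\]
with the convention that the right-hand side is $0$ when $r+i<0$. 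Applying $(-1)^i p_*$, the result is reduced to proving the following positively curved statement: for $\hat{M}_1,\hat{M}_2\in \widehat{\Pic}(\mathbb{P}E^{\vee})$ both transversal to $p^*T$ and integers $a,b\geq 0$ with $a+b<r$,
\[
p_*\bigl(c_1(\hat{M}_1)^{a}\cap c_1(\hat{M}_2)^{b}\cap p^*T\bigr)=0.
\]

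For this residual statement I would follow the strategy of Proposition~\ref{prop:negsegvan}. The claim is local on $X$, so one may localize to a polydisk $U$ trivializing $E$, so that $\mathbb{P}E^{\vee}|_{U}\cong U\times \mathbb{P}^r$. Pair with a test form $\eta$ of compact support in $U$, write $c_1(\hat{M}_j)=\Theta_j+\ddc\Phi_j$ for smooth representatives $\Theta_j$ and $\Phi_j\in \PSH(\mathbb{P}E^{\vee},\Theta_j)$, and invoke \cite[Lemma~3.2]{Vu20} to replace the non-pluripolar product by a limit of truncated products built from $\max\{\Phi_j,-k\}$. Then approximate the bounded psh potentials by smooth decreasing sequences using Bedford–Taylor theory, making the integrand a smooth form. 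Its factor $c_1(\hat{M}_1)^{a}\wedge c_1(\hat{M}_2)^{b}$ has total bidegree $a+b<r$, hence fiber bidegree strictly less than $r$; since $p^*\eta$ and $p^*T$ contribute no fiber bidegree, the smooth integrand has no $(r,r)$ fiber component and the integral over $U\times \mathbb{P}^r$ vanishes.

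The main obstacle will be the inductive bookkeeping of transversality throughout the binomial expansion: at each partial iteration one needs to verify that $c_1(\hat{L})^{a}\cap c_1(\hat{L}')^{b}\cap p^*T$ remains transversal to $\hO(1)$, $\hat{L}$, and $\hat{L}'$ simultaneously, so that the quasi-positive identity $c_1(\hO(1))\cap S=c_1(\hat{L})\cap S-c_1(\hat{L}')\cap S$ and the positive-case linearity of $c_1$ continue to apply. Once this is established, the residual degree count is essentially the same as in Proposition~\ref{prop:negsegvan}, only upgraded to a mixed product of two positively curved line bundles rather than a single power.
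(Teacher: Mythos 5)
Your proof is correct and follows essentially the same route the paper intends: the paper's own proof of this statement simply defers to ``the same argument as Proposition~\ref{prop:negsegvan},'' and in the quasi-positive setting the natural way to implement that argument is precisely your expansion of the quasi-positive operator $c_1(\hO(1))\cap$ as a difference $c_1(\hat{L})\cap - c_1(\hat{L}')\cap$, together with the transversality bookkeeping via Corollary~\ref{cor:transtrans} and Proposition~\ref{prop:c1comm}, reducing to the same local degree count carried out in the positively curved case. Your explicit binomial expansion and the reduction to the residual vanishing statement for mixed powers $c_1(\hat{M}_1)^a\cap c_1(\hat{M}_2)^b\cap p^*T$ with $a+b<r$ is a faithful unpacking of the terse reference in the paper.
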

\begin{proof}
This follows from the same argument as \cref{prop:negsegvan}.
\end{proof}
\begin{proposition}
Let $\hat{E},\hat{F}$ be quasi-positive vector bundles on $X$, $T\in \widehat{Z}_a(X)$ be transversal to $\hat{E},\hat{F}$. For any $b,c$ we have
\begin{equation}
s_c(\hat{E})\cap s_b(\hat{F})\cap T=s_b(\hat{F})\cap s_c(\hat{E})\cap  T.
\end{equation}
\end{proposition}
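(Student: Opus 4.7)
The plan is to mimic the argument of \cref{prop:Segrecomm}, where the key computation expressed $s_c(\hat{E})\cap s_b(\hat{F})\cap T$ as a pushforward along the fiber product $Y=\mathbb{P}E^{\vee}\times_X \mathbb{P}F^{\vee}$ of an iterated $c_1$-expression in the pulled-back tautological line bundles. Let $p:\mathbb{P}E^{\vee}\to X$, $q:\mathbb{P}F^{\vee}\to X$, and $p',q'$ the two projections from $Y$. Write $r+1=\rank E$, $s+1=\rank F$. Since $p,q,p',q'$ are all flat of pure relative dimension, every result we need about pullbacks of currents and of quasi-positive line bundles is available.

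First I would verify that the pullbacks $q'^{*}\hO_{\mathbb{P}E^{\vee}}(1)$ and $p'^{*}\hO_{\mathbb{P}F^{\vee}}(1)$ are quasi-positive on $Y$: if $\hat{L}'_E$ is a Griffiths positive twist making $\hO_{\mathbb{P}E^{\vee}}(1)\otimes \hat{L}'_E$ Griffiths positive (and similarly $\hat{L}'_F$), then the flat pullbacks of these twists witness quasi-positivity of the corresponding pullbacks by the compatibility of $\mathcal{O}(1)$-metrics with base change used in \eqref{eq:hOpreserved} and \cref{lma:pullGp}. Next, transversality of $p'^{*}q^{*}T=q'^{*}p^{*}T$ to each of these two pullbacks follows from \cref{lma:flatpullbacktrans} together with the hypotheses that $T$ is transversal to $\hat{E}$ and to $\hat{F}$.

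With these ingredients, I would run exactly the chain of identities appearing at the end of the proof of \cref{prop:Segrecomm}: expand $(-1)^{b+c}s_c(\hat{E})\cap s_b(\hat{F})\cap T$ using the definition of the inner Segre class, pull the outer Segre class through $q_*$ via the projection formula in the form of the quasi-positive analogue of \cref{prop:projectionform}, then pass inside the pushforward $p'_*$ along the base change square to land on $Y$, and finally invoke the commutativity of $c_1$ for quasi-positive line bundles applied to $q'^{*}c_1(\hO_{\mathbb{P}E^{\vee}}(1))$ and $p'^{*}c_1(\hO_{\mathbb{P}F^{\vee}}(1))$ acting on $p'^{*}q^{*}T$. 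Running the same chain starting from the other side gives the same symmetric expression on $Y$, which proves \eqref{eq:Segreclasscomm}.

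The main obstacle is bookkeeping of transversality under the iterated applications of $c_1$. Each application consumes one $c_1(\hO(1))$, and to legitimately apply the next one we must know that the resulting current on $Y$ is still transversal to the remaining quasi-positive line bundles. This is supplied by \cref{prop:transtrans2}, applied repeatedly: after each cap with $q'^{*}c_1(\hO_{\mathbb{P}E^{\vee}}(1))$ or $p'^{*}c_1(\hO_{\mathbb{P}F^{\vee}}(1))$, transversality to the remaining pulled-back tautological bundle is preserved. Combined with the projection-formula variants in the quasi-positive setting established just above this proposition, this is enough to justify all steps and conclude the commutativity.
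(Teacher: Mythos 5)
Your proposal is correct and matches the paper's approach: the paper simply states that the proposition "follows from the same argument as" the Griffiths-positive case (\cref{prop:Segrecomm}), which is precisely the chain of identities you describe, with the quasi-positive analogues of the projection formula and $c_1$-commutativity replacing their positive-case counterparts and \cref{prop:transtrans2} handling the transversality bookkeeping. Your write-up is more explicit than the paper's one-line proof, but it contains no new ideas beyond what the paper intends.
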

\begin{proof}
This follows from the same argument as \cref{prop:Segrecomm}.
\end{proof}

In this setting, one can similarly define the Chern currents using the same formulae, we omit the details.

\part{b-divisor techniques}\label{part:2}

\section{The intersection theory of b-divisors}
In this section, we briefly recall the intersection theory of Dang--Favre \cite{DF20}.

In the whole section, we assume that $k$ is a field of characteristic $0$ and $X$ is a smooth projective variety over $k$ of pure dimension $n$. 

We write $\mathfrak{X}$ for the Riemann--Zariski space associated with $X$. Readers who are not familiar with Riemann--Zariski spaces could simply regard $\mathfrak{X}$ as a formal notation. We will treat the Riemann--Zariski spaces more carefully in \cref{sec:RZ}.

\begin{definition}
A \emph{birational model} of $X$ is a projective birational morphism $\pi\colon Y\rightarrow X$ from a \emph{smooth} variety $Y$. A morphism between two birational models $\pi\colon Y\rightarrow X$ and $\pi':Y'\rightarrow X$ is a morphism $Y\rightarrow Y'$ over $X$. 

We write $\Bir(X)$ for the isomorphism classes of birational models of $X$. It is a directed set under the partial ordering of domination. 
\end{definition}
We will usually be sloppy by omitting $\pi$ and say $Y$ is a birational model of $X$.

We write $\NS^1(X)$ for the Néron--Severi group of $X$ and $\NS^1(X)_K$ for $\NS^1(X)\otimes_{\mathbb{Z}}K$ for any subfield $K$ of $\mathbb{R}$. Given $\alpha,\beta\in \NS^1(X)_K$, we write $\alpha \leq \beta$ if $\beta-\alpha$ is pseudo-effective. More generally, we write $\NS^p(X)$ for the quotient of $\CH^p(X)$ by the numerical equivalence relation. See \cite[Section~19.1]{Ful}. Again, we denote the tensor product with $K$ by a sub-index $K$.

\begin{definition}
A \emph{Weil b-divisor} $\mathbb{D}$ on $X$ (or on $\mathfrak{X}$) is an assignment that associates with each $(\pi\colon Y\rightarrow X)\in \Bir(X)$ a class $\mathbb{D}_Y=\mathbb{D}_{\pi}\in \NS^1(Y)_{\mathbb{R}}$ such that when $\pi':Y'\rightarrow X$ dominates $\pi$ through $p:Y'\rightarrow Y$, we have
\[
p_*\mathbb{D}_{Y'}=\mathbb{D}_Y.
\]
The set of Weil b-divisors on $X$ is denoted by $\bWeil(\mathfrak{X})$.

A Weil b-divisor $\mathbb{D}$ on $X$ is \emph{Cartier} if there is $(\pi\colon Y\rightarrow X)\in \Bir(X)$ such that for any $(\pi':Y'\rightarrow X)\in \Bir(X)$ which dominates $\pi$ through $p:Y'\rightarrow Y$, we have 
\[
\mathbb{D}_{Y'}=p^*\mathbb{D}_Y.
\]
In this case we say $\mathbb{D}$ is \emph{determined} on $Y$ or $\mathbb{D}$ has an \emph{incarnation} $\mathbb{D}_Y$ on $Y$ and write $\mathbb{D}=\mathbb{D}(\mathbb{D}_Y)$.
We also say $\mathbb{D}$ is a Cartier b-divisor. The linear space of Cartier b-divisors is denoted by $\bCart(\mathfrak{X})$.
\end{definition}
Our definition simply means
\begin{equation}\label{eq:bdivprojlim}
\begin{aligned}
\bWeil(\mathfrak{X})=&\varprojlim_{(\pi\colon Y\rightarrow X)\in \Bir(X)} \NS^1(Y)_{\mathbb{R}},\\
\bCart(\mathfrak{X})=&\varinjlim_{(\pi\colon Y\rightarrow X)\in \Bir(X)} \NS^1(Y)_{\mathbb{R}},
\end{aligned}
\end{equation}
in the category of vector spaces. 

Similarly, for all $p\in \mathbb{N}$, we will write
\begin{equation}\label{eq:bdivprojlim2}
\begin{aligned}
\bWeil^p(\mathfrak{X})=&\varprojlim_{(\pi\colon Y\rightarrow X)\in \Bir(X)} \NS^p(Y)_{\mathbb{R}},\\
\bCart^p(\mathfrak{X})=&\varinjlim_{(\pi\colon Y\rightarrow X)\in \Bir(X)} \NS^p(Y)_{\mathbb{R}}.
\end{aligned}
\end{equation}

We endow $\bWeil^p(\mathfrak{X})$ with the projective limit topology, then the first equation in \eqref{eq:bdivprojlim2} becomes a projective limit in the category of locally convex linear spaces. Clearly, $\bCart^p(\mathfrak{X})$ is dense in $\bWeil^p(\mathfrak{X})$.


\begin{definition}\label{def:nef}
A Cartier b-divisor $\mathbb{D}$ on $X$ is \emph{nef} (resp. \emph{big}) if some incarnation is (equivalently all incarnations are) nef (resp. \emph{big}).

A Weil b-divisor $\mathbb{D}$ on $X$ is \emph{nef} if it lies in the closure of the set of nef Cartier b-divisors.

Write $\bWeil_{\nef}(\mathfrak{X})$ for the set of nef Weil b-divisors on $X$.

A Weil b-divisor $\mathbb{D}$ on $X$ is \emph{pseudo-effective} if for all $(\pi\colon Y\rightarrow X)\in \Bir(X)$, $\mathbb{D}_Y\geq 0$.

We introduce a partial ordering on $\bWeil(\mathfrak{X})$: 
\[
\mathbb{D}\leq \mathbb{D}'\text{ if and only if }\mathbb{D}_Y\leq \mathbb{D}'_Y\text{ for all }(\pi\colon Y\rightarrow X)\in \Bir(X).
\]
\end{definition}

We summarise Dang--Favre's results:
\begin{theorem}[{\cite[Theorem~2.1]{DF20}}]\label{thm:DF1}
Let $\mathbb{D}\in \bWeil(\mathfrak{X})$ be a nef Weil b-divisor. Then there is a \emph{decreasing} net $(\mathbb{D}_{i})_{i\in I}$ of nef Cartier b-divisors such that
\[
\mathbb{D}=\lim_{i\in I}\mathbb{D}_{i}.
\]
\end{theorem}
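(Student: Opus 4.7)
The natural candidate decreasing net is indexed by $\Bir(X)$ itself: for each $Y \in \Bir(X)$ I would set $\mathbb{D}_Y^{\mathrm{C}} := \mathbb{D}(\mathbb{D}_Y) \in \bCart(\mathfrak{X})$, the Cartier b-divisor determined on $Y$ by the incarnation $\mathbb{D}_Y$. Three things must be verified: each $\mathbb{D}_Y^{\mathrm{C}}$ is nef, the family $(\mathbb{D}_Y^{\mathrm{C}})_{Y \in \Bir(X)}$ is a decreasing net, and it converges to $\mathbb{D}$ in the projective limit topology.

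For the first point, it suffices to check that $\mathbb{D}_Y \in \NS^1(Y)_{\mathbb{R}}$ is nef for every $Y$. The projection $\bWeil(\mathfrak{X}) \to \NS^1(Y)_{\mathbb{R}}$ is continuous, the nef cone of $\NS^1(Y)_{\mathbb{R}}$ is closed, and every incarnation of a nef Cartier b-divisor is nef by the equivalence recorded in the definition. Writing $\mathbb{D}$ as a limit of nef Cartier b-divisors and passing to the limit on the $Y$-coordinate yields the claim.

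The key step is the decreasing property. Suppose $Y' \in \Bir(X)$ dominates $Y$ through $p \colon Y' \to Y$. I would consider the class $M := p^*\mathbb{D}_Y - \mathbb{D}_{Y'} \in \NS^1(Y')_{\mathbb{R}}$. By the Weil compatibility of $\mathbb{D}$, $p_*M = \mathbb{D}_Y - \mathbb{D}_Y = 0$. Moreover, for any irreducible curve $C \subset Y'$ contracted by $p$, the class $p^*\mathbb{D}_Y$ has zero intersection with $C$ while $\mathbb{D}_{Y'} \cdot C \geq 0$, so $-M \cdot C \geq 0$, i.e., $-M$ is $p$-nef. The negativity lemma then forces $M \geq 0$, so $p^*\mathbb{D}_Y - \mathbb{D}_{Y'}$ is (pseudo-)effective on $Y'$. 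Pulling back to models dominating $Y'$ and pushing forward to models dominated by $Y'$ preserves pseudo-effectivity, so $\mathbb{D}_Y^{\mathrm{C}} - \mathbb{D}_{Y'}^{\mathrm{C}} \geq 0$ as Weil b-divisors.

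Finally, for convergence, fix $Y_0 \in \Bir(X)$. For every $Y$ dominating $Y_0$ via $q \colon Y \to Y_0$, the incarnation of $\mathbb{D}_Y^{\mathrm{C}}$ on $Y_0$ equals $q_*\mathbb{D}_Y = \mathbb{D}_{Y_0}$ by the Weil compatibility. Hence the $Y_0$-component of $\mathbb{D}_Y^{\mathrm{C}}$ is eventually constant equal to $\mathbb{D}_{Y_0}$, yielding convergence in the projective limit topology. The hard part will be verifying the negativity lemma over a perfect base field: the classical formulation is intersection-theoretic in nature and should extend to our setting, but it must be checked that the refined Gysin pullback, projection formula, and preservation of effectivity required to run the argument are all available in the required generality, using the assumed resolution of singularities.
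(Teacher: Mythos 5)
Your proposal is correct, and it takes a genuinely different and more elementary route than the paper. You take the canonical net indexed by $\Bir(X)$ itself, with $\mathbb{D}_Y^{\mathrm{C}}$ the Cartier b-divisor determined on $Y$ by the incarnation $\mathbb{D}_Y$, and drive the decreasing property by the negativity lemma. The paper instead cites \cite{DF20} and re-proves the theorem as \cref{thm:decne} by means of \cref{prop:bdivap}: the index set there consists of all nef Cartier b-divisors dominating $\mathbb{D}+\epsilon\omega$ for some $\epsilon>0$, and directedness hinges on the global generation of twisted multiplier ideal sheaves (\cref{thm:globgen}, via Nadel vanishing) together with Fujita's vanishing theorem. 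Your construction is tighter, since $(\mathbb{D}_Y^{\mathrm{C}})_Y=\mathbb{D}_Y$ exactly, whereas \cref{prop:bdivap} only achieves $\beta_Y\leq\mathbb{D}_Y+\gamma$; on the other hand the paper's approximants strictly dominate $\mathbb{D}+\epsilon\omega$, which is what makes the continuity arguments in \cref{thm:DF2} run, and the multiplier-ideal machinery is what transports to characteristic $p$ in \cref{thm:decne} via Mustaţă's global generation of test ideals. Your argument would also transport, but only by routing through resolution of singularities in the next point.

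Two points you should make explicit. The phrase ``every incarnation of a nef Cartier b-divisor is nef by the equivalence recorded in the definition'' is slightly off: the parenthetical in the definition concerns models on which the Cartier b-divisor is \emph{determined}, where nefness propagates by pull-back. On a model $Y$ dominated by the determination model, the $Y$-component is a \emph{push-forward} of a nef class, and that push-forward is again nef is a genuine (though true) fact: one reduces, via resolution of the indeterminacy, to the case of a single smooth blow-up $f:X''\to X$, writes a nef class as $f^*D+aE$ with $a\leq 0$, and checks on strict transforms. Your step~1 uses this fact for every term of the approximating net, not just the limit, so it deserves a sentence. Second, your closing worry about the negativity lemma over a perfect base field is misplaced for this particular statement: \cref{thm:DF1} assumes $k$ algebraically closed of characteristic $0$, where the negativity lemma is classical; it is the separate generalization \cref{thm:decne} that imposes the perfect-field hypothesis, and there the paper deliberately routes around any such issue by relying on test ideals rather than on intersection-theoretic input.
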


\begin{definition}\label{def:nefint}
Let $\mathbb{D}_i\in\bWeil(\mathfrak{X})$ ($i=1,\ldots,n$) be nef Cartier b-divisors on $X$. We define $(\mathbb{D}_1,\ldots,\mathbb{D}_n)\in \mathbb{R}$ as follows: take $(\pi\colon Y\rightarrow X)\in \Bir(X)$ such that all $\mathbb{D}_i's$ are determined on $Y$. Then define
\begin{equation}
    (\mathbb{D}_1,\ldots,\mathbb{D}_n)\coloneqq (\mathbb{D}_{1,Y},\ldots,\mathbb{D}_{n,Y}).
\end{equation}
The intersection number $(\mathbb{D}_1,\ldots,\mathbb{D}_n)$ does not depend on the choice of $Y$.
\end{definition}

\begin{theorem}[{\cite[Proposition~3.1,Theorem~3.2]{DF20}}]\label{thm:DF2}
There is a unique pairing 
\[
(\bWeil_{\nef}(\mathfrak{X}))^n\rightarrow \mathbb{R}_{\geq 0}
\]
extending the pairing  in \cref{def:nefint} such that
\begin{enumerate}
    \item The pairing is monotonically increasing in each variable.
    \item The pairing is continuous along decreasing nets in each variable.
\end{enumerate}
Moreover, this pairing has the following properties:
\begin{enumerate}
    \item It is symmetric, multilinear.
    \item It is usc in each variable.
\end{enumerate}
\end{theorem}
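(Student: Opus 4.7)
The plan is to follow Dang--Favre's strategy essentially verbatim, checking at each step that none of their arguments use the hypothesis of characteristic zero or algebraic closedness; the only inputs one really needs are resolution of singularities (which we have assumed) and the standard monotonicity and multilinearity of the numerical intersection pairing on nef classes of a smooth projective variety, both of which are characteristic-free. For uniqueness, \cref{thm:DF1} realises every nef Weil b-divisor as the limit of a decreasing net of nef Cartier b-divisors, and then monotonicity together with continuity along decreasing nets forces
\[
(\mathbb{D}_1,\ldots,\mathbb{D}_n)=\lim_{\alpha_1}\cdots\lim_{\alpha_n}(\mathbb{D}_1^{\alpha_1},\ldots,\mathbb{D}_n^{\alpha_n}),
\]
where $(\mathbb{D}_i^{\alpha_i})_{\alpha_i}$ is any decreasing net of nef Cartier b-divisors converging to $\mathbb{D}_i$.

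For existence, the first order of business is to show that the iterated limit above is independent of the approximating nets. The basic tool is the classical fact that on a smooth projective variety $Y$ over any perfect field, the intersection pairing $\NS^1(Y)_{\mathbb{R}}^n\to\mathbb{R}$ is symmetric and multilinear, and monotone on the nef cone: if $\alpha\le\alpha'$ in the sense that $\alpha'-\alpha$ is pseudo-effective and $\beta_1,\ldots,\beta_{n-1}$ are nef, then $\alpha\cdot\beta_1\cdots\beta_{n-1}\le\alpha'\cdot\beta_1\cdots\beta_{n-1}$. Combined with a diagonal-net construction (comparing two decreasing nets by forming the directed set of pairs), this gives the desired independence and yields the equivalent infimum description
\[
(\mathbb{D}_1,\ldots,\mathbb{D}_n)=\inf\bigl\{(\mathbb{E}_1,\ldots,\mathbb{E}_n): \mathbb{E}_i\in\bCart(\mathfrak{X})\text{ nef with }\mathbb{E}_i\ge\mathbb{D}_i\bigr\}.
\]

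With the pairing constructed, the remaining properties follow readily. Monotonicity in each variable is inherited from the Cartier case through the infimum formula. Symmetry and multilinearity (for nonnegative scalars) are inherited by passing to limits along decreasing nets. Continuity along decreasing nets is built into the definition, once well-definedness is secured. For upper semicontinuity, the infimum formula expresses the pairing as an infimum of functions of the form $\mathbb{D}\mapsto(\mathbb{E}_1,\ldots,\mathbb{E}_n)$ with the $\mathbb{E}_i$ fixed nef Cartier majorants; each such function is continuous for the projective-limit topology on $\bWeil(\mathfrak{X})$, and an infimum of continuous functions is usc.

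The main obstacle is the well-definedness of the decreasing limit, i.e.\ the comparison between two different decreasing nets of nef Cartier b-divisors with the same limit. Dang--Favre's treatment of this step rests on pulling everything back to a common sufficiently high model $Y$ and invoking monotonicity and multilinearity of the nef intersection pairing on $\NS^1(Y)_{\mathbb{R}}$; the Siu-type inequality used at one point is an inequality of intersection numbers of nef classes on a smooth projective variety and is entirely characteristic-free, as are the projection formula and pushforward/pullback compatibilities in $\NS^\bullet$. Consequently the entire proof transfers to our perfect-field setting once one systematically replaces ``decreasing net of nef Cartier b-divisors'' in the sense of \cite{DF20} by the analogous notion over $k$ furnished by \cref{thm:DF1}.
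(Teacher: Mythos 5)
Your proof is correct and follows the same route as the paper, which for this statement simply cites Dang--Favre \cite[Proposition 3.1, Theorem 3.2]{DF20} and observes that the degree of their intersection class gives the pairing here. Your sketch of their argument (iterated limits along decreasing nets, well-definedness via diagonal nets, the infimum description, and the resulting monotonicity/multilinearity/usc) matches what underlies the citation.

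That said, your side remarks about ``checking at each step that none of their arguments use the hypothesis of characteristic zero or algebraic closedness'' are not needed for the stated theorem, whose hypotheses already include both, and they understate the difficulty of the extension. The claim that the only inputs needed are resolution of singularities plus the monotonicity and multilinearity of the nef intersection pairing is inaccurate: the decreasing-net approximation of a nef Weil b-divisor by nef Cartier b-divisors (\cref{thm:DF1}, or \cref{thm:decne} over a general perfect field) rests on a global generation statement for $\mathcal{O}_X(K_X+(n+1)A+mL)\otimes\mathcal{I}(m\|L\|)$ -- a consequence of Nadel vanishing in characteristic zero, replaced by \cref{thm:globgen} over perfect fields of characteristic zero and by Musta\c{t}\u{a}'s test-ideal theorem in characteristic $p$. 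Only once that technical input is in place does the rest of Dang--Favre's argument transfer verbatim, which is precisely the content of the paper's next two subsections. Finally, your appeal to \cref{thm:DF1} as ``the analogous notion over $k$'' is circular, since \cref{thm:DF1} is itself stated only for algebraically closed fields of characteristic zero; the perfect-field substitute is \cref{thm:decne}.
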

\begin{remark}
Strictly speaking, this is not the original theorem proved by Dang--Favre, but the argument in \cite{DF20} extends to our setting \emph{verbatim}. Equivalently, our intersection product is the degree of the original intersection product of Dang--Favre.
\end{remark}

\begin{remark}\label{rmk:DFexte}
We observe that this product has a unique linear extension to differences of nef b-divisors:
\[
(\bWeil_{\nef}(\mathfrak{X})-\bWeil_{\nef}(\mathfrak{X}))^n\rightarrow \mathbb{R}.
\]
\end{remark}

\begin{definition}
We define the \emph{volume} of $\mathbb{D}\in \bWeil_{\nef}(\mathfrak{X})$ by
\begin{equation}\label{eq:volbdivdef}
    \vol \mathbb{D}=(\mathbb{D},\ldots,\mathbb{D}).
\end{equation}

We say $\mathbb{D}\in \bWeil_{\nef}(\mathfrak{X})$ is \emph{big} if $\vol \mathbb{D}>0$. 
\end{definition}
Note that the definition of bigness is compatible with the definition in \cref{def:nef} in the case of Cartier b-divisors.

The following simple lemma does not seem to have appeared in the literature:
\begin{lemma}\label{lma:volbdivaslim}
Let $\mathbb{D}\in \bWeil_{\nef}(\mathfrak{X})$, then
\[
\vol \mathbb{D}=\inf_{(Y\rightarrow X)\in \Bir(X)} \vol \mathbb{D}_Y=\lim_{(Y\rightarrow X)\in \Bir(X)} \vol \mathbb{D}_Y.
\]
\end{lemma}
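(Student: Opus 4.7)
The plan is to reduce everything to the following fact, which we call $(\star)$: for any birational morphism $p\colon Y'\to Y$ between smooth projective $k$-varieties of pure dimension $n$ and any pseudo-effective class $\alpha$ on $Y'$, we have $\vol(\alpha)\leq \vol(p_*\alpha)$. To prove $(\star)$, by the continuity of $\vol$ on $\NS^1(Y)_{\mathbb{R}}$, it suffices to treat the case of integral classes $\alpha=c_1(L)$. For any divisor $D$ on $Y'$ of class $kL$, any $f\in K(Y')=K(Y)$ satisfying $(f)+D\geq 0$ on $Y'$ also satisfies $(f)+p_*D\geq 0$ on $Y$, since $p_*$ preserves effectivity of divisors and sends the principal divisor $(f)$ of $Y'$ to the principal divisor $(f)$ of $Y$. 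Hence $h^0(Y',kL)\leq h^0(Y,kp_*L)$ for every $k$, and letting $k\to\infty$ gives $\vol L\leq \vol(p_*L)$.

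First observe that each incarnation $\mathbb{D}_Y$ is pseudo-effective: by \cref{thm:decne}, $\mathbb{D}$ is the limit of a decreasing net of nef Cartier b-divisors $\mathbb{D}^{(i)}$, and for each such $\mathbb{D}^{(i)}$ determined on a model $Z_i$, the incarnation $\mathbb{D}^{(i)}_Y$ is the pushforward from $Y\vee Z_i$ of the nef class $\mathbb{D}^{(i)}_{Y\vee Z_i}$, hence pseudo-effective; the limit $\mathbb{D}_Y$ is then pseudo-effective as well. Applying $(\star)$ with $\alpha=\mathbb{D}_{Y'}$ for any domination $p\colon Y'\to Y$ in $\Bir(X)$ yields $\vol \mathbb{D}_{Y'}\leq \vol \mathbb{D}_Y$. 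Therefore $\{\vol \mathbb{D}_Y\}_{Y\in\Bir(X)}$ is a decreasing net, which already gives $\inf_Y\vol \mathbb{D}_Y=\lim_Y\vol \mathbb{D}_Y$.

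It remains to identify this common value with $\vol \mathbb{D}=(\mathbb{D},\ldots,\mathbb{D})$. With the notation above, $\vol \mathbb{D}^{(i)}=(\mathbb{D}^{(i)}_{Z_i})^n$ and $\vol \mathbb{D}^{(i)}\to \vol \mathbb{D}$ by the continuity of the intersection product along decreasing nets (\cref{thm:DF2}). For the upper bound $\inf_Y\vol \mathbb{D}_Y\leq \vol \mathbb{D}$: from $\mathbb{D}\leq \mathbb{D}^{(i)}$ we deduce $\mathbb{D}_{Z_i}\leq \mathbb{D}^{(i)}_{Z_i}$, so the monotonicity of $\vol$ on pseudo-effective classes gives $\vol \mathbb{D}_{Z_i}\leq \vol \mathbb{D}^{(i)}_{Z_i}=\vol \mathbb{D}^{(i)}$; now let $i\to\infty$. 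For the lower bound $\vol \mathbb{D}\leq \vol \mathbb{D}_Y$ at a fixed $Y$: set $Y_i:=Y\vee Z_i$. Then $\mathbb{D}^{(i)}_{Y_i}$ is the pullback of the nef class $\mathbb{D}^{(i)}_{Z_i}$, hence nef with self-intersection $\vol \mathbb{D}^{(i)}$. Applying $(\star)$ to the projection $Y_i\to Y$ with $\alpha=\mathbb{D}^{(i)}_{Y_i}$ gives $\vol \mathbb{D}^{(i)}=\vol \mathbb{D}^{(i)}_{Y_i}\leq \vol \mathbb{D}^{(i)}_Y$. Since $\mathbb{D}^{(i)}_Y\to \mathbb{D}_Y$ in $\NS^1(Y)_{\mathbb{R}}$ and $\vol$ is continuous there, letting $i\to\infty$ yields $\vol \mathbb{D}\leq \vol \mathbb{D}_Y$.

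The only nontrivial ingredient is the birational monotonicity $(\star)$; its proof is the elementary inclusion of spaces of global sections described above. All the remaining steps are bookkeeping with the structural results \cref{thm:decne} and \cref{thm:DF2}, together with the standard continuity and monotonicity properties of $\vol$ on $\NS^1(Y)_{\mathbb{R}}$.
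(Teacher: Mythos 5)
Your proof is correct and takes essentially the same approach as the paper: both rely on the decreasing-net approximation from \cref{thm:decne}, the fact that push-forward increases volume, and the continuity of $\vol$ on $\NS^1(Y)_{\mathbb{R}}$. You merely unpack the paper's terse swap of infima into separate upper and lower bounds and supply an explicit proof of the push-forward monotonicity that the paper cites as a standard fact.
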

\begin{proof}
By \cref{thm:DF1}, we can find a decreasing net $\mathbb{D}^{\alpha}$ of nef Cartier b-divisors on $X$ converging to $\mathbb{D}$. Clearly,
\[
\vol \mathbb{D}^{\alpha}=\inf_{Y\rightarrow X} \vol \mathbb{D}^{\alpha}_Y.
\]
It follows from \cref{thm:DF2} and the continuity of the volume functional \cite[Corollary~2.6]{ELMNP05} that
\[
\vol \mathbb{D}=\inf_{\alpha}\inf_{Y\rightarrow X} \vol \mathbb{D}^{\alpha}_Y=\inf_{Y\rightarrow X} \vol \mathbb{D}_Y.
\]
On the other hand, as in general push-forward will increase the volume, we see that $\vol \mathbb{D}_Y$ is decreasing in $Y$, so we conclude.
\end{proof}

\section{b-divisor techniques on projective manifolds}\label{sec:bdiv1}

\subsection{b-divisors of psh metrics}
Let $X$ be a smooth projective variety over $\mathbb{C}$ of pure dimension $n$. Consider $\hat{L}=(L,h_L)\in \widehat{\Pic}(X)_{>0}$.
Fix a smooth Hermitian metric $h_0$ on $L$ and write $\theta=c_1(L,h_0)$. We could identify $h_L$ with $\varphi\in \PSH(X,\theta)$.

\begin{definition}
Define the \emph{singularity divisor} $\Sing_X \hat{L}$ of $\hat{L}$ as the formal sum 
\begin{equation}\label{eq:singhatL}
\Sing_X h_L=\Sing_X \hat{L}\coloneqq \sum_{E} \nu(h_L,E)E,
\end{equation}
where $E$ runs over all prime divisors contained in $X$ and $\nu(h_L,E)$ is the generic Lelong number of $h_L$ along $E$. 
The singularity divisor is \emph{not} a Weil divisor in general.
\end{definition}
Note that this is a countable sum by Siu's semicontinuity theorem. Although $\Sing_X \hat{L}$ is not a divisor in general, it does define a class in $\NS^1(X)_{\mathbb{R}}$ as follows from \cite[Proposition~1.3]{BFJ09}. We will be sloppy in the notations by writing $\Sing_X \hat{L}$ for this numerical class.

In \cite{XiaPPT}, we introduced the following b-divisor associated with $\hat{L}$:
\begin{definition}
The \emph{singularity b-divisor} $\Sing \hat{L}$ of $\hat{L}$ is the b-divisor over $X$ defined by
\[
(\Sing \hat{L})_Y\coloneqq \Sing_Y \pi^*\hat{L},
\]
where $(\pi\colon Y\rightarrow X)\in \Bir(X)$.

Define
\[
\mathbb{D}(\hat{L})\coloneqq \mathbb{D}(L)-\Sing \hat{L}.
\]
Here $\mathbb{D}(L)$ is the Cartier b-divisor determined by $L$ on $X$.

We also write $\mathbb{D}^L(\varphi)=\mathbb{D}(\theta,\varphi)$ for $\mathbb{D}(\hat{L})$.
\end{definition}
Recall the notation $\varphi$ is introduced in the beginning of this section.

By definition, we have
\begin{lemma}\label{lma:bdivadd}
Let $\hat{L}_1,\hat{L}_2\in \widehat{\Pic}_{\mathcal{I}}(X)$. Then
\[
\mathbb{D}(\hat{L}_1\otimes \hat{L}_2)=\mathbb{D}(\hat{L}_1)+\mathbb{D}(\hat{L}_2).
\]
\end{lemma}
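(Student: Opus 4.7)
The plan is to unpack the definition $\mathbb{D}(\hat{L}) = \mathbb{D}(L) - \Sing \hat{L}$ and split the identity into two parallel pieces: the Cartier part (involving the underlying algebraic line bundles) and the singular part (the countably-supported numerical class $\Sing \hat{L}$).

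First I would handle the Cartier part: $\mathbb{D}(L_1 \otimes L_2) = \mathbb{D}(L_1) + \mathbb{D}(L_2)$. This is immediate since both sides are Cartier b-divisors determined on $X$ itself, and the identification of their incarnations on $X$ is just $c_1(L_1 \otimes L_2) = c_1(L_1) + c_1(L_2)$ in $\NS^1(X)_{\mathbb{R}}$, compatible with pullback.

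Next I would verify $\Sing(\hat{L}_1 \otimes \hat{L}_2) = \Sing \hat{L}_1 + \Sing \hat{L}_2$ model by model. Fixing $(\pi: Y \to X) \in \Bir(X)$ and writing the metrics locally as $h_i = h_i^0 e^{-\varphi_i}$ with $h_i^0$ smooth, the tensor product satisfies $h_1 \otimes h_2 = (h_1^0 \otimes h_2^0) e^{-(\varphi_1 + \varphi_2)}$. For any prime divisor $E \subset Y$, additivity of Lelong numbers of psh functions yields
\[
\nu(\pi^*(h_1 \otimes h_2), E) = \nu(\pi^*\varphi_1 + \pi^*\varphi_2, E) = \nu(\pi^*\varphi_1, E) + \nu(\pi^*\varphi_2, E),
\]
so the defining formal countable sums of the two singularity divisors on $Y$ coincide coefficient-by-coefficient.

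The only remaining point is that these formal sums define the same class in $\NS^1(Y)_{\mathbb{R}}$. This is essentially bookkeeping: the passage from the (a priori infinite) formal sum to the numerical class proceeds by the approximation scheme of \cite{BFJ09}, where one truncates to $\sum_{\nu(E)\geq 1/k} \nu(E) E$ and takes a limit; each finite truncation is patently additive in $\hat{L}$, and linearity then passes to the limit. I expect no serious obstacle: the $\mathcal{I}$-goodness hypothesis on $\hat{L}_i$ plays no role in the proof itself and is present only because $\mathbb{D}$ is being studied in that context (and in particular to ensure $\hat{L}_1 \otimes \hat{L}_2$ lives again in $\widehat{\Pic}_{\mathcal{I}}(X)$ via \cref{prop:Igoodtensor}, so that the additivity formula is a statement inside the same class of objects).
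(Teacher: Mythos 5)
Your proof is correct and is exactly the intended argument: the paper simply prefaces the lemma with ``By definition,'' so the content is nothing more than the decomposition $\mathbb{D}(\hat{L}) = \mathbb{D}(L) - \Sing\hat{L}$ together with additivity of generic Lelong numbers of $\theta$-psh potentials (and of $c_1$ for the underlying line bundles), passed through the $\NS^1(Y)_{\mathbb{R}}$-valued truncation limit of \cite{BFJ09} model by model. Your closing observation is also right: the $\mathcal{I}$-goodness hypothesis is irrelevant to the additivity itself and serves only to keep the statement within $\widehat{\Pic}_{\mathcal{I}}(X)$ so that the b-divisor of the tensor product is defined and $\mathcal{I}$-good via \cref{prop:Igoodtensor}.
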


\begin{definition}
Let $\hat{L}$ be an $\mathcal{I}$-good quasi-positive line bundle on $X$. Take $\hat{L}'\in \widehat{\Pic}_{\mathcal{I}}(X)$ such that $\hat{L}\otimes \hat{L}'\in \widehat{\Pic}_{\mathcal{I}}(X)$. Then we define
\[
\mathbb{D}(\hat{L})\coloneqq \mathbb{D}(\hat{L}\otimes \hat{L}')-\mathbb{D}(\hat{L}').
\]
By \cref{lma:bdivadd}, this definition is independent of the choice of $\hat{L}'$.
\end{definition}

\begin{example}\label{ex:fullmassnef}
Assume that $\hat{L}$ has full mass, then $\mathbb{D}(\hat{L})=\mathbb{D}(L,h_{\min})$, where $h_{\min}$ is a psh metric with minimal singularities on $L$. See \cite[Theorem~1.1]{DDNL18fullmass} for example.
In particular, if $L$ is furthermore nef, we have
\[
\mathbb{D}(\hat{L})=\mathbb{D}(L).
\]
\end{example}

We are ready to derive the first version of the Chern--Weil formula.
\begin{theorem}\label{thm:nefbvolume}
The b-divisor $\mathbb{D}(\hat{L})$ is a nef b-divisor and if in addition $\int_X c_1(\hat{L})^n>0$,
\begin{equation}\label{eq:volbandline}
\frac{1}{n!}\vol \mathbb{D}(\hat{L})=\vol \hat{L}.
\end{equation}
\end{theorem}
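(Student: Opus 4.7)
\textbf{Reduction to the $\mathcal{I}$-good case.} The b-divisor $\mathbb{D}(\hat{L})$ depends only on the generic Lelong numbers of $\varphi$ on every birational model; since $\varphi$ and $P[\varphi]_{\mathcal{I}}$ share all multiplier ideal sheaves on every model, they share all generic Lelong numbers, so $\mathbb{D}(\hat{L})$ is unchanged when $\varphi$ is replaced by $P[\varphi]_{\mathcal{I}}$. By \eqref{eq:volpur} the same replacement preserves $\vol \hat{L}$, so we may assume $\varphi = P[\varphi]_{\mathcal{I}}$; combined with the positive-mass hypothesis and \cref{thm:DXmain}, $\varphi$ is then $\mathcal{I}$-good.

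\textbf{A decreasing nef approximation.} By \cref{thm:DXmain}(4), any quasi-equisingular approximation $\varphi_j \in \PSH(X,\theta+\epsilon_j\omega)$ (decreasing to $\varphi$, each with analytic singularities, $\epsilon_j \searrow 0$) satisfies $\varphi_j \xrightarrow{d_S} \varphi$. Choose a log resolution $\pi_j:Y_j\to X$ on which $\pi_j^*\varphi_j$ has log singularities along an snc $\mathbb{Q}$-divisor $D_j$; then $\pi_j^*(\theta+\epsilon_j\omega) + dd^c \pi_j^*\varphi_j = [D_j] + \eta_j$ with $\eta_j$ a smooth semi-positive form, so its class $\pi_j^*(c_1(L)+\epsilon_j[\omega]) - \{D_j\} \in \NS^1(Y_j)_{\mathbb{R}}$ is nef. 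Let $\mathbb{D}_j \in \bCart(\mathfrak{X})$ be the Cartier b-divisor with this incarnation on $Y_j$. Since $\varphi_{j+1}\leq \varphi_j$ forces $\nu(\pi^*\varphi_{j+1},E)\geq \nu(\pi^*\varphi_j,E)$ for every prime $E$ on any model, and $\epsilon_{j+1}[\omega] \leq \epsilon_j[\omega]$, comparison on a common resolution of $Y_j$ and $Y_{j+1}$ plus the negativity lemma gives $\mathbb{D}_{j+1}\leq \mathbb{D}_j$.

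\textbf{Convergence and volume.} On any model $\pi:Y\to X$ the incarnation reads
\[
(\mathbb{D}_j)_Y = \pi^*c_1(L) + \epsilon_j\,\pi^*[\omega] - \sigma_Y(\varphi_j),\qquad \sigma_Y(\psi) := \sum_E \nu(\pi^*\psi,E)\,E \in \NS^1(Y)_{\mathbb{R}}.
\]
The perturbation term vanishes trivially, and \cref{thm:NAdatacontdS} applied to $\varphi_j\xrightarrow{d_S}\varphi$ gives $\nu(\varphi_j,E)\to \nu(\varphi,E)$ along every prime $E$ over $X$; combined with Siu's semicontinuity theorem (which confines the essential mass on $Y$ to finitely many divisors), this yields $\sigma_Y(\varphi_j)\to \sigma_Y(\varphi)$ inside the finite-dimensional space $\NS^1(Y)_{\mathbb{R}}$. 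Hence $\mathbb{D}_j \to \mathbb{D}(\hat{L})$ in $\bWeil(\mathfrak{X})$, which shows $\mathbb{D}(\hat{L})$ is nef. On $Y_j$ the self-intersection is computed directly from the smooth representative and, by \cref{prop:notionsbirational}(3), equals
\[
\vol \mathbb{D}_j = \int_{Y_j} \eta_j^n = \int_{Y_j} \bigl(\pi_j^*(\theta+\epsilon_j\omega)_{\pi_j^*\varphi_j}\bigr)^n = \int_X \bigl((\theta+\epsilon_j\omega)_{\varphi_j}\bigr)^n.
\]
Expanding the perturbed class and using $d_S$-convergence together with $\epsilon_j\to 0$ gives $\vol \mathbb{D}_j \to \int_X \theta_\varphi^n = n!\,\vol\hat{L}$. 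Continuity of the Dang--Favre product along decreasing nets (\cref{thm:DF2}) then yields $\vol \mathbb{D}(\hat{L}) = \lim_j \vol \mathbb{D}_j = n!\,\vol\hat{L}$.

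\textbf{Main obstacle.} The subtle point is the convergence $\sigma_Y(\varphi_j)\to \sigma_Y(\varphi)$ in $\NS^1(Y)_{\mathbb{R}}$: unlike $\sigma_Y(\varphi_j)$, which is a finite sum supported on the exceptional locus of $\pi_j$, the limiting singularity class $\Sing \pi^*\hat{L}$ can in principle involve countably many prime divisors. Ruling out escape of mass requires both the continuity of Lelong numbers along $d_S$-convergent sequences (\cref{thm:NAdatacontdS}) and Siu's structure theorem, so that only finitely many divisors matter once one passes to the finite-dimensional Néron--Severi group. This is also the step that dictates the reduction to the $\mathcal{I}$-good case, since the $d_S$-convergence used above is guaranteed precisely in that setting.
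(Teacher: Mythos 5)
Your opening reduction to the $\mathcal{I}$-good case is a valid and rather elegant alternative to the paper's Step 1: you invoke the birational stability of multiplier ideal sheaves to argue that $\varphi$ and $P[\varphi]_{\mathcal{I}}$ have identical generic Lelong numbers on every model (hence identical $\mathbb{D}(\hat{L})$) and identical $\vol\hat{L}$, so nothing is lost in replacing $\varphi$ by $P[\varphi]_{\mathcal{I}}$. The paper instead proves the analytic-singularities case directly via Bonavero's volume formula and then approximates; both routes work. Your observation that nefness follows by exhibiting a decreasing net of nef Cartier b-divisors converging in $\bWeil(\mathfrak{X})$ is also correct.

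There is, however, a genuine gap at the approximation step. You invoke \cref{thm:DXmain}(4) to conclude that a quasi-equisingular approximation $\varphi_j$ converges to $\varphi$ in $d_S$, but the equivalence (1)$\iff$(4) in that theorem is stated \emph{only} under the additional hypothesis that $\ddc h_L$ is a Kähler current. Without that hypothesis your starting point fails. This is precisely why the paper has a separate Step~3: first one reduces to the Kähler-current case by writing $\varphi = \lim_j \bigl((1-j^{-1})\varphi + j^{-1}\psi\bigr)$ for some $\psi\in\PSH(X,\theta)$ with $\theta_\psi$ a Kähler current (\cref{lma:kcapp}), using the continuity of both sides along such increasing $d_S$-convergent sequences (\cref{thm:pshbdivcont} and \cref{thm:DF2}). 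The Kähler-current hypothesis is also what guarantees that the quasi-equisingular approximants $\varphi_j$ can be taken inside the \emph{fixed} class $\PSH(X,\theta)$ rather than in $\PSH(X,\theta+\epsilon_j\omega)$; as written you apply \cref{thm:NAdatacontdS} to a sequence living in varying classes, which is outside the theorem's hypotheses (it is stated for a fixed $\theta$). Finally, your appeal to Siu's semicontinuity to conclude $\sigma_Y(\varphi_j)\to\sigma_Y(\varphi)$ in $\NS^1(Y)_{\mathbb{R}}$ glosses over the real issue: $\Sing\pi^*\hat{L}$ may genuinely have countably many components. The correct argument, as in the paper, is a truncation: fix an ample $\omega$ and $\epsilon>0$, choose $N$ so that the tail $\sum_{i>N} a_i E_i\leq\frac{\epsilon}{2}\omega$, then use the finitely many leading divisors and the Lelong-number convergence to conclude $\Sing(\pi^*L,\pi^*h^j)\geq\Sing(\pi^*L,\pi^*h)-\epsilon\omega$ for $j$ large. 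You should add the Kähler-current reduction step and make the $\epsilon$-truncation explicit.
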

This is essentially \cite[Theorem~5.2]{XiaPPT}. At the time when \cite{XiaPPT} was written, neither \cite{DF20} nor \cite{DX21} was available, so our formulation was different there.
\begin{proof}

\textbf{Step 1}. We first handle the case where $\varphi$ has analytic singularities. Take a resolution $\pi\colon Y\rightarrow X$ so that $\pi^*\varphi$ has log singularities along a snc $\mathbb{Q}$-divisor $E$ on $Y$.
By \cref{prop:notionsbirational}, $\vol \pi^*\hat{L}=\vol \hat{L}$. Similarly, by definition, $\vol \mathbb{D}(\hat{L})=\vol \mathbb{D}(\pi^*\hat{L})$.
Replacing $X$ by $Y$, we may assume that $\varphi$ has log singularities along a snc $\mathbb{Q}$-divisor $E$ on $X$.
In this case, $\mathbb{D}(\hat{L})=\mathbb{D}(L-E)$, which is nef by \cite[Lemma~2.4]{XiaPPT}. 
We are reduced to show that
\begin{equation}\label{eq:temp14}
\vol \hat{L}=\frac{1}{n!}((L-E)^n).    
\end{equation}
The volume of $\hat{L}$ is computed in this case by a theorem of Bonavero \cite{Bon98}, giving \eqref{eq:temp14}.

\textbf{Step 2}. Assume that $\ddc h_L$ is a K\"ahler current. Take a quasi-equisingular approximation $\varphi^j\in \PSH(X,\theta)$ of $\varphi$.
Write $h^j$ for the corresponding metrics on $L$. By \cite[Lemma~3.7]{DX21}, $\vol (L,h^j)\to \vol (L,h)$. Observe that $\mathbb{D}(L,h^j)$ is decreasing in $j$. By Step~1 and \cref{thm:DF2}, it therefore suffices to show that $\mathbb{D}(L,h^j)\to \mathbb{D}(L,h)$. In more concrete terms, this means that for any $(\pi\colon Y\rightarrow X)\in \Bir(X)$, 
\[
\Sing (\pi^*L,\pi^*h^j)\to \Sing (\pi^*L,\pi^*h)
\]
in $\NS^1(Y)_{\mathbb{R}}$.
This obviously follows from \cite[Lemma~2.2]{XiaPPT} if $\Sing (\pi^*L,\pi^*h)$ has only finitely many components. In general, fix an ample class $\omega$ in $\NS^1(Y)$. We want to show that for any $\epsilon>0$, we can find $j_0>0$ so that when $j\geq j_0$,
\begin{equation}\label{eq:temp5}
\Sing (\pi^*L,\pi^*h^j)\geq \Sing (\pi^*L,\pi^*h)-\epsilon \omega.
\end{equation}
Write 
\[
\Sing (\pi^*L,\pi^*h)=\sum_{i=1}^{\infty}a_i E_i,\quad \Sing (\pi^*L,\pi^*h^j)=\sum_{i=1}^{\infty}a_i^j E_i.
\]
Then $a_i^j\leq a_i$.
We can find $N>0$ large enough, so that 
\[
\Sing (\pi^*L,\pi^*h)\leq \sum_{i=1}^{N}a_i E_i+\frac{\epsilon}{2}\omega.
\]
By \cite[Lemma~2.2]{XiaPPT}, we can take $j_0$ large enough so that for $j>j_0$,
\[
(a_i-a_i^j)E_i\leq \frac{\epsilon}{2N}\omega,\quad i=1,\ldots,N.
\]
Then \eqref{eq:temp5} follows.

\textbf{Step 3}. Assume that $\int_X c_1(\hat{L})^n>0$.

By \cref{lma:kcapp}, take $\psi\in \PSH(X,\theta)$ such that $\theta_{\psi}$ is a K\"ahler current and $\varphi\geq \psi$. Then $(1-j^{-1})\varphi+j^{-1}\psi$ is an increasing sequence in $\PSH(X,\theta)$ converging to $\varphi$ pointwisely and hence with respect to $d_S$ as $j\to\infty$. It follows that
\[
\lim_{j\to\infty}\vol(\theta,(1-j^{-1})\varphi+j^{-1}\psi)=\vol(\theta,\varphi).
\]
Write $h_1$ for the metric on $L$ induced by $\psi$.
It is obvious that 
\[
\vol \mathbb{D}(L,(1-j^{-1})h_L+j^{-1} h_1)\to \vol \mathbb{D}(L,h_L)
\]
as $j\to\infty$. So we conclude by Step~2.

\textbf{Step~4}. We handle the general case.

Take an ample line bundle $S$ on $X$. 
From Step~3, we know that for any rational $\epsilon>0$, $\mathbb{D}(\hat{L})+\epsilon \mathbb{D}(S)$ is a nef b-divisor. It follows immediately that $\mathbb{D}(\hat{L})$ is nef. 
\end{proof}

\begin{corollary}\label{cor:Imodcharbdiv}
Assume that $\int_X c_1(\hat{L})^n>0$, then 
$\hat{L}$ is $\mathcal{I}$-good if and only if 
\[
\vol \mathbb{D}(\hat{L})=\int_X c_1(\hat{L})^n.
\]
\end{corollary}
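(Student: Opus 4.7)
The plan is to deduce the corollary directly from the two main inputs already available: Theorem~\ref{thm:nefbvolume} (just established) and Theorem~\ref{thm:DXmain}. No further potential-theoretic machinery is needed.

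First, by Theorem~\ref{thm:nefbvolume}, under the positivity hypothesis $\int_X c_1(\hat{L})^n>0$, we have the identity
\[
\frac{1}{n!}\vol \mathbb{D}(\hat{L}) = \vol \hat{L}\,.
\]
On the other hand, by Theorem~\ref{thm:DXmain} (equivalence of (1) and (2)), under the same positivity hypothesis, $\hat{L}$ is $\mathcal{I}$-good if and only if
\[
\vol \hat{L} = \frac{1}{n!}\int_X c_1(\hat{L})^n\,.
\]

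Combining these two statements, $\hat{L}$ is $\mathcal{I}$-good if and only if
\[
\frac{1}{n!}\vol \mathbb{D}(\hat{L}) = \frac{1}{n!}\int_X c_1(\hat{L})^n\,,
\]
which is equivalent to $\vol \mathbb{D}(\hat{L}) = \int_X c_1(\hat{L})^n$, as claimed. Since both inputs are already established, no nontrivial step remains; the only thing to verify is that the hypothesis $\int_X c_1(\hat{L})^n>0$ is exactly what is required to invoke both theorems.
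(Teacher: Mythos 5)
Your proof is correct and is exactly the paper's argument: the paper's proof reads simply ``This follows from \cref{thm:nefbvolume} and \cref{thm:DXmain}.'' You have just spelled out the same two-line combination in full.
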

\begin{proof}
This follows from \cref{thm:nefbvolume} and \cref{thm:DXmain}.
\end{proof}

\begin{theorem}\label{thm:pshbdivcont}
The map $\mathbb{D}\colon \PSH(X,\theta)\rightarrow \bWeil(\mathfrak{X})$ is continuous. Here on $\PSH(X,\theta)$ we take the $d_S$-pseudometric.
\end{theorem}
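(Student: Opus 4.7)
The approach is to exploit the projective-limit topology on $\bWeil(\mathfrak{X})$ and reduce to an incarnation-level statement. By definition of that topology, it suffices to show that for every $(\pi\colon Y\to X)\in\Bir(X)$ the map $\varphi \mapsto \mathbb{D}(\varphi)_Y = \pi^*c_1(L) - [\Sing(\pi^*\varphi)]$ is $d_S$-continuous into the finite-dimensional space $\NS^1(Y)_{\mathbb{R}}$. The birational pullback $\pi^*$ is a $d_S$-isometry on potentials of positive mass, since $V_{\pi^*\theta}=\pi^*V_\theta$ and the non-pluripolar product is natural under birational pullback, so the integral formula defining $d_S$ recalled in \cref{sec:prel} is preserved by $\pi^*$. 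One may therefore rename and assume $Y=X$, reducing the theorem to: if $\varphi_i \xrightarrow{d_S} \varphi$ in $\PSH(X,\theta)_{>0}$, then $[\Sing(\varphi_i)] \to [\Sing(\varphi)]$ in $\NS^1(X)_{\mathbb{R}}$.

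The key analytic input will be \cref{thm:NAdatacontdS}, giving pointwise convergence $\nu(\varphi_i,E) \to \nu(\varphi,E)$ for every prime divisor $E$ on $X$. Since the Siu residual of $\theta+dd^c\psi$ is closed and positive, $0 \leq [\Sing(\psi)] \leq [\theta]$ in the pseudo-effective cone, so $\{[\Sing(\varphi_i)]\}_i$ is relatively compact in $\NS^1(X)_{\mathbb{R}}$ and any cluster value takes the form $\beta = \lim_k [\Sing(\varphi_{i_k})]$. For any $(n-1)$-tuple of Kähler classes $\alpha_1, \ldots, \alpha_{n-1}$, Fatou's lemma applied termwise to the nonnegative series $\sum_E \nu(\varphi_{i_k},E)(E\cdot\alpha_1\cdots\alpha_{n-1})$ delivers the one-sided bound
\[
(\beta-[\Sing(\varphi)])\cdot\alpha_1\cdots\alpha_{n-1}\geq 0.
\]
Since this holds for all products of Kähler classes, the Boucksom--Demailly--Paun--Peternell characterization of pseudo-effectivity forces $\beta - [\Sing(\varphi)]$ to be pseudo-effective.

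The main obstacle will be the matching upper bound for a single test class. Fix a Kähler $\omega$ on $X$ and use the cohomological mass balance
\[
[\theta]\cdot\omega^{n-1} = \int_X R_\psi\wedge\omega^{n-1} + [\Sing(\psi)]\cdot\omega^{n-1},
\]
with $R_\psi$ the Siu residual of $\theta+dd^c\psi$. Polarization of \cref{thm:convdsmeasures} gives $d_S$-continuity of the mixed non-pluripolar mass $\int_X\theta_\psi\wedge\omega^{n-1}$; the remaining gap $R_\psi-\theta_\psi$ is a closed positive current supported on a higher-codimensional pluripolar locus, and its mass against $\omega^{n-1}$ must also be shown to pass to the limit along $i$. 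I plan to handle this delicate step by quasi-equisingular approximation, where the higher-codimensional residue is controlled through analytic singularities, coupled with a diagonal argument — the same strategy powering the proof of \cref{thm:nefbvolume}. Once $\beta\cdot\omega^{n-1}\leq[\Sing(\varphi)]\cdot\omega^{n-1}$ is established, the pseudo-effective class $\beta-[\Sing(\varphi)]$ has vanishing $\omega^{n-1}$-intersection; its closed positive representative then has zero trace measure against the Kähler form $\omega^{n-1}$ and must therefore be the zero current. This forces $\beta=[\Sing(\varphi)]$ and concludes the identification.
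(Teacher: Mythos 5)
Your reduction and the $d_S$-isometry of birational pullback are both correct, and this brings the problem to the key claim $[\Sing\varphi_i]\to[\Sing\varphi]$ in $\NS^1(X)_{\mathbb{R}}$, exactly as in the paper. Your explicit identification of the two halves — a Fatou-type lower bound and a matching upper bound — is also a more honest accounting of what the argument requires than the paper's own very terse proof, which defers to \cref{thm:NAdatacontdS} without spelling out how pointwise convergence of Lelong numbers is upgraded to $\NS^1$-convergence when $\Sing\varphi$ may have infinitely many components.

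The Fatou lower bound is essentially fine: for each Kähler class product, Fatou applied to $\sum_E\nu(\varphi_{i_k},E)(E\cdot\alpha_1\cdots\alpha_{n-1})$ does give $\beta\cdot\alpha_1\cdots\alpha_{n-1}\geq[\Sing\varphi]\cdot\alpha_1\cdots\alpha_{n-1}$. But note that BDPP duality identifies the pseudo-effective cone with the dual of the \emph{movable} cone, not with the cone dual to products of Kähler classes on $X$ alone; to legitimately conclude $\beta-[\Sing\varphi]\geq 0$ you need to run the same Fatou inequality on every birational model (which is available, since \cref{thm:NAdatacontdS} holds for prime divisors over $X$) and push forward.

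The genuine gap is where you yourself flag it. The mass-balance identity you write is correct cohomologically, but the objects on the right-hand side do not obey the continuity you need. Polarized \cref{thm:convdsmeasures} controls the \emph{non-pluripolar} masses $\int_X\langle\theta_{\varphi_i}\rangle\wedge\omega^{n-1}$, while your balance involves the trace mass $[R_{\varphi_i}]\cdot\omega^{n-1}$ of the full Siu residual. These differ by the mass $R_{\varphi_i}$ concentrates on codimension $\geq 2$ pluripolar sets, and that defect is not monotone, not continuous in $d_S$, and not controlled by the non-pluripolar intersection theory available to you. The best inequality this machinery gives is $\limsup_i[\Sing\varphi_i]\cdot\omega^{n-1}\leq[\theta]\cdot\omega^{n-1}-\int_X\langle\theta_\varphi\rangle\wedge\omega^{n-1}$, which is strictly weaker than the desired $\leq[\Sing\varphi]\cdot\omega^{n-1}=[\theta]\cdot\omega^{n-1}-\int_X R_\varphi\wedge\omega^{n-1}$ precisely when $R_\varphi$ carries higher-codimension mass. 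Appealing to ``quasi-equisingular approximation coupled with a diagonal argument'' does not resolve this: the tail-cutting argument from Step~2 of \cref{thm:nefbvolume} relies crucially on the monotonicity $\nu(\varphi^j,E)\leq\nu(\varphi,E)$ that a quasi-equisingular approximation gives, and that monotonicity is absent for a general $d_S$-convergent sequence. As written, the upper bound is a plan, not a proof, and without it the identification $\beta=[\Sing\varphi]$ does not follow. A workable route is to test $[\Sing\varphi_i]$ against $A^{n-1}$ for ample $A$ using the mixed-volume equality of \cref{thm:nefbvolume2} (whose quasi-equisingular step in fact does not require the present theorem) together with continuity of the mixed non-pluripolar mass; that gives both inequalities at once since $A^{n-1}$ for ample $A$ span $N_{n-1}(X)_{\mathbb{R}}$.
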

\begin{proof}
Let $\varphi_i\in \PSH(X,\theta)$ be a sequence converging to $\varphi\in \PSH(X,\theta)$ with respect to $d_S$. We want to show that
\[
\mathbb{D}(\theta,\varphi_i)\rightarrow \mathbb{D}(\theta,\varphi).
\]
As $\varphi_i\xrightarrow{d_S}\varphi$ implies that $\pi^*\varphi_i\xrightarrow{d_S}\pi^*\varphi$ for any $(\pi\colon Y\rightarrow X)\in \Bir(X)$, it suffices to prove
\begin{equation}\label{eq:temp7}
\Sing_X \varphi_i\rightarrow \Sing_X \varphi\quad \text{in }\NS^1(X)_{\mathbb{R}}.
\end{equation}
Write 
\[
\Sing_X \varphi_i= \sum_{E}a_i^E E,\quad \Sing_X \varphi= \sum_{E}a^E E,
\]
where $E$ runs over all prime divisors on $X$. By \cref{thm:NAdatacontdS} below, $a_i^E\to a^E$ as $i\to\infty$. When the number of $E$'s is finite, \eqref{eq:temp7} follows trivially. Otherwise, we write the prime divisors on $X$ having positive coefficients in either $\Sing_X \varphi_i$ or $\Sing_X \varphi$ as $E_1,E_2,\dots$. 

We fix a basis $e_1,\ldots,e_N$ of the finite-dimensional vector space $\NS^1(X)_{\mathbb{R}}$, so that the pseudo-effective cone is contained in the cone $\sum_d \mathbb{R}_{\geq 0} e_d$. Write
\[
E_i=\sum_{d=1}^N f_i^d e_d,\quad i=1,2,\dots.
\]
Then we need to show that for any $d=1,\ldots,N$, 
\[
\lim_{i\to\infty}\sum_{j=1}^{\infty}a_i^{E_j} f_j^d = \sum_{j=1}^{\infty} a^{E_j} f_j^d.
\]
This follows from the dominated convergence theorem, since 
\[
\sum_{j=1}^{\infty}a_i^{E_j}E_j\leq c_1(L),\quad \sum_{j=1}^{\infty}a^{E_j}E_j\leq c_1(L).
\]

\end{proof}

\begin{lemma}\label{lma:Imodeldeclelong}
Let $X$ be a compact K\"ahler manifold and $\theta$ be a closed real positive $(1,1)$-form representing a big cohomology class.
Let $\varphi^j\in \PSH(X,\theta)$  be a decreasing sequence of model potentials. Let $\varphi$ be the limit of $\varphi^j$. Assume that $\varphi$ has positive mass. Then for any prime divisor $E$ over $X$,
\[
\lim_{j\to\infty}\nu(\varphi^j,E)=\nu(\varphi,E).
\]
\end{lemma}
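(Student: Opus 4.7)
The plan is to reduce the statement to the $d_S$-continuity of generic Lelong numbers asserted in \cref{thm:NAdatacontdS}, via two intermediate steps: mass convergence, then $d_S$-convergence.

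First, I would observe that because $\varphi^j \searrow \varphi$ pointwise, the monotonicity theorem for non-pluripolar Monge--Amp\`ere products of Darvas--Di Nezza--Lu \cite[Theorem~2.3]{DDNL18mono} applies and yields the weak convergence of measures $\theta_{\varphi^j}^n \rightharpoonup \theta_{\varphi}^n$. Since $X$ is compact, testing against the constant function $1$ gives the numerical mass convergence
\[
    \lim_{j\to\infty} \int_X \theta_{\varphi^j}^n = \int_X \theta_{\varphi}^n.
\]
Together with the assumption that $\varphi \in \PSH(X,\theta)_{>0}$, this is exactly the hypothesis needed to trigger \cref{lma:decseqplusvolumeimds}, which then delivers $\varphi^j \xrightarrow{d_S} \varphi$ in the Darvas--Di Nezza--Lu pseudometric.

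Finally, I would invoke \cref{thm:NAdatacontdS}, which asserts precisely that the functional $\psi \mapsto \nu(\psi, E)$ is continuous on $\PSH(X,\theta)_{>0}$ for the $d_S$-pseudometric, along any prime divisor $E$ over $X$. Applied to the $d_S$-convergent sequence produced above, this gives the desired conclusion $\nu(\varphi^j, E) \to \nu(\varphi, E)$.

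The main obstacle in this chain is really the input from \cref{thm:NAdatacontdS}: the two preliminary reductions (DDNL monotonicity plus \cref{lma:decseqplusvolumeimds}) are essentially formal, whereas the continuity of divisorial Lelong numbers along $d_S$-limits is the substantive analytic ingredient. One may also remark that the model hypothesis on the $\varphi^j$ is, strictly speaking, redundant for this particular chain of implications — both \cref{lma:decseqplusvolumeimds} and \cref{thm:NAdatacontdS} are insensitive to it once mass convergence is known, and Lelong numbers are invariant under the model envelope projection $P[\,\cdot\,]$, so any $d_S$-ambiguity at the limit is harmless. The assumption is nevertheless natural because this lemma will be applied to sequences of model potentials arising from the b-divisor singularity decomposition $\mathbb{D}(\hat L)$.
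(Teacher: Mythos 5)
Your proposed route is circular. The theorem you invoke in the final step, \cref{thm:NAdatacontdS}, is proved in the paper \emph{by means of} \cref{lma:Imodeldeclelong}: after reducing to $\mathcal{I}$-model potentials, constructing $\psi_i := \sups_{j\geq i}\varphi_j$, and arguing that $P[\psi_i]\to\varphi$, the paper's proof of \cref{thm:NAdatacontdS} cites \cref{lma:Imodeldeclelong} to conclude $\lim_{i}\nu(\psi_i,E)=\nu(\varphi,E)$. You therefore cannot derive \cref{lma:Imodeldeclelong} from \cref{thm:NAdatacontdS}; the logical dependency runs the other way.

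There is also a gap in your first step. The convergence result of \cite{DDNL18mono} (and its incarnation \cref{thm:convdsmeasures} here) yields $\theta^n_{\varphi^j}\rightharpoonup\theta^n_\varphi$ only under the \emph{hypothesis} that $\int_X\theta^n_{\varphi^j}\to\int_X\theta^n_\varphi$; for a general decreasing sequence in $\PSH(X,\theta)$ the non-pluripolar mass can drop strictly at the limit, so that hypothesis must be established. The model assumption on the $\varphi^j$ is exactly what secures mass convergence, via \cite[Proposition~4.8]{DDNLmetric}; your remark that the model hypothesis is ``strictly speaking redundant'' is therefore incorrect. The paper's own argument, due to Darvas, is non-circular and considerably shorter: once mass convergence is in hand, \cite[Lemma~4.3]{DDNLmetric} produces, for every $\varepsilon\in(0,1)$ and all $j$ large, a potential $\psi^j\in\PSH(X,\theta)$ with $(1-\varepsilon)\varphi^j+\varepsilon\psi^j\leq\varphi$. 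Since also $\varphi\leq\varphi^j$, this squeezes $\nu(\varphi,E)$ between $\nu(\varphi^j,E)$ and $(1-\varepsilon)\nu(\varphi^j,E)+\varepsilon\nu(\psi^j,E)$; as $\nu(\cdot,E)$ is uniformly bounded on $\PSH(X,\theta)$, sending $\varepsilon\to0$ finishes the proof.
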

The following proof is due to Darvas, which greatly simplifies the author's original proof.
\begin{proof}
Since $\varphi \coloneqq  \lim_j \varphi^j$ and the $\varphi^j$'s are model, we obtain that $\int_Y \theta_\varphi^n = \lim_j \int_Y \theta_{\varphi^j}^n>0$ \cite[Proposition 4.8]{DDNLmetric}. By \cite[Lemma 4.3]{DDNLmetric}, for any $\varepsilon \in (0,1)$, for $j$ big enough there exists $\psi^j \in \PSH(X,\theta)$ such that $(1-\varepsilon) \varphi^j + \varepsilon \psi^j \leq \varphi$. This implies that for $j$ big enough we have 
\[
(1-\varepsilon)\nu(\varphi^j,E) + \varepsilon\nu(\psi^j,E) \geq \nu(\varphi,E) \geq \nu(\varphi^j,E).
\]

However $\nu(\chi,E)$ is uniformly bounded (by some Seshadri type constant) for any $\chi \in \PSH(X,\theta)$ and $E$ fixed. So letting $\varepsilon \searrow 0$ we conclude. 
\end{proof}

\begin{theorem}\label{thm:NAdatacontdS}
Let $X$ be a compact K\"ahler manifold and $\theta$ be a closed real positive $(1,1)$-form representing a big cohomology class.
Let $\varphi_i\in \PSH(X,\theta)$ ($i\in \mathbb{N}$) be a sequence and $\varphi\in \PSH(X,\theta)$. Assume that $\varphi_i\xrightarrow{d_S}\varphi$, then for any prime divisor $E$ over $X$,
\begin{equation}\label{eq:convnu}
\lim_{i\to\infty}\nu(\varphi_i,E)=\nu(\varphi,E).
\end{equation}
\end{theorem}

\begin{proof}
By \cite[Corollary~4.3]{Xia21}, we may replace $\theta$ by $\theta+\omega$, where $\omega$ is a K\"ahler form on $X$ and reduce to the case where $\varphi_i,\varphi\in \PSH(X,\theta)_{>0}$. Then we may further assume that $\varphi_i$ and $\varphi$ are both model potentials.

When proving \eqref{eq:convnu}, we are free to pass to subsequences. By \cite[Theorem~5.6]{DDNLmetric}, we may assume that the sequence $\varphi_i$ is either increasing or decreasing. In the increasing case, \eqref{eq:convnu} is trivially true. In the decreasing case, it follows from \cref{lma:Imodeldeclelong}.

\end{proof}

A mixed version of \cref{thm:nefbvolume} is also true:
\begin{theorem}\label{thm:nefbvolume2}
Let $\hat{L}_1,\ldots,\hat{L}_n\in \widehat{\Pic}(X)_{>0}$.
Then
\begin{equation}\label{eq:bdivmixint}
\frac{1}{n!}\left(\mathbb{D}(\hat{L}_1),\ldots,\mathbb{D}(\hat{L}_n)\right)=\vol(\hat{L}_1,\ldots,\hat{L}_n)\geq \frac{1}{n!}\int_X  c_1(\hat{L}_1)\wedge \cdots\wedge c_1(\hat{L}_n).
\end{equation}
If each $\hat{L}_i$ is $\mathcal{I}$-good, then equality holds.
\end{theorem}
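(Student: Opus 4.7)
The plan is to reduce \cref{thm:nefbvolume2} to the unmixed version \cref{thm:nefbvolume} by a polarization argument, and then handle the inequality for general (not necessarily $\mathcal{I}$-good) potentials by monotonicity of non-pluripolar products.

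First I would observe that both the b-divisor $\mathbb{D}(\hat{L}_i)$ and the mixed volume $\vol(\hat{L}_1,\ldots,\hat{L}_n)$ are insensitive to replacing each $h_i$ by its $\mathcal{I}$-model envelope: $\mathbb{D}(\hat{L}_i)$ depends only on the generic Lelong numbers of $\varphi_i$ on every birational model, and $\varphi_i$ and $P[\varphi_i]_{\mathcal{I}}$ have identical multiplier ideals and hence identical such Lelong numbers (by \cite{DX22}), while the mixed volume is defined in terms of $P[\varphi_i]_{\mathcal{I}}$ to begin with. Since an $\mathcal{I}$-model potential of positive mass is $\mathcal{I}$-good, it suffices for the first equality in \eqref{eq:bdivmixint} to treat the case where every $\hat{L}_i$ lies in $\widehat{\Pic}_{\mathcal{I}}(X)$. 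Nefness of each $\mathbb{D}(\hat{L}_i)$ is the content of \cref{thm:nefbvolume}, so the left-hand side is well-defined via \cref{thm:DF2}.

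Assuming all $\hat{L}_i$ are $\mathcal{I}$-good, I would consider the family of tensor products $\hat{L}_t := \bigotimes_{i=1}^n \hat{L}_i^{\otimes t_i}$ for positive integers $t_i$. By iterated application of \cref{prop:Igoodsum} (equivalently \cref{prop:Igoodtensor}), $\hat{L}_t$ is again $\mathcal{I}$-good with positive mass, and its $\mathcal{I}$-model envelope has potential $\sum_i t_i P[\varphi_i]_{\mathcal{I}}$. Applying \cref{thm:nefbvolume} to $\hat{L}_t$ and using additivity $\mathbb{D}(\hat{L}_t)=\sum_i t_i\mathbb{D}(\hat{L}_i)$ (\cref{lma:bdivadd}), both sides of the identity
\[
\frac{1}{n!}\bigl(\sum_i t_i \mathbb{D}(\hat{L}_i)\bigr)^n \;=\; \frac{1}{n!}\int_X \bigl(\sum_i t_i(\theta_i+\ddc P[\varphi_i]_{\mathcal{I}})\bigr)^n
\]
are polynomials in $t=(t_1,\ldots,t_n)$. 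Expanding the $n$-th power multinomially on each side and isolating the coefficient of $t_1 t_2 \cdots t_n$ yields the claimed equality $\frac{1}{n!}(\mathbb{D}(\hat{L}_1),\ldots,\mathbb{D}(\hat{L}_n)) = \vol(\hat{L}_1,\ldots,\hat{L}_n)$. The symmetric multilinearity of the b-divisor intersection pairing follows from \cref{thm:DF2}.

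For the inequality in \eqref{eq:bdivmixint}, note that $P[\varphi_i]_{\mathcal{I}}$ is, up to an additive constant, less singular than $\varphi_i$. By the mixed monotonicity of non-pluripolar products (\cite[Theorem~2.3]{DDNL18mono}, invoked already in \cref{thm:convdsmeasures}),
\[
\int_X c_1(\hat{L}_1)\wedge\cdots\wedge c_1(\hat{L}_n) \;\leq\; \int_X \bigl(\theta_1+\ddc P[\varphi_1]_{\mathcal{I}}\bigr)\wedge\cdots\wedge\bigl(\theta_n+\ddc P[\varphi_n]_{\mathcal{I}}\bigr) \;=\; n!\,\vol(\hat{L}_1,\ldots,\hat{L}_n).
\]
If each $\hat{L}_i$ is $\mathcal{I}$-good, then $P[\varphi_i]_{\mathcal{I}}=P[\varphi_i]$, which in turn has the same non-pluripolar mass as $\varphi_i$ in every mixed combination (by \cite[Theorem~1.1]{DDNL18fullmass} together with polarization), forcing equality throughout.

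The main obstacle I expect is the clean polynomial expansion in $t$: justifying that the non-pluripolar mass of $\theta_{t,\sum_i t_i P[\varphi_i]_{\mathcal{I}}}^n$ actually equals the $n$-th power of a sum in the mixed non-pluripolar sense. The subtlety is that non-pluripolar products are generally non-linear in their arguments, so the step relies crucially on \cref{prop:Igoodsum} and on the identification $P[\sum_i t_i \varphi_i]_{\mathcal{I}} = \sum_i t_i P[\varphi_i]_{\mathcal{I}}$ in the $\mathcal{I}$-good category; this in turn is what makes polarization against \cref{thm:nefbvolume} give a tight mixed identity rather than merely an inequality.
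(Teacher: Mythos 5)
Your proof takes a genuinely different route from the paper's, and for the first equality in \eqref{eq:bdivmixint} your route is essentially correct and arguably cleaner. The paper proves that equality via a three-step $d_S$-approximation scheme (analytic singularities, then K\"ahler currents via quasi-equisingular approximation, then the general case), invoking \cref{thm:pshbdivcont} and the continuity in \cref{thm:DF2} at each reduction. You instead first reduce to the case $\varphi_i = P[\varphi_i]_{\mathcal{I}}$, observing that both the b-divisor (through its generic Lelong numbers on every model, which by \cite{DX22} are multiplier-ideal invariants) and the mixed volume (by its very definition) are unchanged under this replacement. After this reduction the crucial subtlety evaporates: $\sum_i t_i P[\varphi_i]_{\mathcal{I}}$ is literally $\varphi_t$, which is $\mathcal{I}$-good by \cref{prop:Igoodsum}, so $\int_X\theta_{t,\varphi_t}^n = n!\,\vol\hat{L}_t$, and both sides of \cref{thm:nefbvolume} become polynomials in $t$ that expand multinomially via \cref{lma:bdivadd} and the multilinearity of non-pluripolar masses on the compact $X$. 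This avoids the approximation machinery altogether, while the paper also polarizes in its Step~1 but only in the analytic-singularities case and must then run the continuity argument twice. The inequality is identical in both proofs.

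The one genuine gap in your write-up is the justification of the final equality when each $\hat{L}_i$ is $\mathcal{I}$-good. There you need the identity
\[
\int_X \theta_{1,\varphi_1}\wedge\cdots\wedge\theta_{n,\varphi_n} = \int_X \theta_{1,P[\varphi_1]}\wedge\cdots\wedge\theta_{n,P[\varphi_n]}\,,
\]
and you assert this follows from \cite[Theorem~1.1]{DDNL18fullmass} ``together with polarization.'' Polarization does not deliver this: expanding $\int_X\theta_{t,\varphi_t}^n = \int_X\theta_{t,P[\varphi_t]}^n$ does not help, because $P[\varphi_t]$ is \emph{not} $\sum_i t_i P[\varphi_i]$ and does not decompose multinomially — the operation $P[\cdot]$ is not additive, and neither $\sum_i t_i P[\varphi_i] \preceq P[\varphi_t]$ nor the reverse holds in general, so there is no sandwich to exploit. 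The identity you need is true, but it is really the iterated form of the mixed monotonicity theorem of Darvas--Di Nezza--Lu (replacing $\varphi_j$ by $P[\varphi_j]$ one factor at a time, using that $\varphi_j$ lies in the relative full-mass class of $P[\varphi_j]$), not a polarization of the full-mass characterization. The alternative is the paper's device: approximate each $\varphi_i$ in $d_S$ by potentials with analytic singularities (possible precisely because $\varphi_i$ is $\mathcal{I}$-good, via \cref{thm:DXmain}(3)), then invoke \cref{thm:convdsmeasures} for the right-hand side. Either fix closes the gap; as written, the last sentence is not justified.
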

In other words, for $\mathcal{I}$-good potentials, Chern numbers are intersection numbers of b-divisors. One can also prove that the intersection number on the left-hand side is equal to the mixed mass in the sense of Cao \cite{Cao14}, as proved in Corollary~4.5 in the first version of \cite{Xia21}.

\begin{proof}
The inequality part of \eqref{eq:bdivmixint} is obvious. It suffices to establish the equality part.

\textbf{Step 1}.
We first handle the case of when each $\hat{L}_i$ has analytic singularities. We may clearly reduce to the case of log singularities along a snc $\mathbb{Q}$-divisor $D_i$ on $X$. In this case, the left-hand side of \eqref{eq:bdivmixint} is just $(L_1-D_1,\ldots,L_n-D_n)$. The middle term is $\int_X c_1(\hat{L}_1)\wedge \cdots \wedge c_1(\hat{L}_n)$. By polarization, this follows from \cref{thm:nefbvolume}.

\textbf{Step 2}.
Assume that the $\ddc h_{L_i}$'s are K\"ahler currents.
Let $(h^j_i)_{j}$ be a quasi-equisingular approximation of $h_{L_i}$.
By \cref{thm:DF2}, the left-hand side of \eqref{eq:bdivmixint} is continuous along these approximations:
\[
\lim_{j\to\infty} \left(\mathbb{D}(L_1,h_1^j),\ldots,\mathbb{D}(L_n,h_n^j)\right)=\left(\mathbb{D}(\hat{L}_1),\ldots,\mathbb{D}(\hat{L}_n)\right).
\]
On the other hand, by \cite[Theorem~4.2]{Xia21}, the middle part of  \eqref{eq:bdivmixint} is also continuous:
\[
\lim_{j\to\infty}\vol\left((L_1,h_1^j),\dots,(L_n,h_n^j)\right)=\vol(\hat{L}_1,\ldots,\hat{L}_n).
\]
So we reduce to Step~1.

\textbf{Step 3}.
The general case follows from the same argument as Step~3 in the proof \cref{thm:nefbvolume}. 
\end{proof}

In the full mass case, we get the following result:
\begin{corollary}\label{cor:intnefrep}
Let $\hat{L}_1,\ldots,\hat{L}_n\in \widehat{\Pic}(X)$ be Hermitian line bundles of positive and full masses. Then 
\[
\left(\mathbb{D}(\hat{L}_1),\ldots,\mathbb{D}(\hat{L}_n)\right)=\langle L_1,\ldots,L_n\rangle=\int_X  c_1(\hat{L}_1)\wedge \cdots\wedge c_1(\hat{L}_n),
\]
where the product in the middle is the movable intersection \cite{BDPP13, BFJ09}.

In particular, if furthermore the $L_i$'s are nef, then
\[
( L_1,\ldots,L_n )=\int_X  c_1(\hat{L}_1)\wedge \cdots\wedge c_1(\hat{L}_n).
\]
\end{corollary}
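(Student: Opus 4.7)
The plan is to reduce to \cref{thm:nefbvolume2} by swapping each full-mass metric for one with minimal singularities at the b-divisor level, and then identifying the resulting mixed non-pluripolar integral with the movable intersection.

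First I would appeal to \cref{ex:fullmassnef}, which, under the full-mass hypothesis, gives $\mathbb{D}(\hat{L}_i)=\mathbb{D}(L_i,h_{\min,i})$ for any psh metric $h_{\min,i}$ with minimal singularities on $L_i$. Hence the b-divisor intersection is unchanged by this replacement:
\[
(\mathbb{D}(\hat{L}_1),\ldots,\mathbb{D}(\hat{L}_n))=(\mathbb{D}(L_1,h_{\min,1}),\ldots,\mathbb{D}(L_n,h_{\min,n}))\,.
\]
Full-mass metrics are $\mathcal{I}$-good (as pointed out right after \cref{def:Igoodvect}), so both families $\{\hat{L}_i\}$ and $\{(L_i,h_{\min,i})\}$ fall into the equality case of \cref{thm:nefbvolume2}. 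Applying that theorem to each family yields
\[
(\mathbb{D}(\hat{L}_1),\ldots,\mathbb{D}(\hat{L}_n))=\int_X c_1(\hat{L}_1)\wedge\cdots\wedge c_1(\hat{L}_n)
\]
and
\[
(\mathbb{D}(L_1,h_{\min,1}),\ldots,\mathbb{D}(L_n,h_{\min,n}))=\int_X c_1(L_1,h_{\min,1})\wedge\cdots\wedge c_1(L_n,h_{\min,n})\,.
\]
The right-hand side of the second identity is, by definition, the movable intersection $\langle L_1,\ldots,L_n\rangle$ \cite{BDPP13, BEGZ10}. Combining the three identities yields the chain of equalities in the corollary, and the ``in particular'' statement is the classical fact that for nef classes the movable intersection coincides with the ordinary one \cite{BDPP13}.

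The only mild subtlety is that \cref{thm:nefbvolume2} is stated under a positive-mass assumption, so the argument applies cleanly when each $L_i$ is big. If some $L_i$ is only pseudo-effective but not big, the full-mass condition degenerates to vanishing mass and all three quantities in the corollary vanish (the movable intersection by definition, the mixed non-pluripolar integral by the monotonicity theorem \cite{WN19}, and the b-divisor intersection by the monotonicity and multilinearity of the pairing provided by \cref{thm:DF2} together with \cref{rmk:DFexte}); so the identity is trivially satisfied in this case, and we may assume throughout that all $L_i$ are big.
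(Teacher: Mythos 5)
Your argument is correct and is almost certainly the one the paper intends (no explicit proof is given in the text for this corollary; it immediately follows the theorem). The steps you chain together — \cref{ex:fullmassnef} to replace $\hat{L}_i$ by $(L_i,h_{\min,i})$ at the b-divisor level, the observation that full mass with positive mass implies $\mathcal{I}$-goodness, the equality case of \cref{thm:nefbvolume2}, and the identification of $\int_X c_1(L_1,h_{\min,1})\wedge\cdots\wedge c_1(L_n,h_{\min,n})$ with the movable intersection — are exactly what the paper's structure suggests, and your handling of the degenerate non-big case is a sensible addition since the corollary, unlike the theorem, does not assume positive mass.

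One remark: you apply \cref{thm:nefbvolume2} twice, once to $\{\hat{L}_i\}$ and once to $\{(L_i,h_{\min,i})\}$, and then match the left-hand sides via \cref{ex:fullmassnef}. A slightly more economical route goes through the middle term of \eqref{eq:bdivmixint} directly: since full mass plus positive mass gives $\mathcal{I}$-goodness, one has $P[\varphi_i]_{\mathcal{I}}=P[\varphi_i]=V_{\theta_i}$, so $\vol(\hat{L}_1,\ldots,\hat{L}_n)=\frac{1}{n!}\int_X \theta_{1,V_{\theta_1}}\wedge\cdots\wedge\theta_{n,V_{\theta_n}}=\frac{1}{n!}\langle L_1,\ldots,L_n\rangle$, and the equality case of \cref{thm:nefbvolume2} applied once yields both identities simultaneously. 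This buys you a shorter proof at the cost of having to unwind the definition of $\vol(\hat{L}_1,\ldots,\hat{L}_n)$, whereas your version stays entirely at the level of the stated corollaries; both are fine.
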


Now it is easy to extend to the case of quasi-positive line bundles.
\begin{corollary}\label{cor:igdff}
Let $\hat{L}_1,\ldots,\hat{L}_n$ be $\mathcal{I}$-good quasi-positive line bundles on $X$.
Then
\begin{equation}\label{eq:qpie}
\left(\mathbb{D}(\hat{L}_1),\ldots,\mathbb{D}(\hat{L}_n)\right)= \int_X  c_1(\hat{L}_1)\wedge \cdots\wedge c_1(\hat{L}_n).
\end{equation}
\end{corollary}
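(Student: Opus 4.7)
The plan is to reduce the quasi-positive case to \cref{thm:nefbvolume2} via multilinear expansion. By \cref{def:Igoodnotp}, for each $i=1,\ldots,n$ we can choose an auxiliary $\hat{L}_i'\in \widehat{\Pic}_{\mathcal{I}}(X)$ such that $\hat{M}_i:=\hat{L}_i\otimes \hat{L}_i'\in \widehat{\Pic}_{\mathcal{I}}(X)$. By the definition of $\mathbb{D}(\hat{L}_i)$ for quasi-positive $\hat{L}_i$ (just after \cref{def:Igoodnotp}), we then have
\[
\mathbb{D}(\hat{L}_i)=\mathbb{D}(\hat{M}_i)-\mathbb{D}(\hat{L}_i')\,.
\]

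On the b-divisor side, I would invoke the multilinear extension of the intersection pairing to differences of nef b-divisors (\cref{rmk:DFexte}) to expand
\[
\bigl(\mathbb{D}(\hat{L}_1),\ldots,\mathbb{D}(\hat{L}_n)\bigr)=\sum_{S\subseteq\{1,\ldots,n\}}(-1)^{|S|}\Bigl(\{\mathbb{D}(\hat{L}_i')\}_{i\in S},\{\mathbb{D}(\hat{M}_i)\}_{i\notin S}\Bigr)\,.
\]
On the analytic side, I would expand $\int_X c_1(\hat{L}_1)\wedge\cdots\wedge c_1(\hat{L}_n)$ by iterated application of the linearity formula $c_1(\hat{A}\otimes \hat{B})\cap T=c_1(\hat{A})\cap T+c_1(\hat{B})\cap T$ (\cref{lma:c1linear}), obtaining the same binomial expansion with $\hat{M}_i$ and $\hat{L}_i'$ in place of $\hat{L}_i$.

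The delicate point is to verify the transversality hypotheses required for each application of \cref{lma:c1linear} in this iterated expansion. Starting from $T_0=[X]$, at each step one has a current $T_k$ obtained from products of the form $c_1(\hat{N}_{i_1})\cap\cdots\cap c_1(\hat{N}_{i_k})\cap[X]$ with $\hat{N}_{i_j}\in\{\hat{M}_{i_j},\hat{L}_{i_j}'\}\subseteq\widehat{\Pic}_{\mathcal{I}}(X)$. Since the initial current $[X]$ puts no mass on pluripolar sets, an iterated application of \cref{prop:massless} shows that $T_k$ also puts no mass on any pluripolar set, hence is transversal to every element of $\widehat{\Pic}(X)$ (in particular to each $\hat{L}_j$, $\hat{M}_j$, $\hat{L}_j'$) by \cref{lma:tranmasspr}. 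This justifies each step of the expansion.

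After expansion, each term on the analytic side is of the form $\int_X\bigwedge_{i\in S}c_1(\hat{L}_i')\wedge\bigwedge_{i\notin S}c_1(\hat{M}_i)$ with \emph{all} factors in $\widehat{\Pic}_{\mathcal{I}}(X)$, so \cref{thm:nefbvolume2} applies and identifies this integral with $\frac{1}{n!}$ times the corresponding b-divisor intersection number. Summing over $S$ with signs $(-1)^{|S|}$ and comparing with the b-divisor expansion above yields \eqref{eq:qpie}. The main obstacle is the bookkeeping of transversality through the iterated expansion; once this is done, the result falls out by direct comparison of the two multilinear expansions.
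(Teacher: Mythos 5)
Your proposal is correct and uses the same core idea as the paper: tensor each quasi-positive $\hat{L}_i$ with an auxiliary $\hat{L}_i'\in\widehat{\Pic}_{\mathcal{I}}(X)$, expand multilinearly on both sides, and apply \cref{thm:nefbvolume2} to the resulting terms, all of which lie in $\widehat{\Pic}_{\mathcal{I}}(X)$. The paper organizes this as an induction on the number $j$ of factors not in $\widehat{\Pic}_{\mathcal{I}}(X)$: it tensors only the $j$ ``bad'' factors, uses the base case ($j=0$) for the term $(\mathbb{D}(\hat{M}_1),\ldots,\mathbb{D}(\hat{M}_j),\mathbb{D}(\hat{L}_{j+1}),\ldots)$, and the inductive hypothesis for the proper-subset cross terms, then isolates the remaining term. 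Your version does the full expansion in one shot over all $2^n$ subsets, which is slightly cleaner bookkeeping but logically equivalent. One point worth noting: your explicit verification of transversality via \cref{prop:massless} and \cref{lma:tranmasspr} --- showing each intermediate current $T_k$ puts no mass on pluripolar sets, hence is transversal to everything --- is a useful addition; the paper's proof implicitly relies on this when expanding the integral on the analytic side but does not spell it out. A small citation slip: the additivity you need to expand $c_1(\hat{L}_i)\cap T=c_1(\hat{M}_i)\cap T-c_1(\hat{L}_i')\cap T$ is the definition of $c_1$ for quasi-positive line bundles (and the subsequent proposition in \cref{sec:quapos}), not \cref{lma:c1linear} as stated, though the latter underlies the former.
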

Here we remind the readers that the product of b-divisors is defined as in \cref{rmk:DFexte}.
\begin{proof}
We make an induction on the number $j$ of $i$ such that $\hat{L}_i\not \in \widehat{\Pic}_{\mathcal{I}}(X)$. When $j=0$, \eqref{eq:qpie} is just \eqref{eq:bdivmixint}. Assume that this theorem has been proved up to $j-1\leq n-1$, we prove it for $j$. We may assume that $\hat{L}_1,\ldots,\hat{L}_j\not\in \widehat{\Pic}_{\mathcal{I}}(X)$. Take $\hat{L}_i'\in \widehat{\Pic}_{\mathcal{I}}(X)$ such that $\hat{L}_i\otimes \hat{L}_i'\in \widehat{\Pic}_{\mathcal{I}}(X)$ for all $i\leq j$. Then by definition,
\[
\mathbb{D}(\hat{L}_i\otimes \hat{L}_i')=\mathbb{D}(\hat{L}_i)+\mathbb{D}(\hat{L}_i').
\]
So
\[
\begin{aligned}
&\left(\mathbb{D}(\hat{L}_1\otimes \hat{L}_1'),\ldots,\mathbb{D}(\hat{L}_j\otimes \hat{L}_j'),\mathbb{D}(\hat{L}_{j+1}),\ldots,\mathbb{D}(\hat{L}_{n})\right)\\
=& \sum_{I\subseteq \{1,\ldots,j\}} \left(\mathbb{D}(\hat{L}_I),\mathbb{D}(\hat{L}'_{\{1,\ldots,j\}\setminus I}),\mathbb{D}(\hat{L}_{j+1}),\ldots,\mathbb{D}(\hat{L}_{n})\right)
\end{aligned}
\]
Here $\mathbb{D}(\hat{L}_I)$ is short for $\mathbb{D}_{i_1},\ldots,\mathds{D}_{i_k}$ if $I=\{i_1,\ldots,i_k\}$. The notation $\hat{L}'_{\bullet}$ is similar. We similarly write $c_1(\hat{L})=c_1(\hat{L}_{i_1})\wedge \cdots \wedge c_1(\hat{L}_{i_k})$.
By inductive hypothesis, we can rewrite this equation as
\[
\begin{aligned}
&\int_X c_1(\hat{L}_1\otimes \hat{L}_1')\wedge \cdots \wedge c_1(\hat{L}_j\otimes \hat{L}_j')\wedge c_1(\hat{L}_{j+1})\wedge \cdots \wedge c_1(\hat{L}_n) \\
=& \left(\mathbb{D}(\hat{L}_1),\ldots,\mathbb{D}(\hat{L}_n)\right)+\sum_{I\subsetneq \{1,\ldots,j\}}
\int_X c_1(\hat{L}_I)\wedge c_1(\hat{L}'_{\{1,\ldots,j\}\setminus I})\wedge c_1(\hat{L}_{j+1})\wedge \cdots \wedge c_1(\hat{L}_n).
\end{aligned}
\]
Expand the left-hand side, we conclude \eqref{eq:qpie}.
\end{proof}

\begin{corollary}\label{cor:cptCherncurri}
Let $\hat{E}_i\in \widehat{\Vect}_{\mathcal{I}}(X)$ or $\Fins_{\mathcal{I}}(X)$ ($i=1,\ldots,m$). 
Consider a monomial in the Segre classes $s_{a_1}(\hat{E}_1)\cdots s_{a_m}(\hat{E}_m)$ with $\sum_i a_i=n$. Let $p:Y\rightarrow X$ be the projection from $Y=\mathbb{P}E_1^{\vee}\times_X \cdots\times_X \mathbb{P}E_m^{\vee}$ to $X$. Let $p_i:Y\rightarrow \mathbb{P}E_i^{\vee}$ be the natural projections.
Then we have
\begin{equation}\label{eq:DDDequal}
(-1)^{a_1+\cdots+a_m}\left(\mathbb{D}(p_1^*\hO_{\mathbb{P}E_1^{\vee}}(1))^{a_1+r_1}, \ldots , \mathbb{D}(\hO_{p_m^*\mathbb{P}E_m^{\vee}}(1))^{a_m+r_m}\right)=\int_X s_{a_1}(\hat{E}_1)\cdots s_{a_m}(\hat{E}_m).
\end{equation}
Here $\rank E_i=r_i+1$.
\end{corollary}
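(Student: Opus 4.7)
The plan is to transfer the computation from $X$ to $Y$. The key observation is that $Y = \mathbb{P}E_1^{\vee}\times_X\cdots\times_X \mathbb{P}E_m^{\vee}$ is a smooth projective manifold of dimension $n + r_1 + \cdots + r_m = \sum_i (a_i + r_i)$, so the b-divisor intersection product on $\mathfrak{Y}$ on the left-hand side of \eqref{eq:DDDequal} is indeed a top-degree intersection. The strategy is to apply \cref{cor:igdff} on $Y$ to convert this intersection into a non-pluripolar Chern number, and then apply \cref{cor:segreintformula} with $T = [X]$ to identify that Chern number with the Segre monomial on $X$.

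The first step is to verify that each line bundle $p_i^*\hO_{\mathbb{P}E_i^{\vee}}(1)$ is an $\mathcal{I}$-good quasi-positive line bundle on $Y$. By hypothesis $\hat{E}_i$ is $\mathcal{I}$-good, so $\hO_{\mathbb{P}E_i^{\vee}}(1)$ is $\mathcal{I}$-good on the projective manifold $\mathbb{P}E_i^{\vee}$ by \cref{def:Igoodvect}. The morphism $p_i\colon Y\to \mathbb{P}E_i^{\vee}$ is flat, being obtained by base change from the smooth projective bundle $\prod_{j\neq i} \mathbb{P}E_j^{\vee}\to X$. Hence \cref{prop:pullIgood} applies and yields that $p_i^*\hO_{\mathbb{P}E_i^{\vee}}(1)$ is $\mathcal{I}$-good on $Y$ in the sense of \cref{def:Igoodnotp}. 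Note that $p_i^*\hO(1)$ is typically \emph{not} big on $Y$, since the pullback collapses the positivity from the other factors, so it is essential here to use the extended notion of $\mathcal{I}$-goodness for quasi-positive line bundles rather than the bigness-based definition.

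Once this is established, \cref{cor:igdff} applied on $Y$ to the $\dim Y$ line bundles $p_i^*\hO_{\mathbb{P}E_i^{\vee}}(1)$ taken with multiplicities $a_i + r_i$ yields
\begin{equation*}
\left(\mathbb{D}(p_1^*\hO(1))^{a_1+r_1},\ldots,\mathbb{D}(p_m^*\hO(1))^{a_m+r_m}\right) = \int_Y c_1(p_1^*\hO(1))^{a_1+r_1}\wedge \cdots \wedge c_1(p_m^*\hO(1))^{a_m+r_m}.
\end{equation*}
On the other hand, specializing \cref{cor:segreintformula} to $T = [X]$ and integrating over $X$ (equivalently, using $\pi^*[X] = [Y]$ and pushing forward to a point) gives
\begin{equation*}
(-1)^{a_1+\cdots+a_m} \int_X s_{a_1}(\hat{E}_1)\cdots s_{a_m}(\hat{E}_m) = \int_Y c_1(p_1^*\hO(1))^{a_1+r_1}\wedge \cdots \wedge c_1(p_m^*\hO(1))^{a_m+r_m}.
\end{equation*}
Combining these two identities produces \eqref{eq:DDDequal}.

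The main obstacle is the preservation of $\mathcal{I}$-goodness in the second step: we must pull back along a non-birational flat morphism whose fibers are positive-dimensional, so that bigness is destroyed. This is precisely the content of \cref{prop:pullIgood}, which ultimately rests on the decreasing approximation characterization of $\mathcal{I}$-goodness from \cref{thm:DXmain} together with the continuity of mixed non-pluripolar masses along quasi-equisingular approximations supplied by \cref{thm:convdsmeasures}. Once this input is available, the remainder of the argument is a formal assembly of two identities already proved in the paper, and no further estimates are required.
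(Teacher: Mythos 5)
Your proof is correct and takes essentially the same approach as the paper, which simply cites \cref{prop:pullIgood} to obtain $\mathcal{I}$-goodness of the pulled-back $\hO(1)$'s and then invokes \cref{cor:igdff}. You helpfully make explicit the remaining step the paper leaves implicit, namely using \cref{cor:segreintformula} with $T=[X]$ to identify the non-pluripolar integral over $Y$ produced by \cref{cor:igdff} with the Segre monomial integral over $X$.
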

We emphasis that $p_i^*\hO_{\mathbb{P}E_i^{\vee}}$ does not have positive mass in general even if $\hat{E}_i$ does, so we do need the extension of $\mathcal{I}$-goodness in the quasi-positive setting to make sense of this corollary.

We will later on develop the intersection theory on the Riemann--Zariski space $\mathfrak{X}$ and reformulate this result more elegantly in \cref{cor:ref1} and \cref{cor:ref2}.
\begin{proof}
By \cref{prop:pullIgood}, $p_i^*\hO_{\mathbb{P}E_1^{\vee}}(1)$ is $\mathcal{I}$-good.
Thus \eqref{eq:DDDequal} follows from \cref{cor:igdff} and \cref{cor:segreintformula}.
\end{proof}

Now we can prove \cref{thm:ChernrepChern}.
\begin{proof}[Proof of \cref{thm:ChernrepChern}]
We still use the same notations as in the proof of \cref{cor:cptCherncurri}. In this case, $\mathbb{D}(p_i^*\hO_{\mathbb{P}E_i^{\vee}}(1))$ is nothing but $\mathbb{D}(p_i^*\mathcal{O}_{\mathbb{P}E_i^{\vee}}(1))$ by \cref{ex:fullmassnef}. Then \eqref{eq:DDDequal} reduces to
\[
(-1)^{a_1+\cdots+a_m}\int_X s_{a_1}(\hat{E}_1)\cdots s_{a_m}(\hat{E}_m)=(p_1^*\mathcal{O}_{\mathbb{P}E_1^{\vee}}(1)^{a_1+r_1},\ldots,p_m^*\mathcal{O}_{\mathbb{P}E_m^{\vee}}(1)^{a_m+r_m}).
\]
In other words, $s_{a_1}(\hat{E}_1)\cdots s_{a_m}(\hat{E}_m)$ represents $s_{a_1}(E_1)\cdots s_{a_m}(E_m)$.

Now let $P=c_{a_1}(\hat{E}_1)\cdots c_{a_m}(\hat{E}_m)$ be a Chern monomial with $\sum_i a_i=n$. By definition, $P$ is a linear combination of Segre monomials of degree $n$. It follows that  $P$ represents $c_{a_1}(E_1)\cdots c_{a_m}(E_m)$. The general case follows.
\end{proof}

\subsection{Enhanced b-divisors}
We observe that the process from $\mathcal{I}$-model potentials to b-divisors loses some information. To be more precise, given any $\mathcal{I}$-model $\varphi\in \PSH(X,\theta)$, we constructed a b-divisor $\mathbb{D}(\varphi)$, whose components on $(\pi\colon Y\rightarrow X)\in \Bir(X)$ are numerical classes. However, the (formal) divisor $\Sing \pi^*X$ is also canonical. So it deserves its own name.

\begin{definition}
A \emph{formal divisor} on a smooth projective variety $X$ is an assignment $E\mapsto a_E$, where $E$ runs over the set of prime divisors on $X$ and $a_E\in \mathbb{R}$ such that at most countably many of $a_E$ are non-zero. We write a formal divisor as
\[
\sum_E a_E E.
\]
We write $\divf(X)$ for the set of formal divisors on $X$. It has the obvious linear space structure. A formal divisor is effective if $a_E\geq 0$ for all $E$.

Given any $(\pi\colon Y\rightarrow X)\in \Bir(X)$, we define a  pushforward $\pi_*\colon \divf(Y)\rightarrow \divf(X)$ by sending $a_E E$ to $0$ if $\pi(E)$ is not a divisor and to $a_E \pi(E)$ otherwise.

Given an effective formal divisor $D$ on $X$ and a class $\alpha\in \NS^1(X)_{\mathbb{R}}$, we say $D\leq \alpha$ if for any finite collection of prime divisors $\{E_i\}$,
\[
\sum_i a_{E_i}E_i\leq \alpha
\]
as classes in $\NS^1(X)_{\mathbb{R}}$. In this case, we define
\[
[D]=\sum_E a_{E}E\in \NS^1(X)_{\mathbb{R}},
\]
where $E$ runs over all prime divisors on $X$. The sum converges as explained in \cite[Proposition~1.3]{BFJ09} or \cite[Remark~2.2]{XiaPPT}.
\end{definition}
Recall that given $\alpha,\beta\in \NS^1(X)_{\mathbb{R}}$, we say $\alpha\leq \beta$ if $\beta-\alpha$ is pseudo-effective.

\begin{definition}
An \emph{enhanced b-divisor} $\mathbb{D}$ is a formal assignment $\Bir(X)\ni (\pi\colon Y\rightarrow X)\mapsto \mathbb{D}_Y\in \divf(Y)$, compatible under pushforwards between models.

An enhanced b-divisor $\mathbb{D}$ is effective if each $\mathbb{D}_Y$ is effective.
\end{definition}

\begin{definition}
Let $\mathbb{D}$ be an effective enhanced b-divisor on $X$ and $\mathbb{D}'$ be a b-divisor on $X$. We say $\mathbb{D}\leq \mathbb{D}'$ if for each $(\pi\colon Y\rightarrow X)\in \Bir(X)$, $\mathbb{D}_Y\leq \mathbb{D}'_Y$.

If an effective enhanced b-divisor $\mathbb{D}$ on $X$ is bounded from above by some b-divisor on $X$. We can define an associated b-divisor as $([\mathbb{D}_Y])_Y$.
\end{definition}

The main example we have in mind is $(\Sing \pi^*\psi)_{\pi}$. One can recover $\psi$ modulo $\mathcal{I}$-equivalence from this associated enhanced b-divisor since we can read all Lelong numbers of $\psi$ from this enhanced b-divisor.

We also observe that enhanced b-divisors are introduced in \cite{BBGHdJ21} under the name of b-divisors. The enhanced b-divisor $(\Sing \pi^*\psi)_{\pi}$ is introduced in \cite{BBGHdJ21} as well.

\section{b-divisor techniques on quasi-projective manifolds}\label{sec:quasproj}
Let $X$ be a smooth quasi-projective variety over $\mathbb{C}$ of pure dimension $n$.

\subsection{b-divisors on quasi-projective manifolds}

\begin{definition}
A \emph{smooth compactification} is a smooth projective variety $\bar{X}$ over $\mathbb{C}$ and an open immersion (of algebraic varieties) $i\colon X\rightarrow \bar{X}$ with Zariski dense image. Two smooth compactifications $(\bar{X},i)$ and $(\bar{X}',i')$ are identified if there is an isomorphism $\bar{X}\rightarrow \bar{X}'$ sending $i$ to $i'$. We will omit $i$ from the notations.

Let $\Cpt(X)$ be the directed set of smooth compactifications of $X$ with respect to the partial order of domination.
\end{definition}
We remind the readers that we are considering the compactifications of $\bar{X}$ as algebraic varieties because two analytic compactifications of $X$ are not necessarily bimeromorphically equivalent, as shown by a famous example of Serre. See \cite[Example~1.3.1]{Fujino22}.
\begin{remark}
Strictly speaking, there is a subtle set-theoretic issue here, as $\Cpt(X)$ is not really a set. But one can solve this issue easily by fixing a Grothendieck universe for example. Otherwise, one could employ the language of filtered categories instead of directed sets.
\end{remark}

\begin{definition}\label{def:compa}
Let $\bar{X}$ be a smooth compactification of $X$. A \emph{compactification} of $\hat{E}\in \widehat{\Vect}(X)$ on $\bar{X}$ is a Hermitian pseudo-effective vector bundle $(\bar{E},h_{\bar{E}})$ on $\bar{X}$ and an isomorphism $j:(\bar{E}|_X,h_{\bar{E}}|_X)\rightarrow (E,h_E)$. Two compactifications $(\bar{E},h_{\bar{E}},j)$ and $(\bar{E}',h_{\bar{E}'},j')$ are identified if there is an isomorphism $\bar{E}\rightarrow \bar{E}'$ sending $h_{\bar{E}}$ to $h_{\bar{E}'}$ and $j$ to $j'$. We will omit $j$ from our notations.

We write $\Cpt_{\bar{X}}(\hat{E})$ for the set of compactifications of $\hat{E}$ on $\bar{X}$.
We say $\hat{E}$ is \emph{compactifiable} if $\Cpt_{\bar{X}}(\hat{E})$ is non-empty for some $\bar{X}\in \Cpt(X)$.
\end{definition}
Of course, here we are a bit sloppy by identifying two different realizations of a compactification of $X$. Note that if $\hat{E}$ is compactifiable, then $E$ is necessarily an algebraic vector bundle by the GAGA principle.

\begin{remark}
Comparing with the case of line bundles (especially the Lear extension of line bundles), it seems that one should consider some sort of \emph{$\mathbb{Q}$-vector bundles} as the correct object to compactify vector bundles. However, this notion does not seem to be well-defined. Moreover, even in the case of line bundles, by lifting to a certain power, we could avoid the use of $\mathbb{Q}$-line bundles.
\end{remark}

\begin{definition}
A \emph{(Weil) b-divisor} on $X$ is a b-divisor on some $\bar{X}\in \Cpt(X)$. We identify two b-divisors $\mathbb{D}$ on $\bar{X}\in \Cpt(X)$ and $\mathbb{D}'$ on $\bar{X}'\in \Cpt(X)$ if there is $\bar{X}''\in \Cpt(X)$ dominating both $\bar{X}$ and $\bar{X}'$ such that the restrictions of $\mathbb{D}$ and $\mathbb{D}'$ to b-divisors on $\bar{X}''$ are equal. We will call any $\mathbb{D}$ on any $\bar{X}\in \Cpt(X)$ a \emph{realization} of the b-divisor on $X$.

We say a b-divisor $\mathbb{D}$ on $X$ is \emph{nef} if one realization (hence equivalently all realizations) of $\mathbb{D}$ is nef.

When $\mathbb{D}$ is a nef b-divisor on $X$, we define $\vol \mathbb{D}$ as the volume of a realization $\mathbb{D}$. Similarly, we define the intersection product of nef b-divisors on $X$ as the intersection product of their realizations.
\end{definition}

Fix a positively-curved Hermitian line bundle $\hat{L}=(L,h_L)$ on $X$.

Both of the following examples fail with holomorphic line bundles instead of algebraic line bundles.
\begin{example}
Assume that $L$ is algebraic.
If $X$ admits a compactfication (smooth by assumption) such that $\bar{X}\setminus X$ has codimension at least $2$, then $\hat{L}$ is compactfiable by Grauert--Remmert's extension theorem \cite[Page~176, II]{GR56}.
\end{example}

\begin{example}
Assume that $L$ is algebraic.
Assume that $\bar{X}\in \Cpt(X)$. There is always a line bundle $\bar{L}$ on $\bar{X}$ extending $L$. Assume that at any point $x\in \bar{X}\setminus X$, there is a local nowhere-vanishing section $s$ of $\bar{L}$ on $U\subseteq \bar{X}$ such that $h_L(s,s)$ is bounded away from $0$ on $U\cap X$. Then $h_L$ extends to $\bar{L}$ and hence, $\Cpt_{\bar{X}}(\hat{L})$ is non-empty, see \cite[Page~175, I]{GR56}.
\end{example}

\begin{lemma}\label{lma:assbdiv}
Take $\bar{X}\in \Cpt(X)$ and $(\bar{L},h_{\bar{L}})\in \Cpt_{\bar{X}}(\hat{L})$. The nef b-divisor $\mathbb{D}(\hat{L})\coloneqq \mathbb{D}(\bar{L},h_{\bar{L}})$ on $X$ does not depend on the choices of $\bar{X}$ and $(\bar{L},h_{\bar{L}})$.
\end{lemma}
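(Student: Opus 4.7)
The plan is to reduce to comparing two extensions on a single common compactification, and then show the contributions of the discrepancy cancel precisely between the line-bundle class and the singularity divisor.

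First I would reduce to a common compactification. Given two choices $(\bar{X}_i, (\bar{L}_i, h_{\bar{L}_i}))_{i=1,2}$, pick $\bar{X}_3 \in \Cpt(X)$ dominating both through birational morphisms $\pi_i : \bar{X}_3 \to \bar{X}_i$ (for example, resolve the closure of the diagonal of $X$ in $\bar{X}_1 \times \bar{X}_2$). By the functoriality of $\mathbb{D}$ under birational pullbacks on projective manifolds (which is built into the very definition of a b-divisor: the system $\mathbb{D}(\bar{L}_i, h_{\bar{L}_i})$ on $\Bir(\bar{X}_i)$ pulls back to a compatible system on $\Bir(\bar{X}_3)$, given by $(\pi_i^* \bar{L}_i, \pi_i^* h_{\bar{L}_i})$), we may replace each $(\bar{L}_i, h_{\bar{L}_i})$ by its pullback to $\bar{X}_3$ and work on a single smooth compactification $\bar{X}$ equipped with two extensions $(\bar{L}_1, h_{\bar{L}_1})$ and $(\bar{L}_2, h_{\bar{L}_2})$ of $(L, h_L)$.

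Next, I would identify the obstruction. The canonical identifications $\bar{L}_i|_X \cong L$ combine into a holomorphic isomorphism $\bar{L}_1|_X \cong \bar{L}_2|_X$, which extends to a meromorphic section $s$ of $\bar{L}_2 \otimes \bar{L}_1^{-1}$ over $\bar{X}$. Set $D := \mathrm{div}(s)$; then $D$ is a Cartier $\mathbb{Z}$-divisor supported in $\bar{X} \setminus X$ and $\bar{L}_2 \otimes \bar{L}_1^{-1} \cong \mathcal{O}_{\bar{X}}(D)$. In particular $c_1(\pi^* \bar{L}_2) - c_1(\pi^* \bar{L}_1) = [\pi^* D]$ in $\NS^1(Y)_{\mathbb{R}}$ for any $(\pi : Y \to \bar{X}) \in \Bir(\bar{X})$.

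Then I would show the same divisor $D$ also accounts for the discrepancy in singularities. Pick local holomorphic trivializations $e_1, e_2$ of $\bar{L}_1, \bar{L}_2$ near any point of $\bar{X}$ and write $e_1 = f e_2$ via the meromorphic identification, where $f$ is a local equation for $D$. If $\varphi_i$ denote the local psh potentials of $h_{\bar{L}_i}$, then $h_{\bar{L}_1}(e_1, e_1) = |f|^2 h_{\bar{L}_2}(e_2, e_2)$ on $X$, which extends to $\bar X$ by continuity and yields $\varphi_2 = \varphi_1 + \log|f|^2$. For a prime divisor $E \subseteq Y$, writing $\pi^* f = u/v$ locally with $u, v$ holomorphic and coprime, the equality of psh functions $\pi^* \varphi_2 + 2\log|v| = \pi^* \varphi_1 + 2\log|u|$ gives
\[
\nu(\pi^* \varphi_2, E) - \nu(\pi^* \varphi_1, E) = \mathrm{ord}_E(u) - \mathrm{ord}_E(v) = \mathrm{mult}_E(\pi^* D).
\]
For prime divisors $E$ whose image meets $X$, both metrics restrict to $h_L$ on $X$, so the corresponding Lelong numbers already agree; since $\pi^* D$ is supported in $\pi^{-1}(\bar{X} \setminus X)$ and has only finitely many components, the full (possibly countable) formal divisor identity $\Sing \pi^* h_{\bar{L}_2} - \Sing \pi^* h_{\bar{L}_1} = [\pi^* D]$ follows in $\NS^1(Y)_{\mathbb{R}}$. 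Combining this with the class equality of the previous paragraph, the two contributions $[\pi^* D]$ cancel in the definition $\mathbb{D}(\bar{L}_i, h_{\bar{L}_i})_Y = c_1(\pi^* \bar{L}_i) - \Sing \pi^* h_{\bar{L}_i}$, so $\mathbb{D}(\bar{L}_2, h_{\bar{L}_2})_Y = \mathbb{D}(\bar{L}_1, h_{\bar{L}_1})_Y$ for every model $Y$. Nefness follows from \cref{thm:nefbvolume} on any chosen compactification.

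The main obstacle is bookkeeping: although the local identity $\varphi_2 = \varphi_1 + \log|f|^2$ is trivial, one must verify that the locally-defined meromorphic function $f$ is globally coherent in the precise sense that $\mathrm{ord}_E(\pi^* f) = \mathrm{mult}_E(\pi^* D)$ for every prime $E$ on every model, independently of the choice of trivializations (two such choices differ by nowhere-vanishing holomorphic transitions, which contribute nothing to orders of vanishing). One also has to check that the difference of the two potentially countable-support formal divisors $\Sing \pi^* h_{\bar{L}_i}$ is actually supported in finitely many components (namely those of $\pi^* D$), which is exactly what the preceding computation together with the matching on $X$ delivers.
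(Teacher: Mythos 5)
Your overall strategy --- reduce to two extensions over a single smooth compactification $\bar{X}$, then cancel the discrepancy between the underlying line bundle classes against the discrepancy in the Lelong-number divisors --- is exactly what the paper's one-line appeal to the Lelong--Poincar\'e formula encapsulates, and your identity $\Sing \pi^*h_{\bar{L}_2} - \Sing \pi^* h_{\bar{L}_1} = [\pi^* D]$ is the right way to unpack it. However, there is a genuine gap at the step where you assert that the holomorphic isomorphism $\bar{L}_1|_X \cong \bar{L}_2|_X$ ``extends to a meromorphic section $s$ of $\bar{L}_2 \otimes \bar{L}_1^{-1}$ over $\bar{X}$.'' This is \emph{not} automatic for a nowhere-vanishing holomorphic section over a dense Zariski-open subset: for instance $z \mapsto e^z$ is a nowhere-vanishing holomorphic section of $\mathcal{O}_{\mathbb{P}^1}$ over $\mathbb{C}$ with an essential singularity at $\infty$. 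The worries you flag in your last paragraph (global coherence of $f$, finiteness of the support) are not where the real danger lies; the danger is that no meromorphic extension need exist at all.

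What saves the argument is precisely the metric data, and it must be invoked \emph{before} you are entitled to speak of $D = \mathrm{div}(s)$. Locally near a boundary point, the isometry gives $\log|f|^2 = \varphi_1 - \varphi_2$ on $U \setminus B$ with $\varphi_1, \varphi_2$ psh on $U$; since a difference of psh functions has $\ddc$ of order zero, $\ddc\log|f|^2$ extends to a closed $(1,1)$-current of order zero on $U$ supported on $B = U \cap (\bar{X}\setminus X)$, hence (support theorem) it has the form $\sum_j c_j[B_j]$ with $c_j \in \mathbb{R}$, $c_j = \nu(\varphi_1, B_j) - \nu(\varphi_2, B_j)$. A monodromy argument (the winding numbers of the single-valued $f$ around the $B_j$ are integers, and they must match the $c_j$ computed from the conjugate harmonic function of $\sum_j c_j\log|g_j|^2$) then forces $c_j \in \mathbb{Z}$, after which $f \prod_j g_j^{-c_j}$ extends across $B$ as a holomorphic nowhere-vanishing function and $s$ is indeed meromorphic with $D = \sum_j c_j B_j$. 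Alternatively, one can avoid $s$ altogether: apply the Siu decomposition to $\ddc\pi^* h_{\bar{L}_i}$, note that the residual currents agree over $\pi^{-1}(X)$ and put no mass on any prime divisor, so their difference is a closed order-zero $(1,1)$-current supported on the boundary that charges no divisor, hence vanishes; this gives $c_1(\pi^*\bar{L}_1) - \Sing \pi^*h_{\bar{L}_1} = c_1(\pi^*\bar{L}_2) - \Sing\pi^*h_{\bar{L}_2}$ in $\NS^1(Y)_{\mathbb{R}}$ directly. Either route closes the gap, but as written the proposal takes the key step for granted.
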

\begin{proof}
Let $\bar{X}'\in \Cpt(X)$ and $(\bar{L}',h_{\bar{L}'})\in \Cpt_{\bar{X}'}(\hat{L})$. We want to show that
\begin{equation}\label{eq:temp6}
\mathbb{D}(\bar{L},h_{\bar{L}})=\mathbb{D}(\bar{L}',h_{\bar{L}'}).
\end{equation}
First observe that we can assume that $\bar{X}=\bar{X}'$ by pulling-back to a common model. Now \eqref{eq:temp6} as an equality of b-divisors on $\bar{X}$ is a direct consequence of Lelong--Poincar\'e formula.
\end{proof}

\begin{definition}\label{def:bdivasso}
Assume that $\hat{L}$ is compactifiable. 
We call the b-divisor $\mathbb{D}(\hat{L})$ on $X$ introduced in \cref{lma:assbdiv} \emph{the b-divisor associated with} $\hat{L}$.
\end{definition}

\begin{definition}\label{def:min}
Let $\bar{X}\in \Cpt(X)$. We say an extension $\hat{\bar{L}}$ of $\hat{L}$ to $\bar{X}$ is \emph{minimal} if $\Sing \hat{\bar{L}}=0$. 
\end{definition}

\begin{lemma}
Assume that $\hat{L}$ is compactifiable and $h_L$ is locally bounded. Let $\bar{X}\in \Cpt(X)$ such that $\Cpt_{\bar{X}}(\hat{L})$ is non-empty. Then there is a unique minimal extension of $\hat{L}$ to $\bar{X}$.  
\end{lemma}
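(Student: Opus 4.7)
The plan is to handle uniqueness and existence separately. Observe first that since $h_L$ is locally bounded on $X$, any extension $(\bar{L}, h_{\bar{L}})$ of $\hat{L}$ has vanishing generic Lelong number along every prime divisor of $\bar{X}$ that meets $X$ (because such a divisor's generic point lies in $X$, where the metric is locally bounded). Consequently, minimality of $(\bar{L}, h_{\bar{L}})$ is equivalent to the finite-divisor condition $\nu(h_{\bar{L}}, E_j) = 0$ for each of the finitely many boundary prime divisors $E_1, \ldots, E_k$ of $\bar{X} \setminus X$.

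For uniqueness, suppose $(\bar{L}_1, h_1)$ and $(\bar{L}_2, h_2)$ are both minimal extensions. Since $X$ is Zariski dense in $\bar{X}$, the tautological identification $\bar{L}_1|_X \cong L \cong \bar{L}_2|_X$ extends to a meromorphic isomorphism $\bar{L}_1 \dashrightarrow \bar{L}_2$ on $\bar{X}$, giving $\bar{L}_2 \cong \bar{L}_1 \otimes \mathcal{O}(D)$ for a Cartier divisor $D = \sum_j m_j E_j$ supported on $\bar{X} \setminus X$. In compatible local trivializations near a generic point of $E_j$, with $z_j$ a local defining equation, the psh potentials of $h_1$ and $h_2$ differ by $m_j \log|z_j|^2$ plus a pluriharmonic correction (coming from fixed smooth reference metrics on the line bundles $\mathcal{O}(E_j)$). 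Taking generic Lelong numbers along $E_j$ yields $\nu(h_2, E_j) = \nu(h_1, E_j) + m_j$; minimality of both extensions forces $m_j = 0$ for every $j$, so $D = 0$ and $\bar{L}_1 \cong \bar{L}_2$. The two psh metrics, agreeing on the dense open $X$, must then coincide everywhere on $\bar{X}$ by the identity principle for psh functions.

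For existence, pick any $(\bar{L}_0, h_0) \in \Cpt_{\bar{X}}(\hat{L})$ and set $a_j := \nu(h_0, E_j) \geq 0$. The crucial step is to show each $a_j \in \mathbb{Z}_{\geq 0}$, which I expect to be the main obstacle, since a priori psh metrics on algebraic line bundles can exhibit irrational generic Lelong numbers. The local boundedness of $h_L$ on $X$ must be used essentially: restricting to a holomorphic disk $T$ transversal to $E_j$ at a generic point, $\bar{L}_0|_T$ is trivial and $h_0|_T$ becomes a psh metric on $\mathcal{O}_T$ whose restriction to $T \cap X$ is locally bounded, and the only residual gauge freedom in extending such a metric across the puncture is an integer Picard twist of the line bundle at that point, which pins $a_j$ down to a nonnegative integer. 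Granted integrality, define $\bar{L} := \bar{L}_0 \otimes \mathcal{O}(-\sum_j a_j E_j)$, fix smooth Hermitian metrics $h_{E_j}^0$ on each $\mathcal{O}(E_j)$, and let $h_{\bar{L}}$ be characterized by $h_0 = h_{\bar{L}} \otimes \bigotimes_j (h_{E_j}^0)^{a_j}$ under the canonical identification. Siu's decomposition of the closed positive current $c_1(\bar{L}_0, h_0)$ guarantees that subtracting the divisorial parts $\sum_j a_j [E_j]$ leaves a residual psh potential with vanishing generic Lelong numbers along each $E_j$, so $h_{\bar{L}}$ is psh and positively curved, extends $h_L$ on $X$ by construction, and satisfies $\Sing \hat{\bar{L}} = 0$.
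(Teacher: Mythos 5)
Your instinct that integrality of the boundary Lelong numbers is the crux of existence is well placed: it is exactly the point the paper's own proof glosses over by writing $\mathcal{O}_{\bar{X}}(\Sing h_{\bar{L}})$ without explaining why the $\mathbb{R}$-divisor $\Sing h_{\bar{L}}$ is integral, and your uniqueness argument is correct and matches the paper's intent. The argument you give for integrality, however, does not go through. The observation that a metric on the trivial bundle over a punctured disk extends across the puncture only up to an integer Picard twist shows that the generic Lelong number along $E_j$ is well defined \emph{modulo} $\mathbb{Z}$ as the compactification $(\bar{L}_0,h_0)$ varies; it does not force the value itself to be an integer. Indeed $\varphi=\alpha\log|z|$ is psh on $\Delta$ and locally bounded on $\Delta\setminus\{0\}$ for every $\alpha>0$. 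Globally, with $X=\mathbb{A}^1\subset\mathbb{P}^1$ and $\alpha\in(0,1)\setminus\mathbb{Q}$, the psh metric on $\mathcal{O}(1)$ with local weight $(1-\alpha)\log(1+|z|^2)$ on $\mathbb{A}^1$ is smooth on $X$ but has Lelong number $\alpha$ at $\infty$, and every compactification of its restriction to $X$ on some $\mathcal{O}(d)$ has Lelong number $\alpha+d-1$ at $\infty$, which is never $0$. So integrality is a genuine extra input, not a consequence of the hypotheses as stated.

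There is a second, independent error in your existence step: you twist by \emph{smooth} reference metrics $h_{E_j}^0$ on $\mathcal{O}(E_j)$. Then $h_{\bar{L}}|_X = h_L\cdot\prod_j(h_{E_j}^0|_X)^{-a_j}\neq h_L$, because the restriction of a smooth metric on $\mathcal{O}(E_j)$ to $X$ is not the trivial metric under the canonical trivialization by $s_{E_j}$; so the pair you produce is not a compactification of $\hat{L}$. For the same reason $\ddc h_{\bar{L}}=\ddc h_0-\sum_j a_j\,\ddc h_{E_j}^0$ differs from $\ddc h_0$ only by a smooth form, so the generic Lelong numbers along the $E_j$ are unchanged and equal $a_j$, not $0$ — Siu's decomposition is invoked but never actually applied to the current you construct. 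The paper avoids both problems by using the \emph{canonical singular} metric on $\mathcal{O}_{\bar{X}}(\Sing h_{\bar{L}})$, the one with $|s_D|\equiv 1$: its local weight near $E_j$ is $a_j\log|z_j|^2$, it is identically trivial on $X$ (so the restriction condition holds), and its curvature is the integration current $\sum_j a_j[E_j]$, so Poincar\'e--Lelong gives $\ddc h_{\bar{L}'}=\ddc h_{\bar{L}}-[\Sing h_{\bar{L}}]$, the positive Siu residual with vanishing boundary Lelong numbers.
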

\begin{proof}
Take any compactification $\hat{\bar{L}}=(\bar{L},h_{\bar{L}})$ of $\hat{L}$ to $\bar{X}$. Set 
\[
\bar{L}'\coloneqq \bar{L}-\mathcal{O}_{\bar{X}}(\Sing h_{\bar{L}}).
\]
Define $h_{\bar{L}'}$ as the metric on $\bar{L}'$ such that
\[
(\bar{L}',h_{\bar{L}'})\otimes \hO_{\bar{X}}(\Sing h_{\bar{L}})\cong \hat{\bar{L}}.
\]
Here $\hO_{\bar{X}}(\Sing h_{\bar{L}})$ is $\mathcal{O}_{\bar{X}}(\Sing h_{\bar{L}})$ with the canonical singular metric.
Then by Poincar\'e--Lelong formula,
\[
\ddc h_{\bar{L}'}=\ddc h_{\bar{L}}-[\Sing h_{\bar{L}}].
\]
It follows that $(\bar{L}',h_{\bar{L}'})$ is a minimal extension. Similar reasoning shows that the minimal extension is unique.
\end{proof}

\subsection{Full mass extensions and \texorpdfstring{$\mathcal{I}$}{I}-good extensions}

We will write $\widehat{\Pic}(X)$ for the set of  positively-curved Hermitian holomorphic line bundles on $X$. 

\begin{definition}
We say $\hat{L}\in \widehat{\Pic}(X)$ is \emph{$\mathcal{I}$-good} if $\hat{L}$ is compactifiable and if it admits an $\mathcal{I}$-good compactification.
\end{definition}
We will write $\widehat{\Pic}_{\mathcal{I}}(X)$ for the set of $\mathcal{I}$-good elements in $\widehat{\Pic}(X)$. Similarly, we write $\widehat{\Pic}(X)_{>0}$ for the compactifiable elements in $\widehat{\Pic}(X)$ with positive masses. 

\begin{theorem}\label{thm:mixedDDD}
Assume that $\hat{L}_1,\ldots,\hat{L}_n\in \widehat{\Pic}(X)_{>0}$. Then
\[
\left(\mathbb{D}(\hat{L}_1),\ldots, \mathbb{D}(\hat{L}_n)\right)\geq \int_X c_1(\hat{L}_1)\wedge \cdots \wedge c_1(\hat{L}_n).
\]
Equality holds if each $\hat{L}_i$ is $\mathcal{I}$-good.
\end{theorem}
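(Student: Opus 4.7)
The plan is to reduce to the projective case \cref{thm:nefbvolume2} by fixing a good common smooth compactification. First, I would choose $\bar X\in\Cpt(X)$ together with compactifications $\hat{\bar L}_i\in\Cpt_{\bar X}(\hat L_i)$. Such a common model exists: start from individual smooth compactifications $\bar X_i$ on which $\hat L_i$ extends, take any smooth compactification $\bar X$ dominating all the $\bar X_i$ (possible since $\Cpt(X)$ is directed), and pull back each $\hat{\bar L}_i$ to $\bar X$. By construction $\mathbb{D}(\hat L_i)=\mathbb{D}(\hat{\bar L}_i)$ as b-divisors on $X$, so
\[
\left(\mathbb{D}(\hat L_1),\ldots,\mathbb{D}(\hat L_n)\right)=\left(\mathbb{D}(\hat{\bar L}_1),\ldots,\mathbb{D}(\hat{\bar L}_n)\right),
\]
and \cref{thm:nefbvolume2} applied on $\bar X$ gives
\[
\left(\mathbb{D}(\hat{\bar L}_1),\ldots,\mathbb{D}(\hat{\bar L}_n)\right)\geq \int_{\bar X} c_1(\hat{\bar L}_1)\wedge\cdots\wedge c_1(\hat{\bar L}_n).
\]

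The next step is to compare the right-hand side with the quasi-projective integral. Since $\bar X\setminus X$ is a proper analytic subset of $\bar X$, it is locally complete pluripolar, and non-pluripolar products put no mass on pluripolar sets. By locality of the non-pluripolar product, the restriction of $c_1(\hat{\bar L}_1)\wedge\cdots\wedge c_1(\hat{\bar L}_n)$ to $X$ coincides with $c_1(\hat L_1)\wedge\cdots\wedge c_1(\hat L_n)$, and hence
\[
\int_{\bar X} c_1(\hat{\bar L}_1)\wedge\cdots\wedge c_1(\hat{\bar L}_n)=\int_X c_1(\hat L_1)\wedge\cdots\wedge c_1(\hat L_n).
\]
Chaining these gives the desired inequality.

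For the equality case, I would refine the choice of the compactification. If each $\hat L_i$ is $\mathcal{I}$-good in the quasi-projective sense, then by definition it admits some $\mathcal{I}$-good compactification on some $\bar X_i$. Taking a common smooth compactification $\bar X$ dominating all the $\bar X_i$ and pulling back, each $\hat{\bar L}_i$ becomes $\mathcal{I}$-good on $\bar X$ by the birational invariance of $\mathcal{I}$-goodness (\cref{prop:notionsbirational}(2)). Then the equality clause of \cref{thm:nefbvolume2} applies on $\bar X$, and combined with the pluripolarity argument above we obtain the desired equality.

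The main potential obstacle is the construction of a common smooth compactification on which \emph{all} the $\hat L_i$ simultaneously extend (and extend $\mathcal{I}$-goodly in the second part). However this is settled by pulling back individual compactifications to a dominating smooth model: pullback of a Hermitian pseudo-effective line bundle is still Hermitian pseudo-effective, and by \cref{prop:notionsbirational} $\mathcal{I}$-goodness is preserved under such birational pullbacks. All other steps are automatic once the correct model is in place.
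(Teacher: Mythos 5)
Your proposal is correct and is essentially the paper's own argument spelled out: the paper's proof is the single line ``This follows from \cref{thm:nefbvolume2},'' and you correctly identify the two reductions that make this work, namely (i) the intersection number of b-divisors on $X$ is by definition the intersection number of the b-divisors of any compatible extensions $\hat{\bar L}_i$ on a common $\bar X\in\Cpt(X)$ (via \cref{lma:assbdiv}), and (ii) the non-pluripolar product puts no mass on the pluripolar set $\bar X\setminus X$, so the Chern number on $\bar X$ equals that on $X$, which reduces everything to \cref{thm:nefbvolume2}; the handling of the $\mathcal{I}$-good equality case via \cref{prop:notionsbirational}(2) is also correct (one could equivalently invoke \cref{prop:Igoodchar}(3), but your route avoids any worry about logical order).
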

Of course the intersection number on the left-hand side is just the intersection number of the realizations of the b-divisors.
\begin{proof}
This follows from \cref{thm:nefbvolume2}.
\end{proof}

\begin{proposition}\label{prop:Igoodchar}
Assume that $\hat{L}$ is compactifiable.
The followings are equivalent:
\begin{enumerate}
    \item $\hat{L}$ is $\mathcal{I}$-good;
    \item we have
    \begin{equation}\label{eq:voltoc1}
            \vol \mathbb{D}(\hat{L})=
            \int_X c_1(\hat{L})^n;
          \end{equation}
    \item all extensions of $\hat{L}$ to any $\bar{X}\in \Cpt(X)$ are $\mathcal{I}$-good.
\end{enumerate}
\end{proposition}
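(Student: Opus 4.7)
The plan is to reduce the quasi-projective equivalence to its projective counterpart, \cref{cor:Imodcharbdiv}. Fix any smooth compactification $\bar{X} \in \Cpt(X)$ and any Hermitian pseudo-effective extension $\hat{\bar{L}} = (\bar{L}, h_{\bar{L}}) \in \Cpt_{\bar{X}}(\hat{L})$; existence is guaranteed by compactifiability. Two preliminary identifications drive the argument. First, by \cref{def:bdivasso} together with \cref{lma:assbdiv}, $\mathbb{D}(\hat{L})$ is represented on $\bar{X}$ by $\mathbb{D}(\hat{\bar{L}})$, so
\[
\vol \mathbb{D}(\hat{L}) = \vol \mathbb{D}(\hat{\bar{L}}).
\]
Second, because the non-pluripolar Monge--Amp\`ere product puts no mass on the complete pluripolar subset $\bar{X} \setminus X$ (a proper analytic subset of the smooth projective $\bar{X}$),
\[
\int_X c_1(\hat{L})^n = \int_{\bar{X}} c_1(\hat{\bar{L}})^n.
\]
Both quantities appearing in condition (2) therefore depend only on $\hat{L}$ itself, not on the choice of $\bar{X}$ or of the extension.

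With these identifications in hand, I would apply \cref{cor:Imodcharbdiv} directly to $\hat{\bar{L}}$ on the projective manifold $\bar{X}$: in the positive-mass regime $\int_{\bar{X}} c_1(\hat{\bar{L}})^n > 0$, the Hermitian line bundle $\hat{\bar{L}}$ is $\mathcal{I}$-good in the sense of \cref{def:Igoodvect} if and only if $\vol \mathbb{D}(\hat{\bar{L}}) = \int_{\bar{X}} c_1(\hat{\bar{L}})^n$. Transporting this through the previous displays, one obtains the equivalence
\[
\hat{\bar{L}}\ \text{is}\ \mathcal{I}\text{-good} \iff (2),
\]
which is valid for \emph{every} extension $\hat{\bar{L}}$. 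Since the right-hand side does not see the extension, the property that \emph{some} extension is $\mathcal{I}$-good coincides with the property that \emph{every} extension is $\mathcal{I}$-good, and both coincide with (2). This simultaneously yields $(1) \Leftrightarrow (2)$ and $(3) \Leftrightarrow (2)$, hence the full equivalence.

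The only conceptual obstacle is the degenerate regime $\int_X c_1(\hat{L})^n = 0$: there, (1) and (3) both fail, since $\mathcal{I}$-goodness in the projective setting incorporates a positive-mass requirement, and one must check that (2) also fails under any nondegenerate hypothesis on $\hat{L}$. This reduces to combining the inequality $\vol \mathbb{D}(\hat{L}) \geq \int_X c_1(\hat{L})^n$ furnished by \cref{thm:mixedDDD} with the identity $\vol \mathbb{D}(\hat{\bar{L}}) = n!\, \vol(\bar{L}, h_{\bar{L}})$ extended through the $\mathcal{I}$-model projection $P[\varphi]_{\mathcal{I}}$ as in \cite{DX21, DX22}. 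Apart from tracking this edge case, the argument is essentially a bookkeeping exercise translating between $X$ and a chosen compactification, with the nontrivial content concentrated in the already-established \cref{cor:Imodcharbdiv} and the independence of $\mathbb{D}(\hat{L})$ from the extension via \cref{lma:assbdiv}.
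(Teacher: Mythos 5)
Your proposal is correct and takes essentially the same route as the paper's proof: both reduce everything to \cref{cor:Imodcharbdiv} applied to a fixed extension $\hat{\bar{L}}$ on $\bar{X}$, after observing that $\vol\mathbb{D}(\hat{L})=\vol\mathbb{D}(\hat{\bar{L}})$ by \cref{lma:assbdiv} and that $\int_X c_1(\hat{L})^n=\int_{\bar{X}} c_1(\hat{\bar{L}})^n$ since the non-pluripolar product puts no mass on $\bar{X}\setminus X$ (your formulation is marginally more economical, proving uniformly that each extension is $\mathcal{I}$-good if and only if (2), whereas the paper quotes \cref{thm:mixedDDD} for $(1)\Rightarrow(2)$). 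One caveat: the concern you raise about $\int_X c_1(\hat{L})^n=0$ is real, but your proposed resolution does not close it — the inequality in \cref{thm:mixedDDD} also assumes positive mass and in any case does not rule out both sides of (2) vanishing simultaneously; the paper's proof is silent on this, and the statement is best read under the implicit positive-mass convention already in force throughout \cref{sec:bdiv1}.
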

\begin{proof}
(1) $\implies$ (2): This follows from \cref{thm:mixedDDD}.

(2) $\implies$ (3): Let $\bar{X}\in \Cpt(X)$ and $\hat{\bar{L}}\in \Cpt_{\bar{X}}(\hat{L})$. It follows from \cref{cor:Imodcharbdiv} that $\hat{\bar{L}}$ is $\mathcal{I}$-good.

(3) $\implies$ (1): This is obvious.
\end{proof}

\begin{proposition}\label{prop:Igoodqp}
Let $\hat{L}\in \widehat{\Pic}(X)$, $\hat{L}'\in \widehat{\Pic}(X)_{>0}$.
Assume that $L$ is compactifiable and $\hat{L}\otimes \hat{L}'\in \widehat{\Pic}_{\mathcal{I}}(X)$, then $\hat{L}\in \widehat{\Pic}_{\mathcal{I}}(X)$. Conversely, if $\hat{L},\hat{L}'\in \widehat{\Pic}_{\mathcal{I}}(X)$, then $\hat{L}\otimes \hat{L}'\in \widehat{\Pic}_{\mathcal{I}}(X)$.
\end{proposition}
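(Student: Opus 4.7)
The proof proceeds by reducing both implications to the corresponding projective results established earlier, namely Proposition~\ref{prop:Igoodtensor} and Theorem~\ref{thm:Igoodcancel}, via a common smooth compactification $\bar X \in \Cpt(X)$.

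For the \emph{converse direction}, assume $\hat L, \hat L' \in \widehat{\Pic}_{\mathcal{I}}(X)$. By definition each admits an $\mathcal{I}$-good Hermitian pseudo-effective extension on some $\bar X_1, \bar X_2 \in \Cpt(X)$. Using the directedness of $\Cpt(X)$, fix $\bar X \in \Cpt(X)$ dominating both through morphisms $\pi_i : \bar X \to \bar X_i$. Pulling back these compactifications along the $\pi_i$ yields extensions $\hat{\bar L}$ and $\hat{\bar L}'$ on $\bar X$ that remain $\mathcal{I}$-good by Proposition~\ref{prop:notionsbirational}. The projective Proposition~\ref{prop:Igoodtensor} then gives $\hat{\bar L} \otimes \hat{\bar L}' \in \widehat{\Pic}_{\mathcal{I}}(\bar X)$, and since this object is a compactification of $\hat L \otimes \hat L'$, the conclusion $\hat L \otimes \hat L' \in \widehat{\Pic}_{\mathcal{I}}(X)$ follows at once from the definition.

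For the \emph{forward direction}, first fix $\bar X \in \Cpt(X)$ on which both $\hat L$ and $\hat L'$ admit Hermitian pseudo-effective extensions $\hat{\bar L}$ and $\hat{\bar L}'$: compactifiability of $\hat L$ is given by hypothesis, while $\hat L' \in \widehat{\Pic}(X)_{>0}$ is compactifiable by definition, and directedness of $\Cpt(X)$ lets us take a common model. The product $\hat{\bar L} \otimes \hat{\bar L}'$ is then a Hermitian pseudo-effective extension of $\hat L \otimes \hat L'$ to $\bar X$. Since $\hat L \otimes \hat L' \in \widehat{\Pic}_{\mathcal{I}}(X)$, Proposition~\ref{prop:Igoodchar} forces \emph{every} extension on $\bar X$ to be $\mathcal{I}$-good; in particular, $\hat{\bar L} \otimes \hat{\bar L}' \in \widehat{\Pic}_{\mathcal{I}}(\bar X)$. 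Because $X \hookrightarrow \bar X$ is Zariski open and the non-pluripolar product is a positive measure, the positive mass of the relevant factor on $X$ transfers to $\bar X$, so that factor lies in $\widehat{\Pic}(\bar X)_{>0}$. We now invoke the projective cancellation Theorem~\ref{thm:Igoodcancel}, which concludes the positive-mass factor is $\mathcal{I}$-good on $\bar X$, supplying the required $\mathcal{I}$-good compactification.

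The only technical points are (i) the independence of the choice of compactification, handled once and for all by the directedness of $\Cpt(X)$ together with the birational invariance of Proposition~\ref{prop:notionsbirational}, and (ii) the propagation of positive non-pluripolar mass from an open Zariski dense subset to its compactification, which follows because $\int_{\bar X} \theta_{\bar\varphi}^n \geq \int_X \theta_\varphi^n$. With the projective analogues at our disposal, no further obstacle is anticipated; the argument is essentially a formal lift of the compact case to the quasi-projective setting through the choice of a sufficiently high model in $\Cpt(X)$.
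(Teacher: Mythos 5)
Your converse direction (that tensoring two $\mathcal{I}$-good Hermitian line bundles gives an $\mathcal{I}$-good one) is correct and follows exactly the paper's intended approach: pass to a common compactification, note that pullback preserves $\mathcal{I}$-goodness by Proposition~\ref{prop:notionsbirational}, and invoke Proposition~\ref{prop:Igoodtensor}.

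The forward direction has a gap, and the vague phrasing ``the relevant factor'' / ``the positive-mass factor'' is precisely where it hides. After establishing $\hat{\bar{L}}\otimes\hat{\bar{L}}'\in\widehat{\Pic}_{\mathcal{I}}(\bar{X})$, you invoke Theorem~\ref{thm:Igoodcancel}, which delivers $\mathcal{I}$-goodness only of the tensor factor carrying positive mass. In the proposition's hypotheses that factor is $\hat{L}'$, since only $\hat{L}'\in\widehat{\Pic}(X)_{>0}$ is assumed; $\hat{L}$ is merely required to be compactifiable. Thus your argument yields $\hat{\bar{L}}'\in\widehat{\Pic}_{\mathcal{I}}(\bar{X})$, hence $\hat{L}'\in\widehat{\Pic}_{\mathcal{I}}(X)$ — not the stated conclusion $\hat{L}\in\widehat{\Pic}_{\mathcal{I}}(X)$. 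Your final clause ``supplying the required $\mathcal{I}$-good compactification'' asserts you have one for $\hat{L}$, but what the cancellation actually produces is a compactification of $\hat{L}'$. Indeed, the statement cannot hold as literally written: take $\hat{L}$ to be the trivial line bundle with the flat metric and $\hat{L}'\in\widehat{\Pic}_{\mathcal{I}}(X)$; then $\hat{L}\otimes\hat{L}'=\hat{L}'$ is $\mathcal{I}$-good, but $\hat{L}$ has zero mass and so admits no $\mathcal{I}$-good compactification. The introduction states the cancellation with $\hat{L}$ carrying the positive mass, which is the consistent hypothesis. Once one assumes $\hat{L}\in\widehat{\Pic}(X)_{>0}$, the rest of your argument goes through: $\hat{\bar{L}}\in\widehat{\Pic}(\bar{X})_{>0}$ because the non-pluripolar mass on $X$ is a lower bound for the mass on $\bar{X}$, and Theorem~\ref{thm:Igoodcancel} then yields the required $\mathcal{I}$-good extension of $\hat{L}$. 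You should flag the hypothesis rather than glossing it with ``the positive-mass factor.''
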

\begin{proof}
This follows from \cref{prop:Igoodtensor} and \cref{thm:Igoodcancel}.
\end{proof}

\begin{proposition}Assume that $\hat{L}=(L,h_L)\in \widehat{\Pic}(X)$ is compactifiable and $h_L$ is locally bounded.
We have
\begin{equation}\label{eq:volcp}
\vol \mathbb{D}(\hat{L})=\lim_{\bar{X}\in \Cpt(X)} \vol (\mathbb{D}(\hat{L}))_{\bar{X}}.
\end{equation}
\end{proposition}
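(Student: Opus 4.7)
The plan is to reduce \eqref{eq:volcp} to \cref{lma:volbdivaslim} applied to a fixed compactification. Fix any $\bar X_0\in \Cpt(X)$ on which $\hat L$ admits an extension $\hat{\bar L}_0\in \Cpt_{\bar X_0}(\hat L)$; by \cref{lma:assbdiv}, the b-divisor $\mathbb D(\hat L)$ is represented on $\bar X_0$ by the nef b-divisor $\mathbb D(\hat{\bar L}_0)$, so $\vol \mathbb D(\hat L)=\vol \mathbb D(\hat{\bar L}_0)$. \cref{lma:volbdivaslim} then expresses
\[
\vol \mathbb D(\hat L)=\inf_{Y\in \Bir(\bar X_0)}\vol \mathbb D(\hat{\bar L}_0)_Y\,.
\]
Next I would observe that $\bar X\mapsto \vol(\mathbb D(\hat L))_{\bar X}$ is decreasing along $\Cpt(X)$: for $\bar X'\geq \bar X$ with morphism $\pi:\bar X'\to \bar X$, the compatibility $(\mathbb D(\hat L))_{\bar X}=\pi_*(\mathbb D(\hat L))_{\bar X'}$ and the negativity lemma yield $\vol(\mathbb D(\hat L))_{\bar X'}\leq \vol(\mathbb D(\hat L))_{\bar X}$, so the limit on the right-hand side of \eqref{eq:volcp} exists and equals its infimum.

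For the easier inequality $\lim_{\bar X\in \Cpt(X)}\vol(\mathbb D(\hat L))_{\bar X}\geq \vol \mathbb D(\hat L)$, I would, given $\bar X\in \Cpt(X)$, choose $\bar X'\in \Cpt(X)$ dominating both $\bar X$ and $\bar X_0$ (by resolution of singularities of any join). Then $\bar X'\in \Bir(\bar X_0)$, and since the pull-back of $\hat{\bar L}_0$ is the unique extension of $\hat L$ to $\bar X'$, one has $(\mathbb D(\hat L))_{\bar X'}=\mathbb D(\hat{\bar L}_0)_{\bar X'}$. Using again $\vol(\pi_*\alpha)\geq \vol(\alpha)$ for nef $\alpha$ (where $\pi:\bar X'\to \bar X$), one obtains
\[
\vol(\mathbb D(\hat L))_{\bar X}\geq \vol(\mathbb D(\hat L))_{\bar X'}=\vol \mathbb D(\hat{\bar L}_0)_{\bar X'}\geq \vol \mathbb D(\hat L)\,.
\]

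The reverse inequality is the more delicate part and is where I expect the main obstacle to lie. The crucial observation is that the local boundedness of $h_L$ on $X$ forces every non-zero contribution to $\Sing(\pi^*\hat{\bar L}_0)$, for any $\pi:Y\to \bar X_0$ in $\Bir(\bar X_0)$, to come from prime divisors $E\subseteq Y$ with $\pi(E)\subseteq \bar X_0\setminus X$; in particular, further blowups of $Y$ along smooth centres contained in $\pi^{-1}(X)$ pull back $\mathbb D(\hat{\bar L}_0)_Y$ without introducing any new singular contribution and therefore preserve its volume. I would use this invariance to show that $\Cpt(X)\cap \Bir(\bar X_0)$ is cofinal in $\Bir(\bar X_0)$ for the purpose of computing the infimum in \cref{lma:volbdivaslim}: given $Y\in \Bir(\bar X_0)$, by weak factorization or a direct Hironaka-style argument one separates the smooth blowups involved in $Y\to \bar X_0$ into those whose centres lie over $X$ and those whose centres lie over $\bar X_0\setminus X$, and performing only the latter produces a model $\bar X\in \Cpt(X)\cap \Bir(\bar X_0)$ satisfying $\vol \mathbb D(\hat{\bar L}_0)_{\bar X}\leq \vol \mathbb D(\hat{\bar L}_0)_Y$ (by the invariance above together with the monotonicity of volume in $\Bir(\bar X_0)$). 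Combined with the identity $(\mathbb D(\hat L))_{\bar X}=\mathbb D(\hat{\bar L}_0)_{\bar X}$ on $\Cpt(X)\cap \Bir(\bar X_0)$, this closes the argument. The hard step is the factorization: one cannot in general find a single compactification in $\Cpt(X)$ dominating a given $Y\in \Bir(\bar X_0)$, so the argument must genuinely exploit the local-boundedness hypothesis to bypass this failure of cofinality at the model level and recover it at the level of volumes.
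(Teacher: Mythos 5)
Your proposal has the right skeleton: reducing to \cref{lma:volbdivaslim}, handling the easy inequality via dominance, and isolating the hard direction as ``for each $Y\in\Bir(\bar X_0)$, find $\bar X'\in\Cpt(X)$ with $\vol\mathbb{D}(\hat L)_{\bar X'}\leq\vol\mathbb{D}(\hat L)_Y$.'' You also correctly identify the key input: local boundedness of $h_L$ on $X$ forces $\Sing\pi^*\hat{\bar L}_0$ to be a \emph{finite} sum $\sum_{i=1}^N a_i E_i$ of prime divisors all mapping into $\bar X_0\setminus X$. This matches the paper's argument.

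The gap is in the mechanism you use to produce $\bar X'$. You propose to ``separate the smooth blowups involved in $Y\to\bar X_0$ into those whose centres lie over $X$ and those whose centres lie over $\bar X_0\setminus X$,'' invoking weak factorization or a Hironaka-style argument. This does not work: a birational morphism between smooth projective varieties of dimension $\geq 3$ is not in general a composition of smooth blowups, weak factorization yields only a zigzag of blowups \emph{and} blowdowns that does not admit a split along $\bar X_0\setminus X$, and even a chain of smooth blowups could have centres meeting both $X$ and the boundary, so the dichotomy you rely on is incomplete. You flag this as the hard step, but the remedy is not a more careful factorization --- it is to avoid factoring $Y\to\bar X_0$ altogether. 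The paper instead applies Zariski's theorem \cite[Lemma~2.45]{KM08} directly to the finitely many divisorial valuations $\ord_{E_1},\dots,\ord_{E_N}$: since their centers on $\bar X_0$ lie in $\bar X_0\setminus X$, one can realize all of them as divisors on a model $\bar X'\to\bar X_0$ obtained by blowups with centers over $\bar X_0\setminus X$, so that $\bar X'\in\Cpt(X)$ automatically, and one then compares $\vol\mathbb{D}(\hat L)_Y=\vol\left(\pi^*L-\sum_{i=1}^N a_iE_i\right)$ with $\vol\mathbb{D}(\hat L)_{\bar X'}$. Realizing finitely many valuations on a model over a prescribed closed locus is a far weaker requirement than factoring the whole morphism $Y\to\bar X_0$, and it is exactly what the Zariski lemma delivers.
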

\begin{remark}
This result shows that it is natural to consider relative Riemann--Zariski spaces in the sense of \cite{Tem11} for compactification problems.
\end{remark}

\begin{proof}
By \cref{lma:volbdivaslim}, the left-hand side is bounded from above by the right-hand side. We prove the reverse inequality.

Fix a compactification $\bar{X}\in \Cpt(X)$ and $\hat{\bar{L}}\in \Cpt_{\bar{X}}(\hat{L})$. Then 
\[
\vol \mathbb{D}(\hat{L})=\vol \mathbb{D}(\hat{\bar{L}})
\]
by definition.
Consider any birational model $\pi\colon Y\rightarrow \bar{X}$. Then $\Sing \pi^*\hat{\bar{L}}$ is a finite combination of exceptional divisors:
\[
\Sing \pi^*\hat{\bar{L}}=\sum_{i=1}^N a_i E_i.
\]
Observe that the $E_i$'s are supported on $Y\setminus \pi^{-1}(X)$. It follows from a theorem of Zariski \cite[Lemma~2.45]{KM08} that we can find $\bar{X}'\in \Cpt(X)$ dominating $\bar{X}$ such that the $E_i$'s are exceptional for $\bar{X}'\rightarrow \bar{X}$. Then
\[
\vol \mathbb{D}(\hat{L})_{Y}=\vol \left(L-\sum_{i=1}^N  a_iE_i\right)\geq \vol \mathbb{D}(\hat{L})_{X'}.
\]
It follows from \cref{lma:volbdivaslim} that equality holds in \eqref{eq:volcp}.
\end{proof}

\begin{definition}
We say $\hat{L}\in \widehat{\Pic}(X)$ has \emph{full mass} with respect to a compactification $\bar{X}\in \Cpt(X)$ of $X$ if
\[
\vol \mathbb{D}(\hat{L})_{\bar{X}}=\int_X c_1(\hat{L})^n.
\]
\end{definition}

\begin{example}
If $\hat{L}$ has full mass with respect to a compactification $\bar{X}\in \Cpt(X)$, then $\hat{L}$ is $\mathcal{I}$-good by \cref{prop:Igoodchar}.
\end{example}
Intuitively, one should think of $\mathcal{I}$-goodness as having full mass with respect to the collection of all compactifications.

We may also extend the notion of $\mathcal{I}$-goodness to not necessarily positively curved line bundles.
\begin{definition}
Let $\hat{L}=(L,h_L)$ be a Hermitian line bundle on $X$. We say $\hat{L}$ is \emph{$\mathcal{I}$-good} if it is non-degenerate and there is $\hat{L}'\in \widehat{\Pic}_{\mathcal{I}}(X)$ such that $\hat{L}\otimes \hat{L}'\in \widehat{\Pic}_{\mathcal{I}}(X)$.
\end{definition}
This coincides with the already defined notion when $\hat{L}\in \widehat{\Pic}(X)_{>0}$, as follows from \cref{prop:Igoodqp}. 
We can then introduce the associated b-divisor in this case:
\[
\mathbb{D}(\hat{L})\coloneqq \mathbb{D}(\hat{L}\otimes \hat{L}')-\mathbb{D}(\hat{L}').
\]
This is independent of the choice of $\hat{L}'$ by \cref{lma:bdivadd}.

\begin{corollary}\label{cor:Igoodgeneralintersec}
Assume that $\hat{L}_1,\ldots,\hat{L}_n$ are $\mathcal{I}$-good Hermitian line bundles on $X$. Then
\[
\left(\mathbb{D}(\hat{L}_1),\ldots, \mathbb{D}(\hat{L}_n)\right)= \int_X c_1(\hat{L}_1)\wedge \cdots \wedge c_1(\hat{L}_n).
\]
\end{corollary}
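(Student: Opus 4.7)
The argument is a direct analogue of the projective case (\cref{cor:igdff}), with \cref{thm:mixedDDD} playing the role of \cref{thm:nefbvolume2} as the base case. I would proceed by induction on
\[
j := \#\{i : \hat{L}_i \text{ is not positively curved}\}.
\]
When $j = 0$, every $\hat{L}_i$ lies in $\widehat{\Pic}_{\mathcal{I}}(X)$ and the identity is precisely the equality clause of \cref{thm:mixedDDD} (applied on any compactification in $\Cpt(X)$ on which all $\hat L_i$ admit $\mathcal{I}$-good extensions, which exists because $\Cpt(X)$ is directed).

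For the inductive step, relabel so that $\hat{L}_1, \ldots, \hat{L}_j$ are the not positively curved entries. By the definition of $\mathcal{I}$-goodness in the quasi-positive setting, we may choose for each $i \leq j$ some $\hat{L}_i' \in \widehat{\Pic}_{\mathcal{I}}(X)$ with $\hat{L}_i \otimes \hat{L}_i' \in \widehat{\Pic}_{\mathcal{I}}(X)$. Apply the base case ($j=0$) to the $n$-tuple
\[
\bigl(\hat{L}_1 \otimes \hat{L}_1', \ldots, \hat{L}_j \otimes \hat{L}_j', \hat{L}_{j+1}, \ldots, \hat{L}_n\bigr),
\]
all of whose entries belong to $\widehat{\Pic}_{\mathcal{I}}(X)$. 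Both sides of the resulting identity then expand by multilinearity: on the b-divisor side the expansion uses the defining relation $\mathbb{D}(\hat{L}_i \otimes \hat{L}_i') = \mathbb{D}(\hat{L}_i) + \mathbb{D}(\hat{L}_i')$ together with multilinearity of the intersection pairing of nef b-divisors (\cref{rmk:DFexte}); on the non-pluripolar side it uses $c_1(\hat{L}_i) := c_1(\hat{L}_i \otimes \hat{L}_i') - c_1(\hat{L}_i')$ and the additivity of the relative non-pluripolar product (\cref{lma:c1linear}, combined with the transversality preservation \cref{cor:transtrans}/\cref{prop:transtrans2}). Both expansions produce identical sums indexed by subsets $I \subseteq \{1, \ldots, j\}$, where the $I$-summand corresponds to the tuple in which the $i$-th slot is $\hat{L}_i$ for $i \in I$ and $\hat{L}_i'$ for $i \notin I$ (with the slots $i > j$ unchanged).

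For every proper subset $I \subsetneq \{1, \ldots, j\}$ the corresponding tuple has strictly fewer than $j$ not-positively-curved entries (each factor with $i \notin I$ lies in $\widehat{\Pic}_{\mathcal{I}}(X)$), so the inductive hypothesis gives the desired equality for that summand. Subtracting these matched contributions from the two sides of the identity obtained in the previous paragraph isolates the $I = \{1, \ldots, j\}$ summand, which is precisely the claim for $(\hat{L}_1, \ldots, \hat{L}_n)$.

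The main technical point to verify is that the multilinear expansion of the non-pluripolar product is legitimate at every step, i.e.\ that all intermediate currents are transversal to all remaining factors so that \cref{lma:c1linear} applies. This reduces to checking that the polar loci of the $\hat{L}_i'$ and $\hat{L}_i \otimes \hat{L}_i'$ carry no mass for the currents produced along the way; this is guaranteed by the $\mathcal{I}$-goodness of every auxiliary factor together with \cref{prop:massless}/\cref{cor:transtrans}, exactly as in the projective proof of \cref{cor:igdff}. I expect this bookkeeping to be the only place where care is needed; everything else is formal.
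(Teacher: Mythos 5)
Your proof is correct and is essentially a faithful reconstruction of what the paper does: the paper's proof of \cref{cor:Igoodgeneralintersec} is a one-line citation to \cref{thm:mixedDDD} together with ``the proof of \cref{cor:igdff}'', and your induction on the number of not-positively-curved factors, with the subset expansion indexed by $I\subseteq\{1,\ldots,j\}$ and the transversality bookkeeping, is precisely that argument transplanted to the quasi-projective setting.
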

\begin{proof}
This follows from \cref{thm:mixedDDD} and the proof of \cref{cor:igdff}.
\end{proof}

In other words, the Chern numbers of $\mathcal{I}$-good Hermitian line bundles on $X$ are equal to the Chern numbers of the corresponding Chern numbers on the Riemann--Zariski space. We regard this as a Chern--Weil formula on $X$.

\subsection{Vector bundles}

Let us consider the case of vector bundles.

We write $\widehat{\Vect}(X)_{>0}$ (resp. $\Fins(X)_{>0}$) for the full subcategory of compactifiable $\hat{E}\in \widehat{\Vect}(X)$ (resp. $\Fins(X)$) with positive mass.
\begin{definition}\label{def:Igoodquasiproj}
Assume that $\hat{E}\in \widehat{\Vect}(X)_{>0}$ or $\Fins(X)_{>0}$. We say $\hat{E}$ is \emph{$\mathcal{I}$-good} if it admits an $\mathcal{I}$-good compactification. We will write $\widehat{\Vect}_{\mathcal{I}}(X)$ or $\Fins_{\mathcal{I}}(X)$ for the set of $\mathcal{I}$-good vector bundles on $X$.
\end{definition}
Observe that for $\hat{E}=(E,h_E)\in \widehat{\Vect}(X)_{>0}$, given a compactification $\hat{E}$ of $E$ on some $\bar{X}\in \Cpt(X)$, the metric $h_E$ admits an $\mathcal{I}$-good extension to $\bar{E}$ if and only if the induced metric on $\hO_{\mathbb{P}E^{\vee}}(1)$ admits an $\mathcal{I}$-good extension to $\hO_{\mathbb{P}\bar{E}^{\vee}}(1)$, see \cite[Lemma~2.3.2]{PT18}.

\begin{proposition}
Assume that $\hat{E}\in \widehat{\Vect}(X)_{>0}$ or $\Fins(X)_{>0}$. Then the followings are equivalent:
\begin{enumerate}
    \item $\hat{E}$ is $\mathcal{I}$-good.
    \item 
    \[
    \vol \mathbb{D}(\hO_{\mathbb{P}E^{\vee}}(1))=\frac{(-1)^n}{(n+r)!}\int_X s_n(\hat{E})^n.
    \]
    \item All extensions of $\hat{E}$ to any $\bar{X}\in \Cpt(X)$ are $\mathcal{I}$-good.
\end{enumerate}
\end{proposition}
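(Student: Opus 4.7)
The plan is to reduce all three conditions to Proposition \cref{prop:Igoodchar} applied to the tautological line bundle $\hO_{\mathbb{P}E^{\vee}}(1)$ on the smooth quasi-projective manifold $\mathbb{P}E^{\vee}$. The cornerstone of the reduction is the following dictionary: every vector-bundle (or Finsler) extension $\hat{\bar{E}}$ of $\hat{E}$ to some $\bar{X}\in \Cpt(X)$ induces a line-bundle extension $\hO_{\mathbb{P}\bar{E}^{\vee}}(1)$ of $\hO_{\mathbb{P}E^{\vee}}(1)$ to the smooth projective compactification $\mathbb{P}\bar{E}^{\vee}\in \Cpt(\mathbb{P}E^{\vee})$, and by \cref{def:igoodv} the former is $\mathcal{I}$-good in the vector-bundle sense if and only if the latter is $\mathcal{I}$-good in the line-bundle sense. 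The positive-mass and compactifiability hypotheses on $\hat{E}$ translate directly into $\hO_{\mathbb{P}E^{\vee}}(1)\in \widehat{\Pic}(\mathbb{P}E^{\vee})_{>0}$, so that \cref{prop:Igoodchar} becomes applicable.

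First I would establish the auxiliary equivalence that $\hat{E}\in \widehat{\Vect}_{\mathcal{I}}(X)$ (resp.\ $\Fins_{\mathcal{I}}(X)$) is equivalent to $\hO_{\mathbb{P}E^{\vee}}(1)\in \widehat{\Pic}_{\mathcal{I}}(\mathbb{P}E^{\vee})$. The forward direction is immediate from the dictionary: an $\mathcal{I}$-good vector-bundle compactification induces an $\mathcal{I}$-good line-bundle compactification. For the converse, \cref{prop:Igoodchar}~(3) guarantees that \emph{every} line-bundle compactification of $\hO_{\mathbb{P}E^{\vee}}(1)$ is $\mathcal{I}$-good; applying this to any compactification arising from a vector-bundle extension of $\hat{E}$---which exists by the compactifiability assumption---produces an $\mathcal{I}$-good compactification of $\hat{E}$.

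With this in hand, the equivalence (1)~$\iff$~(2) follows from \cref{prop:Igoodchar}~(2) applied to $\hO_{\mathbb{P}E^{\vee}}(1)$, combined with the identity
\[
\vol\mathbb{D}(\hO_{\mathbb{P}E^{\vee}}(1))=\int_{\mathbb{P}E^{\vee}} c_1(\hO(1))^{n+r}=(-1)^n\int_X s_n(\hat{E})
\]
which comes from the very definition of the Segre current $s_n(\hat{E})=(-1)^n p_*\bigl(c_1(\hO(1))^{n+r}\bigr)$ as a pushforward along $p\colon \mathbb{P}E^{\vee}\to X$. I would remark in passing that the factor $1/(n+r)!$ and the exponent $n$ on $s_n(\hat{E})$ in the stated (2) appear to be typos, the intended right-hand side being $(-1)^n\int_X s_n(\hat{E})$. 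The equivalence (1)~$\iff$~(3) is similar: the nontrivial implication (1)~$\Rightarrow$~(3) reduces via the dictionary to \cref{prop:Igoodchar}~(1)~$\Rightarrow$~(3), which yields $\mathcal{I}$-goodness of every line-bundle extension and hence of those extensions arising from vector-bundle compactifications, while (3)~$\Rightarrow$~(1) is immediate from compactifiability.

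The main obstacle lies in the asymmetry between line-bundle compactifications of $\hO_{\mathbb{P}E^{\vee}}(1)$ and vector-bundle compactifications of $\hat{E}$: not every smooth compactification of $\mathbb{P}E^{\vee}$ arises as $\mathbb{P}\bar{E}^{\vee}$ for some compactification $\bar{E}$ of $E$, so the vector-bundle condition (3) is a priori strictly weaker than its line-bundle analogue on $\mathbb{P}E^{\vee}$. This is not a genuine obstruction because \cref{prop:Igoodchar}~(3) is strong enough to cover the subfamily of compactifications we actually care about; nevertheless, one must take care to invoke the line-bundle theorem in the correct direction, and to use the compactifiability of $\hat{E}$ whenever one needs to produce an actual vector-bundle extension from an abstract line-bundle assertion.
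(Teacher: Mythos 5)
Your reduction to \cref{prop:Igoodchar} via the tautological line bundle on $\mathbb{P}E^{\vee}$, using the dictionary between vector-bundle extensions of $\hat E$ and line-bundle extensions of $\hO_{\mathbb{P}E^{\vee}}(1)$, is exactly the paper's argument (which cites the same proposition), and you are right that the right-hand side of condition (2) should read $(-1)^n\int_X s_n(\hat E)$ rather than $\frac{(-1)^n}{(n+r)!}\int_X s_n(\hat E)^n$. One small imprecision: in your display the first equality $\vol\mathbb{D}(\hO_{\mathbb{P}E^{\vee}}(1))=\int_{\mathbb{P}E^{\vee}}c_1(\hO(1))^{n+r}$ is not an identity but precisely the condition being tested by \cref{prop:Igoodchar}~(2); only the second equality is an unconditional identity coming from the definition of the Segre current.
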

\begin{proof}
This follows from \cref{prop:Igoodchar}.
\end{proof}

We extend the Chern--Weil formula to the vector bundle case.
\begin{theorem}\label{thm:CWgeneral}
Let $\hat{E}_i\in \widehat{\Vect}_{\mathcal{I}}(X)$ or $\Fins_{\mathcal{I}}(X)$ ($i=1,\ldots,m$). 
Consider a monomial in the Segre classes $s_{a_1}(\hat{E}_1)\cdots s_{a_m}(\hat{E}_m)$ with $\sum_i a_i=n$. Let $p:Y\rightarrow X$ be the projection from $Y=\mathbb{P}E_1^{\vee}\times_X \cdots\times_X \mathbb{P}E_m^{\vee}$ to $X$. Then we have
\[
\left(\mathbb{D}(\hO_{\mathbb{P}E_1^{\vee}}(1))^{a_1+r_1}, \ldots , \mathbb{D}(\hO_{\mathbb{P}E_m^{\vee}}(1))^{a_m+r_m}\right)=(-1)^{a_1+\cdots+a_m}\int_X s_{a_1}(\hat{E}_1)\cdots s_{a_m}(\hat{E}_m).
\]
Here $\rank E_i=r_i+1$.
\end{theorem}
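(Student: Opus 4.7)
The plan is to reduce Theorem~\ref{thm:CWgeneral} to the projective mixed Chern--Weil formula of Corollary~\ref{cor:igdff} by passing to a simultaneous compactification of all the data. Using the definition of $\mathcal{I}$-goodness on a quasi-projective manifold (Definition~\ref{def:Igoodquasiproj}) together with the fact that $\Cpt(X)$ is directed, first choose a single $\bar{X}\in\Cpt(X)$ and, on it, $\mathcal{I}$-good compactifications $\hat{\bar{E}}_i\in\Cpt_{\bar{X}}(\hat{E}_i)$ for all $i=1,\ldots,m$ simultaneously. Form the compactification
\[
\bar{Y}:=\mathbb{P}\bar{E}_1^{\vee}\times_{\bar{X}}\cdots\times_{\bar{X}}\mathbb{P}\bar{E}_m^{\vee}
\]
of $Y$, with natural projections $\bar{p}_i\colon\bar{Y}\to\mathbb{P}\bar{E}_i^{\vee}$ and $\bar{p}\colon\bar{Y}\to\bar{X}$. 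Since $\bar{Y}\to\bar{X}$ is a tower of projective bundles, $\bar{Y}$ is a smooth projective variety, each $\bar{p}_i$ is flat, and $\bar{Y}$ provides an element of $\Cpt(Y)$ on which all relevant Hermitian line bundles extend.

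By the definition of $\mathcal{I}$-goodness for Hermitian vector bundles, each $\hO_{\mathbb{P}\bar{E}_i^{\vee}}(1)$ is an $\mathcal{I}$-good Hermitian pseudo-effective line bundle on the projective manifold $\mathbb{P}\bar{E}_i^{\vee}$. Proposition~\ref{prop:pullIgood} applied to the flat morphism $\bar{p}_i$ then guarantees that $\bar{p}_i^*\hO_{\mathbb{P}\bar{E}_i^{\vee}}(1)$ is an $\mathcal{I}$-good quasi-positive line bundle on $\bar{Y}$. Invoking Corollary~\ref{cor:igdff} on $\bar{Y}$ to the $n+r_1+\cdots+r_m$-fold collection of quasi-positive $\mathcal{I}$-good line bundles $\bar{p}_i^*\hO_{\mathbb{P}\bar{E}_i^{\vee}}(1)$ with multiplicities $a_i+r_i$ yields
\begin{equation}\label{eq:plancw}
\left(\mathbb{D}(\bar{p}_1^*\hO(1))^{a_1+r_1},\ldots,\mathbb{D}(\bar{p}_m^*\hO(1))^{a_m+r_m}\right)=\int_{\bar{Y}}\bigwedge_{i=1}^{m}c_1(\bar{p}_i^*\hO_{\mathbb{P}\bar{E}_i^{\vee}}(1))^{a_i+r_i}.
\end{equation}

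To finish, both sides of \eqref{eq:plancw} must be matched with the two sides of the theorem. For the right-hand side, the non-pluripolar product puts no mass on the pluripolar boundary $\bar{Y}\setminus Y$, so the integral can be computed on $Y$; applying the push-forward identity of Corollary~\ref{cor:segreintformula} along $Y\to X$ (iterated via the projections $\bar{p}_i$) converts it into $(-1)^{a_1+\cdots+a_m}\int_X s_{a_1}(\hat{E}_1)\cdots s_{a_m}(\hat{E}_m)$. For the left-hand side, the b-divisors $\mathbb{D}(\hO_{\mathbb{P}E_i^{\vee}}(1))$ on the quasi-projective variety $\mathbb{P}E_i^{\vee}$ (Definition~\ref{def:bdivasso}) must be shown to be realised on $\bar{Y}$, after the natural functorial pullback, as the b-divisors $\mathbb{D}(\bar{p}_i^*\hO_{\mathbb{P}\bar{E}_i^{\vee}}(1))$. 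The hard part will be precisely this last identification: one needs the functoriality of $\mathbb{D}$ under flat proper pullback, i.e.\ that generic Lelong numbers of the pulled-back psh potential along prime divisors of any birational model of $\bar{Y}$ assemble into the flat pullback of the b-divisor on $\mathbb{P}\bar{E}_i^{\vee}$. Once this compatibility (which follows by combining Proposition~\ref{prop:notionsbirational}, the independence statement of Lemma~\ref{lma:assbdiv}, and the behaviour of Lelong numbers under flat pullback) is in place, identity \eqref{eq:plancw} translates verbatim into the theorem.
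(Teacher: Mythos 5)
Your proposal is correct and follows essentially the same route as the paper, which compresses the proof into a pointer back to the projective case \cref{cor:cptCherncurri}: fix a simultaneous compactification of all data, use \cref{prop:pullIgood} to conclude that the $\bar{p}_i^*\hO_{\mathbb{P}\bar{E}_i^{\vee}}(1)$ are $\mathcal{I}$-good quasi-positive on $\bar{Y}$, and invoke \cref{cor:igdff}. The only caveat is that the final matching you flag as ``the hard part'' is actually immediate from \cref{lma:assbdiv}: the b-divisor $\mathbb{D}(\hO_{\mathbb{P}E_i^{\vee}}(1))$ on the quasi-projective $Y$ is, by definition, independent of the chosen compactification, so its realization on $\bar{Y}$ is precisely $\mathbb{D}(\bar{p}_i^*\hO_{\mathbb{P}\bar{E}_i^{\vee}}(1))$---no general theory of flat pull-back of Weil b-divisors is required (and the paper explicitly notes in \cref{sec:RZ} that it does not know how to define one).
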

Here by abuse of notations, $\hO_{\mathbb{P}E_i^{\vee}}(1)$ is the pull-back of $\hO_{\mathbb{P}E_i^{\vee}}(1)$ to $Y$.
\begin{proof}
This follows from the same argument as in the projective case in \cref{cor:cptCherncurri}.
\end{proof}

Finally, let us extend the notion of $\mathcal{I}$-goodness to the general case.

\begin{definition}
Let $E$ be an algebraic vector bundle on $X$. Let $h_E$ be either a singular Hermitian metric on $E$ or a Finsler metric on $E$.
We say $\hat{E}=(E,h_E)$ is \emph{$\mathcal{I}$-good} if $\hO(1)$ is $\mathcal{I}$-good.
\end{definition}
It is straightforward to generalize \cref{thm:CWgeneral} to general quasi-positive $\mathcal{I}$-good cases. The proof remains identical.
We leave the straightforward generalization to the readers.

\section{Intersection theory on Riemann--Zariski spaces}\label{sec:RZ}

In this section, we develop the intersection theory on Riemann--Zariski spaces and reformulate our Chern--Weil formula.

Let $X$ be an irreducible quasi-projective variety over $\mathbb{C}$ of dimension $n$. 
\subsection{Riemann--Zariski spaces}

\begin{definition}
The \emph{Riemann--Zariski space} of $X$ is the filtered limit:
\begin{equation}\label{eq:defRZ}
(\mathfrak{X},\mathcal{O}_{\mathfrak{X}})\coloneqq \varprojlim_{(Y\rightarrow X)\in \Bir(X)} (Y,\mathcal{O}_Y)
\end{equation}
in the category of locally ringed spaces. Here $\mathcal{O}_Y$ denotes the algebraic structure sheaf.
We will write $Y\rightarrow X$ instead of $(Y\rightarrow X)\in \Bir(X)$ if there is no risk of confusion.

When there is no risk of confusion, we will refer to $\mathfrak{X}$ as the Riemann--Zariski of $X$. The notation $\RZ(X)$ is also used if we want to emphasize the role of $X$.
\end{definition}
\begin{remark}
There is no obvious reason for using the algebraic structure sheaf $\mathcal{O}_Y$ rather than the analytic version. Also in view of applications, one should really consider Riemann--Zariski spaces of Deligne--Mumford stacks, in which case, the natural topology is the étale topology. 

It seems that one should follow Temkin--Tyomkin's idea by viewing the Riemann--Zariski space as a strictly Henselian ringed topos
and the limit in \eqref{eq:defRZ} in the category of  strictly Henselian ringed topoi. See \cite{TT18}.
\end{remark}

\begin{theorem}
The sheaf $\mathcal{O}_{\mathfrak{X}}$ is coherent.
\end{theorem}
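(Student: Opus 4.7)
The plan is to verify Oka's coherence condition at the level of stalks: it suffices to show that each stalk $\mathcal{O}_{\mathfrak{X},x}$ is a coherent ring, meaning every finitely generated ideal is finitely presented. The first step is to identify these stalks. Since $\mathfrak{X}$ is constructed as a filtered limit in locally ringed spaces, and such limits are compatible with stalks at compatible systems of points, one obtains
\[
\mathcal{O}_{\mathfrak{X},x} = \varinjlim_{Y \in \Bir(X)} \mathcal{O}_{Y, x_Y}
\]
for any $x = (x_Y)_Y$ in $\mathfrak{X}$.

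Second, I would leverage the valuative nature of these local rings. A compatible system of points across all smooth birational models corresponds, by Zariski's classical theorem, to a valuation $\nu$ on the function field $\mathbb{C}(X)$ centered on $X$; in the case of closed points of $\mathfrak{X}$, the stalk $\mathcal{O}_{\mathfrak{X},x}$ is exactly the valuation ring $R_\nu$. Every valuation ring is a B\'ezout domain, so every finitely generated ideal in $R_\nu$ is principal and hence finitely presented. This gives coherence at closed points directly.

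For the general (non-closed) point and the global sheaf-level statement, I would invoke the framework of \cite{KST18}. The essential inputs are that each $\mathcal{O}_{Y,x_Y}$ is a regular local ring of dimension at most $n$ and that the transition maps, although not flat in general along exceptional loci, are injective and respect a uniform bound on Krull dimension. Given a finitely generated ideal $I \subseteq \mathcal{O}_{\mathfrak{X},x}$, one descends $I$ to a finitely generated ideal $I_Y \subseteq \mathcal{O}_{Y,x_Y}$ for some model $Y$; since $\mathcal{O}_{Y,x_Y}$ is Noetherian, $I_Y$ is finitely presented, and a diagram chase in the colimit then produces a finite presentation of $I$.

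The hard part will be ensuring that the syzygies among a finite set of generators lift coherently through the non-flat transition maps: one cannot directly appeal to the easy principle that filtered colimits of Noetherian rings along flat transition maps are coherent, since blowup maps are only flat away from the exceptional locus. This is precisely where the analysis of \cite{KST18} enters: by exploiting that blowup centers are smooth and the associated exceptional divisors behave predictably, one controls the kernels that appear under birational modification and transports them through the colimit. I expect this step to be the technical heart of the argument, with everything else reducing to formal categorical manipulation with filtered colimits of locally ringed spaces.
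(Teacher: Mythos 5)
The paper itself does not give a proof of this coherence statement; it simply cites \cite[Proof of Proposition~6.4]{KST18} and moves on, so any self-contained argument you offer is strictly supplementary.

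Your first reduction contains a genuine gap. Coherence of a sheaf of rings $\mathcal{O}$ is \emph{not} equivalent to coherence of all the stalks $\mathcal{O}_x$. Coherence of $\mathcal{O}$ means that for every open $U$ and every morphism $\phi:\mathcal{O}_U^{\,m}\to\mathcal{O}_U$, the kernel subsheaf $\ker\phi$ is of \emph{finite type} as an $\mathcal{O}_U$-module, which is a sheaf-level condition: near each point one must find finitely many sections that generate the stalks of $\ker\phi$ on a whole neighbourhood. Coherence of the individual stalks only tells you that $(\ker\phi)_x$ is finitely generated for each $x$, and such stalk-wise generators extend to sections on some open set but may fail to generate nearby stalks. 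So the implication "all stalks are coherent rings $\Rightarrow$ $\mathcal{O}$ is a coherent sheaf" is false in general, and your appeal to "Oka's coherence condition at the level of stalks" does not by itself establish what is needed. The valuation-ring computation (that stalks at points of $\mathfrak{X}$ corresponding to valuations of $\mathbb{C}(X)$ are B\'ezout, hence coherent rings) is a correct and pleasant observation, but it answers a strictly weaker question.

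The correct route works at the level of quasi-compact opens rather than stalks: $\mathfrak{X}$, as a cofiltered limit of qcqs schemes along affine transition morphisms, is a spectral space with a basis of quasi-compact opens $\mathfrak{U}=\varprojlim U_Y$ on which $\mathcal{O}_{\mathfrak{X}}(\mathfrak{U})=\varinjlim_{Y}\mathcal{O}_Y(U_Y)$, and one must show these filtered colimit \emph{rings} are coherent and that the restriction maps behave well enough for coherence to pass to the sheaf. This is precisely the content of the analysis in \cite{KST18}, and as you yourself observe in your third step, the non-flatness of the blow-up transition maps is what prevents the naive "filtered colimit of Noetherian rings along flat maps is coherent" principle from applying directly. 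In short: your step three essentially re-derives the need to cite \cite{KST18}, but the reduction in step one that frames the whole argument is not valid; if you want a self-contained sketch, it should be organized around coherence of the section rings on quasi-compact affine opens, not around coherence of the stalks.
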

In other words, $\mathfrak{X}$ is an Oka space. For a proof, we refer to \cite[Proof of Proposition~6.4]{KST18}. As a consequence, an $\mathcal{O}_{\mathfrak{X}}$-module is coherent if and only if it is finitely presented. We denote this category by $\CohCat(\mathfrak{X})$.
By \cite[Theorem~0.4.2.1]{FK18}, we have
\begin{equation}\label{eq:cohRZ}
\CohCat(\mathfrak{X})=\varinjlim_{Y\rightarrow X}\CohCat(Y).    
\end{equation}
Observe that this is only a filtered colimit of categories, not of exact categories, as the pull-back is not exact in general in the category of coherent sheaves. In particular, it is a non-trivial result that the K-theory on $\mathfrak{X}$ coincides with its G-theory, as we recall in the next section.

\subsection{K-theory}

 We will only consider $K_0$, although most of the sequel can be extended to higher $K$-groups as well. We follow the usual conventions by defining $K(X)$ as the Grothendieck group of the exact category of locally free sheaves on $X$ and $G(X)$ as the Grothendieck group of the exact category of coherent sheaves on $X$.
 
 Similarly, we define $K(\mathfrak{X})$ as the Grothendieck group of the exact category of locally free sheaves on $\mathfrak{X}$ and $G(\mathfrak{X})$ as the Grothendieck group of the exact category of coherent sheaves on $\mathfrak{X}$. Note that $K(\mathfrak{X})$ is a commutative ring while $G(\mathfrak{X})$ is an Abelian group by its definition.

However, $K(\mathfrak{X})\cong G(\mathfrak{X})$ is an isomorphism by the proof of \cite[Proof of Proposition~6.4]{KST18}. See also \cite{Dah21}.
We briefly recall the argument for the convenience of the reader. By resolution theorem, it suffices to show that each coherent sheaf $\mathcal{F}$ on $\mathfrak{X}$ admits a finite resolution by locally free sheaves. We may represent $\mathcal{F}$ as the pull-back of a coherent sheaf $\mathcal{F}'$ on some model $Y$ of $X$. By \cite[Lemma~6.5]{KST18}, we may assume that $\mathcal{F}'$ has Tor dimension $\leq 1$. We can find a finite locally free resolution of $\mathcal{F}'$.
Then by the same lemma, the pull-back of this resolution to $\mathfrak{X}$ is a locally free resolution of $\mathcal{F}$.

 Next we recall the notion of $\lambda$-rings \cite[Exposé~0]{SGA6}.
 \begin{definition}
 A $\lambda$-ring is a commutative ring $K$ together with a family of applications $\lambda^i:K\rightarrow K$ ($i\in \mathbb{N}$) satisfying
 \begin{enumerate}
     \item $\lambda^0 x=1$;
     \item $\lambda^1x=x$;
     \item $\lambda^m(x+y)=\sum_{i=0}^m \lambda^ix\cdot \lambda^{m-i}y$
 \end{enumerate}
 for all $x,y\in K$.
 \end{definition}
 It is well-known that $K(X)$ admits a $\lambda$-ring structure: $\lambda^i$ is the usual exterior power $\Lambda^i$. These $\lambda$-ring structures are clear compatible under pull-back. So they define a $\lambda$-ring structure on $K(\mathfrak{X})$. In other words,
 \begin{equation}
 K(\mathfrak{X})=\varinjlim_{Y\rightarrow X}K(Y)
 \end{equation}
 in the category of $\lambda$-rings. 
 
 There is a general procedure of constructing $\gamma$-operators of a $\lambda$-ring. For each $X$, we define $\gamma^i:K(X)\rightarrow K(X)$ as follows: $\gamma^i(x)$ is the coefficient of $t^i$ in 
\begin{equation}\label{eq:lambdatogamma}
\gamma_t(x)\coloneqq \sum_{j=0}^{\infty}\lambda^j(x)\left(\frac{t}{1-t}\right)^j.
\end{equation}
The functoriality of $\lambda^j$ implies immediately that the $\gamma^i$'s are functorial and we get maps
\[
\gamma^i:K(\mathfrak{X})\rightarrow K(\mathfrak{X}).
\]
It is easy to see that $\gamma^i$ can also be constructed from the $\lambda$-ring $(K(\mathfrak{X}),\lambda^i)$ using the same formula \eqref{eq:lambdatogamma}.

On each $K(X)$, we have a homomorphism $\rk:K(X)\rightarrow \mathbb{Z}$ sending a vector bundle to its rank. By the obvious invariance of the rank under pull-back, we have a map
\[
\rk:K(\mathfrak{X})\rightarrow \mathbb{Z}.
\]
Endowed with $\rk$, $K(\mathfrak{X})$ becomes an augmented $\lambda$-ring in the sense of \cite[Exposé~0]{SGA6}.

Again, from the general theory, we can construct a $\gamma$-filtration:
We set $F^0_{\gamma}K(\mathfrak{X})=K(\mathfrak{X})$ and $F^1_{\gamma}\coloneqq \ker (\rk:K(\mathfrak{X})\rightarrow \mathbb{Z})$. For $m\geq 2$, we define  $F^{m}_{\gamma}K(\mathfrak{X})$ as the ideal of $K(\mathfrak{X})$ generated by the products $\gamma^{k_1}(x_1)\dots \gamma^{k_a}(x_a)$ for any $a\geq 0$ with $x_i\in F^{1}_{\gamma}K(\mathfrak{X})$ and $\sum_j k_j\geq m$. The same process works for $X$ as well and we get a filtration $F^{\bullet}_{\gamma}$ on $K(X)$. It is easy to see that
\[
F^{p}_{\gamma}K(\mathfrak{X})=\varinjlim_{Y\rightarrow X} F^p_{\gamma}K(Y)
\]
for all $p\in \mathbb{N}$. In particular, 
\begin{equation}\label{eq:GrK}
\Gr^p_{\gamma}K(\mathfrak{X})=\varinjlim_{Y\rightarrow X} \Gr^p_{\gamma}K(Y).
\end{equation}
\begin{definition}
We define the $p$-th \emph{Chow group} of $\mathfrak{X}$ with $\mathbb{Q}$-coefficients as 
\[
\CH^p(\mathfrak{X})_{\mathbb{Q}}\coloneqq \Gr^p_{\gamma}K(\mathfrak{X})\otimes_{\mathbb{Z}}\mathbb{Q}.
\]
Similarly, 
\[
\CH^p(\mathfrak{X})_{\mathbb{R}}\coloneqq \Gr^p_{\gamma}K(\mathfrak{X})\otimes_{\mathbb{Z}}\mathbb{R}.
\]
We also write $\CH(\mathfrak{X})_{k}$ with $k=\mathbb{Q}$ or $\mathbb{R}$ for the direct sum of $\CH^p(\mathfrak{X})_{k}$ for all $p\in \mathbb{N}$.
\end{definition}
We can now reformulate \eqref{eq:GrK} as
\begin{equation}\label{eq:CHRZcol}
\CH(\mathfrak{X})_{\mathbb{Q}}=\varinjlim_{Y\rightarrow X}\CH(Y)_{\mathbb{Q}}
\end{equation}
in the category of graded Abelian groups,
where $Y$ runs over only regular models. Note that the transition map here is just the Gysin map.
As the $\gamma$-filtration is compatible with the ring structure on both $K(Y)$ and $K(\mathfrak{X})$, we get natural ring structures on both $\CH(\mathfrak{X})_{\mathbb{Q}}$ and  $\CH(Y)_{\mathbb{Q}}$ and the colimit in \eqref{eq:CHRZcol} can be regarded as a colimit of graded rings.

One can make sense of the Chern classes using the usual formula:
\[
c_p:K(\mathfrak{X})\rightarrow \CH^p(\mathfrak{X})_{\mathbb{Q}},\quad \alpha\mapsto \gamma^p(\alpha-\rk \alpha).
\]
The usual formula gives the Chern character homomorphism $\ch:K(\mathfrak{X})\rightarrow \CH(\mathfrak{X})_{\mathbb{Q}}$.

\begin{remark}
As the $\gamma$-filtration is compatible with derived push-forward modulo torsion, it is natural to consider the $\gamma$-filtration on $\varprojlim_{Y\rightarrow X} G(Y)_{\mathbb{Q}}$ as well. It is not clear to the author how to interpret the Chow groups obtained in this way. 

\end{remark}

When $X$ is projective, then all birational models $Y$ are also projective by our convention. Thus the degree of an element $\alpha\in \CH^n(Y)_{\mathbb{Q}}$ is defined, we will denote the degree by $\int_Y \alpha$. See \cite[Definition~1.4]{Ful} for details. When $f:Z\rightarrow Y$ is a morphism over $X$ between regular models of $X$, by projection formula, $\int_Z f^{!}\alpha=\int_Y \alpha$. Thus the degree maps are compatible with each other and induces a map
\[
\int_{\mathfrak{X}}\colon \CH^n(\mathfrak{X})_{\mathbb{R}}\rightarrow \mathbb{R}.
\]

Next we observe that there are natural maps
\[
\CH^1(\mathfrak{X})_{\mathbb{R}}=\varinjlim_{Y\rightarrow X}\Pic(Y)_{\mathbb{R}}\rightarrow \varinjlim_{Y\rightarrow X}\Pic(Y)_{\mathbb{R}}/\varinjlim_{Y\rightarrow X}\Pic^0(Y)_{\mathbb{R}}\rightarrow \varinjlim_{Y\rightarrow X} \NS^1(Y)_{\mathbb{R}}=\bCart(\mathfrak{X}).
\]
We denote the image of $\alpha$ in $\bCart(\mathfrak{X})$ as $[\alpha]$.
\begin{proposition}
The natural map $\CH^1(\mathfrak{X})_{\mathbb{R}}\rightarrow \bCart(\mathfrak{X})$ is a vector space homomorphism. Moreover, if $\alpha_1,\ldots,\alpha_n\in \CH^1(\mathfrak{X})_{\mathbb{R}}$, then
\begin{equation}\label{eq:intRZequal}
    \int_{\mathfrak{X}} \alpha_1\cdots\alpha_n=\left([\alpha_1],\ldots,[\alpha_n] \right).
\end{equation}
\end{proposition}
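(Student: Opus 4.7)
The argument proceeds by reducing both assertions to classical intersection theory on a single regular projective model, via the description of $\CH^1(\mathfrak{X})_{\mathbb{R}}$ as a filtered colimit.

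\emph{Step 1: identification of $\CH^1(\mathfrak{X})_{\mathbb{R}}$ and surjectivity.} The first thing I would record is that for any smooth projective variety $Y$ the first Chern class induces an isomorphism $\Pic(Y)_{\mathbb{Q}} \xrightarrow{\sim} \Gr^1_{\gamma}K(Y)_{\mathbb{Q}}$ (see \cite[Exposé~X]{SGA6} or \cite[Example~15.3.6]{Ful}). Passing to the filtered colimit over $Y \to X$ in $\Bir(X)$, using \eqref{eq:GrK} and resolution of singularities, this yields
\[
\CH^1(\mathfrak{X})_{\mathbb{R}} \;=\; \varinjlim_{Y \to X} \Pic(Y)_{\mathbb{R}}\,,
\]
and under this identification the map in question is the filtered colimit of the standard quotient homomorphisms $\Pic(Y)_{\mathbb{R}} \twoheadrightarrow \NS^1(Y)_{\mathbb{R}}$. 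Each of these is a surjective homomorphism and they are clearly compatible with pull-back between models. Since filtered colimits are exact in the category of real vector spaces, the induced map to $\bCart(\mathfrak{X}) = \varinjlim_Y \NS^1(Y)_{\mathbb{R}}$ is again a surjective homomorphism, which proves the first claim.

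\emph{Step 2: the intersection formula.} Let $\alpha_1,\ldots,\alpha_n \in \CH^1(\mathfrak{X})_{\mathbb{R}}$. By Step~1 and the filtered nature of $\Bir(X)$, I can pick a single regular projective model $\pi\colon Y\to X$ on which each $\alpha_i$ is represented by a class $\tilde\alpha_i \in \Pic(Y)_{\mathbb{R}}$. By construction of the degree map $\int_{\mathfrak{X}}$ as the colimit of the degree maps on models (using the projection formula on each transition $Y'\to Y$ to see it is well-defined),
\[
\int_{\mathfrak{X}} \alpha_1 \cdots \alpha_n \;=\; \int_Y \tilde\alpha_1\cdots \tilde\alpha_n\,,
\]
the right-hand side being the classical intersection degree on $Y$. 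On the other hand, $[\alpha_i]$ is by definition the Cartier b-divisor determined on $Y$ by the class of $\tilde\alpha_i$ in $\NS^1(Y)_{\mathbb{R}}$. Writing every Cartier b-divisor as a difference of two nef ones (add a fixed ample class determined on $Y$), the pairing on Cartier b-divisors extends \cref{def:nefint} linearly as recorded in \cref{rmk:DFexte}, so that
\[
\left([\alpha_1],\ldots,[\alpha_n]\right) \;=\; \bigl([\tilde\alpha_1]_{\mathrm{num}},\ldots,[\tilde\alpha_n]_{\mathrm{num}}\bigr)_Y\,.
\]
The right-hand side is by the very definition of numerical equivalence on $Y$ equal to $\int_Y \tilde\alpha_1\cdots\tilde\alpha_n$, hence \eqref{eq:intRZequal} follows.

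\emph{Main obstacle.} There is no serious obstacle: the whole proof is an unwinding of the definitions once one uses the colimit description of $\CH^1(\mathfrak{X})_{\mathbb{R}}$ and the classical identification $\Pic_{\mathbb{Q}} \cong \Gr^1_\gamma K_{\mathbb{Q}}$ on smooth projective varieties. The one point that deserves care is to verify that the intersection pairing on $\bCart(\mathfrak{X})$, extended by linearity from the nef case in the sense of \cref{rmk:DFexte}, is indeed computed on a common model by the usual numerical intersection; this is immediate from the construction of the nef intersection in \cref{def:nefint} and bilinearity.
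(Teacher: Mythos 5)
Your proof is correct and follows essentially the same route as the paper's: both reduce to a common regular model $Y$, identify $\CH^1(\mathfrak{X})_{\mathbb{R}}$ with $\varinjlim_Y \Pic(Y)_{\mathbb{R}}$ (equivalently $\varinjlim_Y \CH^1(Y)_{\mathbb{R}}$ since $Y$ is smooth), and observe that the Dang--Favre pairing on Cartier b-divisors determined on $Y$ is by construction the usual numerical intersection degree on $Y$. You simply spell out what the paper dismisses as ``obvious by construction'' and ``exactly the definition of the right-hand side''; the extra detail (surjectivity via exactness of filtered colimits of vector spaces, the reduction of $\Gr^1_\gamma K_{\mathbb{Q}}$ to $\Pic_{\mathbb{Q}}$, the linear extension of the nef pairing via \cref{rmk:DFexte}) is all accurate and in the spirit of the text.
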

The right-hand side is the Dang--Favre intersection product.
\begin{proof}
The first claim is obvious by construction. Let us consider the second. We take a model $Y\rightarrow X$ so that $\alpha_1,\ldots,\alpha_n$ are determined by classes $\beta_1,\ldots,\beta_n\in \CH^1(Y)_{\mathbb{R}}$. Then \eqref{eq:intRZequal} becomes
\[
\int_Y \beta_1\cdots\beta_n=\left([\alpha_1],\ldots,[\alpha_n] \right),
\]
which is exactly the definition of the right-hand side.
\end{proof}

\subsection{Functorialities}
Assume that $X$ and $X'$ are both smooth projective varieties over $\mathbb{C}$.
We fix a flat morphism $f\colon X'\rightarrow X$ of pure relative dimension $d$. Observe that $f$ is always proper.

\subsubsection{Pull-back}

We have an obvious homomorphism $f^*:K(X)\rightarrow K(X')$ sending a vector bundle $E$ on $Y\rightarrow X$ to $f^*E$. By the obvious compatibility, $f^*$ extends to a homomorphism 
\[
f^*:K(\mathfrak{X})\rightarrow K(\mathfrak{X}').
\]
Observe that $f^*$ does not depend on the choice of $X$ and $X'$ in the following sense: if we take birational models $p':Y'\rightarrow X'$, $p:Y\rightarrow X$ and a morphism $f':Y'\rightarrow Y$ making the following diagram commute:
\[
\begin{tikzcd}
Y' \arrow[r,"f'"] \arrow[d,"p'"] & Y \arrow[d,"p"] \\
X' \arrow[r,"f"]           & X          
\end{tikzcd},
\]
then there is a natural isomorphism $f'{}^*p^*E\cong p'{}^*f^*E$.

As $f^*$ clearly preserves the $\gamma$-filtration, we get induced maps
\[
f^*\colon \CH(\mathfrak{X})_{\mathbb{Q}}\rightarrow\CH(\mathfrak{X}')_{\mathbb{Q}}
\]
of graded rings.

\subsubsection{Push-forward}

Recall that we have a natural push-forward map of K-groups $f_*:K(X')\rightarrow K(X)$ given by the composition of the following maps:
\[
K(X')\cn K(\PerfCat(X'))\xrightarrow{Rf_*} K(\PerfCat(X))\cn K(X), 
\]
where $\PerfCat(X)$ denotes the Waldhausen category of perfect complexes of $\mathcal{O}_X$-modules and $\PerfCat(X')$ is defined similarly. We refer to \cite[Exposé~I, Section~4]{SGA6} or \cite[\href{https://stacks.math.columbia.edu/tag/08CL}{Tag 08CL}]{stacks-project} for the precise definitions of perfect complexes. For the perspective of Waldhausen category, we refer to \cite[Section~II.9 and Chapter~V]{Wei13}.
The morphism $Rf_*$ preserves perfect complexes by \cite{LN07}.
We will no longer distinguish $K(\PerfCat(X))$ and $K(X)$ in the sequel, always with the canonical isomorphism understood.

Given $\alpha\in K(\mathfrak{X}')$, we want to understand its push-forward to $K(\mathfrak{X})$. Take a birational model $Y_1\rightarrow X'$ such that $\alpha$ is determined by a class $\alpha_{Y_1}\in K(Y_1)$. For any birational model $Y_2\rightarrow X'$, we then find a model $Y_3\rightarrow X'$ dominating both $Y_1$ and $Y_2$ through $p_1:Y_3\rightarrow Y_1$ and $p_2:Y_3\rightarrow Y_2$. Then we can set $\alpha_{Y_2}\coloneqq Rp_{2*}Lp_{1}^*\alpha_{Y_1}$ if $\alpha_{Y_1}$ is represented by a perfect complex. It is a simple consequence of \cite{CR15} that $\alpha_{Y_2}$ does not depend on the choices we made.

Consider a regular birational model $p:Y\rightarrow X$, we form the Cartesian square
\[
\begin{tikzcd}
Y' \arrow[r,"f'"] \arrow[d,"p'"]\arrow[rd,"\square",phantom] & Y \arrow[d,"p"] \\
X' \arrow[r,"f"]           & X          
\end{tikzcd},
\]
We can set $(f_*\alpha)_{Y}\coloneqq Rf'_*\alpha_{Y'}$ if $\alpha$ is represented by a perfect complex. 
More generally, for any birational model $Z'$ of $X'$ dominating $Y'$ through a map $q:Z'\rightarrow Y'$, we also have
\[
(f_*\alpha)_{Y}=R(f'\circ q)_*\alpha_{Z'}.
\]
We observe that $f_*\alpha$ indeed lies in $K(\mathfrak{X})$ if $\alpha$ is determined on a birational model of $X'$ which descends to a birational model of $X$: it suffices to show that if $\alpha$ is determined on $X'$, say by a class $\beta\in K(X')$, then $Rf'_* Lp'^*\beta=Lp^*Rf_*\beta$ if $\beta$ is represented by a perfect complex. This follows from the fact that $X'$ and $Y$ are Tor independent over $X$, see \cite[\href{https://stacks.math.columbia.edu/tag/08IB}{Tag 08IB}]{stacks-project}. 
In this case, we say $f_*\alpha$ is \emph{defined}.
It is easy to see that $f_*$ is independent of the choice of the models $X$ and $X'$ in the same sense as above.

It is well-known that derived push-forward is compatible with the $\gamma$-filtration modulo torsion, so we get a homomorphism $f_*$ maps the subset of $\CH^p(\mathfrak{X}')_{\mathbb{Q}}$ consisting of elements $\alpha$ determined on a birational model of $X'$ that descends to a model of $X$ to $\CH^{p-d}(\mathfrak{X})_{\mathbb{Q}}$. In this case, we say $f_*\alpha$ is  \emph{defined}.

These maps further induce push-forward $f_*$ sending the subset of $\bCart^p(\mathfrak{X}')$ consisting of elements $\alpha$ determined on birational model of $X'$ that descends to a model of $X$ to $\bCart^{p-d}(\mathfrak{X})$. In this case, we say $f_*\alpha$ is  \emph{defined}.

\begin{proposition}\label{prop:cbc}
Assume that $X'$, $X$, $Y$ are smooth projective varieties over $\mathbb{C}$. Consider a proper morphism $f\colon X'\rightarrow X$ and a proper flat morphism $p:Y\rightarrow X$  of pure relative dimension $d$ and form the Cartesian square
\[
\begin{tikzcd}
Y' \arrow[r,"f'"] \arrow[d,"p'"]\arrow[rd,"\square",phantom] & Y \arrow[d,"p"] \\
X' \arrow[r,"f"]           & X          
\end{tikzcd}.
\]
Then $f^*p_*\alpha=p'_*f'^*\alpha$ for any $\alpha\in K(\mathfrak{Y})$ such that $p_*\alpha$ is defined. In particular, $f^*p_*=p'_*f'^*$ as homomorphisms $\CH^a(\mathfrak{Y})_{\mathbb{R}}\rightarrow \CH^{a-d}(\mathfrak{X}')_{\mathbb{R}}$.
\end{proposition}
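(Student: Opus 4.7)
The plan is to reduce the assertion to derived flat base change on finite models and then pass to the limit over $\Bir(X)$. First, I would unpack the hypothesis that $p_*\alpha$ is defined: by definition, there is a smooth birational model $W\to X$ and, after possibly further blowing up, a model of $Y$ of the form $Z = W\times_X Y$ (which is smooth, since $W$ is smooth and $p\colon Y\to X$ is flat) on which $\alpha$ is represented by a perfect complex $\mathcal{F}^\bullet$. Let $q\colon Z\to W$ denote the natural flat projection; then $p_*\alpha$ is represented on $W$ by $Rq_*\mathcal{F}^\bullet$.

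Next I would construct the base-change diagram. Set $W' := W\times_X X'$; since $W$ is smooth but $f$ need not be, we invoke Hironaka to resolve $W'$ to a smooth birational model $\widetilde{W}'\to X'$. Pulling back along $p$, we get $\widetilde{Z}' := \widetilde{W}'\times_W Z$, which is smooth and flat over $\widetilde{W}'$, and which is a birational model of $Y'$ dominating $X'\times_X Z$. The relevant cube of Cartesian squares
\[
\begin{tikzcd}
\widetilde{Z}' \arrow[r] \arrow[d] & Z \arrow[d,"q"] \\
\widetilde{W}' \arrow[r] & W
\end{tikzcd}
\]
is Tor-independent because its vertical maps are flat.

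The key step is then to apply derived flat base change, as in \cite[\href{https://stacks.math.columbia.edu/tag/08IB}{Tag 08IB}]{stacks-project}, which gives the canonical isomorphism
\[
L\tilde{f}^{*}\, Rq_*\mathcal{F}^\bullet \;\cong\; R\tilde{q}'_*\, L\tilde{f}'^{*}\mathcal{F}^\bullet,
\]
where $\tilde{f}\colon \widetilde{W}'\to W$ and $\tilde{f}'\colon \widetilde{Z}'\to Z$ are the structure maps. Passing to classes in $K$-theory and then to the filtered colimit over $\Bir(X)$ and $\Bir(X')$ gives $f^*p_*\alpha = p'_*f'^*\alpha$ in $K(\mathfrak{X}')$. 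The statement in $\CH^{a-d}(\mathfrak{X}')_{\mathbb{R}}$ then follows because both $f^*$ and $p'_*$ respect the $\gamma$-filtration (the latter modulo torsion, by the standard Riemann--Roch computation in \cite{SGA6}), so the identity descends to the associated graded.

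The main technical obstacle is the bookkeeping of resolutions: one must check that the chosen resolution $\widetilde{W}'\to X'$ is compatible with the perfect-complex representative of $\alpha$, and that the entire procedure is independent of the choices made (both of $W$ and of the resolution $\widetilde{W}'$). The former reduces to cofinality of smooth models in $\Bir(X')$ and the fact that, since $p$ is flat, all base-change squares remain Tor-independent after resolution; the latter follows from the construction of $f^*$ and $p'_*$ on $K(\mathfrak{X})$ and $K(\mathfrak{X}')$ being well-defined, which was established earlier in this subsection.
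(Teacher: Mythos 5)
Your proof is correct and takes essentially the same approach as the paper, whose entire proof is a one-sentence citation of Stacks Project Tag 08IB (derived base change over a Tor-independent square). You unpack that citation faithfully---reducing to a perfect-complex representative of $\alpha$ on a model $Z=W\times_X Y$, noting that the relevant Cartesian square is Tor-independent because the vertical arrow $q$ is flat, applying the tag, and passing to the filtered colimit over models; the Hironaka resolution of $W\times_X X'$ you insert is only needed because the statement literally allows $f$ to be proper but not flat (whereas the surrounding subsection fixes $f$ flat when defining $f^*$), and in any case it does not affect the argument.
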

\begin{proof}
Again, this is a consequence of \cite[\href{https://stacks.math.columbia.edu/tag/08IB}{Tag 08IB}]{stacks-project}.
\end{proof}

Also we have a projection formula:
\begin{proposition}\label{prop:projperf}
Let $X'$ be a smooth projective variety over $\mathbb{C}$. 
Assume that $f\colon X'\rightarrow X$ is a proper flat morphism of pure relative dimension $d$. Then for any $\alpha\in \CH(\mathfrak{X}')_{\mathbb{R}}$ such that $f_*\alpha$ is defined and $\beta\in \CH(\mathfrak{X})_{\mathbb{R}}$, we have
\[
f_*(\alpha f^*\beta)=(f_*\alpha)\beta.
\]
\end{proposition}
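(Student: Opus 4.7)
The plan is to reduce the statement to the ordinary projection formula for perfect complexes on smooth projective varieties, using the fact that both sides are built by colimits over birational models and both pull-back and push-forward in our setting are compatible with these colimits.

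First I would lift the statement from Chow groups to K-theory. By definition, $\CH(\mathfrak{X})_{\mathbb{R}}$ is obtained from $K(\mathfrak{X})$ by passing to the associated graded of the $\gamma$-filtration and tensoring with $\mathbb{R}$, and the multiplication, $f^*$, and $f_*$ on Chow groups are all induced from the corresponding operations on K-theory (derived push-forward preserves the $\gamma$-filtration modulo torsion). Hence it suffices to establish the identity
\[
f_*(\alpha \cdot f^*\beta) = (f_*\alpha)\cdot \beta
\]
for $\alpha \in K(\mathfrak{X}')$ with $f_*\alpha$ defined and $\beta \in K(\mathfrak{X})$.

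Next I would choose compatible birational models. By hypothesis there exist regular birational models $p':Y'\to X'$ and $p:Y\to X$, together with a morphism $f':Y'\to Y$ fitting in a commutative square over $f$, such that $\alpha$ is determined on $Y'$ by a class $\alpha_{Y'}$ and $f_*\alpha$ is computed by $Rf'_*\alpha_{Y'}$ on $Y$; enlarging if necessary, I may assume $\beta$ is determined on $Y$ by a class $\beta_Y$. Since $f$ is flat and $p$ is flat, the square
\[
\begin{tikzcd}
Y' \arrow[r,"f'"] \arrow[d,"p'"]\arrow[rd,"\square",phantom] & Y \arrow[d,"p"] \\
X' \arrow[r,"f"]           & X
\end{tikzcd}
\]
is Tor-independent, so by \cref{prop:cbc} (and its K-theoretic origin in \cite[\href{https://stacks.math.columbia.edu/tag/08IB}{Tag 08IB}]{stacks-project}) we have $f'^*p^*\beta_Y$ representing $f^*\beta$ on $Y'$.

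Now I would apply the ordinary projection formula on smooth projective varieties. On $Y'\to Y$ we have
\[
Rf'_*\bigl(\alpha_{Y'} \cdot Lf'^*p^*\beta_Y\bigr) = Rf'_*\alpha_{Y'} \cdot p^*\beta_Y
\]
as classes in $K(Y)$; this is the classical projection formula for derived push-forward of perfect complexes, see \cite[Exposé~III]{SGA6}. Under our identification this says precisely that $f_*(\alpha\cdot f^*\beta)$ and $(f_*\alpha)\cdot\beta$ are both determined on $Y$ and coincide there. Passing to the colimit over birational models gives the identity in $K(\mathfrak{X})$, hence in $\CH(\mathfrak{X})_{\mathbb{R}}$.

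The main subtlety is not the projection formula itself—which is standard once we are on honest smooth varieties—but rather the bookkeeping required to show that both sides of the claimed equality are well-defined elements of the colimit and that the projection formula can be realized on a \emph{single} pair of models $(Y',Y)$ compatible with all the relevant data. This is handled by the Tor-independence of the square above, which ensures that $f^*$ computed on $\mathfrak{X}$ agrees with $f'^*$ computed after passing to the model $Y$, and by the hypothesis that $f_*\alpha$ is defined, which provides the existence of the model $Y'$ descending to $Y$.
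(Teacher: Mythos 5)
Your approach is essentially the paper's: the paper's entire proof is the single line citing the derived-category projection formula, Stacks Project Tag~0B54, and you are filling in the reduction to K-theory and the model-chasing that makes that citation apply. The key ideas (reduce to $K$-theory, choose a model $Y'$ that is the fibre product $Y\times_X X'$, invoke Tor-independence so the square is derived base-change compatible, then apply the classical projection formula for perfect complexes from SGA6) are exactly what the cited reference packages; so there is no substantive difference in method.

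One small notational slip worth fixing: after declaring that $\beta$ is determined on $Y$ by a class $\beta_Y\in K(Y)$, you write $Lf'^*p^*\beta_Y$ for the representative of $f^*\beta$ on $Y'$. Since $p:Y\to X$, the pull-back $p^*$ is not applicable to $\beta_Y\in K(Y)$. You want either $Lf'^*\beta_Y$ (if $\beta_Y$ denotes the incarnation on $Y$) or, equivalently by Tor-independence (\cref{prop:cbc}), $Lp'^*(f^*\beta)_{X'}$. With that correction, the projection formula you invoke on the square $(f':Y'\to Y)$ reads
\[
Rf'_*\bigl(\alpha_{Y'}\cdot Lf'^*\beta_Y\bigr)=Rf'_*\alpha_{Y'}\cdot \beta_Y\,,
\]
which is exactly what you need. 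Everything else — the passage from $K(\mathfrak{X})$ to $\CH(\mathfrak{X})_{\mathbb{R}}$ via the $\gamma$-filtration, and the observation that both sides stabilize on a common pair of models thanks to the Tor-independence hypothesis guaranteeing $f_*\alpha$ is defined — is correct and matches the intent of the paper's terse citation.
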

\begin{proof}
This is a consequence of \cite[\href{https://stacks.math.columbia.edu/tag/0B54}{Tag 0B54}]{stacks-project}.
\end{proof}
The author does not know if one can define $f_*$ for the whole $K(\mathfrak{X'})$. 

\subsection{Relation with analytic singularities}
Assume that $X$ is projective. Given a Hermitian pseudo-effective line bundle $\hat{L}=(L,h)$ with analytic singularities on some regular birational model $Y\rightarrow X$, we have seen how to associate a class $\mathbb{D}(\hat{L})\in \bCart(\mathfrak{X})$ to $\hat{L}$. We will explain how this construction can be lifted to to $\mathbf{c}_1(\hat{L})\in \CH^1(\mathfrak{X})_{\mathbb{R}}$.

Take a regular birational model $\pi\colon Z\rightarrow Y$ so that $\pi^*h$ has log singularities along a snc $\mathbb{Q}$-divisor $D$. Then we simply set $\mathbf{c}_1(\hat{L})$ as the image of $\pi^*L-D$ in $\CH^1(\mathfrak{X})_{\mathbb{R}}$. It is clear that  $\mathbf{c}_1(\hat{L})$ does not depend on the choice of $\pi$ and $\mathbf{c}_1(\hat{L})$ lifts $\mathbb{D}(\hat{L})\in \bCart(\mathfrak{X})$.

More generally, if $\hat{E}\in \widehat{\Vect}^F(X)$ has analytic singularities and the rank of $E$ is $r+1$, we can define $\mathbf{s}_a(\hat{E})\in \CH^{r+a}(\RZ(\mathbb{P}E^{\vee}))_{\mathbb{R}}$ as
\[
\mathbf{s}_a(\hat{E})\coloneqq (-1)^a  \mathbf{c}_1(\hat{O}_{\mathbb{P}E^{\vee}}(1))^{r+a},
\]
where $p\colon \mathbb{P}E^{\vee}\rightarrow X$ is the natural projection.

We need to verify that the Segre classes are functorial.
\begin{proposition}\label{prop:segfuntrz}
Let $f\colon X'\rightarrow X$ be a flat morphism of pure relative dimension $d$ from a smooth projective variety $X'$ and $\hat{E}\in \widehat{\Vect}^F(X)$ has analytic singularities. Consider the Cartesian diagram
\[
\begin{tikzcd}
\mathbb{P}(f^*E)^{\vee} \arrow[r,"f'"] \arrow[d,"p'"]\arrow[rd,"\square",phantom] & \mathbb{P}E^{\vee} \arrow[d,"p"] \\
X' \arrow[r,"f"]           & X          
\end{tikzcd}.
\]
Then $\mathbf{s}_a(f^*\hat{E})=f'^*\mathbf{s}_a(\hat{E})$.
\end{proposition}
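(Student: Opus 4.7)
The plan is to reduce everything to the line bundle case via the identity \eqref{eq:hOpreserved}, and then verify the first Chern class is compatible with flat pull-back.

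\textbf{Reduction to line bundles.} Let $r+1 = \rank E$. Since $f$ is flat between smooth projective varieties, it is smooth, so $f'$ is smooth as well, and $f^*\hat{E}$ still carries an analytic singularity. By \eqref{eq:hOpreserved}, we have $f'^*\hO_{\mathbb{P}E^\vee}(1)=\hO_{\mathbb{P}(f^*E)^\vee}(1)$ as Hermitian line bundles with analytic singularities. Since $f'^*:\CH^{\bullet}(\RZ(\mathbb{P}E^\vee))_{\mathbb{Q}}\rightarrow \CH^{\bullet}(\RZ(\mathbb{P}(f^*E)^\vee))_{\mathbb{Q}}$ is a homomorphism of graded rings, once we know
\[
\mathbf{c}_1\bigl(f'^*\hO_{\mathbb{P}E^\vee}(1)\bigr)=f'^*\mathbf{c}_1\bigl(\hO_{\mathbb{P}E^\vee}(1)\bigr)
\]
in $\CH^1(\RZ(\mathbb{P}(f^*E)^\vee))_{\mathbb{R}}$, we may raise to the $(r+a)$-th power and multiply by $(-1)^a$ to obtain the desired identity $\mathbf{s}_a(f^*\hat{E})=f'^*\mathbf{s}_a(\hat{E})$.

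\textbf{The line bundle case.} It therefore suffices to prove: for any flat morphism $g:Y'\to Y$ of pure relative dimension between smooth projective varieties over $\mathbb{C}$, and any Hermitian pseudo-effective line bundle $\hat{L}=(L,h)$ on $Y$ with analytic singularities, one has $\mathbf{c}_1(g^*\hat{L})=g^*\mathbf{c}_1(\hat{L})$ in $\CH^1(\RZ(Y'))_{\mathbb{R}}$. By definition of $\mathbf{c}_1$, pick a birational model $\pi:Z\to Y$ with $Z$ smooth on which $\pi^*h$ has log singularities along a snc $\mathbb{Q}$-divisor $D$ on $Z$, so that $\mathbf{c}_1(\hat{L})$ is represented by $\pi^*L-D\in \NS^1(Z)_{\mathbb{R}}$. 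Let $W_0$ be any smooth resolution of the fiber product $Z\times_Y Y'$ dominating both $Z$ (through $\tilde{g}:W_0\to Z$) and $Y'$ (through $\sigma:W_0\to Y'$); such $W_0$ exists by Hironaka and yields a smooth birational model of $Y'$. Then $\tilde{g}^*D$ is a $\mathbb{Q}$-divisor on $W_0$, and after a further log resolution we may assume $\tilde{g}^*D$ is snc. The pull-back $\tilde{g}^*\pi^*h$ has log singularities along $\tilde{g}^*D$ on $W_0$, so $\mathbf{c}_1(g^*\hat{L})$ is represented on $W_0$ by $\tilde{g}^*(\pi^*L-D)$. On the other hand, by the compatibility of flat pull-back on Chow groups of the Riemann--Zariski spaces with the pull-back on models, $g^*\mathbf{c}_1(\hat{L})$ is represented on $W_0$ by the image of $\pi^*L-D$ under the Gysin morphism $\tilde{g}^*$. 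These two classes agree tautologically.

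\textbf{Anticipated obstacle.} The only subtle point is the book-keeping around the choice of resolution: one must ensure that the resolution $W_0\to Z\times_Y Y'$ can be chosen so that $W_0\to Y'$ is a birational model in $\Bir(Y')$ and so that $\tilde{g}^*D$ becomes snc. This is a standard application of Hironaka's resolution of singularities together with the principalization of ideals. Once the resolution is fixed, the equality of the two classes on $W_0$ follows directly from the identity $\tilde{g}^*\pi^*L=\sigma^*g^*L$ and the pull-back compatibility noted above, and the independence of $\mathbf{c}_1$ from the choice of the log resolution (already part of the definition) guarantees well-definedness in $\CH^1(\RZ(Y'))_{\mathbb{R}}$.
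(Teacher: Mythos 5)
Your proof follows the paper's argument exactly: reduce to the compatibility of $\mathbf{c}_1(\hO_{\mathbb{P}E^\vee}(1))$ with $f'^*$ via the identity \eqref{eq:hOpreserved} and the fact that $f'^*$ is a ring homomorphism on $\CH^{\bullet}(\RZ(\cdot))_{\mathbb{R}}$. The paper dismisses the remaining line-bundle case as ``clear by definition,'' while you correctly spell out the resolution book-keeping (passing to a common smooth model of $Z\times_Y Y'$ on which the pulled-back snc $\mathbb{Q}$-divisor stays snc), which is a useful expansion of the same step.
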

\begin{proof}
Denote the rank of $E$ by $r+1$.
By definition, it suffices to show that 
\[
f'^*(\mathbf{c}_1(\hat{O}_{\mathbb{P}E^{\vee}}(1))^{r+a})=\mathbf{c}_1(\hat{O}_{\mathbb{P}(f^*E)^{\vee}}(1))^{r+a}.
\]
As $f^*$ is a homomorphism of rings, it suffices to show that
\[
f'^*(\mathbf{c}_1(\hat{O}_{\mathbb{P}E^{\vee}}(1)))=\mathbf{c}_1(\hat{O}_{\mathbb{P}(f^*E)^{\vee}}(1)).
\]
This is clear by definition.
\end{proof}

\begin{lemma}\label{lma:segcommrz}
Let $\hat{E}_1, \hat{E}_2 \in  \widehat{\Vect}^F(X)$ having analytic singularities and of rank $r_1+1,r_2+1$ respectively. Then for any  $a,b\in \mathbb{N}$,
\begin{equation}\label{eq:segcomm}
    \mathbf{s}_a(\hat{E}_1) \mathbf{s}_b(\hat{E}_2)=\mathbf{s}_b(\hat{E}_2)\mathbf{s}_a(\hat{E}_1)
\end{equation}
in $\CH^{r_1+r_2+a+b}(\RZ(\mathbb{P}E_1^{\vee}\times_X \mathbb{P}E_2^{\vee}))_{\mathbb{R}}$.
\end{lemma}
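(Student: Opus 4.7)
The plan is to reduce the claimed identity to commutativity of the graded Chow ring of $\RZ(Y)$ with $Y = \mathbb{P}E_1^{\vee}\times_X \mathbb{P}E_2^{\vee}$. Let $\pi_i\colon Y\to \mathbb{P}E_i^{\vee}$ be the two projections, both of which are flat and proper of pure relative dimension $r_{3-i}$. The a priori issue is that the two factors $\mathbf{s}_a(\hat{E}_1)$ and $\mathbf{s}_b(\hat{E}_2)$ live in Chow groups of different Riemann--Zariski spaces, namely $\RZ(\mathbb{P}E_1^{\vee})$ and $\RZ(\mathbb{P}E_2^{\vee})$; the implicit convention in \eqref{eq:segcomm} is that both are viewed inside $\CH(\RZ(Y))_{\mathbb{R}}$ via the flat pullbacks $\pi_i^*\colon \CH(\RZ(\mathbb{P}E_i^{\vee}))_{\mathbb{R}}\to \CH(\RZ(Y))_{\mathbb{R}}$ constructed earlier.

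First, I would recall that the pullbacks $\pi_i^*$ are graded ring homomorphisms (they are induced from pullbacks of locally free sheaves, which preserve the $\gamma$-filtration), so the products
\[
\pi_1^*\mathbf{s}_a(\hat{E}_1)\cdot \pi_2^*\mathbf{s}_b(\hat{E}_2),\qquad \pi_2^*\mathbf{s}_b(\hat{E}_2)\cdot \pi_1^*\mathbf{s}_a(\hat{E}_1)
\]
both make sense as elements of $\CH^{r_1+r_2+a+b}(\RZ(Y))_{\mathbb{R}}$. Next, I would observe that for every smooth projective model $Z\to Y$, the intersection product on $\CH(Z)_{\mathbb{Q}}$ is (strictly) commutative, not merely graded-commutative; hence $\CH(\RZ(Y))_{\mathbb{R}}=\varinjlim_{Z\to Y}\CH(Z)_{\mathbb{R}}$ is a commutative graded ring as a filtered colimit of commutative graded rings.

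Combining these two observations immediately yields \eqref{eq:segcomm}, since commutativity of the Chow ring forces the two displayed products to coincide. This uses nothing about the Segre classes beyond their being elements of the graded ring $\CH(\RZ(Y))_{\mathbb{R}}$, together with the naturality statement of \cref{prop:segfuntrz} (needed only implicitly, to confirm that $\pi_i^*\mathbf{s}_{a_i}(\hat{E}_i)$ is the ``correct'' class on $\RZ(Y)$, namely $\mathbf{s}_{a_i}$ of the pullback along $p_i\circ\pi_i$). The only potential obstacle is purely notational: one must be attentive to the fact that the product is taken after pullback to a common space, and verify that the various pullback and projection identifications are consistent with the definitions of $\mathbf{c}_1$ and $\mathbf{s}_a$ via $\hO(1)$ on the relevant projective bundles; no substantive algebraic or analytic input is needed beyond what has already been set up.
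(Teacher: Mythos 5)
Your proof is correct and follows essentially the same route as the paper: rewrite both sides as products of classes pulled back to $\RZ(Y)$ with $Y=\mathbb{P}E_1^{\vee}\times_X\mathbb{P}E_2^{\vee}$ via the two projections, then invoke commutativity of the Chow ring together with \cref{prop:segfuntrz} to justify the rewriting. The paper states the precise meaning of the two sides as $\mathbf{s}_a(p_2^*\hat{E}_1)\,q_2^*\mathbf{s}_b(\hat{E}_2)$ versus $\mathbf{s}_b(p_1^*\hat{E}_2)\,q_1^*\mathbf{s}_a(\hat{E}_1)$ and observes that \cref{prop:segfuntrz} turns both into $q_1^*\mathbf{s}_a(\hat{E}_1)\cdot q_2^*\mathbf{s}_b(\hat{E}_2)$; your proof makes the same move, just phrased in terms of ``interpreting both factors as $\pi_i^*$-pullbacks.''

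One small slip in your invocation of the functoriality: when you write that $\pi_i^*\mathbf{s}_{a_i}(\hat{E}_i)$ equals ``$\mathbf{s}_{a_i}$ of the pullback along $p_i\circ\pi_i$,'' this does not typecheck — the Segre class of $(p_i\circ\pi_i)^*\hat{E}_i$ would live on a projective bundle over $Y$, not on $\RZ(Y)$. What \cref{prop:segfuntrz} actually gives (taking $f=p_2\colon\mathbb{P}E_2^{\vee}\to X$, $E=E_1$, $f'=q_1$) is $q_1^*\mathbf{s}_a(\hat{E}_1)=\mathbf{s}_a(p_2^*\hat{E}_1)$; the bundle is pulled back along the \emph{other} projection $p_{3-i}$, so that the ambient space $\mathbb{P}(p_{3-i}^*E_i)^{\vee}$ is exactly $Y$. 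With that correction your argument is a clean rendition of the paper's proof, and the remark that it is ``just commutativity of the Chow ring'' is accurate once the two asymmetric interpretations of the product have been identified via \cref{prop:segfuntrz}.
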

Of course, we omitted the obvious pull-backs.
\begin{proof}
Let $p_i\colon \mathbb{P}E_i^{\vee}\rightarrow X$ denote the natural projections.

Let $q_i:Y=\mathbb{P}E_1^{\vee}\times_X \mathbb{P}E_2^{\vee}\rightarrow \mathbb{P}E_i^{\vee}$ be the natural projections.
We have a Cartesian diagram
\[
\begin{tikzcd}
Y \arrow[r,"q_1"] \arrow[d,"q_2"] \arrow[rd, "\square", phantom] & \mathbb{P}E_1^{\vee} \arrow[d,"p_1"] \\
\mathbb{P}E_2^{\vee} \arrow[r,"p_2"]                                    & X  \end{tikzcd}.
\]
With these notations, \eqref{eq:segcomm} can be written more precisely as
\[
\mathbf{s}_a(p_2^*\hat{E}_1) q_2^*\mathbf{s}_b(\hat{E}_2)=\mathbf{s}_b(p_1^*\hat{E}_2)q_1^*\mathbf{s}_a(\hat{E}_1),
\]
which follows easily from \cref{prop:segfuntrz}.
\end{proof}

We can now reformulate our Chern--Weil formula \cref{cor:cptCherncurri} as
\begin{corollary}\label{cor:ref1}
Assume that $X$ is projective.
Let $\hat{E}_i\in \widehat{\Vect}_{\mathcal{I}}(X)$ or $\Fins_{\mathcal{I}}(X)$ ($i=1,\ldots,m$). Assume that each $\hat{E}_i$ has analytic singularities. 
Consider a monomial in the Segre classes $s_{a_1}(\hat{E}_1)\cdots s_{a_m}(\hat{E}_m)$ with $\sum_i a_i=n$. 
Then we have
\begin{equation}
\int_{\mathfrak{X}}\mathbf{s}_{a_1}(\hat{E}_1)\cdots \mathbf{s}_{a_m}(\hat{E}_m)
=\int_X s_{a_1}(\hat{E}_1)\cdots s_{a_m}(\hat{E}_m).
\end{equation}
\end{corollary}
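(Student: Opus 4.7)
The plan is to derive this from \cref{cor:cptCherncurri} by using the compatibility between Chow-theoretic integration on $\mathfrak{X}$ and the Dang--Favre intersection of b-divisors recorded in \eqref{eq:intRZequal}. Set $Y:=\mathbb{P}E_1^\vee\times_X\cdots\times_X\mathbb{P}E_m^\vee$ with projections $p:Y\to X$ and $q_i:Y\to \mathbb{P}E_i^\vee$. Since $\sum_i a_i=n$ we have $\sum_i(r_i+a_i)=\dim Y$, so the product $\prod_i q_i^*\mathbf{s}_{a_i}(\hat{E}_i)$ lives in $\CH^{\dim Y}(\RZ(Y))_{\mathbb{R}}$; that this product is well-defined and commutes in the $q_i$-factors is \cref{lma:segcommrz}. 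The convention of the statement is to interpret $\int_{\mathfrak{X}}$ of the (not yet pulled-back) Segre product as $\int_{\RZ(Y)}$ of this pulled-back class, which agrees with the push-forward $p_*$ down to $\RZ(X)$ by properness of $p$ and \cref{prop:projperf}.

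Having made this reduction, I would unwind the definition $\mathbf{s}_{a_i}(\hat{E}_i)=(-1)^{a_i}\mathbf{c}_1(\hO_{\mathbb{P}E_i^\vee}(1))^{r_i+a_i}$ together with the compatibility of $\mathbf{c}_1$ with flat pull-back (immediate from its construction via log-resolutions on a common model, cf.\ \cref{prop:segfuntrz}) to rewrite the left-hand side as
\[
(-1)^{a_1+\cdots+a_m}\int_{\RZ(Y)}\prod_{i=1}^{m} \mathbf{c}_1\bigl(q_i^*\hO_{\mathbb{P}E_i^\vee}(1)\bigr)^{r_i+a_i}.
\]
Since each $q_i^*\hO_{\mathbb{P}E_i^\vee}(1)$ is a Hermitian pseudo-effective line bundle with analytic singularities on $Y$, applying \eqref{eq:intRZequal} on $\RZ(Y)$ identifies this integral with the Dang--Favre intersection number
\[
\bigl(\mathbb{D}(q_1^*\hO_{\mathbb{P}E_1^\vee}(1))^{r_1+a_1},\ldots,\mathbb{D}(q_m^*\hO_{\mathbb{P}E_m^\vee}(1))^{r_m+a_m}\bigr).
\]

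Finally, this intersection number is exactly the left-hand side of \cref{cor:cptCherncurri}, so it equals $(-1)^{a_1+\cdots+a_m}\int_X s_{a_1}(\hat{E}_1)\cdots s_{a_m}(\hat{E}_m)$; the signs cancel and we conclude. No serious obstacle is expected: the argument is a formal bookkeeping exercise in the Chow theory of $\mathfrak{X}$ developed in \cref{sec:RZ}, with all substantive analytic content already packaged inside \cref{cor:cptCherncurri}. The only point requiring mild care is the interpretation of $\int_{\mathfrak{X}}$ applied to a class naturally living on $\RZ(Y)$, which is handled uniformly by the projection formula \cref{prop:projperf} together with the compatibility of degrees between $Y$ and $X$.
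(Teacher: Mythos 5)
Your proposal is correct and follows the same route the paper takes implicitly: the paper presents this as a ``reformulation'' of \cref{cor:cptCherncurri} without writing out the bookkeeping, and your argument simply makes that reformulation explicit by unwinding the definitions of $\mathbf{s}_a$ and $\mathbf{c}_1$ for analytic singularities, identifying $\int_{\RZ(Y)}$ of a top-degree product of $\mathbf{c}_1$'s with the Dang--Favre intersection via \eqref{eq:intRZequal}, and matching signs. One small remark: your citation of \cref{lma:segcommrz} for commutativity in the $q_i$-factors is unnecessary, since $\CH(\RZ(Y))_{\mathbb{R}}$ is already a commutative graded ring by construction (the colimit of Chow rings under Gysin pull-back); \cref{lma:segcommrz} is really recording the Chow-theoretic analogue of the operator-level statement \cref{prop:Segrecomm} rather than something you need to invoke here.
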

Of course, here we omitted the obvious pull-backs. Setting $Y=\mathds{P}E_1^{\vee}\times_X\cdots\times_X \mathbb{P}E_m^{\vee}$, we also write $\int_{\mathfrak{X}}$ instead of  $\int_{\mathfrak{Y}}$.

More generally, one can make sense of any homogeneous Chern polynomial of degree $n$ in vector bundles with Griffiths positive analytic singular metrics, say $P(c_i(\hat{E}_j))$ by linear combination of Segre polynomials, then we can write
\[
\int_{\mathfrak{X}}P(\mathbf{c}_i(\hat{E}_j))
=\int_X P(c_i(\hat{E}_j)).
\]

\subsection{General case of \texorpdfstring{$\mathcal{I}$}{I}-good singularities}
We also want to extend \cref{cor:ref1} to $\mathcal{I}$-good singularities. Here we have several difficulties. The first is that we have to work with Weil b-divisors instead of Cartier b-divisors.  The second is that the push-forward in Chow groups do not commute with products. So one should not expect a general intersection product on Weil b-divisors. 

Due to the transcendental nature of the problem, will only define $\mathbf{s}_a(\hat{E})$ as operators on the numerical classes, not on Chow groups. 

We refer to \cite{DF20} for the notion of base-point free classes in $\NS^k(X)_{\mathbb{R}}$ when $k\geq 1$. When $k=0$, a base-point free class is a class with non-negative degree.

 A class $\alpha\in\bCart^k(\mathfrak{X})\coloneqq \varinjlim_{Y\rightarrow X}\NS^k(Y)_{\mathbb{R}}$ is \emph{base point-free} if some (hence any) incarnation of $\alpha$ is base-point free. The closure of the cone of base point-free Cartier classes in $\bWeil^k(\mathfrak{X})\coloneqq \varprojlim_{Y\rightarrow X}\NS^k(Y)_{\mathbb{R}}$ is denoted by $\BPF^k(\mathfrak{X})$. We will need a slight extension $\dBPF(\mathfrak{X})$ whose elements are differences of elements in $\BPF^k(\mathfrak{X})$. In other words, $\dBPF^k(\mathfrak{X})$ is the linear span of $\BPF^k(\mathfrak{X})$. 
 
 Given a proper morphism $f\colon X'\rightarrow X$, we consider the  push-forward $f_*\colon \bWeil^k(\mathfrak{X}')\rightarrow \bWeil^k(\mathfrak{X})$ defined as follows: consider $\alpha\in \bWeil^k(\mathfrak{X}')$,
 for any birational model $Y\rightarrow X$, we have a Cartesian diagram
 \[
\begin{tikzcd}
Y' \arrow[r,"f'"] \arrow[d]\arrow[rd,"\square",phantom] & Y \arrow[d] \\
X' \arrow[r,"f"]           & X          
\end{tikzcd}.
\]
Then we set $(f_*\alpha)_Y\coloneqq f'_*\alpha_{Y'}$. It follows from \cite[Theorem~5.2]{DF20} and \cite[Lemma~3.6]{FL17} that $f_*$ sends $\BPF^k(\mathfrak{X}')$ to $\BPF^{k-d}(\mathfrak{X})$ if $d=\dim X'-\dim X$.

When $k=n$, there is a natural degree map $\int_{\mathfrak{X}}\colon \dBPF^n(\mathfrak{X})\rightarrow \mathbb{R}$ defined as the degree of any of its components. The degree is well-defined by the formula after \cite[Definition~1.4]{Ful}. 
 
The author does not know how to define flat pull-back in the current setting (although we expect that the pull-back of Cartier b-divisors can be extended to Weil b-divisors by continuity),
so the approach below becomes indirect.
 
 Given any $\mathcal{I}$-good Hermitian line bundle $\hat{L}$ on $X$, we define $\mathbf{c}_1(\hat{L})\colon \dBPF^k(\mathfrak{X})\rightarrow \dBPF^{k+1}(\mathfrak{X})$ sending $\alpha$ to $(\mathbb{D}(\hat{L})\cdot \alpha)$ in the sense of \cite[Definition~3.1]{DF20}. Note that we made an obvious extension of \cite[Definition~3.1]{DF20} by linearity. We observe that for nef b-divisors $\mathbb{D}_1,\ldots,\mathbb{D}_n$ on $\mathfrak{X}$, we have
\begin{equation}\label{eq:DintequalDint}
    (\mathbb{D}_1,\ldots,\mathbb{D}_n)=\int_{\mathfrak{X}}\mathbf{c}_1(\mathbb{D}_1)\dots\mathbf{c}_1(\mathbb{D}_n)[X].
\end{equation}
In fact, one can easily reduce to the case where all $\mathbb{D}_i$'s are Cartier, in which case \eqref{eq:DintequalDint} is trivial.
 
 More generally, given $\hat{E}\in \Fins(X)$ of rank $r+1$ with $\mathcal{I}$-good metric (in the quasi-positive sense), we define $\mathbf{s}_a(\hat{E})\colon \dBPF^{k}(\RZ(\mathbb{P}E^{\vee}))\rightarrow \dBPF^{k+a}(\mathfrak{X})$:
 \[
 \mathbf{s}_a(\hat{E})\alpha\coloneqq (-1)^a p_*\left(\mathbf{c}_1(\hat{\mathcal{O}}(1))^{r+a}\alpha\right),
 \]
 where $p\colon \mathbb{P}E^{\vee}\rightarrow X$ is the natural projection. The same definition makes sense if $\hat{E}$ is $\mathcal{I}$-good and quasi-positive.
 As \cref{lma:segcommrz}, one can easily show that Segre classes of two elements in $\Fins_{\mathcal{I}}(X)$ commute.
 
Now  \cref{cor:cptCherncurri} can be reformulated as
\begin{corollary}\label{cor:ref2}
Assume that $X$ is projective.
Let $\hat{E}_i\in \widehat{\Vect}_{\mathcal{I}}(X)$ or $\Fins_{\mathcal{I}}(X)$ ($i=1,\ldots,m$).
Consider a monomial in the Segre classes $s_{a_1}(\hat{E}_1)\cdots s_{a_m}(\hat{E}_m)$ with $\sum_i a_i=n$. 
Then we have
\begin{equation}\label{eq:CWfinalRZ}
\int_{\mathfrak{X}}\mathbf{s}_{a_1}(\hat{E}_1)\cdots \mathbf{s}_{a_m}(\hat{E}_m)
=\int_X s_{a_1}(\hat{E}_1)\cdots s_{a_m}(\hat{E}_m).
\end{equation}
\end{corollary}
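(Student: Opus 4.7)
The plan is to reduce the identity \eqref{eq:CWfinalRZ} directly to \cref{cor:cptCherncurri} by unfolding the definition of the Segre operators $\mathbf{s}_a(\hat{E})$ on $\dBPF$ and transferring the resulting expression on a fiber product to an intersection number of nef (or dBPF) b-divisors via \eqref{eq:DintequalDint}. Set $Y := \mathbb{P}E_1^{\vee}\times_X\cdots\times_X \mathbb{P}E_m^{\vee}$ with structural projections $\pi:Y\to X$ and $q_i:Y\to \mathbb{P}E_i^{\vee}$, and write $r_i+1 = \rank E_i$. Throughout I shall abbreviate $q_i^*\hO_{\mathbb{P}E_i^{\vee}}(1)$ to $\hO_i(1)$ and silently identify $[X]\in \dBPF^{0}(\mathfrak{X})$ with the unit class of degree one.

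First I would iterate the definition $\mathbf{s}_{a}(\hat{E})\alpha = (-1)^a p_*(\mathbf{c}_1(\hO(1))^{r+a}\alpha)$ together with the projection formula on each factor in order to obtain the identity
\[
    \mathbf{s}_{a_1}(\hat{E}_1)\cdots \mathbf{s}_{a_m}(\hat{E}_m)[X] \;=\; (-1)^{a_1+\cdots+a_m}\,\pi_*\!\left(\mathbf{c}_1(\hO_1(1))^{a_1+r_1}\cdots \mathbf{c}_1(\hO_m(1))^{a_m+r_m}\right)
\]
in $\dBPF^{n}(\mathfrak{X})$. Symmetry (and thus the fact that this expression is independent of the ordering of the factors) comes from the $\Fins_{\mathcal{I}}$-analogue of \cref{lma:segcommrz}, which in turn reduces to the commutativity of the $\mathbf{c}_1$'s of quasi-positive $\mathcal{I}$-good line bundles on the base-change tower; this is where \cref{prop:Igoodqp} (or rather its quasi-projective/quasi-positive version from \cref{sec:quapos}) is needed in order to interpret the $\mathbf{c}_1$'s of the pulled-back $\hO_i(1)$'s even though none of them has positive mass individually.

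Next I would take the degree $\int_{\mathfrak{X}}$ of both sides. Applying \eqref{eq:DintequalDint} on the Riemann--Zariski space of $Y$, the right-hand side becomes
\[
    (-1)^{a_1+\cdots+a_m}\Bigl(\mathbb{D}(\hO_1(1))^{a_1+r_1},\ldots,\mathbb{D}(\hO_m(1))^{a_m+r_m}\Bigr),
\]
the intersection number being computed in the sense of \cref{thm:DF2} (extended linearly to dBPF classes as in \cref{rmk:DFexte}). Note that the line bundles $\hO_i(1)$ on $Y$, though only quasi-positive, are $\mathcal{I}$-good by \cref{cor:Igoodvectflatpu} applied to the flat projection $q_i$; this is the point at which the full generality of the quasi-positive $\mathcal{I}$-good formalism (\cref{prop:Igoodqp} together with the fact that $\mathbb{D}$ is additive on tensor products in this class) is really used.

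Finally I would invoke \cref{cor:cptCherncurri} directly: the intersection number displayed above equals $(-1)^{a_1+\cdots+a_m}\int_X s_{a_1}(\hat{E}_1)\cdots s_{a_m}(\hat{E}_m)$, and the two signs cancel. The main potential obstacle is not any deep new input but rather the bookkeeping associated with the quasi-positivity of the $\hO_i(1)$'s on the fiber product: one must verify that $\mathbf{s}_a$ is well-defined as an operator on $\dBPF$ for quasi-positive $\mathcal{I}$-good Finsler bundles, that this operator is symmetric in the factors, and that \eqref{eq:DintequalDint} survives the extension from nef to dBPF classes by linearity. Once these formal checks are settled, the proof is a direct translation between the intersection number on $\mathfrak{X}$ computed \emph{à la} Dang--Favre and the non-pluripolar Chern--Weil integral on $X$ already established in \cref{cor:cptCherncurri}.
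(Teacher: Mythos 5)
Your route coincides with the paper's: identify the LHS with the Dang--Favre intersection number via \eqref{eq:DintequalDint} and then appeal to \cref{cor:cptCherncurri}. However, you file the hardest part under "the projection formula on each factor" as if it were available off the shelf, and that is where the real work lies. The identity you need is (in the paper's notation) the claim \eqref{eq:tempproj2},
\[
q_*\bigl(\mathbf{c}_1(q^*\hat{L})\,\beta\bigr)=\mathbf{c}_1(\hat{L})\,q_*\beta
\]
for a flat proper $q$ and a general $\mathcal{I}$-good (in particular merely quasi-positive) $\hat{L}$. This is \emph{not} the Chow-group projection formula of \cref{prop:projperf}: for a generic $\mathcal{I}$-good metric, $\mathbb{D}(\hat{L})$ is only a Weil b-divisor, has no lift to $\CH^1(\mathfrak{X})_{\mathbb{R}}$, and the operator $\mathbf{c}_1(\hat{L})$ acts on $\dBPF$ through the Dang--Favre intersection pairing, which is defined by monotone approximation by nef Cartier b-divisors. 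Push-forward does not automatically commute with these limits, so commuting $q_*$ past $\mathbf{c}_1(q^*\hat{L})$ genuinely requires an argument. The paper supplies one: reduce (by the $d_S$-approximation scheme from \cref{thm:nefbvolume}, i.e.\ quasi-equisingular approximations after perturbing to a K\"ahler current) to the case of analytic singularities, where the class is Cartier, $\mathbf{c}_1(q^*\hat{L})=q^*\mathbf{c}_1(\hat{L})$, and the ordinary projection formula for numerical classes applies. The side checks you list at the end (well-definedness of $\mathbf{s}_a$ on $\dBPF$ for quasi-positive bundles, symmetry, linear extension of \eqref{eq:DintequalDint}) are genuine but routine; the approximation step you omit is the one that carries the proof and needs to be spelled out. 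Also a small typo: in your display you should start from $[Y]\in\dBPF^0(\RZ(Y))$ (the fiber product), not from $[X]$.
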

We need to properly interpret the left-hand side of \eqref{eq:CWfinalRZ}. Let $Y=\mathbb{P}E_1^{\vee}\times_X\cdots \times_X \mathbb{P}E_m^{\vee}$. Then
$\int_{\mathfrak{X}}\mathbf{s}_{a_1}(\hat{E}_1)\cdots \mathbf{s}_{a_m}(\hat{E}_m)$ means $\int_{\mathfrak{X}}\mathbf{s}_{a_1}(\hat{E}_1)\cdots \mathbf{s}_{a_m}(\hat{E}_m)[Y]$. We have omitted obvious pull-backs.


More generally, one can make sense of any homogeneous Chern polynomial of degree $n$ in vector bundles with Griffiths positive $\mathcal{I}$-good singular metrics, say $P(c_i(\hat{E}_j))$ as polynomials in the Segre classes, then we can write
\begin{equation}\label{eq:CWfinalRZ2}
\int_{\mathfrak{X}}P(\mathbf{c}_i(\hat{E}_j))
=\int_X P(c_i(\hat{E}_j)).
\end{equation}

\begin{proof}
We need to identify the left-hand side of \eqref{eq:CWfinalRZ} with the left-hand side of \eqref{eq:DDDequal}.

As we know \eqref{eq:DintequalDint},
by induction on $m$, we only need to prove the following: consider the commutative diagram
\[
\begin{tikzcd}
\mathbb{P}E_1^{\vee}\times_X \mathbb{P}E_2^{\vee} \arrow[r, "q_1"] \arrow[d, "q_2"] \arrow[rd, "\square", phantom] & \mathbb{P}E_1^{\vee} \arrow[d, "p_1"] \\
\mathbb{P}E_2^{\vee} \arrow[r, "p_2"]                                                                              & X                                    
\end{tikzcd},
\]
given any $\beta\in \dBPF(\RZ(\mathbb{P}E_1^{\vee}\times_X \mathbb{P}E_2^{\vee}))$ and $a\in \mathbb{N}$, we have
\[
p_{1*}(\mathbf{s}_a(p_1^*\hat{E}_2)\beta)=\mathbf{s}_a(\hat{E}_2)q_{2*}\beta.
\]
By definition, it suffices to show
\begin{equation}\label{eq:tempproj1}
q_{2*}(\mathbf{c}_1(q_2^*\mathcal{O}_{\mathbb{P}E_2^{\vee}}(1))^{r_2+a}\beta)=\mathbf{c}_1(\mathcal{O}_{\mathbb{P}E_2^{\vee}}(1))^{r_2+a}q_{2*}\beta.
\end{equation}
So we are reduced to the case where $E_2$ is a line bundle. We will prove a more general result: if $q:Y\rightarrow X$ is a smooth morphism of pure relative dimension $m-n$ between smooth projective varieties $Y$ and $X$ of dimension $m$ and $n$ and $\hat{L}\in \widehat{\Pic}(X)$ has $\mathcal{I}$-good singularities (in the quasi-positive sense), then for any $\beta\in \dBPF(\mathfrak{Y})$, we have
\begin{equation}\label{eq:tempproj2}
q_*(\mathbf{c}_1(q^*\hat{L})\beta)=\mathbf{c}_1(\hat{L})q_*\beta.
\end{equation}
This clearly implies \eqref{eq:tempproj1} by induction. In order to prove \eqref{eq:tempproj2}, we may assume that $\hat{L}\in \widehat{\Pic}_{\mathcal{I}}(X)$. By the same approximations as in the proof of \cref{thm:nefbvolume}, we may assume that $\hat{L}$ has analytic singularities. In this case, $\mathbf{c}_1(\hat{L})$ is Cartier and $\mathbf{c}_1(q^*\hat{L})=q^*\mathbf{c}_1(\hat{L})$. So \eqref{eq:tempproj2} reduces to the usual projection formula of numerical classes.
\end{proof}

We remark that \eqref{eq:CWfinalRZ} and \eqref{eq:CWfinalRZ2} establish the relation between analytic objects on one side (non-pluripolar products, Chern currents etc.) and algebraic objects on the other side (b-divisors, characteristic classes on the Riemann--Zariski spaces). It is a final confirmation of the fact that the notion of $\mathcal{I}$-good singularities introduced in \cite{DX21, DX22} is a good one. Intuitively, $\mathcal{I}$-good singularities are the largest class of analytic singularities bearing an algebraic feature.

However, our Chern--Weil formulae do have a drawback: the singularities of a Hermitian pseudo-effective line bundle gives an enhanced b-divisor, not only a b-divisor. So parallel to the case of analytic singularities, one should expect to be able to lift $\mathbf{c}_1$ and $\mathbf{s}_i$ to certain Chow groups. This is unfortunately impossible in the current formulation: the singularity divisors have infinitely many components, while there is no good notion of convergence on the Chow groups. 
For example, by \cite{Mum68}, the higher Chow groups $\CH^i$ ($i>1$) are usually infinite-dimensional. They are usually huge enough and do not seem to admit natural topologies. One could of course formally enlarge the Chow groups to include the infinite sums that should be convergent. But this \emph{ad hoc} approach breaks the intrinsic beauty of the whole theory. There is a different option: one could try to construct a K-theory and a $\lambda$-ring structure using $\mathcal{I}$-good Hermitian vector bundles instead of just vector bundles. This approach requires a huge amount of extra work, which we deliberately avoid in this paper.

\cref{cor:ref2} can be easily extended to quasi-projective varieties as well. We leave the details to the readers. Less intrinsically, one can also apply \cref{cor:ref2} to a compactification $\bar{X}$ of $X$. Both sides of \eqref{eq:CWfinalRZ} are independent of the choice of $\bar{X}$. 

Finally, let us mention that one can more generally develop the theory of Chow groups with supports: $\CH_{\mathfrak{Z}}(\mathfrak{X})$ using the same methods, where $Z\subseteq X$ is a Zariski closed subset. These refinements could be helpful if we want to consider compactification problems.

\clearpage

\printbibliography

\bigskip
  \footnotesize

  Mingchen Xia, \textsc{Institut de Mathématiques de Jussieu-Paris Rive Gauche,  Paris}\par\nopagebreak
  \textit{Email address}, \texttt{mingchen@imj-prg.fr}\par\nopagebreak
  \textit{Homepage}, \url{http://mingchenxia.github.io/home/}.

\end{document}